\newtheorem{theorem}{Theorem}[section]
\newtheorem{proposition}[theorem]{Proposition}
\newtheorem{corollary}[theorem]{Corollary}
\newtheorem{conjecture}[theorem]{Conjecture}
\newtheorem{claim}[theorem]{Claim}
\newtheorem{lemma}[theorem]{Lemma}
\theoremstyle{remark}
\newtheorem{remark}[theorem]{Remark}
\newtheorem{example}[theorem]{Example}
\theoremstyle{definition}
\newtheorem{definition}[theorem]{Definition}
\def\beq{\begin{eqnarray}}
\def\eeq{\end{eqnarray}}
\def\bes{\begin{eqnarray*}}
\def\ees{\end{eqnarray*}}
\def\omhat{{\bm\omega}}
\def\tauhat{{\bm \tau}}
\def\muhat{{\bm \mu}}
\def\lambdahat{{\bm \lambda}}
\def\oP{\overline{\calP}}
\def\oT{\overline{\bT}}
\def\otT{\overline{\bT}{^o}}
\def\bT{{\mathbf{T}}}
\def\tT{{\bT}^o}
\def\bbU{\mathbb{U}}
\def\C{\mathbb{C}}
\def\M{{\mathcal{M}}}
\def\bM{\mathbb{M}}
\def\calQ{{\mathcal{Q}}}
\def\calA{{\mathcal{A}}}
\def\bfX{{\bf X}}
\def\calC{\mathcal{C}}
\def\calX{{\mathcal{X}}}
\def\calU{{\mathcal{U}}}
\def\calE{{\mathcal{E}}}
\def\calF{{\mathcal{F}}}
\def\calS{{\mathfrak{S}}}
\def\bfD{{\bf D}}
\def\oSigma{{\overline{\Sigma}}}
\def\obfSigma{{\overline{\bfSigma}}}
\def\calP{\mathcal{P}}
\def\calH{\mathcal{H}}
\def\x{\mathbf{x}}
\def\y{\mathbf{y}}
\def\v{\mathbf{v}}
\def\w{\mathbf{w}}
\def\IC{\mathcal{IC}^{\bullet}_}
\def\pIC{\underline{\mathcal{IC}}^{\bullet}_}
\def\bfL{{\bf L}}
\def\bfP{{\bf P}}
\def\bfSigma{{\bf \Sigma}}
\def\H{\mathbb{H}}
\def\N{\mathbb{Z}_{\geq 0}}
\def\F{\mathbb{F}}
\def\Q{\mathbb{Q}}
\def\bfS{{\bf S}}
\def\bC{{\bf C}}
\def\calO{{\mathcal O}}
\def\calY{{\mathcal Y}}
\def\bfO{{\bf O}}
\def\Z{\mathbb{Z}}
\def\K{\mathbb{K}}
\def\gl{{\mathfrak g\mathfrak l}}
\def\bY{{\mathbb{Y}}}
\newcommand{\nc}{\newcommand}
\nc{\op}[1]{\mathop{\mathchoice{\mbox{\rm #1}}{\mbox{\rm #1}}
{\mbox{\rm \scriptsize #1}}{\mbox{\rm \tiny #1}}}\nolimits}
\nc{\al}{\alpha}
\nc{\ep}{\varepsilon} \nc{\ga}{\gamma} \nc{\Ga}{\Gamma}
\nc{\la}{\lambda} \nc{\La}{\Lambda} \nc{\si}{\sigma}
\nc{\Sig}{{\Gamma}} \nc{\Om}{\Omega} \nc{\om}{\omega}
\nc{\SL}{{\rm SL}} \nc{\GL}{{\rm GL}} \nc{\PGL}{{\rm PGL}}
\nc{\G}{{\rm G}}
\nc{\cpt}{{\op{cpt}}} \nc{\Dol}{{\op{Dol}}} \nc{\DR}{{\op{DR}}}
\nc{\B}{{\op{B}}} \nc{\Triv}{\op{Triv}} \nc{\Hod}{{\op{Hod}}}
\nc{\Log}{{\op{Log}}} \nc{\Exp}{{\op{Exp}}} \nc{\Est}{E_{\op{st}}}
\nc{\Hst}{H_{\op{st}}} \nc{\Left}[1]{\hbox{$\left#1\vbox to
  10.5pt{}\right.\nulldelimiterspace=0pt \mathsurround=0pt$}}
\nc{\Right}[1]{\hbox{$\left.\vbox to
  10.5pt{}\right#1\nulldelimiterspace=0pt \mathsurround=0pt$}}
\nc{\LEFT}[1]{\hbox{$\left#1\vbox to
  15.5pt{}\right.\nulldelimiterspace=0pt \mathsurround=0pt$}}
\nc{\RIGHT}[1]{\hbox{$\left.\vbox to
 15.5pt{}\right#1\nulldelimiterspace=0pt \mathsurround=0pt$}}
\nc{\bee}{{\bf E}} \nc{\bphi}{{\bf \Phi}}
\begin{document}

\title{Character varieties with Zariski closures of $\GL_n$-conjugacy classes at punctures }

\author{Emmanuel Letellier \\ {\it Universit\'e de Caen} \\ {\tt letellier.emmanuel@math.unicaen.fr}}

\pagestyle{myheadings}

\maketitle

\begin{abstract}

In the paper \cite{HLV} we gave a conjectural formula for the mixed Hodge polynomials of character varieties with generic semisimple conjugacy classes at punctures and we prove a formula for the $E$-polynomial. We also proved that these character varieties are irreducible  \cite{HLV2}. In this paper we extend the results of \cite{HLV}\cite{HLV2} to character varieties with Zariski closures of arbitrary generic conjugacy classes at punctures working with intersection cohomology.  We also study  Weyl group action on the intersection cohomology of the partial resolutions of these character varieties and give a conjectural formula for the two-variables polynomials that encode the trace of the elements of the Weyl group on the subquotients of the weight filtration.  Finally, we compute the generating function of the stack count of character varieties with Zariski closure of unipotent regular conjugacy class at punctures.\end{abstract}
\vspace{.2cm} 

\noindent Key words = character varieties, Hodge polynomials, intersection cohomology, character theory of finite general linear groups.
\vspace{.2cm}

\noindent MSC (2010) : 14L24, 20C33.

\tableofcontents

\section{Introduction}

Assume given a genus $g$ compact Riemann surface $\Sigma$, a set $S=\{a_1,\dots,a_k\}\subset \Sigma$ and a tuple $\bC=(C_1,\dots,C_k)$ of conjugacy classes of $\GL_n(\C)$, we consider the space 

$$
\calU_{\overline{\bC}}=\left\{\rho\in{\rm Hom}\,\left(\pi_1(\Sigma\backslash S),\GL_n(\C)\right)\,\left|\,\rho(\gamma_i)\in\overline{C}_i\,\,\text{ for all }\, i\right\}\right.,$$
where $\pi_1(\Sigma\backslash S)$ is the fundamental group with respect to a fixed base point, $\gamma_i$, with  $i=1,\dots,k$, is a single loop around the puncture $a_i$, and $\overline{C}_i$ denotes the Zariski closure of the conjugacy class $C_i$. In general the space of orbits $\calU_{\overline{\bC}}/\GL_n$, where $\GL_n(\C)$ acts by conjugation, is not an algebraic variety and so often one consider the quotient stack $[\calU_{\overline{\bC}}/\GL_n]$  or the GIT quotient $\M_{\overline{\bC}}:=\calU_{\overline{\bC}}//\GL_n={\rm Spec}\left(\C[\calU_{\overline{\bC}}]^{\GL_n}\right)$ that parametrizes the closed orbits. In this paper we will assume that the tuple $\bC$ is \emph{generic} (except in \S \ref{unipotent}). In this case, the quotient map $\calU_{\overline{\bC}}\rightarrow\M_{\overline{\bC}}$ is a principal $\PGL_n$-bundle in the \'etale topology. 

It is known by Saito \cite{saito} that the compactly supported intersection cohomology $IH_c^*(\M_{\overline{\bC}},\C)$ is endowed with a mixed Hodge structure from which we get the mixed Hodge Poincar\'e polynomial 

$$
IH_c(\M_{\overline{\bC}};x,y,t):=\sum_{p,q,i}{\rm dim}\,\left({\rm Gr}_{p+q}^WIH_c^i(\M_{\overline{\bC}})_\C\right)^{p,q}x^py^q t^i.
$$
Note that $IH_c(\M_{\overline{\bC}};1,1,t)$ is the compactly supported Poincar\'e polynomial $\sum_i \left({\rm dim}\,IH_c^i(\M_{\overline{\bC}},\C)\right)\, t^i$.
These moduli spaces $\M_{\overline{\bC}}$ are classical objects at least when the conjugacy classes $C_1,\dots,C_k$ are all semisimple in which case $\M_{\overline{\bC}}=\M_{\bC}$ is nonsingular and $IH_c^*(\M_\bC)$ is the usual compactly supported cohomology $H_c^*(\M_\bC)$. 

\subsection{Computing the cohomology of character varieties: State of the art}The simplest case is when $k=1$ and the conjugacy class at the puncture $a=a_1$ is $C=\zeta\cdot I$ where $\zeta=e^{\frac{2\pi i d}{n}}$ is a primitive $n$-root of unity and $I$ the identity matrix. The corresponding character variety which we denote by $\M_B^d$, is known also in the literature as the \emph{Betti moduli space}. It is known from non-Abelian Hodge theory \cite{hitchin} \cite{simpson} that $\M_B^d$ is diffeomorphic to a certain moduli space of stable Higgs bundles on $\Sigma$, known as the \emph{Dolbeault moduli space} which is often denoted by $\M_{Dol}^d$. The cohomology of the $\GL_n$ (or ${\rm SL}_n$)- character varieties $\M_B^d$ and $\M_{Dol}^d$ has been the object of a very intensive study in the literature starting from Hitchin \cite{hitchin} until recent work by Hausel-Villegas \cite{hausel-villegas}, de Cataldo-Hausel-Miggliorini \cite{CHM}, Logares-Mu\~noz-Newstead \cite{LMN}, Garcia-Prada, Heinloth, Schmitt \cite{garcia2} and Chaudouard-Laumon \cite{chaud-Laumon}. To be more precise, the Poincar\'e polynomial of $\M_{Dol}^d$ has been computed by Hitchin \cite{hitchin} when $n=2$ and by Gothen \cite{gothen} when $n=3$. Hausel and Rodriguez-Villegas \cite{hausel-villegas} gave a conjectural formula for the generating function of the mixed Hodge polynomial of the $\GL_n$-character variety  $\M_B^d$. They prove their conjecture  when $n=2$ as in this case the cohomology ring of $\M_B^d$ is known by Hausel-Thaddeus \cite{HT}.  Recently Garcia-Prada, Heinloth and Schmitt \cite{garcia2} gave a theoritical algorithm  for computing the number of points of $\M_{Dol}^d$ over finite fields  (and therefore the Poincar\'e polynomial of $\M_{Dol}^d$ as the cohomology of $\M_{Dol}^d$ is pure) from which they managed to get an explicit formula for $n=4$. Motivated by  Hausel-Villegas conjecture,  Chaudouard and Laumon \cite{chaud-Laumon} investigated the counting of stable Higgs bundles in connection with Arthur-Selberg trace formula for automorphic forms. 

The diffeomorphism bewteen $\M_B^d$ and $\M_{Dol}^d$ arising from non-Abelian theory does not preserve the mixed Hodge structure for the obvious reason that, unlike $\M_B^d$,  the mixed Hodge structure on the cohomology of $\M_{Dol}^d$ is pure. However, de Cataldo-Hausel-Migliorini \cite{CHM} conjectured that the weight filtration on $H^*(\M_B^d)$ should correspond to the perverse filtration on $H^*(\M_{Dol}^d)$ which is defined from the Hitchin map. They prove the conjecture for $n=2$. 

 In the case where the conjugacy classes $C_1,\dots,C_k$ are semisimple,  the character variety $\M_\bC$ is diffeomorphic to a certain moduli space of semistable parabolic Higgs bundles on $\Sigma$. The Poincar\'e polynomial of the later space was computed for $n=2$ by Boden and Yokogawa \cite{boden} and for $n=3$ by Garcia-Prada, Gothen and Mun\~oz \cite{garcia}. In \cite{HLV} we give a conjectural formula for the generating function of the mixed Hodge polynomial of $\M_\bC$ in terms of Macdonald polynomials. This conjecture generalizes Hausel-Villegas conjecture (the Macdonald polynomials appear trivially in Hausel-Villegas conjecture). The result of Garcia-Prada, Gothen and Mun\~oz confirms  our conjecture when $n=3$. We also prove \cite{HLV} that our conjecture is true after the specialization $t\mapsto -1$, giving thus an explicit formula for the so-called $E$-polynomials $E(\M_\bC;x,y):=H_c(\M_\bC;x,y,-1)$. We use this formula to prove that the variety $\M_\bC$ is connected (and also irreducible as it is non-singular) which is by far the most technical proof of \cite{HLV2}. 

\subsection{Results of the paper}

Unless specified the letter $\K$ denotes either $\C$ or an algebraic closure $\overline{\F}_q$ of a finite field $\F_q$ with $q$ elements.  

\subsubsection{Basic properties of character varieties} As in \S \ref{ne} we define from any tuple $\bC=(C_1,\dots,C_k)$ of conjugacy classes of $\GL_n(\K)$ a comet-shaped quiver $\Gamma_\bC$ equipped with a dimension vector $\v_\bC$. 

Denote by $\calU_\bC$ the open subset of $\calU_{\overline{\bC}}$ of representations $\rho\in{\rm Hom}\,(\pi_1(\Sigma\backslash S),\GL_n(\K))$ with $\rho(\gamma_i)\in C_i$ and by $\M_\bC$ the image of $\calU_\bC$ in $\M_{\overline{\bC}}$.

\begin{theorem}Assume that $\bC$ is generic (see Definition \ref{gendef}). The variety $\calU_{\overline{\bC}}$ is not empty if and only if $\v_\bC$ is a root of $\Gamma_\bC$. Moreover it is reduced to a single $\PGL_n$-orbit if and only if $\v_\bC$ is a real root.

Assume that  $\calU_{\overline{\bC}}\neq \emptyset$, then

\noindent (i) the quotient map $\calU_{\overline{\bC}}\rightarrow\M_{\overline{\bC}}$ is a principal $\PGL_n$-bundle in the \'etale topology. In particular the $\PGL_n$-orbits of $\calU_{\overline{\bC}}$ are all closed of same dimension ${\rm dim}\,\PGL_n$,

\noindent (ii) $\M_\bC$ is a dense nonsingular open subset of $\M_{\overline{\bC}}$ (in particular it is not empty),

\noindent (iii) the variety $\M_{\overline{\bC}}$ is irreducible of dimension

$$
d_\bC:=2gn^2-2n^2+2+\sum_{i=1}^k{\rm dim}\,C_i.
$$
\label{theo1-intro}\end{theorem}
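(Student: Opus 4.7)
The plan is to translate the statements into the language of the comet-shaped quiver $\Gamma_\bC$, apply Crawley-Boevey's theorems on the multiplicative Deligne-Simpson problem for the root criteria and irreducibility, and extract the geometric assertions (i)--(iii) from genericity combined with standard GIT arguments.

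First I would identify $\calU_{\overline{\bC}}$ with the representation variety of a multiplicative preprojective algebra $\Lambda^q(\Gamma_\bC)$ at dimension vector $\v_\bC$, where the central parameter $q$ records the eigenvalues of $\bC$. The theorem of Crawley-Boevey (together with the refinements of Crawley-Boevey--Shaw) then yields: $\calU_{\overline{\bC}}\neq\emptyset$ iff $\v_\bC$ is a positive root of $\Gamma_\bC$; the variety reduces to a single isomorphism (equivalently $\PGL_n$-)orbit iff $\v_\bC$ is a real root; and at a generic parameter $q$ the whole variety is irreducible of the expected dimension. The critical step here is matching the quiver-theoretic genericity of $q$ with Definition~\ref{gendef}, which is a direct computation on products of the prescribed eigenvalues.

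For (i), genericity forces every $\rho\in\calU_{\overline{\bC}}$ to be irreducible as a representation of $\pi_1(\Sigma\setminus S)$: any nonzero invariant subspace $W\subsetneq \K^n$ would induce, through the Jordan decompositions of the $\rho(\gamma_i)|_W$, a nontrivial multiplicative relation among subcollections of the prescribed eigenvalues, contradicting Definition~\ref{gendef}. Hence all $\GL_n$-stabilizers reduce to the center, $\PGL_n$ acts freely, and since $\PGL_n$ is reductive and $\calU_{\overline{\bC}}$ is affine, Luna's étale slice theorem yields the principal $\PGL_n$-bundle structure with all orbits closed of dimension $n^2-1$.

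For (ii), openness of $\M_\bC$ in $\M_{\overline{\bC}}$ is inherited from openness of $\calU_\bC$ in $\calU_{\overline{\bC}}$ via (i); smoothness of $\M_\bC$ follows because $\calU_\bC$ is a smooth fiber of the product-of-commutators map on $\GL_n^{2g}\times C_1\times\cdots\times C_k$ quotiented by a free reductive action; density of $\M_\bC$ then follows from irreducibility of $\M_{\overline{\bC}}$. For (iii), the dimension formula is a straightforward count: $\GL_n^{2g}\times\prod_iC_i$ has dimension $2gn^2+\sum\dim C_i$; the single $\SL_n$-valued equation $\prod[\alpha_j,\beta_j]\prod\gamma_i=I$ (the $\det=1$ constraint being forced by the eigenvalue data under genericity) cuts transversally and subtracts $n^2-1$; a further quotient by $\dim\PGL_n=n^2-1$ yields $d_\bC$. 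Irreducibility of $\M_{\overline{\bC}}$ is the main remaining obstacle, and it too follows from the quiver dictionary of the first paragraph by passing to the $\PGL_n$-GIT quotient of the irreducible representation variety of $\Lambda^q(\Gamma_\bC)$; so the bulk of the real work is in setting up the multiplicative preprojective dictionary carefully enough that Crawley-Boevey--Shaw apply, after which all of (i)--(iii) become standard.
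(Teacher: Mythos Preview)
Your approach via the multiplicative preprojective algebra $\Lambda^q(\Gamma_\bC)$ is genuinely different from the paper's, and the dictionary you set up is correct for the root criterion and the single-orbit statement: those are indeed Crawley-Boevey's results in \cite{crawleyind} (at least for $g=0$; for $g>0$ the paper observes separately that $\v_\bC$ is always in the fundamental domain). Likewise your argument for (i) is essentially what the paper does, and the dimension count in (iii) is standard.

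The gap is in the irreducibility claim. You assert that Crawley-Boevey together with Crawley-Boevey--Shaw yields ``at a generic parameter $q$ the whole variety is irreducible of the expected dimension,'' and later that irreducibility ``follows from the quiver dictionary \ldots\ by passing to the $\PGL_n$-GIT quotient of the irreducible representation variety of $\Lambda^q(\Gamma_\bC)$.'' No such irreducibility theorem is available in those references. Irreducibility of $\M_\bC$ even in the \emph{semisimple} case is highly nontrivial --- the paper stresses that it is ``by far the most technical proof'' of \cite{HLV2} --- and Crawley-Boevey--Shaw do not prove it. Extending it to $\M_{\overline{\bC}}$ for arbitrary $\bC$ is precisely one of the new contributions here. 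So your proposal is circular at the crucial step: you are assuming the representation variety of $\Lambda^q(\Gamma_\bC)$ is irreducible, which is equivalent to what you need to prove.

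The paper's actual route is quite different and worth noting. It constructs the resolution $\bM_{\bfL,\bfP,\sigma}\to\M_{\overline{\bC}}$, proves by a direct character-theoretic computation over finite fields (Theorem~\ref{counting}) that $\#\,\bM_{\bfL,\bfP,\sigma}(\F_{q^r})=\#\,\M_\bfS(\F_{q^r})$ for a suitable generic \emph{semisimple} tuple $\bfS$, and then imports irreducibility from the semisimple case \cite{HLV2} by reading off the top-degree coefficient of the counting polynomial (equivalently the $E$-polynomial over $\C$). The same counting identity also handles the non-emptiness of $\M_\bC$ given non-emptiness of $\M_{\overline{\bC}}$, another point your proposal does not address. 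In short, the paper reduces to the semisimple case via resolutions and arithmetic, rather than appealing to a general structure theorem for multiplicative preprojective varieties that is not in the literature.
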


It is not difficult to check that when $g>0$, then $\v_\bC$ is always an imaginary root (see for instance  \cite[Lemma 5.2.3]{HLV2}), and so by the above theorem the variety $\M_\bC$ is always not empty when $g>0$. 

The proof of the assertion (i) as well as of the fact that if not empty then $\M_\bC$ is a non-singular equidimensional variety of dimension $d_\bC$ goes exactly along the same lines as in the case where the conjugacy classes $C_1,\dots,C_k$ are all semisimple \cite{HLV}, see also Foth \cite[Proposition 3.4]{Foth} for the case $k=1$.

We reduce the proof of the other statements of the theorem to the semisimple case (which case is well-known) by using  the resolution $\bM_{\bf L,P,\sigma}\rightarrow\M_{\overline{\bC}}$ defined in \S \ref{resolchar} and by proving that there exists a generic tuple of semisimple conjugacy classes $\bfS$ of $\GL_n$ such that  $\#\,\bM_{\bf L,P,\sigma}(\F_q)=\# \M_\bfS(\F_q)$.

\bigskip

\begin{corollary}Assume that $\bC$ is generic and $\calU_{\overline{\bC}}$ is not empty.  The decomposition

$$
\M_{\overline{\bC}}=\coprod_{\bC'\unlhd \bC}\M_{\bC'}
$$
is a stratification, where $\bC'\unlhd\bC$ means that $C_i'\subset\overline{C}_i$ for all $i=1,\dots,k$.
\end{corollary}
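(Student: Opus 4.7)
The plan is to verify three things: each $\M_{\bC'}$ with $\bC'\unlhd\bC$ is a locally closed subvariety of $\M_{\overline{\bC}}$; the pieces cover $\M_{\overline{\bC}}$ disjointly; and the closure of each $\M_{\bC'}$ is a union of the pieces indexed by the $\bC''\unlhd\bC'$. This is exactly what is needed to call the decomposition a stratification.

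First, I would observe that for any conjugacy class $C$ of $\GL_n$, every class $C'\subset\overline{C}$ has the same characteristic polynomial (hence the same multiset of eigenvalues) as $C$, since the semisimple part is constant on $\overline{C}$. Consequently, if $\bC$ is generic in the sense of Definition~\ref{gendef}, then so is every tuple $\bC'\unlhd\bC$. Theorem~\ref{theo1-intro} may therefore be applied to each such $\bC'$.

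For the set-theoretic decomposition, by Theorem~\ref{theo1-intro}(i) every point of $\M_{\overline{\bC}}$ corresponds to a closed $\PGL_n$-orbit in $\calU_{\overline{\bC}}$. If $\rho$ is a representative of this orbit, let $C_i'$ denote the conjugacy class of $\rho(\gamma_i)$; by definition $C_i'\subset\overline{C}_i$, so $\bC'\unlhd\bC$ and $\rho\in\calU_{\bC'}$. The tuple $\bC'$ depends only on the orbit, so the union is disjoint. Each $\calU_{\bC'}$ is $\PGL_n$-invariant and locally closed in $\calU_{\overline{\bC}}$ (as each $C_i'$ is locally closed in $\overline{C}_i$), and since the quotient map $\calU_{\overline{\bC}}\to\M_{\overline{\bC}}$ is a principal $\PGL_n$-bundle in the \'etale topology, its image $\M_{\bC'}$ is locally closed in $\M_{\overline{\bC}}$.

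Finally, for the closure relations, $\calU_{\overline{\bC'}}$ is a closed, $\PGL_n$-invariant subset of $\calU_{\overline{\bC}}$ (being the preimage of the closed set $\prod\overline{C_i'}\subset\prod\overline{C_i}$ under the monodromy map), hence its image $\M_{\overline{\bC'}}$ is closed in $\M_{\overline{\bC}}$. By Theorem~\ref{theo1-intro}(ii) applied to $\bC'$, $\M_{\bC'}$ is dense in the irreducible variety $\M_{\overline{\bC'}}$, so
\[
\overline{\M_{\bC'}}\;=\;\M_{\overline{\bC'}}\;=\;\coprod_{\bC''\unlhd\bC'}\M_{\bC''},
\]
which is the required compatibility. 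The possibly empty strata (those $\bC'$ for which $\v_{\bC'}$ fails to be a root of $\Gamma_{\bC'}$) simply contribute nothing to either side. The main (mild) point to check is the inheritance of genericity under degeneration in the first paragraph; once that is in hand, the rest is a formal consequence of Theorem~\ref{theo1-intro} together with the standard fact that conjugacy classes stratify their closures in $\GL_n$.
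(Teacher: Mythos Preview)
Your argument is correct and is exactly the intended one; the paper states this corollary without proof, as an immediate consequence of Theorem~\ref{geoth} (and the lemma that genericity is inherited under $\bC'\unlhd\bC$, which you reprove in your first paragraph). One very minor point: the paper's Definition~\ref{stratification} also asks that each nonempty stratum be nonsingular and equidimensional, which you do not list in your three bullet points, but this is already contained in Theorem~\ref{geoth}(ii)--(iii) (each $\M_{\bC'}$ is a nonsingular open subset of the irreducible variety $\M_{\overline{\bC'}}$), so nothing is missing from the argument itself.
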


For an irreducible algebraic variety $X$ over $\K$, denote by $\IC X$ the intersection cohomology complex (as defined by Goresky-MacPherson and Deligne) on $X$ such that its restriction to any nonsingular open subset is the constant sheaf concentrated in degree $0$, i.e. it is the complex with the constant sheaf $\kappa$ in degree $0$ and with $0$ in other degrees (here $\kappa=\overline{\Q}_\ell$ where $\ell$ is a fixed prime which does not divide the characteristic of $\K$ or $\kappa=\C$ if $\K=\C$). Say that $X$ is \emph{rationally smooth} if $\IC X$ is isomorphic to the constant sheaf on $X$ concentrated in degree $0$ in which case we have $H_c^*(X,\kappa)=IH_c^*(X,\kappa)$.

Denote by $i$ the inclusion of $\calU_{\overline{\bC}}$ in 

$$
\calY_{\overline{\bC}}:=(\GL_n)^{2g}\times\overline{C}_1\times\cdots\times\overline{C}_k.
$$

The following theorem will be important for the proof of the main theorem of this paper (see Theorem \ref{mainth-intro} below).

\begin{theorem}We have 

$$
i^*\left(\IC {\calY_{\overline{\bC}}}\right)\simeq\IC {\calU_{\overline{\bC}}}.
$$
\end{theorem}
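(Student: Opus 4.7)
The plan is to identify $\calU_{\overline{\bC}}$ as the fiber over the identity of the commutator-product map
\begin{equation*}
\mu\colon \calY_{\overline{\bC}}\longrightarrow \SL_n,\qquad
(a_1,b_1,\dots,a_g,b_g,X_1,\dots,X_k)\longmapsto \prod_{i=1}^g [a_i,b_i]\prod_{j=1}^k X_j,
\end{equation*}
whose image lies in $\SL_n$ since $\calU_{\overline{\bC}}\neq\emptyset$ forces $\prod_j\det(C_j)=1$. The identification of intersection cohomology sheaves will follow from the fact that $\mu$ is sufficiently smooth along $\calU_{\overline{\bC}}$.

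First I would establish that $\mu$ is smooth along every stratum of $\calY_{\overline{\bC}}=\coprod_{\bC'\unlhd\bC}\calY_{\bC'}$. For each $\bC'\unlhd\bC$ with $\calU_{\bC'}\neq\emptyset$, the tuple $\bC'$ is again generic, because the genericity condition involves only the eigenvalues of the $C_j$, which are preserved on Zariski closures. Hence Theorem \ref{theo1-intro} applies to $\bC'$ and yields that $\calU_{\bC'}=\mu^{-1}(I)\cap\calY_{\bC'}$ is smooth of dimension $\dim\calY_{\bC'}-(n^2-1)$, hence of the expected codimension $n^2-1=\dim\SL_n$ in the smooth variety $\calY_{\bC'}$. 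The Jacobian criterion then forces $\mu|_{\calY_{\bC'}}$ to be smooth at every point of $\calU_{\bC'}$. In particular, the codimension of $\calU_{\bC'}$ in $\calU_{\overline{\bC}}$ matches that of $\calY_{\bC'}$ in $\calY_{\overline{\bC}}$, both equal to $\sum_j(\dim C_j-\dim C_j')$.

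From this stratified smoothness I would extract an étale-local product decomposition around every $u\in\calU_{\overline{\bC}}$: étale neighborhoods $V$ of $u$ in $\calY_{\overline{\bC}}$ and $U$ of $I$ in $\SL_n$ together with a stratification-preserving étale isomorphism $V\xrightarrow{\sim}(V\cap\calU_{\overline{\bC}})\times U$. Once this trivialization is in hand, the K\"unneth property of the intersection cohomology complex combined with the smoothness of $U$ gives
\begin{equation*}
\IC{\calY_{\overline{\bC}}}\big|_V\;\simeq\;\IC{\calU_{\overline{\bC}}}\big|_{V\cap\calU_{\overline{\bC}}}\boxtimes\kappa_U,
\end{equation*}
whose restriction to the slice $V\cap\calU_{\overline{\bC}}\simeq (V\cap\calU_{\overline{\bC}})\times\{I\}$ is the desired local isomorphism. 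Uniqueness of IC then allows these local identifications to glue to the global $i^*\IC{\calY_{\overline{\bC}}}\simeq\IC{\calU_{\overline{\bC}}}$.

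The main obstacle will be the extraction of the étale-local product decomposition from stratum-by-stratum smoothness of $\mu$. In the complex-analytic setting this is a version of Thom's first isotopy lemma applied to the stratified smooth map $\mu$. Algebraically, one can proceed by constructing, for each $u\in\calU_{\bC'}$, a smooth étale-local section of $\mu|_{\calY_{\bC'}}$ through $u$ (which exists by stratum-smoothness) and verifying that such sections extend compatibly to adjacent strata, using the principal $\PGL_n$-bundle structure of Theorem \ref{theo1-intro}(i) together with an inductive argument on the stratification. Alternatively one could bypass the construction of the trivialization and verify the IC support and cosupport axioms directly: the support condition is immediate from the matching codimensions established above, and the cosupport condition follows from Verdier duality combined with the stratified smoothness of $\mu$.
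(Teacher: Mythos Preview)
Your approach differs from the paper's, which does not argue via the map $\mu$ at all. Instead the paper invokes a general criterion (Proposition~\ref{proprest}, taken from \cite[Proposition 3.2.1]{letellier4}): given a Cartesian square
\[
\xymatrix{\tilde X\ar[r]^f & X\\ \tilde Z\ar[r]^g\ar[u] & Z\ar[u]_i}
\]
with $f,g$ semi-small resolutions, with each nonempty $Z_\alpha=X_\alpha\cap Z$ of the same codimension in $Z$ as $X_\alpha$ in $X$, and with $\mathcal{H}^j(f_*\kappa)$ locally constant along each stratum, one has $i^*\IC X\simeq\IC Z$. The paper takes $X=\calY_{\overline{\bC}}$, $Z=\calU_{\overline{\bC}}$, and for $f$ the product of the Springer-type resolutions $\bY_{L_i,P_i,\sigma_i}\to\overline{C}_i$; semi-smallness of $f$ is classical (Lusztig), the square is Cartesian by construction, and the codimension match plus Cartesianness force $g$ to be semi-small too. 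This runs uniformly in the characteristic of $\K$ and relies only on already-established facts about these resolutions.

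Your route has a real gap at exactly the point you flag. Over $\C$, Thom's first isotopy lemma does yield the local product structure you want, but it requires the Whitney conditions on the stratification (which you do not verify---they hold for orbit stratifications of $\overline{C}_i$, but this must be said) and delivers only a \emph{topological} local triviality; one should then argue that this suffices for intersection cohomology. Over $\overline{\F}_q$ there is no Thom--Mather theory. Your algebraic alternative, constructing compatible \'etale sections stratum by stratum, is only a sketch: the whole difficulty is the compatibility across strata, and the principal $\PGL_n$-bundle structure you invoke concerns the conjugation action, not the map $\mu$, so it is unclear how it helps. Finally, the direct-axiom alternative is not justified as stated: the support condition does transfer from the codimension match, but the cosupport condition does \emph{not} follow from ``Verdier duality combined with stratified smoothness of $\mu$''---Verdier duality turns it into a statement about $i^!\IC{\calY_{\overline{\bC}}}$, and stratum-wise smoothness of $\mu$ gives no direct handle on $i^!$. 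The honest way to salvage your idea is to embed $\calY_{\overline{\bC}}$ in the smooth ambient $A=(\GL_n)^{2g}\times\prod_i\det^{-1}(\det C_i)$, observe that $\mu^{-1}(I)\subset A$ is smooth of codimension $n^2-1$, and prove that this inclusion is \emph{non-characteristic} for the pushforward of $\IC{\calY_{\overline{\bC}}}$ to $A$---equivalently, transverse to every stratum $\calY_{\bC'}$, which is precisely your first step. Over $\C$ this is standard micro-local sheaf theory (Kashiwara--Schapira); in positive characteristic it requires Beilinson's singular support for $\ell$-adic sheaves. Either way this is substantially more input than you invoke, whereas the paper's semi-small-resolution argument sidesteps it entirely.
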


The proof of the theorem uses also the resolution $\bM_{\bf L,P,\sigma}\rightarrow\M_{\overline{\bC}}$. 

Recall that Zariski closure of regular conjugacy classes are rationally smooth (this can be easily deduced from the unipotent case which is well-known \cite{BM}). Hence we have the following corollary.

\begin{corollary} For a generic tuple $\bC=(C_1,\dots,C_k)$ of conjugacy classes of $\GL_n(\K)$ with $C_1,\dots,C_k$ regular or semisimple, the character variety $\M_{\overline{\bC}}$ is rationally smooth.
\end{corollary}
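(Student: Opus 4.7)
The strategy is to reduce rational smoothness of $\M_{\overline{\bC}}$ to that of the ambient product $\calY_{\overline{\bC}}$, transfer it to $\calU_{\overline{\bC}}$ using the theorem just proved, and then descend along the principal bundle of Theorem \ref{theo1-intro}(i).

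I would first check that the ambient variety $\calY_{\overline{\bC}}=(\GL_n)^{2g}\times\overline{C}_1\times\cdots\times\overline{C}_k$ is rationally smooth. The factor $(\GL_n)^{2g}$ is smooth, hence rationally smooth. For each $i$, if $C_i$ is semisimple then $\overline{C}_i=C_i$ is a single $\GL_n$-orbit (semisimple conjugacy classes in $\GL_n$ are closed), so $\overline{C}_i$ is smooth; if $C_i$ is regular then $\overline{C}_i$ is rationally smooth by the fact recalled just before the statement. A product of rationally smooth irreducible varieties is again rationally smooth: on the product of the smooth loci, the constant sheaf and $\IC{X\times Y}$ agree, and since both $\IC X$ and $\IC Y$ are constant, the external product $\IC X\boxtimes\IC Y$ is constant and equals $\IC{X\times Y}$. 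Iterating, $\IC{\calY_{\overline{\bC}}}$ is the constant sheaf $\kappa$ concentrated in degree $0$.

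Applying the preceding theorem, $\IC{\calU_{\overline{\bC}}}\simeq i^*\IC{\calY_{\overline{\bC}}}$ is also the constant sheaf in degree $0$, so $\calU_{\overline{\bC}}$ is rationally smooth. It remains to descend to $\M_{\overline{\bC}}$. By Theorem \ref{theo1-intro}(i) the quotient map $\pi\colon\calU_{\overline{\bC}}\to\M_{\overline{\bC}}$ is a principal $\PGL_n$-bundle in the \'etale topology, so it is smooth and surjective, and \'etale-locally of the form $U\times\PGL_n\to U$. Under this normalization of $\IC$ (constant sheaf in degree $0$ on the smooth locus), pullback along a smooth morphism sends $\IC{}$ to $\IC{}$, giving $\pi^*\IC{\M_{\overline{\bC}}}\simeq\IC{\calU_{\overline{\bC}}}$. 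The constancy of the right-hand side, combined with \'etale descent applied to the surjective \'etale cover trivializing the bundle, forces $\IC{\M_{\overline{\bC}}}$ to be the constant sheaf $\kappa$ in degree $0$, which is the rational smoothness of $\M_{\overline{\bC}}$.

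The only delicate point in this argument is the bookkeeping with IC complexes: one must be careful with the normalization (``constant sheaf in degree $0$'' versus the perverse shift) and verify that pullback along the smooth surjection $\pi$ and \'etale descent both preserve constancy. Once this is in place, the proof consists solely of the three structural inputs assembled above---the product formula for $\IC$, the theorem just proved, and the principal bundle structure of Theorem \ref{theo1-intro}(i)---with no further calculation required.
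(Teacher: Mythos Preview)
Your proof is correct and follows the same route as the paper. The paper's own argument is a one-line appeal to Theorem~\ref{restheo} together with the rational smoothness of $\overline{C}_i$ for regular $C_i$; you have simply made explicit the two steps the paper leaves implicit, namely that $\IC{\calY_{\overline{\bC}}}$ is constant (product of rationally smooth factors) and that rational smoothness descends along the principal $\PGL_n$-bundle $\calU_{\overline{\bC}}\to\M_{\overline{\bC}}$ via $\pi^*\IC{\M_{\overline{\bC}}}\simeq\IC{\calU_{\overline{\bC}}}$.
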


\subsubsection{Mixed Hodge polynomials of character varieties}

Let us first introduce some combinatorial data. Denote by $\calP$ the set of all partitions including the empty partition $\emptyset$, and by $\bT$ the set of all maps $\omega:\Z_{>0}\times(\calP\backslash\{\emptyset\})\rightarrow\N$ with finite support. It will be also convenient for us to choose a total ordering on the set $\Z_{>0}\times(\calP\backslash\{\emptyset\})$ and write the elements of $\bT$ as non-increasing sequences $(d_1,\omega^1)(d_2,\omega^2)\cdots(d_r,\omega^r)$ (this is for instance useful  when defining the  dimension vector of some comet-shaped quiver  from multi-types as in \S \ref{ne}). Denote by $\bT_n$ the subset of types $\omega\in\bT$ of size $|\omega|:=\sum_{(d,\lambda)\in{\rm Supp}(\omega)}d\,|\lambda|=n$. The set $\bT_n$ parametrizes the types of conjugacy classes of $\GL_n(\F_q)$ (see \S \ref{conj-type}) while the subset $\tT_n$ of types $\omega\in\bT_n$ such that $(d,\lambda)\notin{\rm Supp}(\omega)$ unless $d=1$ parametrizes the types of the conjugacy classes of $\GL_n$ over an algebraically closed field (see \S \ref{ne}). Under this parametrization, types of semisimple conjugacy classes   corresponds to the elements  $\omega\in\bT$ with $\omega(d,\lambda)=0$ unless $\lambda$ is of the form $(1^m)$. We the denote by $\oT$ (resp. $\otT$) the set of elements $(\omega_1,\dots,\omega_k)$ in $\bT^k$ (resp. in $(\tT)^k$) such that $|\omega_1|=|\omega_2|=\cdots=|\omega_k|$.

In this paper (see \S \ref{def-H})  we define a  rational function $\H_\omhat(z,w)$ from Macdonald symmetric funtions for any multi-type $\omhat\in\oT$. It  is invariant under the transformations $(z,w)\mapsto (w,z)$ and $(z,w)\mapsto (-z,-w)$. For multi-types $\omhat\in\otT$ of semisimple conjugacy classes, which can be thought as multi-partitions via the map $(1,(1^{n_1}))(1,(1^{n_2}))\cdots(1,(1^{n_r}))\mapsto (n_1,\dots,n_r)$, this function was first defined in \cite{HLV}. 

Our main conjecture is the following one (see \cite{HLV} for semisimple conjugacy classes).

\begin{conjecture} Assume that $\bC$ is a generic tuple of conjugacy classes of $\GL_n(\C)$. Then

(i) the mixed Hodge polynomial $IH_c(\M_{\overline{\bC}}\,;\,x,y,t)$ depends only on $xy$ and $t$,

(ii) we have 

$$
IH_c(\M_{\overline{\bC}}\,;\, q,t):=IH_c(\M_{\overline{\bC}}\,;\,\sqrt{q},\sqrt{q},t)=(t\sqrt{q})^{d_\bC}\H_\omhat\left(-\frac{1}{\sqrt{q}},t\sqrt{q}\right),$$
where $\omhat\in\otT$ is the type of $\bC$.
\label{mainconj-intro}\end{conjecture}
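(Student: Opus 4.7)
My plan is to follow the two-level strategy of \cite{HLV}: first establish the $E$-polynomial version of the conjecture (the specialization at $t = -1$, together with part (i) restricted to $t = -1$), and then indicate how one would bootstrap to the full mixed Hodge polynomial. The crucial reduction is to the already-understood semisimple case via the resolution $\pi : \bM_{\bf L,P,\sigma}\rightarrow \M_{\overline{\bC}}$ appearing in the proof of Theorem~\ref{theo1-intro}, combined with the stated equality $\#\bM_{\bf L,P,\sigma}(\F_q) = \#\M_\bfS(\F_q)$ for an auxiliary generic tuple $\bfS$ of semisimple conjugacy classes. The key new input here is that the resolution is $W$-equivariant for the Weyl group action mentioned in the introduction, so the decomposition theorem can be unpacked combinatorially rather than geometrically.

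For the $E$-polynomial, I would first spread $\M_{\overline{\bC}}$ and $\pi$ over a finitely generated $\Z$-subalgebra of $\C$ and apply Katz's theorem to identify $IH_c(\M_{\overline{\bC}}; \sqrt q, \sqrt q, -1)$ with the alternating sum of Frobenius traces on $IH_c^*(\M_{\overline{\bC}}, \overline{\Q}_\ell)$. The decomposition theorem applied to the proper map $\pi$ then furnishes a $W$-equivariant splitting of $R\pi_* \IC {\bM_{\bf L,P,\sigma}}$ into shifted intersection cohomology complexes of strata $\M_{\overline{\bC'}}$, with the sign isotypic component recovering $\IC {\M_{\overline{\bC}}}$ itself. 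Taking the appropriate $W$-trace expresses $\sum_i(-1)^i \Tr(F_q, IH_c^i(\M_{\overline{\bC}}))$ as an explicit linear combination of $\#\bM_{\bf L,P,\sigma}(\F_q) = \#\M_\bfS(\F_q)$. Inserting the HLV formula for $\#\M_\bfS(\F_q)$ in terms of $\H$ at the semisimple multi-type, and using the orthogonality relations for modified Macdonald polynomials to collapse the sum into the single multi-type $\omhat$, should produce precisely $q^{d_\bC/2}\H_\omhat(-1/\sqrt q, \sqrt q)$. Statement (i) at $t = -1$ then falls out automatically from the $(z, w)\mapsto (-z, -w)$ invariance of $\H_\omhat$.

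The main obstacle is promoting this from $t = -1$ to arbitrary $t$: parts (i) and (ii) in full form require controlling the whole weight filtration on $IH_c^*(\M_{\overline{\bC}})$, not merely its Euler characteristic. The expected route is through a non-Abelian Hodge correspondence between $\M_{\overline{\bC}}$ and an appropriate moduli space of parabolic Higgs bundles with prescribed residues, followed by a $P = W$-type identification of the weight filtration on the Betti side with the perverse filtration coming from the Hitchin map on the Dolbeault side, extending de Cataldo-Hausel-Migliorini \cite{CHM} to intersection cohomology with arbitrary Zariski-closure boundary conditions. No such $P = W$ theorem is currently available in this generality, and I would expect this gap to be the precise reason the full mixed Hodge statement must remain conjectural even after the $E$-polynomial part is completely settled by the argument above.
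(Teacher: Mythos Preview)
This statement is a conjecture, and the paper does not prove it in full; it establishes only the specialization $t\mapsto -1$ (Theorem~\ref{mainth-intro}, equivalently Theorem~\ref{maintheo1} in the body), exactly as you anticipate in your final paragraph. So the relevant comparison is between your sketch of the $E^{ic}$-polynomial argument and the paper's actual proof of that theorem, and here your route diverges substantially from the paper's and has gaps.

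The paper does \emph{not} reduce to the semisimple case via the resolution $\bM_{\bf L,P,\sigma}$ and a Weyl-group isotypic projection. Instead it proves directly that the restriction of $\IC{\calY_{\overline{\bC}}}$ to $\calU_{\overline{\bC}}$ is $\IC{\calU_{\overline{\bC}}}$ (Theorem~\ref{restheo}), which yields $E^{ic}(\M_{\overline{\bC}};q)=(q-1)\langle\calE*\bfX_{\overline{C}_1}*\cdots*\bfX_{\overline{C}_k},1_1\rangle_{\GL_n(\F_q)}$ (Theorem~\ref{for32}), and then evaluates this convolution by a purely character-theoretic computation (Theorem~\ref{convtheo}) using the stalk formula $\bfX_{\overline{C}_\omega}(C_\tau^F)=q^{-n(\omega)}\tilde K_{\omega\tau}(q)$ and a symmetric-function identity (Proposition~\ref{propmagic}). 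No Macdonald orthogonality and no reduction to the semisimple HLV formula appear.

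Your proposed mechanism has two concrete problems. First, in the decomposition $(p/_{\PGL_n})_*\left(\pIC{\bM_{\bf L,P,\sigma}}\right)\simeq\bigoplus_{\bC'\unlhd\bC}A_{\bC'}\otimes\pIC{\M_{\overline{\bC'}}}$ (this is the paper's Theorem~\ref{theoweight} with $\Sigma=\{\sigma\}$), the open-stratum summand carries the \emph{trivial} representation of $W(\bfL,\bfSigma)$, not the sign; and the other $A_{\bC'}$ are generally reducible, so no single isotypic projection isolates $\pIC{\M_{\overline{\bC}}}$. Second, even granting an inductive extraction, the auxiliary semisimple tuple $\bfS$ has a \emph{different} type from $\bC$ (they share only the quiver data $(\Gamma_\bC,\v_\bC)$), so the HLV formula hands you $\H_{\text{type}(\bfS)}$, not $\H_\omhat$; the passage between them is governed not by Macdonald orthogonality but by the specific identity $(-1)^{r(\omhat)}s_{\omhat'}=\sum_{\tauhat\unlhd\omhat^+}\Tr(\w\mid A_\tauhat)\,s_{\tauhat'}$ from \cite[\S7.4]{letellier4}, which the paper uses only in the \emph{opposite} direction (Proposition~\ref{impli}, deriving the $\w$-twisted statement from the untwisted one already proved).
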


Note that $IH_c(\M_{\overline{\bC}}\,;\, q,t)$ is the mixed Poincar\'e polynomial $\sum_{m,i}\left({\rm dim}\,{\rm Gr}_m IH_c^i(\M_{\overline{\bC}})\right) q^{m/2} t^i$ that encodes the weight filtration (this polynomial makes also sense in positive characteristic where the weight filtration is defined in terms of Frobenius eigenvalues).

One of the main theorem of the paper is the following one.

\begin{theorem} The above conjecture is true after the specialization $t\mapsto -1$, namely the $E^{ic}$-polynomial $E^{ic}(\M_{\overline{\bC}}\,;\, x,y):=IH_c(\M_{\overline{\bC}}\,;\, x,y,-1)$ depends only on $xy$ and 

$$
E^{ic}(\M_{\overline{\bC}}\,;\, q):=E^{ic}(\M_{\overline{\bC}}\,;\, \sqrt{q},\sqrt{q})=q^{d_\bC/2}\H_\omhat\left(\frac{1}{\sqrt{q}},\sqrt{q}\right).
$$
\label{mainth-intro}\end{theorem}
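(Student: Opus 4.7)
The approach is to follow the strategy of \cite{HLV} for generic semisimple classes, upgraded from ordinary to intersection cohomology, and to use the resolution $\pi\colon\bM_{\bf L,P,\sigma}\to\M_{\overline{\bC}}$ of Theorem \ref{theo1-intro} as the bridge to a smooth character variety with semisimple data.

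First, I would spread $\M_{\overline{\bC}}$ out to a scheme of finite type over a suitable subring $R\subset\C$ and reduce modulo a prime. A Katz-type argument for intersection cohomology asserts that if the Frobenius trace $\sum_i(-1)^i \Tr(F,\,IH_c^i(\M_{\overline{\bC}}/\overline{\F}_q))$ agrees with a polynomial $P(q)\in\Z[q]$ for almost all $q$, and if $IH_c^\bullet(\M_{\overline{\bC}})$ is weight-pure in each cohomological degree, then $E^{ic}(\M_{\overline{\bC}};x,y)$ depends only on $xy$ and equals $P(xy)$. Modulo this purity input, the theorem reduces to establishing
\begin{equation*}
\sum_i(-1)^i \Tr\!\bigl(F,\,IH_c^i(\M_{\overline{\bC}}/\overline{\F}_q)\bigr)\;=\;q^{d_\bC/2}\,\H_\omhat\!\left(\tfrac{1}{\sqrt{q}},\sqrt{q}\right).
\end{equation*}

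Second, I would compute the left-hand side by applying the decomposition theorem to $\pi$, combined with the Grothendieck--Lefschetz trace formula. Since $\pi$ is proper and $\bM_{\bf L,P,\sigma}$ is smooth, $R\pi_\ast\kappa$ decomposes into a shift of $\IC{\M_{\overline{\bC}}}$ together with simple perverse sheaves supported on proper closed subsets of $\M_{\overline{\bC}}$. The strata of the decomposition $\M_{\overline{\bC}}=\coprod_{\bC'\unlhd\bC}\M_{\bC'}$ organise these summands, and the identity $\#\bM_{\bf L,P,\sigma}(\F_q)=\#\M_\bfS(\F_q)$ from the proof of Theorem \ref{theo1-intro} supplies the value of $\sum_x \Tr(F,(R\pi_\ast\kappa)_x)$. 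An induction along the partial order $\unlhd$ then isolates the IC contribution of $\M_{\overline{\bC}}$ itself from those of the lower-dimensional substrata, which have the same shape and are known by the semisimple base case of \cite{HLV}.

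Third, it remains to recognise the resulting arithmetic sum as $q^{d_\bC/2}\H_\omhat(1/\sqrt{q},\sqrt{q})$. Applying the Frobenius mass formula for $\#\calU_\bfS(\F_q)$ in terms of $\Irr(\GL_n(\F_q))$, followed by the Green-function expansion of these characters, converts the right-hand side of the induction into the Macdonald-polynomial generating series defining $\H_\omhat$; the type $\omhat\in\otT$, now allowed to contain arbitrary partitions, selects the pieces that go beyond the semisimple regime of \cite{HLV}, while the overall factor $q^{d_\bC/2}$ reflects the cohomological shift normalising $\IC{\M_{\overline{\bC}}}$. The main obstacle will be the second stage: making the decomposition of $R\pi_\ast\kappa$ sufficiently explicit at the level of Frobenius traces to extract the IC summand through the induction. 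This is an analogue in the character-variety setting of the Borho--MacPherson analysis of the Springer resolution, and it requires controlling the local systems that appear along each stratum $\M_{\bC'}$. As a by-product it delivers the weight-purity of $IH_c^\bullet(\M_{\overline{\bC}})$ needed in the first stage, since $H_c^\bullet(\bM_{\bf L,P,\sigma})$ is already pure.
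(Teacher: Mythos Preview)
Your proposal has a genuine error: the claim that $H_c^\bullet(\bM_{\bf L,P,\sigma})$ is pure, and hence that $IH_c^\bullet(\M_{\overline{\bC}})$ is pure, is false. The variety $\bM_{\bf L,P,\sigma}$ is smooth but not proper (it is only proper \emph{over} $\M_{\overline{\bC}}$), so its compactly supported cohomology has no reason to be pure; and character varieties are well known to carry a genuinely mixed weight filtration --- this is exactly what makes Conjecture~\ref{mainconj-intro} a two-variable rather than one-variable statement (contrast the additive analogue $\calQ_{\overline{\bfO}}$, which \emph{is} pure). Your route to ``$E^{ic}$ depends only on $xy$'' via purity therefore cannot succeed. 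There is a secondary gap in your induction: the substrata $\M_{\bC'}$ with $\bC'\unlhd\bC$ are themselves character varieties with, in general, non-semisimple classes, so they are not covered by ``the semisimple base case of \cite{HLV}''; the induction as stated is circular.

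The paper avoids both issues by a different key step. Instead of decomposing $R\pi_*\kappa$ and inducting, it proves a restriction theorem (Theorem~\ref{restheo}): writing $\calY_{\overline{\bC}}:=(\GL_n)^{2g}\times\overline{C}_1\times\cdots\times\overline{C}_k$ and $i\colon\calU_{\overline{\bC}}\hookrightarrow\calY_{\overline{\bC}}$ for the inclusion, one has $i^*\IC{\calY_{\overline{\bC}}}\simeq\IC{\calU_{\overline{\bC}}}$. This gives the stalks of the IC sheaf directly as a product, so the Frobenius-trace sum over $\M_{\overline{\bC}}(\F_q)$ becomes the convolution $(q-1)\langle\calE*\bfX_{\overline{C}_1}*\cdots*\bfX_{\overline{C}_k},1_1\rangle$, with $\bfX_{\overline{C}_i}$ the (known) IC characteristic function of a single conjugacy-class closure. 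That this equals $E^{ic}(\M_{\overline{\bC}};q)$, and that the latter depends only on $xy$, comes not from purity but from a Katz-type criterion for intersection cohomology (Theorem~\ref{Katz2}), whose hypothesis is verified using the resolution; here the resolution plays an auxiliary role, not the central one you assign it. The convolution is then matched with $q^{d_\bC/2}\H_\omhat(q^{-1/2},q^{1/2})$ by a direct symmetric-function computation (Theorem~\ref{convtheo}, via the identity of Proposition~\ref{propmagic}), with no induction on strata.
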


From the properties of $\H_\omhat(z,w)$ mentioned above we have the following corollary.

\begin{corollary} The polynomial $E^{ic}(\M_{\overline{\bC}}\,;\,q)$ is \emph{palindromic}, namely

$$
E^{ic}(\M_{\overline{\bC}}\,;\,q)=q^{d_\bC}E^{ic}(\M_{\overline{\bC}}\,;\, q^{-1}).
$$
\end{corollary}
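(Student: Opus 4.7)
The plan is to deduce this corollary directly from Theorem \ref{mainth-intro} together with the two stated symmetries of the function $\H_\omhat(z,w)$. The guiding observation is that palindromicity in $q$ corresponds to the involution $q\leftrightarrow q^{-1}$, which in turn exchanges the two arguments $1/\sqrt{q}$ and $\sqrt{q}$ appearing in the formula of Theorem \ref{mainth-intro}; the swap symmetry $\H_\omhat(z,w)=\H_\omhat(w,z)$ is therefore tailor-made to produce the desired identity.

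Explicitly, I would start from
\[
E^{ic}(\M_{\overline{\bC}};q)=q^{d_\bC/2}\,\H_\omhat\!\left(\tfrac{1}{\sqrt{q}},\sqrt{q}\right),
\]
substitute $q\mapsto q^{-1}$, and multiply by $q^{d_\bC}$; the resulting expression $q^{d_\bC/2}\,\H_\omhat(\sqrt{q},1/\sqrt{q})$ is then transformed back into $E^{ic}(\M_{\overline{\bC}};q)$ by a single application of the swap symmetry. This yields the palindromic identity in one short computation.

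The only point requiring care is to ensure that $E^{ic}(\M_{\overline{\bC}};q)$, which is written above using $\sqrt{q}$, really is a Laurent polynomial in $q$ itself, so that the palindromic statement is well-posed. This is where the second symmetry $(z,w)\mapsto(-z,-w)$ enters: it forces the monomials of $\H_\omhat$ to have even total degree in $(z,w)$, which makes $\H_\omhat(1/\sqrt{q},\sqrt{q})$ a Laurent polynomial in $q$. Combined with the parity of $d_\bC$, which one checks directly from the formula $d_\bC=2gn^2-2n^2+2+\sum_i\dim C_i$ using that $\dim C_i$ is always even for a conjugacy class in $\GL_n(\C)$ (e.g.\ via $\dim C_i = 2\sum_{\alpha<\beta}m_\alpha m_\beta + \text{unipotent contributions}$, each piece even), the statement makes sense. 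Beyond this bookkeeping I do not anticipate any genuine obstacle; the corollary is essentially a one-line manipulation once Theorem \ref{mainth-intro} and the symmetries of $\H_\omhat$ are granted.
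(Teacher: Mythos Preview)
Your argument is correct and matches the paper's own reasoning: the paper states (both in the introduction and after Theorem~\ref{maintheo1}) that the corollary follows immediately from the formula $E^{ic}(\M_{\overline{\bC}};q)=q^{d_\bC/2}\H_\omhat(1/\sqrt{q},\sqrt{q})$ together with the symmetries of $\H_\omhat(z,w)$ recorded in Lemma~\ref{propH}, without writing out any details. Your extra care about well-posedness (even total degree of monomials in $\H_\omhat$ via the $(-z,-w)$ symmetry, and evenness of $d_\bC$) is a welcome elaboration beyond what the paper makes explicit.
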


\subsubsection{Partial resolutions and Weyl group actions}
 One motivation for introducing partial resolutions is to give a (conjectural) geometrical interpretation of the functions $\H_\omhat(z,w)$ for any $\omhat\in\oT$ (not necessarily in $\otT$). In \S \ref{parresol} we introduce partial resolutions $\bM_{\bf L,P,\overline{\Sigma}}\rightarrow\M_{\overline{\bC}}$ of the character variety $\M_{\overline{\bC}}$ for any generic tuple $\bC$ of conjugacy classes of $\GL_n(\K)$. There are defined from a parabolic subgroup ${\bf P}$ of $\GL_n(\K)^k$ together with a Levi factor ${\bf L}$ of ${\bf P}$ and ${\bf \Sigma}=\sigma\cdot {\bf D}$ where $\sigma$ is in the center $Z_{\bf L}$ of ${\bf L}$ and ${\bf D}$ is a unipotent conjugacy class of ${\bf L}$. We define an action of the (relative) Weyl group
 
 $$
 W({\bf L,\Sigma}):=\{g\in(\GL_n)^k\,|\, g{\bf L}g^{-1}={\bf L},\,g{\bf \Sigma}g^{-1}={\bf \Sigma})\}/{\bf L}
 $$
 on the intersection cohomology $IH_c^*(\bM_{\bf L,P,\overline{\Sigma}},\C)$ that preserves the weight filtration. 
 
 Define the $\w$-twisted mixed Poincar\'e polynomial
 
 $$
 IH_c^\w(\bM_{\bfL,\bfP,\obfSigma}\,;\,q,t):=\sum_{m,i}{\rm Tr}\left(\w,{\rm Gr}_mIH_c^i(\bM_{\bfL,\bfP,\obfSigma})\right)q^{m/2}t^i.
 $$
 
  Say that two triples $({\bf L, D,w})$ and $({\bf L', D',w'})$ with $\w\in W({\bf L,D})$ are \emph{equivalent} if there exists $g\in\GL_n(\C)^k$ such that $g({\bf L,D})g^{-1}=({\bf L',D'})$ and if  $g\w g^{-1}$ and $\w'$ are in the same $W({\bf L',D'})$-conjugacy class (here $g\w g^{-1}$ denotes the image in $W({\bf L',D'})$ of $g\dot{\w}g^{-1}$ with $\dot{\w}$ a representative of $\w$ in the normalizer $N_{(\GL_n)^k}({\bf L,D})$). The set $\oT_n$ parametrizes the equivalent classes of triples $(\bfL,\bfD,\w)$.
 
 \begin{conjecture} Assume that $\K=\C$. The mixed Hodge numbers $ih_c^{i,j;k}(\bM_{\bfL,\bfP,\obfSigma})$ are $0$ unless $i=j$ and for any $\w\in W({\bf L,\Sigma})\subset W(\bfL,\bfD)$ we have
 
  $$
 IH_c^\w(\bM_{\bfL,\bfP,\obfSigma}\,;\, q,t)=(t\sqrt{q})^{d_\bC}\H_\omhat\left(-\frac{1}{\sqrt{q}},t\sqrt{q}\right),
 $$
 where $\omhat\in\oT_n$ is the type of $(\bfL,\bfD,\w)$.
 \label{main-twisted}\end{conjecture}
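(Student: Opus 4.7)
The plan is to extend the proof of Theorem \ref{mainth-intro} to the $W(\bfL,\bfSigma)$-equivariant setting through a $\w$-twisted arithmetic count on the partial resolution. I would first spread $\bM_{\bfL,\bfP,\obfSigma}$ together with its Weyl group action out to a $\Z[1/N]$-scheme and reduce modulo a prime of good reduction, so that a geometric Frobenius $F$ acts on $IH_c^*(\bM_{\bfL,\bfP,\obfSigma},\overline{\Q}_\ell)$ commuting with the $W(\bfL,\bfSigma)$-action. By the Frobenius--weight comparison for intersection cohomology, the twisted polynomial $IH_c^\w(\bM_{\bfL,\bfP,\obfSigma};q,t)$ is determined by the traces of $\w F^m$ on $IH_c^*$, which by Grothendieck--Lefschetz are controlled by the cardinalities of $\{x\in\bM_{\bfL,\bfP,\obfSigma}(\overline{\F}_{q^m})\mid \w\cdot F^m(x)=x\}$.

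The main new geometric input, generalising the identity $\#\bM_{\bfL,\bfP,\sigma}(\F_q)=\#\M_\bfS(\F_q)$ used in the proof of Theorem \ref{theo1-intro}, should read
\[
\#\bigl\{x\in\bM_{\bfL,\bfP,\obfSigma}(\overline{\F}_q)\,\big|\,\w\cdot F(x)=x\bigr\}\;=\;\#\M_{\overline{\bC_\w}}(\F_q),
\]
where $\bC_\w$ is a generic tuple of conjugacy classes of $\GL_n(\F_q)$ whose $\F_q$-type is precisely $\omhat\in\oT_n$, the type of $(\bfL,\bfD,\w)$. The content is that twisting Frobenius by $\w\in W(\bfL,\bfSigma)$ amounts to replacing the $\F_q$-structure on the pair $(\bfL,\bfSigma)$ by its $\w$-conjugate, so that a split Levi stratum gets replaced by a (possibly non-split) Levi stratum; this is the character-variety counterpart of Deligne--Lusztig twisting, and it explains why the larger parameter set $\oT_n$, rather than $\otT_n$, appears in the conjecture.

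Granting this identity, the Frobenius-type formula for surface groups (the main engine of \cite{HLV}) applied to $\#\M_{\overline{\bC_\w}}(\F_q)$ rewrites the count as a character sum over $\Irr\,\GL_n(\F_q)$, and Green's parametrization of $\Irr\,\GL_n(\F_q)$ by elements of $\bT_n$, combined with the Macdonald-polynomial definition of $\H_\omhat$ from \S\ref{def-H}, should match the two expressions after the specialisation $t\mapsto -1$. The principal obstacle, and the reason the statement remains conjectural, is the Hodge--Tate purity half: proving $ih_c^{i,j;k}(\bM_{\bfL,\bfP,\obfSigma})=0$ for $i\neq j$ would let one recover the full mixed polynomial from its $t=-1$ specialisation, but even for the untwisted variety $\M_{\overline{\bC}}$ this purity is unknown beyond low rank (it is Conjecture \ref{mainconj-intro}(i)). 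Absent it, the method above yields at best the $\w$-twisted $E^{ic}$-polynomial, and the $t$-variable lift will require genuinely new input---for instance a twisted perverse-filtration / weight-filtration identification in the spirit of de Cataldo--Hausel--Migliorini.
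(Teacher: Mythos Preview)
Your outline correctly identifies the conjectural status and pinpoints the Hodge--Tate purity as the genuine obstruction; this matches the paper. However, your proposed route to the $t=-1$ specialisation differs from the paper's and has a gap.

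\textbf{The gap.} You assume that $W(\bfL,\bfSigma)$ acts on the variety $\bM_{\bfL,\bfP,\obfSigma}$ so that one can speak of the fixed-point set $\{x\mid \w\cdot F(x)=x\}$ and invoke Grothendieck--Lefschetz. But in the paper the Weyl group action is \emph{not} geometric: it is a Springer-type action defined on the perverse sheaf $p_*\bigl(\pIC{\bY_{\bfL,\bfP,\obfSigma}}\bigr)$ via the multiplicity spaces $A_{\bC'}$ in the decomposition (formula (\ref{icconj}) and Theorem \ref{theoweight}), and hence only on $IH_c^*$. There is no action on the scheme to spread out, and your twisted fixed-point count does not make literal sense. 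One could try to replace it by twisting the $\F_q$-rational structure on the triple $(\bfL,\bfP,\bfSigma)$ by $\w$ and comparing with the Springer action, but establishing that identification is a nontrivial extra step (essentially the character formula for the Springer action), not a free consequence of Lefschetz.

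\textbf{What the paper does instead.} The paper bypasses twisted point-counting entirely. From Theorem \ref{theoweight} one gets
\[
IH_c^\w\bigl(\bM_{\bfL,\bfP,\obfSigma};q,t\bigr)=\sum_{\bC'\unlhd\bC}{\rm Tr}\bigl(\w\,\big|\,A_{\bC'}\bigr)\,(qt^2)^{-r_{\bC'}}\,IH_c\bigl(\M_{\overline{\bC}{'}};q,t\bigr),
\]
a finite linear combination of \emph{untwisted} mixed Poincar\'e polynomials of ordinary character varieties. The repackaging into $\H_\omhat$ then comes from a purely combinatorial symmetric-function identity (Formula (7.4.3) of \cite{letellier4}):
\[
(-1)^{r(\omhat)}s_{\omhat'}=\sum_{\tauhat\unlhd\omhat^+}{\rm Tr}\bigl(\w\,\big|\,A_{\tauhat}\bigr)\,s_{\tauhat'}.
\]
This shows at once (Proposition \ref{impli}) that the twisted conjecture is \emph{equivalent} to the untwisted Conjecture \ref{mainconj}, and in particular the $t=-1$ case follows from Theorem \ref{maintheo1} with no new counting. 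Your intuition that types in $\oT_n\setminus\otT_n$ should correspond to non-split conjugacy classes is correct and is indeed reflected in Theorem \ref{convtheo}, but the paper uses that theorem for the \emph{untwisted} convolution and reaches the twisted statement through the decomposition, not through a twisted Frobenius.
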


We prove the following results.

\begin{proposition} Conjecture \ref{main-twisted} is equivalent to Conjecture \ref{mainconj-intro}.
\end{proposition}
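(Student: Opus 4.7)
The plan is to place both conjectures under a common identity derived from the Beilinson-Bernstein-Deligne-Gabber decomposition theorem applied to the partial resolution $\pi \colon \bM_{\bfL,\bfP,\obfSigma} \to \M_{\overline{\bC}}$ (where $\bC$ is the conjugacy class of $(\GL_n)^k$ obtained by Lusztig induction of $\bfSigma$ from $\bfL$). The proof then splits into a geometric decomposition, a parallel combinatorial identity for $\H_\omhat$, and a matching step.

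First, the decomposition theorem, enhanced by the $W(\bfL,\bfSigma)$-action on $R\pi_*\IC{\bM_{\bfL,\bfP,\obfSigma}}$ introduced in \S\ref{parresol}, yields a $W(\bfL,\bfSigma)$-equivariant splitting
\[
R\pi_*\,\IC{\bM_{\bfL,\bfP,\obfSigma}} \;\simeq\; \bigoplus_{\bC'\unlhd \bC} V_{\bC,\bC'}\otimes \IC{\M_{\overline{\bC'}}}[d_{\bC'}-d_\bC],
\]
with each $V_{\bC,\bC'}$ a representation of $W(\bfL,\bfSigma)$ and $V_{\bC,\bC}\simeq \kappa$. Taking $\w$-twisted traces at the level of mixed Poincar\'e polynomials gives
\[
IH_c^\w(\bM_{\bfL,\bfP,\obfSigma};q,t) \;=\; \sum_{\bC'\unlhd \bC} \chi_{V_{\bC,\bC'}}(\w)\,(t\sqrt{q})^{d_\bC-d_{\bC'}}\,IH_c(\M_{\overline{\bC'}};q,t).
\]

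Next, I would establish the parallel combinatorial identity
\[
(t\sqrt{q})^{d_\bC}\H_{\omhat}\!\left(-\tfrac{1}{\sqrt{q}},t\sqrt{q}\right) \;=\; \sum_{\bC'\unlhd \bC} \chi_{V_{\bC,\bC'}}(\w)\,(t\sqrt{q})^{d_\bC-d_{\bC'}}\,(t\sqrt{q})^{d_{\bC'}}\,\H_{\omhat'}\!\left(-\tfrac{1}{\sqrt{q}},t\sqrt{q}\right),
\]
where $\omhat\in\oT_n$ is the type of $(\bfL,\bfD,\w)$ and $\omhat'\in\otT_n$ is the type of $\bC'$. This identity is a direct computation from the Macdonald-polynomial definition of $\H_\omhat$ (\S\ref{def-H}): the coefficients $\chi_{V_{\bC,\bC'}}(\w)$ appear as Green functions of $W(\bfL,\bfSigma)$, and the identity is essentially the Frobenius formula expressing power-sum symmetric functions in terms of characters of the symmetric group, transported through the plethystic substitution used to define $\H_\omhat$.

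With both identities in hand the equivalence becomes formal: Conjecture \ref{mainconj-intro} applied to each $\bC'\unlhd \bC$ substitutes into the geometric decomposition to give Conjecture \ref{main-twisted}, while conversely, for a fixed $\bC$, choosing $\bfL$ to be the centralizer of the semisimple data of $\bC$ together with $\w=1$, and inducting on the closure order $\unlhd$, recovers Conjecture \ref{mainconj-intro} for $\bC$ from Conjecture \ref{main-twisted} and the inductive hypothesis on smaller $\bC'$. The main obstacle is the combinatorial identity of the second step: one must trace the $W$-representation data through the plethystic/Macdonald construction of $\H_\omhat$ and match the geometric Springer-type representations $V_{\bC,\bC'}$ coefficient by coefficient. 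This is the analogue, in the character-variety setting, of the classical Borho-MacPherson/Lusztig decomposition for the nilpotent cone, and it is the only step that is not a formal consequence of the decomposition theorem.
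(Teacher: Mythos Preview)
Your approach is essentially the same as the paper's: both use the $W(\bfL,\bfSigma)$-equivariant decomposition (\ref{decomp}) to reduce the equivalence to a combinatorial identity, and both identify that identity with a known result from \cite{letellier4}. Two points where the paper is cleaner than your sketch are worth noting.

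First, your backward direction (Conjecture~\ref{main-twisted} $\Rightarrow$ Conjecture~\ref{mainconj-intro}) is more complicated than necessary. You propose choosing $\bfL$ to be the centralizer of the semisimple part and then inducting on the closure order. The paper instead observes that if one chooses $\sigma_i\in Z_{L_i}$ with $C_{\GL_n}(\sigma_i)=L_i$, then the partial resolution $\bM_{\bfL,\bfP,\obfSigma}\to\M_{\overline{\bC}}$ is an \emph{isomorphism}, so $IH_c^{\w=1}(\bM_{\bfL,\bfP,\obfSigma};q,t)=IH_c(\M_{\overline{\bC}};q,t)$ directly, and the type of $(\bfL,\bfD,1)$ is precisely $\iota^k(\omhat)$. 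No induction is required.

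Second, the combinatorial identity you need is simpler than your description suggests. You frame it as an identity involving Macdonald polynomials, plethystic substitution, and Green functions. In fact, since $\H_\omhat$ is defined via the Hall pairing of $\Log\,\Omega$ against $s_{\omhat'}$, the required identity lives entirely at the level of Schur functions:
\[
(-1)^{r(\omhat)}\,s_{\omhat'}\;=\;\sum_{\tauhat\unlhd\omhat^+}{\rm Tr}(\w\,|\,A_\tauhat)\,s_{\tauhat'},
\]
where $\omhat^+\in\otT$ is the type of $\bC$. This is \cite[Formula~(7.4.3)]{letellier4}, a character identity for the $W(\bfL,\bfD)$-modules $A_\tauhat$ coming from Springer theory; the Macdonald side plays no role in its proof. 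Pairing both sides with $\Log\,\Omega$ then gives your ``parallel combinatorial identity'' immediately.
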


\begin{theorem}Conjecture \ref{main-twisted} is true after specialization $(q,t)\mapsto (q,-1)$, namely

$$
IH_c^\w(\bM_{\bfL,\bfP,\obfSigma}\,;\,q,-1)=q^{d_\bC/2}\H_\omhat\left(\frac{1}{\sqrt{q}},\sqrt{q}\right).
$$
\label{w-twist-intro}\end{theorem}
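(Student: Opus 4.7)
My plan is to reduce the left-hand side to a twisted point count over $\F_q$ and then apply the finite-field character-theoretic formula, paralleling the proof of the untwisted specialization Theorem~\ref{mainth-intro}. Concretely, I aim to show
$$
IH_c^\w(\bM_{\bfL,\bfP,\obfSigma};q,-1)=\#\M_{\overline{\bC_\omhat}}(\F_q),
$$
where $\bC_\omhat$ is a generic $k$-tuple of $F$-stable conjugacy classes in $\GL_n(\overline{\F}_q)$ whose $\F_q$-rational type is $\omhat\in\oT_n$. The right-hand side is then $q^{d_\bC/2}\H_\omhat(1/\sqrt{q},\sqrt{q})$ by the $\GL_n(\F_q)$-character-theoretic counting formula of \cite{HLV}, which is naturally defined for every type in $\oT$ and underlies Theorem~\ref{mainth-intro}.

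First I would descend $\bM_{\bfL,\bfP,\obfSigma}$ together with its $W(\bfL,\bfSigma)$-action to an $\F_q$-model, so that commuting actions of Frobenius $F$ and of $W(\bfL,\bfSigma)$ are realized on $IH_c^*(\bM_{\bfL,\bfP,\obfSigma},\overline{\Q}_\ell)$. Applying the decomposition theorem to the proper map $\bM_{\bfL,\bfP,\obfSigma}\to\M_{\overline{\bC}}$, whose open smooth locus is a smooth character variety (cf. Theorem~\ref{theo1-intro}(ii)), one writes $IH_c^*(\bM_{\bfL,\bfP,\obfSigma})$ as a direct sum of shifted $IH_c^*(\M_{\overline{\bC'}})$ weighted by Springer-type multiplicity spaces that carry the $W(\bfL,\bfSigma)$-action. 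Purity of the open stratum propagates to purity of the whole, so the weight filtration agrees with the Frobenius weight filtration and the specialization $t\mapsto -1$ collapses to a twisted Grothendieck--Lefschetz trace
$$
IH_c^\w(\bM_{\bfL,\bfP,\obfSigma};q,-1)=\sum_i(-1)^i\Tr\bigl(\w F\,\big|\,IH_c^i(\bM_{\bfL,\bfP,\obfSigma})\bigr).
$$

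The heart of the proof is to identify this twisted trace with a point count. Because $\w$ normalizes $\bfL$ and $\bfSigma$, twisting $F$ by $\w$ replaces $\bfL$ with an $\F_q$-twisted form $\bfL^\w$ whose rational points decompose as $\prod_j\GL_{m_j}(\F_{q^{d_j}})$, the pairs $(d_j,m_j)$ arising from the action of $\w$ on the block set of $\bfL$; combined with the unipotent partition data from $\bfD$ this precisely realizes the multi-type $\omhat\in\oT_n$ attached to $(\bfL,\bfD,\w)$. Lang's theorem applied to the parabolic-induction construction of $\bM_{\bfL,\bfP,\obfSigma}$ should then yield the $\w$-twisted analog of the identity $\#\bM_{\bfL,\bfP,\sigma}(\F_q)=\#\M_\bfS(\F_q)$ used in the proof of Theorem~\ref{theo1-intro}, equating the twisted trace above with $\#\M_{\overline{\bC_\omhat}}(\F_q)$.

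The hardest step will be this last identification. One must show that the Springer/Borho--MacPherson action of $W(\bfL,\bfSigma)$ on $IH_c^*(\bM_{\bfL,\bfP,\obfSigma})$ that defines $IH_c^\w$ is compatible, element by element in $\w$, with the arithmetic twist $F\mapsto\w F$ and with the corresponding twisting of the rational structure on the Levi. Concretely, this amounts to propagating the standard Frobenius-equivariance of Lusztig's generalized Springer correspondence through the parabolic-induction description of $\bM_{\bfL,\bfP,\obfSigma}$, then coupling it with the character sum over representations of the genus $g$ surface group, with every step tracked $W(\bfL,\bfSigma)$-equivariantly rather than only after averaging over $W(\bfL,\bfSigma)$.
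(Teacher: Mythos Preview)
Your proposal contains a genuine gap and also misses a much simpler route taken in the paper.

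The critical error is the purity claim: ``Purity of the open stratum propagates to purity of the whole.'' The open stratum is a smooth character variety, and character varieties are \emph{not} pure in general --- this non-purity is precisely the phenomenon the entire paper (and \cite{HLV}, \cite{hausel-villegas}) is built around. Without purity, the passage from $IH_c^\w(\bM_{\bfL,\bfP,\obfSigma};q,-1)$ to the twisted Lefschetz trace $\sum_i(-1)^i\Tr(\w F\mid IH_c^i)$ fails: the Frobenius eigenvalues on ${\rm Gr}_mIH_c^i$ need not equal $q^{m/2}$, only have that absolute value. Consequently the whole Lang-theorem identification with $\#\M_{\overline{\bC_\omhat}}(\F_q)$ never gets off the ground.

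The paper's argument is a purely formal reduction to the already-proven untwisted case (Theorem~\ref{mainth-intro}). From the decomposition theorem applied to $p/_{\PGL_n}:\bM_{\bfL,\bfP,\obfSigma}\to\M_{\overline{\bC}}$ one obtains
$$
IH_c^\w(\bM_{\bfL,\bfP,\obfSigma};q,t)=\sum_{\bC'\unlhd\bC}\Tr(\w\mid A_{\bC'})\,(qt^2)^{-r_{\bC'}}\,IH_c(\M_{\overline{\bC'}};q,t),
$$
with the $W(\bfL,\bfSigma)$-action coming only through the multiplicity spaces $A_{\bC'}$. Specializing $t\mapsto-1$ and applying Theorem~\ref{mainth-intro} termwise gives a linear combination of the $\H_{\iota^k(\tauhat)}(1/\sqrt{q},\sqrt{q})$. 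The proof is then completed by the symmetric function identity
$$
(-1)^{r(\omhat)}s_{\omhat'}=\sum_{\tauhat\unlhd\omhat^+}\Tr(\w\mid A_{\tauhat})\,s_{\tauhat'}
$$
from \cite[Formula (7.4.3)]{letellier4}, paired against $\Log\,\Omega$. No twisted Frobenius, no Lang's theorem, and no purity are needed: the $\w$-dependence is entirely absorbed into the coefficients $\Tr(\w\mid A_{\bC'})$, and the arithmetic input is already packaged in the untwisted theorem.
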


\begin{corollary}The polynomial $E^{ic}_\w(\bM_{\bf L,P,\overline{\Sigma}}\,;\,q):=IH_c^\w(\bM_{\bfL,\bfP,\obfSigma}\,;\,q,-1)$ is palindromic.

\end{corollary}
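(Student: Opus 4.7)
The plan is to derive the palindromic identity directly from Theorem \ref{w-twist-intro} together with the stated symmetries of the rational function $\H_\omhat(z,w)$. Recall that $\H_\omhat$ is invariant under $(z,w)\mapsto(w,z)$ and under $(z,w)\mapsto(-z,-w)$; here only the first symmetry will be used.

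First, I would record the formula delivered by Theorem \ref{w-twist-intro}, namely
$$
E^{ic}_\w(\bM_{\bfL,\bfP,\obfSigma}\,;\,q)=q^{d_\bC/2}\,\H_\omhat\!\left(\tfrac{1}{\sqrt{q}},\sqrt{q}\right),
$$
and then substitute $q\mapsto q^{-1}$ to obtain
$$
E^{ic}_\w(\bM_{\bfL,\bfP,\obfSigma}\,;\,q^{-1})=q^{-d_\bC/2}\,\H_\omhat\!\left(\sqrt{q},\tfrac{1}{\sqrt{q}}\right).
$$
Applying the symmetry $\H_\omhat(z,w)=\H_\omhat(w,z)$ to the right-hand side yields
$$
E^{ic}_\w(\bM_{\bfL,\bfP,\obfSigma}\,;\,q^{-1})=q^{-d_\bC/2}\,\H_\omhat\!\left(\tfrac{1}{\sqrt{q}},\sqrt{q}\right)=q^{-d_\bC}\,E^{ic}_\w(\bM_{\bfL,\bfP,\obfSigma}\,;\,q),
$$
which is exactly the palindromic relation $E^{ic}_\w(\bM_{\bfL,\bfP,\obfSigma}\,;\,q)=q^{d_\bC}E^{ic}_\w(\bM_{\bfL,\bfP,\obfSigma}\,;\,q^{-1})$.

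There is essentially no obstacle here: the real content lies in Theorem \ref{w-twist-intro} itself and in the symmetry property of $\H_\omhat$, both already established (the former in the main body, the latter in \S\ref{def-H}). The only point worth verifying carefully is that one may legitimately treat the right-hand side of Theorem \ref{w-twist-intro} as a Laurent polynomial in $\sqrt{q}$ so that the substitution $q\mapsto q^{-1}$ is well-defined; this is immediate because the left-hand side is a polynomial in $q$ of degree at most $d_\bC$, so the identity above reads as an equality of polynomials in $q$ once one clears powers of $\sqrt{q}$.
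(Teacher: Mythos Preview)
Your proof is correct and is exactly the argument the paper has in mind: the corollary is stated without proof, but the sentence ``From the properties of $\H_\omhat(z,w)$ mentioned above we have the following corollary'' preceding the analogous palindromicity statement for $\M_{\overline{\bC}}$ makes clear that the intended reasoning is precisely your combination of Theorem~\ref{w-twist-intro} with the $(z,w)\mapsto(w,z)$ symmetry of $\H_\omhat$ (Lemma~\ref{propH}).
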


Write $\bfL=L_1\times\cdots\times L_k$ and $\bfD=D_1\times\cdots\times D_k$ and  let $\bfS=(S_1,\dots,S_k)$ be a generic tuple of conjugacy classes of $\GL_n$ such that for each $i=1,\dots,k$, there exists an element $s_i\in S_i$ with Jordan decomposition $s_i=l_i u_i$ where $l_i$ is semisimple  such that $C_{\GL_n}(l_i)=L_i$ and $u_i\in D_i$. Then we prove the following result.

\begin{theorem}If Conjecture \ref{mainconj-intro} is true then $\M_{\overline{\bfS}}$ and $\bM_{\bfL,\bfP,\obfSigma}$ have the same mixed Hodge polynomial. \end{theorem}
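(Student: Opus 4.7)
The plan is to derive the statement directly from the preceding proposition asserting the equivalence of Conjecture~\ref{mainconj-intro} and Conjecture~\ref{main-twisted}, by specialising the latter to the trivial Weyl group element.

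First I would identify the base of the partial resolution $\bM_{\bfL,\bfP,\obfSigma}\to\M_{\overline{\bC}}$ with $\M_{\overline{\bfS}}$ itself. The hypothesis $C_{\GL_n}(l_i)=L_i$ forces $l_i\in Z_{L_i}$, so $\sigma:=(l_1,\dots,l_k)$ lies in $Z_\bfL$ and $\bfSigma=\sigma\cdot\bfD$ is of the required form. For each $i$, an element of $\sigma_iD_i\subset L_i$ has the form $l_iu$ with $u\in D_i$ unipotent; its $\GL_n$-orbit is the Jordan class determined by the data (semisimple part $\GL_n$-conjugate to $l_i$, unipotent part $L_i$-conjugate to a prescribed element of $D_i$), hence equal to $S_i$. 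The tuple $\bC$ attached to the partial resolution is therefore $\bfS$, and in particular $d_\bC=d_\bfS$.

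Next, assuming Conjecture~\ref{mainconj-intro}, the preceding proposition provides Conjecture~\ref{main-twisted} as well, and both mixed Hodge polynomials $IH_c(\cdot;x,y,t)$ then depend only on the product $xy$ and on $t$; it therefore suffices to compare their specialisations at $x=y=\sqrt{q}$. Conjecture~\ref{mainconj-intro} applied to $\M_{\overline{\bfS}}$ yields
$$IH_c(\M_{\overline{\bfS}};q,t)=(t\sqrt{q})^{d_\bfS}\,\H_{\omhat_\bfS}\!\left(-\tfrac{1}{\sqrt{q}},\,t\sqrt{q}\right),$$
where $\omhat_\bfS\in\otT$ is the type of $\bfS$. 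Applying Conjecture~\ref{main-twisted} at $\w=1\in W(\bfL,\bfSigma)\subset W(\bfL,\bfD)$, and using that the identity acts trivially on $IH_c^*$ so that its $\w$-twisted polynomial coincides with the usual one, gives
$$IH_c(\bM_{\bfL,\bfP,\obfSigma};q,t)=(t\sqrt{q})^{d_\bfS}\,\H_{\omhat_1}\!\left(-\tfrac{1}{\sqrt{q}},\,t\sqrt{q}\right),$$
where $\omhat_1\in\oT_n$ is the type of the triple $(\bfL,\bfD,1)$. The theorem thus reduces to the combinatorial identity $\omhat_1=\omhat_\bfS$.

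The main obstacle is precisely this identification of types. Under the parametrization of $\oT_n$ recalled in~\S\ref{ne}, a pair $(d,\lambda)$ in a multi-type encodes a $\w$-orbit of length $d$ on the $\GL$-factors of $\bfL$, together with the partition $\lambda$ describing the unipotent block of $\bfD$ along that orbit. For $\w=1$ every orbit is a singleton, so $d=1$ throughout and $\omhat_1$ in fact lies in $\otT_n$; its partitions $\lambda$ are exactly the Jordan partitions of $u_i$ on the individual eigenspaces of $l_i$. These are precisely the data that also read off $\omhat_\bfS$, since the Jordan type of $s_i=l_iu_i$ is encoded block by block by the Jordan structure of $u_i$ on the $l_i$-eigenspaces, which correspond to the $\GL$-factors of $L_i=C_{\GL_n}(l_i)$. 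Consequently $\omhat_1=\omhat_\bfS$, and the theorem follows.
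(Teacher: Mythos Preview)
Your overall strategy—deduce Conjecture~\ref{main-twisted} from Conjecture~\ref{mainconj-intro} via the equivalence proposition, specialise to $\w=1$, and match types—is exactly the paper's route (see the paragraph following Conjecture~\ref{main3}). The identification $\omhat_1=\omhat_\bfS$ is argued correctly.

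There is, however, a genuine gap in your first step. You \emph{define} $\sigma:=(l_1,\dots,l_k)$ and then identify the base $\M_{\overline{\bC}}$ with $\M_{\overline{\bfS}}$. But in the theorem $\sigma\in Z_\bfL$ is \emph{given}, not chosen; the whole point (stressed in the remark immediately after the statement) is that the conclusion holds for \emph{every} $\sigma\in Z_\bfL$, so that the mixed Hodge polynomial of $\bM_{\bfL,\bfP,\obfSigma}$ is independent of $\sigma$. For a general $\sigma$ the centraliser $C_{\GL_n}(\sigma_i)$ may strictly contain $L_i$, in which case $C_i\neq S_i$ and $\M_{\overline{\bC}}\neq\M_{\overline{\bfS}}$; your identification then fails. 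Your argument as written therefore proves only the special case where $C_{\GL_n}(\sigma_i)=L_i$ (in which, as the paper notes in the proof of Proposition~\ref{impli}, the partial resolution is already an isomorphism and there is nothing to do).

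The fix is short: drop the identification entirely and simply observe that $d_\bC=d_\bfS$ for \emph{any} $\sigma$. Indeed $\dim C_i=\dim\bY_{L_i,P_i,\Sigma_i}=\dim(\GL_n/P_i)+\dim D_i+\dim U_{P_i}=n^2-\dim L_i+\dim D_i$, which equals $\dim S_i=n^2-\dim C_{L_i}(u_i)$; this is independent of $\sigma_i$. With $d_\bC=d_\bfS$ in hand, your comparison of the two conjectural formulas goes through verbatim for arbitrary $\sigma$.
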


Note that the above theorem implies that under Conjecture \ref{mainconj-intro} the mixed Hodge polynomial of $\bM_{\bfL,\bfP,\obfSigma}$ depends only on $(\bfL,\bfD)$ and not on the choice of $\sigma\in Z_\bfL$. In particular, taking $\sigma\in Z_{(\GL_n)^k}$ (so that $W(\bfL,\bfSigma)=W(\bfL,\bfD)$), we end up (assuming Conjecture \ref{mainconj-intro}) with an action of $W(\bfL,\bfD)$ on the intersection cohomology of $\bfS$ that preserves the weight filtration and such that for all $\w\in W(\bfL,\bfD)$

$$
\sum_{m,i}{\rm Tr}\left(\w,{\rm Gr}_mIH_c^i(\M_\bfS)\right)q^{m/2}t^i=q^{d_{\bfS}/2}\H_\omhat\left(\frac{1}{\sqrt{q}},\sqrt{q}\right),
$$
where $\omhat$ is the type of $(\bfL,\bfD,\w)$.

 Note that it is not clear how to define an action of $W(\bfL,\bfD)$ directly on the variety $\M_\bfS$. For instance in the case of semisimple conjugacy classes (i.e., $\bfD_i=\{1\}$ for all $i=1,\dots,k$) the natural action of $W(\bfL)$ on $S_1\times\cdots\times S_k$ does not preserves the relation defining character varieties.
 
 We have the following theorem.
 
 \begin{theorem} We have 
 
 $$
 E^{ic}(\bM_{\bf L,P,\overline{\Sigma}}\,;\,x)=E^{ic}(\M_{\overline{\bfS}}\,;\, x).
 $$

\label{w-twistedE-intro} \end{theorem}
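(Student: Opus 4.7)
The plan is to deduce Theorem \ref{w-twistedE-intro} by combining Theorem \ref{w-twist-intro}, specialized at the identity element $\w = 1 \in W(\bfL,\bfSigma)$, with Theorem \ref{mainth-intro} applied to the generic tuple $\bfS$. Both results express an $E^{ic}$-polynomial as the specialization at $(z,w)=(1/\sqrt{q},\sqrt{q})$ of the rational function $\H_\omhat$; the task is to check that the triple $(\bfL,\bfD,\mathbf{1})$ and the tuple $\bfS$ give rise to the same type $\omhat$ and to the same dimension $d$.

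Concretely, taking $\w = 1$ in Theorem \ref{w-twist-intro} yields
\[
E^{ic}(\bM_{\bfL,\bfP,\obfSigma}\,;\,q)=IH_c^{\,1}(\bM_{\bfL,\bfP,\obfSigma}\,;\,q,-1)=q^{d_\bC/2}\,\H_\omhat\!\left(\tfrac{1}{\sqrt{q}},\sqrt{q}\right),
\]
with $\omhat\in\oT_n$ the type of $(\bfL,\bfD,\mathbf{1})$ and $d_\bC$ the dimension of the character variety $\M_{\overline{\bC}}$ that $\bM_{\bfL,\bfP,\obfSigma}$ partially resolves. On the other hand, Theorem \ref{mainth-intro} applied to $\bfS$ yields
\[
E^{ic}(\M_{\overline{\bfS}}\,;\,q)=q^{d_\bfS/2}\,\H_{\omhat'}\!\left(\tfrac{1}{\sqrt{q}},\sqrt{q}\right),
\]
where $\omhat'\in\otT_n$ is the type of $\bfS$. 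The theorem then reduces to the two identities $\omhat=\omhat'$ and $d_\bC=d_\bfS$.

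For the type equality, since $\w = 1$ has only trivial cycles, the parameter $d$ in each component of the type $\omhat$ is forced to equal $1$, so in fact $\omhat \in \otT_n \subset \oT_n$. Under the explicit parametrization of $\otT_n$, the triple $(\bfL,\bfD,\mathbf{1})$ records for each factor $L_i = \GL_{n_{i,1}}\times\cdots\times\GL_{n_{i,r_i}}$ of $\bfL$ and each $D_i=D_{i,1}\times\cdots\times D_{i,r_i}$ the collection of pairs $(1,\lambda^{i,j})$ where $\lambda^{i,j}$ is the Jordan type of $D_{i,j}$. By the defining property of $\bfS$, each $s_i\in S_i$ admits a Jordan decomposition $s_i = l_i u_i$ with $C_{\GL_n}(l_i)=L_i$ and $u_i\in D_i$: the eigenspace decomposition of $l_i$ matches the factor decomposition of $L_i$, and the Jordan type of $u_i$ on each eigenspace is precisely $\lambda^{i,j}$. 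Hence $\omhat=\omhat'$. For the dimension equality, tracing the construction of the partial resolution shows that a generic element of $C_i$ has semisimple part with centralizer $L_i$ and unipotent part in $D_i$, so $\dim C_i = \dim S_i$; together with the dimension formula from Theorem \ref{theo1-intro}, this gives $d_\bC = d_\bfS$.

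The main technical obstacle is the combinatorial identification of the type $\omhat$ attached to the triple $(\bfL,\bfD,\mathbf{1})$ with the type $\omhat'$ of the conjugacy class $\bfS$: one must carefully unwind the definition of the type of $(\bfL,\bfD,\w)$ given in \S\ref{parresol} and match it, in the case $\w = 1$, with the parametrization of conjugacy class types in $\otT_n$. Modulo this bookkeeping the argument is essentially immediate, since Theorems \ref{w-twist-intro} and \ref{mainth-intro} do all the geometric and cohomological work.
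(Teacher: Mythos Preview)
Your approach is essentially the paper's: in the body of the paper this is Theorem~\ref{theoE}, and its proof is the one-line ``By Theorem~\ref{maintheo1} and Theorem~\ref{maintheo2}'', i.e.\ exactly the combination of Theorem~\ref{mainth-intro} applied to $\bfS$ with Theorem~\ref{w-twist-intro} at $\w=1$ that you spell out.

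One step in your write-up is not correct, though the conclusion survives. You assert that a generic element of $C_i$ has semisimple part with centralizer $L_i$ and unipotent part in $D_i$. This fails whenever $\sigma_i\in Z_{L_i}$ has centralizer in $\GL_n$ strictly larger than $L_i$: for instance if $\sigma_i$ is a scalar matrix, every element of $C_i$ has semisimple part $\sigma_i$, whose centralizer is all of $\GL_n$, not $L_i$. The equality $\dim C_i=\dim S_i$ (hence $d_\bC=d_\bfS$) is nonetheless true, by a direct count. Since $C_{\GL_n}(s_id_i)=C_{L_i}(d_i)$ one has
\[
\dim S_i=n^2-\dim L_i+\dim D_i=2\dim U_{P_i}+\dim D_i=\dim\bY_{L_i,P_i,\Sigma_i},
\]
and the partial resolution $\bY_{L_i,P_i,\overline{\Sigma}_i}\to\overline{C}_i$ is semi-small (this underlies the decomposition (\ref{icconj}) on which Theorem~\ref{w-twist-intro} rests), hence generically finite, giving $\dim C_i=\dim\bY_{L_i,P_i,\Sigma_i}=\dim S_i$. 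With this correction your argument goes through and matches the paper's.
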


\subsubsection{The regular unipotent case}

Denote by $[\M_n^{\rm uni}]$ (resp. $[\M_n^{\rm gen}]$) the quotient stack of $\calU_{\overline{\bC}}$ by $\GL_n$ where $\bC$ is the tuple of regular unipotent conjugacy classes of $\GL_n$  (resp. $\bC$ is a generic tuple of regular conjugacy classes of $\GL_n$ with only  one eigenvalue).

We prove the following relation between the two stacks counts over finite fields.

\begin{theorem}

\begin{align}
\Log\,\left(\sum_{n\geq 0}(-1)^{kn}q^{1-d_n/2}\#[\M_n^{\rm uni}](\F_q)\, T^n\right)&=\frac{1}{1-q^{-1}}\sum_{n\geq 1}(-1)^{kn}\H_{((n^1),\dots,(n^1))}\left(\sqrt{q},\frac{1}{\sqrt{q}}\right)\, T^n\label{reg}\\
&=\sum_{n\geq 1}(-1)^{kn}q^{1-d_n/2}\#[\M_n^{\rm gen}](\F_q)\, T^n
\end{align}
where $d_n=(2g+k-2)n^2-kn+2$ is the dimension of the generic character variety $\M_n^{\rm gen}$.

\end{theorem}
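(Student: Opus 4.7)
The plan is to treat the two equalities separately: the second is a direct consequence of Theorem \ref{mainth-intro}, while the first requires the Frobenius mass formula combined with a plethystic identity for Macdonald polynomials.

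For the second equality, note that a generic tuple $\bC^{\rm gen}$ of regular conjugacy classes with single eigenvalues has multi-type $\omhat=((n^1),\dots,(n^1))\in\otT_n$. Since each class is regular, the closures $\overline{C}_i$ are rationally smooth, and hence so is $\M_{\overline{\bC}^{\rm gen}}$. Therefore
\begin{equation*}
|\M_{\overline{\bC}^{\rm gen}}(\F_q)| = E^{ic}(\M_{\overline{\bC}^{\rm gen}};q) = q^{d_n/2}\,\H_{\omhat}(1/\sqrt q,\sqrt q)
\end{equation*}
by Theorem \ref{mainth-intro}. Since $\calU_{\overline{\bC}^{\rm gen}}\to\M_{\overline{\bC}^{\rm gen}}$ is a principal $\PGL_n$-bundle (Theorem \ref{theo1-intro}(i)), one has $\#[\M_n^{\rm gen}](\F_q)=|\M_{\overline{\bC}^{\rm gen}}(\F_q)|/(q-1)$. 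Combined with the symmetry $\H_\omhat(z,w)=\H_\omhat(w,z)$ this yields the second equality term by term in $T$.

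For the first equality, I would apply the Frobenius mass formula to $\bC^{\rm uni}$; the Zariski closure of the regular unipotent class of $\GL_n(\F_q)$ is the full unipotent variety $\mathcal{U}_n$, so
\begin{equation*}
\#[\M_n^{\rm uni}](\F_q) = |\GL_n(\F_q)|^{2g-2}\sum_{\omega\in\bT_n}\frac{S_\omega^k}{\chi_\omega(1)^{2g+k-2}},\qquad S_\omega:=\sum_{u\in\mathcal{U}_n(\F_q)}\chi_\omega(u),
\end{equation*}
where $\chi_\omega$ is the irreducible character of $\GL_n(\F_q)$ indexed by $\omega\in\bT_n$. The monoid $\bT=\bigsqcup_n\bT_n$ is freely generated by the primitive types $(d,\lambda)$, and by Green's theory both $\chi_\omega(1)$ and $S_\omega$ factor multiplicatively over the primitive components $(d_i,\omega^i)$ of $\omega$ (each such component contributing after a Frobenius twist $q\mapsto q^{d_i}$). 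Hence the generating series on the left-hand side of the first equality takes the form $\Exp(F)$ for some formal power series $F$ in $T$, and $\Log$ retains only the contributions from primitive multi-types.

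The main obstacle is then to identify the coefficient of $T^n$ in $\Log$ of this series with $(-1)^{kn}\H_{((n^1),\dots,(n^1))}(\sqrt q,1/\sqrt q)/(1-q^{-1})$. This is an instance of the Macdonald/Hall--Littlewood Cauchy-type identity underlying the definition of $\H_\omhat$ in \S \ref{def-H}: one expresses $S_\omega$ in terms of modified Hall--Littlewood polynomials $\widetilde{H}_{\omega^i}$, unwinds the plethystic $\Log$ in power-sum variables, and matches the result with the $\H_\omhat$ formula, with the factor $1/(1-q^{-1})$ tracking the contribution of the centre $\F_q^\times\subset\GL_n(\F_q)$ in the plethystic formalism. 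This is the same combinatorial engine as in \cite{HLV}, adapted here to the non-generic unipotent setting, where the reduction to primitive multi-types is effected by the plethystic projection rather than by genericity-based character orthogonality.
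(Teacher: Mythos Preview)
Your treatment of the second equality is correct and matches the paper exactly.

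For the first equality your approach is in the right spirit but leaves the key steps as assertions. The claim that $S_\omega=\sum_{u\in\mathcal U_n(\F_q)}\chi_\omega(u)$ factors multiplicatively over the primitive components of $\omega$ is true but not immediate: it requires knowing that $\sum_\lambda |C_\lambda^F|\,\tilde H_\lambda(\x;q)=(-1)^n|\GL_n(\F_q)|\,e_n(\x\y)$ under the specialisation $y_i=q^{i-1}$ (this is the $\mu=(n)$ case of Proposition~\ref{propmagic}), after which the pairing with $s_\omega(\x)$ does factor via the involution $s_\nu\mapsto s_{\nu'}$. Your final paragraph then gestures at the Cauchy identity without carrying out the necklace-counting that turns the product over Frobenius orbits $\gamma\in\calO$ into a plethystic $\Exp$ and matches it with the definition of $\H_\omhat$.

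The paper takes a different route: it first proves the more general identity (Theorem~\ref{unichar-theo})
\[
(q-1)\,\Log\sum_{\muhat\in\oP}q^{1-d_\muhat/2}\bigl\langle\calE*\bfX_{\overline C_{\mu^1}}*\cdots*\bfX_{\overline C_{\mu^k}},1_1\bigr\rangle\,s_{\muhat'}
=q\sum_{\muhat}\H_\muhat\!\left(\sqrt q,\tfrac1{\sqrt q}\right)s_{\muhat'}
\]
over \emph{all} unipotent types simultaneously, and then extracts the regular case $\muhat=((n),\dots,(n))$ by applying the top-degree-of-$u$-specialisation operator $[f]=(-1)^n\langle f,s_{(1^n)}\rangle$ of \cite{HLV3}, which is a ring homomorphism commuting with $\Log$. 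The factorisation you need and the product-over-orbits argument are absorbed into the proof of Theorem~\ref{unichar-theo} (where they appear naturally because the symmetric-function variables are still present), and the commutation of $[\cdot]$ with $\Log$ replaces any direct verification that your one-variable generating series is an $\Exp$. Your direct approach could be completed, but the paper's ``prove general, then specialise'' strategy packages these computations once rather than redoing them in the special case.
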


\subsubsection{Comments}\label{comment}

\noindent 1. Note that Theorem \ref{w-twistedE-intro} applied to $\w=1$ and $\Sigma=\{\sigma\}$ gives back (modulo Theorem \ref{Katz}) the identity 

\beq
\#\,\bM_{\bf L,P,\sigma}(\F_q)=\#\M_\bfS(\F_q)
\label{eq-intro}\eeq mentioned below Theorem \ref{theo1-intro}. In \S \ref{proofcounting}, we prove (\ref{eq-intro}) by straightforward calculation while  our proof of  Theorem \ref{w-twistedE-intro} uses intersection cohomology methods (even when $\w=1$ and $\Sigma=\{\sigma\}$). 
However, the proof of Formula (\ref{eq-intro}) we give in \S \ref{proofcounting} is more explicative and natural. 
\bigskip

\noindent 2. If $\M_{\overline{\bC}}$ is not empty, the open stratum $\M_\bC$ is always not empty by Theorem \ref{theo1-intro}(ii) but other strata $\M_{\bC'}$, with $\bC'\unlhd\bC$ maybe empty. For instance if $k=3$, $g=0$ and $\bC$ is a generic tuple of regular conjugacy classes of $\GL_2$, then the underlying graph of $\Gamma_\bC$ is the Dynkin diagram $D_4$ and $\v_\bC$ is the real root with a $2$ at the central vertex and a $1$ at the other vertices. Hence by Theorem \ref{theo1-intro} $\M_{\overline{\bC}}=\M_\bC=\{pt\}$. In particular  $\M_\bC$ is the only  stratum of $\M_{\overline{\bC}}$ which is not empty.
\bigskip

\noindent 3. There is an additive version of character varieties which we define from a \emph{generic} tuple $\bfO=(\calO_1,\dots,\calO_k)$ of adjoint orbits of $\gl_n(\C)$ as the affine GIT quotient

$$
\calQ_{\overline{\bfO}}:=\left.\left.\left\{(A_1,B_1,\dots,A_g,B_g,X_1,\dots,X_k)\in(\gl_n)^{2g}\times\prod_{i=1}^k\overline{\calO}_i\,\left|\,\sum_{j=1}^g[A_j,B_j]+\sum_{i=1}^kX_i=0\right\}\right.\right/\right/{\GL_n}.
$$

The analogue of Theorems \ref{theo1-intro}  for $\calQ_{\overline{\bfO}}$ is known  to be true. Its proof relays on a result of  Crawley-Boevey \cite{crawley-mat} \cite[Proposition 5.2.2]{letellier4} which says that $\calQ_{\overline{\bfO}}$ is a  \emph{quiver variety}. The theorem follows then from some  general results on quiver varieties due also to Crawley-Boevey (see \cite[\S 4]{letellier4} for a review) in particular irreducibility and a necessary and sufficient condition for  non-emptiness of quiver varieties. 

Our strategy to study the intersection cohomology of the character varieties $\M_{\overline{\bC}}$ follows that of $\calQ_{\overline{\bfO}}$ which is developed in \cite{letellier4}. However the intersection cohomology of $\calQ_{\overline{\bfO}}$ is simpler to study as it  always has a pure mixed Hodge structure (\cite[Theorem 7.3.2]{letellier4}) and  the mixed Hodge numbers $ih_c^{p,q;k}(\calQ_{\overline{\bfO}})$ vanish unless $p=q$. This implies that $\calQ_{\overline{\bfO}}$ has vanishing odd cohomology and that the mixed Hodge polynomial of $\calQ_{\overline{\bfO}}$ is just the Poincar\'e polynomial.  In \cite[Corollary 7.3.5]{letellier4} we prove that 

\beq
P_c(\calQ_{\overline{\bfO}};q):=\sum_i{\rm dim}\, IH_c^{2i}(\calQ_{\overline{\bC}},\C)\, q^i=q^{d_\bfO/2}\H_\omhat(0,\sqrt{q}),
\label{forJEMS}\eeq
where $\omhat\in\otT$ is the type of $\bfO$ and $\H_\omhat(z,w)$ is as above (in the semisimple case this formula was first proved in \cite{HLV}). However note that not all functions $\H_\omhat(0,\sqrt{q})$ are realized in this way as generic tuples of adjoint orbits do not exist in any type (unlike generic tuples of conjugacy classes).

If $\bC$ and $\bfO$ are respectively tuple of conjugacy classes of $\GL_n(\C)$ and tuple of adjoint orbits of $\gl_n(\C)$ of same type, then Conjecture \ref{mainconj-intro} together with Formula (\ref{forJEMS}) implies the following conjectural identity

$$
PP_c(\M_{\overline{\bC}}\,;\,q)\stackrel{?}{=}P_c(\calQ_{\overline{\bfO}}\,;\, q),
$$
where $PP_c(\M_{\overline{\bC}}\,;\,q)$ is the Poincar\'e polynomial of the pure part of the cohomology of $\M_{\overline{\bC}}$, i.e., $PP_c(\M_{\overline{\bC}}\,;\,q)=\sum_i\left({\rm dim}\,{\rm Gr}_{2i}IH_c^i(\M_{\overline{\bC}},\C)\right) \, q^i$. In the semisimple case this was first conjectured in \cite{HLV}.
\bigskip

\noindent 4. Our strategy to study cohomology of character varieties or quiver varieties (as above) go through counting points over finite fields. In the character varieties case this is related with the computation of convolution products of characteristic functions of Zariski closures of conjugacy classes while in the quiver varieties case we compute tensor products of irreducible characters of $\GL_n(\F_q)$ \cite[Theorem 1.2.2]{letellier4}. More precisely, we prove the followings. Assume that $g=0$ and fix a type $\omhat\in \oT_n$. Then for any generic tuple $(\calX_1,\dots,\calX_k)$ of irreducible characters of $\GL_n(\F_q)$ of type $\omhat$ and any generic tuple $(C_1,\dots,C_k)$ of $\F_q$-rational conjugacy classes of $\GL_n(\overline{\F}_q)$ such that $(C_1(\F_q),\dots,C_k(\F_q))$ is of type $\omhat$ we have (see Theorem \ref{convtheo} and \cite[Theorem 6.10.1]{letellier4})

\beq\xymatrix{(q-1)\left\langle {\bf X}_{\overline{C}_1}*\cdots*{\bf X}_{\overline{C}_k},1_1\right\rangle_{\GL_n(\F_q)}&\\
&\ar[ld]^{"\text{pure part}"}\ar[lu]_{t\mapsto -1}(t\sqrt{q})^{d_\bC}\H_\omhat\left(-\frac{1}{\sqrt{q}},t\sqrt{q}\right)=:{\bf H}_\omhat(q,t)\\
q^{d_\bC/2}\left\langle\calX_1\otimes\cdots\otimes\calX_k,{\rm Id}\right\rangle_{\GL_n(\F_q)}&}
\label{diag}\eeq
where roughly speaking the "pure part" is obtained by sending all terms of the form $qt^2$ to $q$ and the others to $0$ (it can be defined more rigorously in terms of specialization). Indeed the pure part of ${\bf H}_\omhat(q,t)$ is $\H_\omhat(0,\sqrt{q})$ and by \cite[Theorem 6.10.1]{letellier4}, we have $\left\langle\calX_1\otimes\cdots\otimes\calX_k,{\rm Id}\right\rangle=\H_\omhat(0,\sqrt{q})$.  Now by Conjecture \ref{main-twisted}, the function ${\bf H}_\omhat(q,t)$ is the $\w$-twisted mixed Poincar\'e polynomial of a character variety $\bM_{\bfL,\bfP,\obfSigma}$ and so the weight filtration on the intersection cohomology of character varieties  could be thought as a way to measure the default of existence of Fourier transforms on the space of class functions on $\GL_n(\F_q)$. Indeed if there were a Fourier transforms  then the two quantities  $\left\langle {\bf X}_{\overline{C}_1}*\cdots*{\bf X}_{\overline{C}_k},1_1\right\rangle_{\GL_n(\F_q)}$ and $\left\langle\calX_1\otimes\cdots\otimes\calX_k,{\rm Id}\right\rangle_{\GL_n(\F_q)}$ would be equal. In the situation of $\gl_n$ with conjugacy classes replaced by adjoint orbits and irreducible characters by characteristic functions of character sheaves, the corresponding two inner products are equal as by Lusztig \cite{LuFour} character sheaves coincide with Deligne-Fourier transforms of intersection cohomology complexes on Zariski closures of adjoint orbits. Note also that when the corresponding quiver variety $\calQ_{\overline{\bfO}}$ mentioned above  does exist, then by the results of \cite[\S 7]{letellier4} the analogue of the two above inner products in $\gl_n(\F_q)$  also equal to the Poincar\'e polynomial of $\calQ_{\overline{\bfO}}$ (which has pure mixed Hodge structure).
\bigskip

\emph{Acknowledgments}

This work is supported by the grant ANR-09-JCJC-0102-01.

\section{Preliminaries}

Unless specified, the letter $\K$ will denote either $\C$ or an algebraic closure $\overline{\F}_q$ of a finite field $\F_q$, and $\ell$ will always denote  a prime different from the characteristic of $\K$. The letter $\kappa$ will denote $\overline{\Q}_\ell$ if the characteristic of $\K$ is non-zero and $\C$ if $\K=\C$. For an algebraic variety $X/_\K$ over $\K$, we will denote by $H_c^i(X/_\K,\kappa)$ the compactly supported cohomology (this is $\ell$-adic cohomology if $\kappa=\overline{\Q}_\ell$ and the usual cohomology if $\kappa=\C$).

\subsection{Mixed Hodge polynomials and Katz theorem}

\subsubsection{Katz theorem}

Recall that  $H_c^k(X/_\C,\C)$ is endowed with a mixed Hodge structure as defined by Deligne \cite{Del1}. Namely there is a finite increasing filtration $W_\bullet^k$ on $H_c^k(X/_\C,\Q)$, called the weight filtration, such that the complexified  subquotients  $(W_r^k/W_{r-1}^k)_\C$ are endowed with a \emph{pure Hodge structure} of weight $r$. 

Denote by $\{h_c^{p,q;k}(X)\}_{p,q}$ the  mixed Hodge numbers of $H_c^k(X/_\C,\C)$ so that $\{h_c^{p,q;k}(X)\}_{p+q=r}$ are the  Hodge numbers of weight $r$, and  define the mixed Hodge polynomial  of $X/_\C$ as 

$$H_c(X/_\C\,;\,x,y,t)=\sum_{p,q,k}h_c^{p,q;k}(X)x^py^qt^k.$$

The $E$-polynomial of $X/_\C$ is defined as $$E(X/_\C\,;\,x,y):=H_c(X/_\C\,;\,x,y,-1)=\sum_{p,q}\left(\sum_k(-1)^kh_c^{p,q;k}(X)\right)x^py^q.$$

Let $R$ be a subring of $\C$ which is finitely generated as a $\Z$-algebra and let $\calX$ be a separated $R$-scheme of finite type. According to \cite[Appendix]{hausel-villegas}, we say that $\calX$ is \emph{strongly polynomial count} if there exists a polynomial $P(T)\in\C[T]$ such that for any finite field $\F_q$ and any ring homomorphism $\varphi: R\rightarrow\F_q$, the $\F_q$-scheme $\calX^{\varphi}$ obtained from $\calX$ by base change is polynomial count with counting polynomial $P$, i.e., for every finite extension $\F_{q^n}/\F_q$, we have $$\#\,\calX^{\varphi}(\F_{q^n})=P(q^n).$$

We call a separated $R$-scheme $\calX$ which gives back $X/_\C$ after extension of scalars from $R$ to $\C$ a \emph{spreading out} of $X/_\C$. 

Say that $X/_\C$ is \emph{polynomial count} if it admits a spreading out $\calX$  which is strongly polynomial count.

The following theorem is due to Katz \cite[Appendix]{hausel-villegas}.

\begin{theorem}Assume that $X/_\C$ is  polynomial count with counting polynomial $P\in\C[T]$. Then 
$$E(X/_\C\,;\,x,y)=P(xy).$$
\label{Katz}\end{theorem}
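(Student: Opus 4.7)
\medskip

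\noindent\textbf{Proof plan.} The strategy is the classical one: pass to a reduction modulo $p$, exploit the Grothendieck--Lefschetz trace formula to extract information from the counting polynomial $P$, and then invoke Deligne's comparison between mixed Hodge theory and $\ell$-adic weights to transfer this back to $E(X/_\C;x,y)$.

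First, I would choose a strongly polynomial-count spreading out $\calX/R$ of $X/_\C$ as in the hypothesis, where $R\subset\C$ is finitely generated over $\Z$. After possibly enlarging $R$ by inverting finitely many elements, I may assume that for every closed point $\varphi:R\to\overline{\F}_q$ one has good comparison properties, namely that the Betti numbers with compact support of $X/_\C$ coincide with the $\ell$-adic ones of the geometric fiber $\calX^\varphi$, and more importantly, that the mixed Hodge structure on $H_c^k(X/_\C,\C)$ is compatible with the Frobenius weight filtration on $H_c^k(\calX^\varphi/_{\overline{\F}_q},\overline{\Q}_\ell)$: the dimension of the weight-$m$ graded piece agrees on both sides (this is Deligne's comparison, used in the same form in the appendix of \cite{hausel-villegas}).

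Next, for each $\varphi:R\to\overline{\F}_q$ in our shrunk base, the Lefschetz trace formula gives
$$
P(q^n)=\#\calX^\varphi(\F_{q^n})=\sum_{k}(-1)^k\,\Tr\!\left(F^n\,\big|\,H_c^k(\calX^\varphi/_{\overline{\F}_q},\overline{\Q}_\ell)\right)
$$
for all $n\ge 1$, where $F$ is the geometric Frobenius. By Deligne's theorem on weights, the eigenvalues of $F$ on each $H_c^k$ are algebraic numbers all of whose conjugates have absolute value $q^{m/2}$ for some integer $m$ (the weight). Grouping eigenvalues by absolute value and letting $n$ vary, one compares two expressions both polynomial in $q^n$: the right-hand side, viewed as a linear combination of $n$-th powers of the eigenvalues, and the left-hand side $P(q^n)$, a polynomial in $q^n$. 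A standard Vandermonde/asymptotic argument forces all eigenvalues to be of the form $q^i$ with $i\in\Z_{\ge 0}$, and yields
$$
\sum_{k}(-1)^k\,\dim\left({\rm Gr}_{2i}^WH_c^k(\calX^\varphi/_{\overline{\F}_q},\overline{\Q}_\ell)\right)=a_i
$$
where $P(T)=\sum_i a_i T^i$, and the weight-$m$ pieces vanish for $m$ odd.

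Finally, I transport this back to the complex setting via the compatibility statement from step one: the dimensions of ${\rm Gr}^W$-pieces on both sides agree. The vanishing of odd-weight pieces and the concentration of eigenvalues on powers of $q$ translate into $h_c^{p,q;k}(X)=0$ unless $p=q$, together with $\sum_k(-1)^k h_c^{p,p;k}(X)=a_p$. Summing gives
$$
E(X/_\C;x,y)=\sum_{p,q}\left(\sum_k(-1)^k h_c^{p,q;k}(X)\right)x^py^q=\sum_p a_p(xy)^p=P(xy),
$$
as required. The most delicate step is not the trace-formula computation but the comparison of weight filtrations between $\C$ and $\overline{\F}_q$ for a generic $\varphi$: this is the technical heart of the argument, supplied by Deligne's theory, and it is what lets the arithmetic identity $\#\calX^\varphi(\F_{q^n})=P(q^n)$ control the Hodge-theoretic invariant $E(X/_\C;x,y)$.
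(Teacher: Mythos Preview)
The paper does not give its own proof of this result; it simply attributes it to Katz's appendix in \cite{hausel-villegas}. Your outline follows the standard strategy up to a point, but there is a genuine gap in the final step. The comparison you invoke between the Hodge-theoretic and $\ell$-adic weight filtrations only matches the dimensions of ${\rm Gr}^W_m H_c^k$ on the two sides. From this, together with your Vandermonde argument showing that all Frobenius eigenvalues are integer powers of $q$, you correctly deduce that the odd-weight graded pieces vanish and that $\sum_k(-1)^k\dim{\rm Gr}^W_{2i}H_c^k(X_\C)=a_i$. But each ${\rm Gr}^W_{2i}H_c^k(X_\C)$ is a pure Hodge structure of weight $2i$ whose Hodge decomposition may a priori contain pieces of type $(p,q)$ with $p+q=2i$ and $p\neq q$; the $\ell$-adic Frobenius sees only $p+q$, not the Hodge filtration. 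Hence your argument establishes only the one-variable specialization $E(X;\sqrt{q},\sqrt{q})=P(q)$, not the two-variable identity $E(X;x,y)=P(xy)$. The inference ``eigenvalues are powers of $q$ $\Rightarrow$ $h_c^{p,q;k}=0$ unless $p=q$'' is exactly the step that does not follow from $\ell$-adic weight theory alone.

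What fills this gap in Katz's actual argument is $p$-adic, not $\ell$-adic, input. Once one knows that the Frobenius eigenvalues on a given weight-$2i$ piece are all equal to $q^i$, their $p$-adic valuations are all equal to $i$, so the Newton polygon is the straight segment of slope $i$. The Newton-above-Hodge theorem (Mazur, Katz--Messing, and extensions to the open/singular setting) then forces the Hodge polygon to coincide with this segment, which means the pure Hodge structure on ${\rm Gr}^W_{2i}$ is entirely of type $(i,i)$. This is precisely what upgrades the weight computation to $e^{p,q}(X)=0$ for $p\neq q$, and hence to $E(X;x,y)=P(xy)$. If you want to repair your write-up, you should replace the appeal to ``Deligne's comparison of weight filtrations'' by this $p$-adic Newton--Hodge argument (or cite Katz's appendix for it), since the $\ell$-adic comparison by itself is too coarse to see the Hodge bigrading.
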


We will use the notation $E(X/_\C\,;\,x)$ for $E(X_\C\,;\,\sqrt{x},\sqrt{x})$.

\subsubsection{Intersection cohomology}
If $X$ is an equidimensional algebraic variety over $\K$, we denote by $\IC X$ the intersection cohomology complex on $X$ with coefficients in the constant sheaf $\kappa$ as defined by Goresky-MacPherson and Deligne. If $X$ is non-singular, the complex $\IC X$ is just the complex with the constant sheaf $\kappa$ in degree $0$ and $0$ in other degrees. We define the compactly supported intersection cohomology $IH_c^i(X,\kappa)$ as the compactly supported $i$-th hypercohomology group $\H_c^i(X,\IC X)$. Finally we denote by $\pIC X$  the simple  perverse sheaf on $X$ obtained by shifting by ${\rm dim}\, X$ the complex $\IC X$.
\bigskip

\noindent \textbf{Characteristic functions.} Assume that $\K$ is an algebraic closure of a finite field $\F_q$ and that $X$ is an irreducible algebraic variety defined over $\F_q$. We denote by $F:X\rightarrow X$ the corresponding Frobenius endomorphism.  Let $K$ be  in the bounded "derived" category $\mathcal{D}_c^b(X)$ of $\kappa$-(constructible) sheaves on $X$, and assume that there exists an isomorphism $\varphi: F^*(K)\simeq K$. The \emph{characteristic function} $\mathbf{X}_{K,\varphi}:X^F\rightarrow\kappa$ of $(K,\varphi)$ is defined by $$\mathbf{X}_{K,\varphi}(x)=\sum_i(-1)^i{\rm Trace}\hspace{.05cm}\big(\varphi_x^i,\mathcal{H}^i_xK\big).$$

If $Y$ is an open nonsingular $F$-stable subset of $X$, we will simply denote by ${\bf X}_{\IC X}$ the function ${\bf X}_{\IC X,\varphi}$ where $\varphi:F^*\left(\IC X\right)\rightarrow \IC X$ is the unique isomorphism which induces the identity on $\mathcal{H}^0_x\left(\IC X\right)$ for all $x\in Y^F$.
\bigskip

\noindent \textbf{Mixed Hodge polynomials.} Recall (Saito \cite{saito}, see also \cite[Chapter 14]{Peters-etal}) that  $IH_c^k(X/_\C,\C)$ is endowed with a mixed Hodge structure. Namely there is a finite increasing filtration $W_\bullet^k$ on $IH_c^k(X/_\C,\Q)$, called the weight filtration, such that the complexified  subquotients  $(W_r^k/W_{r-1}^k)_\C$ are endowed with a \emph{pure Hodge structure} of weight $r$. If $X/_\C$ is \emph{rationally smooth} (i.e. the complex $\IC X$ is isomorphic to the constant sheaf $\kappa$ concentrated in degree $0$), it coincides with Deligne's mixed Hodge structure on $H_c^k(X/_\C,\C)$ which is defined in \cite{Del1}. 

Denote by $\{ih_c^{p,q;k}(X)\}_{p,q}$ the  mixed Hodge numbers of $IH_c^k(X/_\C,\C)$ so that $\{ih_c^{p,q;k}(X)\}_{p+q=r}$ are the  Hodge numbers of weight $r$, and  define the mixed Hodge polynomial  of $X/_\C$ as 

$$IH_c(X/_\C\,;\,x,y,t)=\sum_{p,q,k}ih_c^{p,q;k}(X)x^py^qt^k.$$

The $E^{ic}$-polynomial of $X/_\C$ is defined as $$E^{ic}(X/_\C\,;\,x,y):=IH_c(X/_\C\,;\,x,y,-1)=\sum_{p,q}\left(\sum_k(-1)^kih_c^{p,q;k}(X)\right)x^py^q.$$

If $X/_\C$ is not rationally smooth, we do not get  $E^{ic}(X/_\C\,;\,q)$ by counting points over finite fields. If the $R$-scheme $\calX$ is a spreading out of $X/_\C$ and $\varphi$ is a ring homomorphism $R\rightarrow\F_q$, the best we can expect is the following formula 

\beq
E^{ic}(X/_\C\,;\,q)\stackrel{?}{=}\sum_{x\in \calX^{\varphi}(\F_q)}\bfX_{\IC {\calX^\varphi(\overline{\F}_q)}}(x).
\label{Kacinter}\eeq
Note that if $X/_\C$ is rationally smooth and polynomial count, then the right hand side of the above formula is the evaluation at $q$ of the counting polynomial of $X$. Formula (\ref{Kacinter}) is thus a  generalization of Katz formula for intersection cohomology.

Now let us \cite[Theorem 3.3.2]{letellier4} recall the conditions for Formula (\ref{Kacinter}) to hold.

\begin{definition} Let $X$ be an algebraic variety over $\K$. We say that $X=\coprod_{\alpha\in I}X_\alpha$ is a \emph{stratification} of $X$ if the set $\{\alpha\in I\,|\, X_\alpha\neq\emptyset\}$ is finite, for each $\alpha\in I$ such that $X_\alpha\neq\emptyset$, the subset $X_\alpha$ is a locally closed nonsingular equidimensional subvariety of $X$, and for each $\alpha,\beta\in I$, if  $X_\alpha\cap\overline{X}_\beta\neq\emptyset$, then $X_\alpha\subset \overline{X}_\beta$.
 \label{stratification}\end{definition}

Assume that $X=X/_\C$ is irreducible and that it has a stratification $X=\coprod_{\alpha\in I} X_\alpha$ with open stratum  $X_{\alpha_o}$. Put $\alpha\leq\beta$ if $X_\alpha\subset\overline{X}_\beta$, and  $r_\alpha:=({\rm dim}\, X_\alpha-{\rm dim}\, X)/2$. 

Say that  $X$ satisfies the property $(E)$ with respect to this stratification if there is  a spreading out $\calX$ of $X$  (say over the ring $R$), a stratification $\coprod_{\alpha\in I}\calX_\alpha$ of $\calX$ and a morphism  $\mathcal{r}:\tilde{\calX}\rightarrow\calX$ of $R$-schemes such that:

\noindent (1) $\tilde{\calX}$ and the closed strata $\calX_\alpha$ are strongly polynomial count,

\noindent (2) $\calX_\alpha$ is a spreading out of $X_\alpha$ for all $\alpha\in I$ and the morphism $r:\tilde{X}\rightarrow X$ obtained from $\mathcal{r}$ after extension of scalars from $R$ to $\C$ yields an isomorphism of mixed Hodge structures

\begin{equation}H_c^i(\tilde{X},\Q)\simeq IH_c^i(X,\Q)\oplus\left(\bigoplus_{\alpha\neq\alpha_o} W_\alpha\otimes\left(IH_c^{i+2r_\alpha}(\overline{X}_\alpha,\Q)\otimes\Q(r_\alpha)\right)\right),\label{isomhs}\end{equation}where $\Q(-d)$ is the pure mixed Hodge structure on $\Q$ of weight $2d$ and with Hodge filtration $F^d=\C$ and $F^{d+1}=0$.

\noindent (3) for any ring homomorphism $\varphi: R\rightarrow \F_q$, the morphism $\mathcal{r}^{\varphi}:\tilde{\calX}^{ \varphi}\rightarrow\calX^{\varphi}$ obtained from $\mathcal{r}$ by base change yields an isomorphism   \begin{equation}\left(\mathcal{r}^{\varphi}\right)_*(\underline{\kappa})\simeq \pIC {\calX^\varphi}\oplus\left(\bigoplus_{\alpha\neq\alpha_o} W_\alpha\otimes \pIC {\overline{\calX}_\alpha^\varphi}(r_\alpha)\right)\label{isops}\end{equation}of perverse sheaves.

Assume now that  all complex varieties $\overline{X}_\alpha$ (in particular $X$) satisfy the property $(E)$ with respect to the stratification $\overline{X}_\alpha=\coprod_{\beta\leq\alpha}X_\beta$. We have the following theorem \cite[Theorem 3.3.2]{letellier4}.

\begin{theorem} With the above assumption, there exists a polynomial $P(T)\in\Z[T]$ such that for any ring homomorphism $\varphi: R\rightarrow\F_q$, we have

\begin{equation}\sum_{x\in \calX^\varphi(\F_q)}{\bf X}_{\IC {\calX^\varphi(\overline{\F}_q)}}(x)=P(q)\label{equakatz}\end{equation}and $$E^{ic}(X\,;\,x,y)=P(xy).$$
\label{Katz2}\end{theorem}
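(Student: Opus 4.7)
The plan is to proceed by induction on $\dim X$, using the two decompositions furnished by property $(E)$ in parallel: the perverse-sheaf decomposition (\ref{isops}) on the Frobenius/counting side, and the mixed-Hodge-structure decomposition (\ref{isomhs}) on the topological side. The base case $\dim X=0$ is immediate: $X$ is a finite set of reduced points, $\IC X$ is the constant sheaf placed in degree $0$, and both $E^{ic}(X;x,y)$ and $\sum_x \bfX_{\IC{\calX^\varphi}}(x)$ equal $\#X$. For the inductive step I may assume the theorem for every $\overline{X}_\alpha$ with $\alpha\neq\alpha_o$, since each such closure has strictly smaller dimension than $X$ and satisfies property $(E)$ with respect to its induced stratification by hypothesis.

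For the Frobenius side, apply the Grothendieck--Lefschetz trace formula to the proper morphism $\mathcal{r}^\varphi:\tilde{\calX}^\varphi\rightarrow\calX^\varphi$: this yields
$$\sum_{x\in\calX^\varphi(\F_q)}\bfX_{(\mathcal{r}^\varphi)_*(\underline{\kappa})}(x)=\#\tilde{\calX}^\varphi(\F_q).$$
Substituting the perverse decomposition (\ref{isops}) (and carefully tracking the shift by $\dim\calX$ that converts $\pIC{}$ back to $\IC{}$, as well as the Tate twist $(r_\alpha)$ which contributes a factor $q^{r_\alpha}$ to characteristic functions) gives
$$\#\tilde{\calX}^\varphi(\F_q)=\sum_{x\in\calX^\varphi(\F_q)}\bfX_{\IC{\calX^\varphi}}(x)+\sum_{\alpha\neq\alpha_o}(\dim W_\alpha)\,q^{r_\alpha}\sum_{x\in\overline{\calX}_\alpha^\varphi(\F_q)}\bfX_{\IC{\overline{\calX}_\alpha^\varphi}}(x).$$
By property $(E)(1)$, $\#\tilde{\calX}^\varphi(\F_q)$ is a polynomial in $q$ independent of $\varphi$; by the induction hypothesis, each inner sum over $\overline{\calX}_\alpha^\varphi(\F_q)$ is likewise independent of $\varphi$ and polynomial in $q$. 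Solving for the remaining unknown term shows that $\sum_{x\in\calX^\varphi(\F_q)}\bfX_{\IC{\calX^\varphi}}(x)$ equals a polynomial $P(q)\in\Z[T]$ that does not depend on $\varphi$, proving (\ref{equakatz}).

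On the Hodge side, the mixed-Hodge decomposition (\ref{isomhs}) yields the analogous identity
$$E(\tilde{X};x,y)=E^{ic}(X;x,y)+\sum_{\alpha\neq\alpha_o}(\dim W_\alpha)(xy)^{r_\alpha}\,E^{ic}(\overline{X}_\alpha;x,y),$$
where the $(xy)^{r_\alpha}$ factor comes from the Tate twist $\Q(r_\alpha)$. Since $\tilde{\calX}$ is strongly polynomial count, Katz's theorem (Theorem \ref{Katz}) gives $E(\tilde X;x,y)=P_{\tilde X}(xy)$; the induction hypothesis provides $E^{ic}(\overline X_\alpha;x,y)=P_\alpha(xy)$ for $\alpha\neq\alpha_o$. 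Solving the same linear equation backwards shows $E^{ic}(X;x,y)$ depends only on $xy$ and equals a certain polynomial $Q(xy)$. Finally, comparing the Frobenius and Hodge rearrangements term by term, the coefficients $(\dim W_\alpha)\,q^{r_\alpha}$ match in both decompositions, and by induction the polynomials expressing the $\overline X_\alpha$-contribution agree; hence $Q=P$, giving $E^{ic}(X;x,y)=P(xy)$ as required.

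The main obstacle is bookkeeping: the argument depends on verifying that the cohomological shift $\dim\calX$ (converting $\pIC{}$ to $\IC{}$), the Tate twists in (\ref{isops}) and (\ref{isomhs}), and the multiplicities $\dim W_\alpha$ all contribute in matching fashion to both the characteristic-function identity and the $E^{ic}$-identity, so that the two inductions close together and produce the same polynomial $P$. Once these bookkeeping items are in place, the rest reduces to solving a triangular system of equations indexed by the strata, which is automatic from the induction hypothesis.
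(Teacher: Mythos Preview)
Your approach is correct and is the standard argument. Note that the paper itself does not prove this theorem; it simply cites it as \cite[Theorem 3.3.2]{letellier4}, where the proof proceeds by exactly the induction on strata that you outline, using the parallel decompositions (\ref{isops}) and (\ref{isomhs}) together with Katz's theorem for the nonsingular resolution $\tilde X$.

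Two minor bookkeeping corrections. First, the Tate twist $(r_\alpha)$ contributes a factor $q^{-r_\alpha}$ (not $q^{r_\alpha}$) to characteristic functions, and correspondingly $(xy)^{-r_\alpha}$ on the Hodge side; since $r_\alpha=(\dim X_\alpha-\dim X)/2<0$, these are genuine positive powers and the resulting $P$ lands in $\Z[T]$. You have the sign reversed consistently on both sides, so the final comparison $Q=P$ is unaffected, but the intermediate identities should be fixed. Second, your identity $\sum_x\bfX_{(\mathcal{r}^\varphi)_*(\underline{\kappa})}(x)=\#\tilde{\calX}^\varphi(\F_q)$ uses that $H_c^*$ commutes with $(\mathcal{r}^\varphi)_*$, which requires $\mathcal{r}^\varphi$ to be proper; this is not listed explicitly in property $(E)$ as stated here, but is implicit (the decomposition (\ref{isops}) is an instance of the decomposition theorem) and holds in every application in the paper, where $\mathcal{r}$ is a resolution.
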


As for the $E$-polynomial, the above theorem suggests to introduce the notation 

$$E^{ic}(X/_\C\,;\,x):=E^{ic}(X/_\C\,;\,\sqrt{x},\sqrt{x}).$$

Note that $E^{ic}(X/_\C\,;\, x)$ is then the specialization at $t=-1$ of the mixed Poincar\'e polynomial 

$$
IH_c(X/_\C\,;\,x,t):=IH_c(X/_\C\,;\,\sqrt{x},\sqrt{x},t)=\sum_{m,i}\left({\rm dim}\,{\rm Gr}_m\, IH_c^i(X/_\C)\right) x^{m/2} t^i.
$$

Note that the mixed Poincar\'e polynomial makes also sense for an algebraic variety $X$ defined over a finite field $\F_q$ where the weight filtration on $IH_c^i(X,\kappa)$ is defined in terms of Frobenius eigenvalues.

\subsection{Frobenius formula}\label{frobsec}
For technical reasons we will need to work with $\kappa=\overline{\Q}_\ell$ instead of $\C$ (although this is not required for this section where we could use $\C$). Fixing an isomorphism $\kappa\simeq\C$ gives an involution $\kappa\rightarrow\kappa,x\mapsto\overline{x}$ such that $\overline{\zeta}=\zeta^{-1}$ for any root of unity $\zeta$.

For a finite group $H$ denote by $C(H)$ the $\kappa$-space of all class functions $H\rightarrow\kappa$. The space $C(H)$ comes with two natural basis, namely the  irreducible characters of $H$ and the characteristic functions $1_C$ of conjugacy classes of $H$ that take the value $1$ on $C$ and $0$ elsewhere.  It is also endowed with two products, the pointwise multiplication (which for characters corresponds to taking tensor products)  and the convolution $*$ defined as 

$$
(f_1*f_2)(z)=\sum_{xy=z}f_1(x)f_2(y).
$$
We also define the inner product 

\beq
\langle f_1,f_2\rangle_H=\frac{1}{|H|}\sum_{h\in H}f_1(h)\overline{f_2(h)}.
\label{inner}\eeq

Given $k$-conjugacy classes $C_1,\dots,C_k$ of $H$ and an integer $g\geq 0$, define

$$
\calU_\bC=\left\{(a_1,b_1,\dots,a_g,b_g,x_1,\dots,x_k)\in H^{2g}\times \prod_i C_i\,\left|\,\prod_i(a_i,b_i)\prod_j x_j=1\right\}\right.,
$$
where $(a,b)$ is the commutator $aba^{-1}b^{-1}$.
We have the following well-known formula which goes back to Frobenius (see \cite[\S 3]{HLV})

\begin{theorem}

\beq
\frac{\#\,\calU_\bC}{|H|}=\sum_{\chi\in{\rm Irr}\, H}\left(\frac{|H|}{\chi(1)}\right)^{2g-2}\prod_{i=1}^k\frac{|C_i|\,\chi(C_i)}{\chi(1)}
\label{frobenius}\eeq

\label{frob}\end{theorem}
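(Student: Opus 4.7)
The plan is to prove this classical Frobenius identity by pure character theory of the finite group $H$. The underlying principle is that the space $C(H)$ of class functions, equipped with the convolution product $*$, is diagonalised by the basis of irreducible characters: the second orthogonality relation gives
\[
\chi*\psi=\delta_{\chi,\psi}\,\frac{|H|}{\chi(1)}\,\chi\qquad(\chi,\psi\in\Irr H),
\]
so if $f=\sum_\chi f_\chi\chi$ and $g=\sum_\chi g_\chi\chi$ then $(f*g)_\chi=\frac{|H|}{\chi(1)}f_\chi g_\chi$. The idea is to realise $\#\,\calU_\bC$ as the value at $1\in H$ of an explicit convolution of class functions, then read off the result in the character basis.

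Concretely, one observes
\[
\#\,\calU_\bC=\bigl(\underbrace{\phi*\cdots*\phi}_{g\text{ copies}}*1_{C_1}*\cdots*1_{C_k}\bigr)(1),
\]
where $\phi(z):=\#\{(a,b)\in H^2:aba^{-1}b^{-1}=z\}$ and $1_{C_i}$ is the characteristic function of $C_i$. Two inputs are then needed. For the indicators, the orthogonality relations directly give $1_{C_i}=\frac{|C_i|}{|H|}\sum_{\chi}\overline{\chi(C_i)}\,\chi$. For $\phi$, I will show $\phi_\chi=|H|/\chi(1)$ by computing $\sum_{a,b}\chi(aba^{-1}b^{-1})$: Schur's lemma shows that for any representation $\rho$ affording $\chi$, the operator $\sum_{a}\rho(a)\rho(b)\rho(a)^{-1}$ is scalar, equal to $\frac{|H|\chi(b)}{\chi(1)}\mathrm{Id}$, so that $\sum_{a}\chi(aba^{-1}b^{-1})=\frac{|H|}{\chi(1)}|\chi(b)|^2$; summing over $b$ and using $\langle\chi,\chi\rangle_H=1$ gives $\sum_{a,b}\chi(aba^{-1}b^{-1})=|H|^{2}/\chi(1)$, from which $\phi_\chi=|H|/\chi(1)$ follows.

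Plugging these expansions into the convolution rule, the $g$ copies of $\phi_\chi=|H|/\chi(1)$ combine with the $g+k-1$ extra factors of $|H|/\chi(1)$ produced by iterating $*$, and evaluation at $1$ introduces one further factor $\chi(1)$ via $F(1)=\sum_\chi F_\chi\,\chi(1)$. A straightforward bookkeeping of exponents yields
\[
\frac{\#\,\calU_\bC}{|H|}=\sum_{\chi\in\Irr H}\Bigl(\frac{|H|}{\chi(1)}\Bigr)^{2g-2}\prod_{i=1}^{k}\frac{|C_i|\,\overline{\chi(C_i)}}{\chi(1)}.
\]
Finally I remove the complex conjugates using that $\chi\mapsto\overline{\chi}$ is an involution on $\Irr H$ with $\overline{\chi}(1)=\chi(1)$, so the sum is unchanged under this relabelling. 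The only delicate point is the careful tracking of the powers of $|H|$ and $\chi(1)$ through the iterated convolution; everything else is an immediate consequence of Schur's lemma and the orthogonality relations, so no serious obstacle is expected.
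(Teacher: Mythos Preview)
Your argument is correct and is the standard derivation of the Frobenius formula. Note, however, that the paper does not actually supply a proof of this theorem: it is stated as a well-known formula going back to Frobenius, with a reference to \cite[\S 3]{HLV}. The paper does record, immediately afterward, the identity
\[
\frac{\#\calU_\bC}{|H|}=\left\langle\calE* 1_{C_1}*\cdots* 1_{C_k},1_1\right\rangle_H
\]
(with $\calE=\phi^{*g}$ in your notation), which is precisely your starting observation; from there the paper simply invokes the classical result rather than expanding in irreducible characters as you do. So there is nothing to compare: your proof fills in the details the paper chose to omit, and does so correctly.
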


 It will be convenient for us to use the following straightforward formula 
\beq 
\frac{\#\calU_\bC}{|H|}=\left\langle\calE* 1_{C_1}*\cdots* 1_{C_k},1_1\right\rangle_H
\label{conv}\eeq
where 

\beq
\calE(z)=\#\left\{(a_1,b_1,\dots,a_g,b_g)\in H^{2g}\,\left|\,\prod_i(a_i,b_i)=z\right\}\right.,
\label{Xi}\eeq
and $1_1$ is the characteristic function of the trivial conjugacy class  $\{1\}$.

\section{Basic properties of character varieties}\label{sec-charvar}

\subsection{Character varieties}\label{charvarsec}

Fix a genus $g$ compact Riemann surface $\Sigma$ and a subset $S=\{a_1,\dots, a_k\}\subset\Sigma$. Consider $k$ conjugacy classes $C_1,\dots, C_k$ of $\GL_n(\K)$ and put $\bC=(C_1,\dots,C_k)$.

Define

$$
\calU_{\overline{\bC}}:=\left\{(a_1,b_1,\dots, a_g,b_g,x_1,\dots,x_k)\in (\GL_n)^{2g}\times \overline{C}_1\times\cdots\times\overline{C}_k\,\left|\, \prod_{i=1}^g(a_i,b_i)\prod_{j=1}^k x_j=I\right\}\right.
$$
where $(a,b)$ denotes the commutator $aba^{-1}b^{-1}$ and where $I$ denotes the identity matrix. Then $\calU_{\overline{\bC}}$ can be identified with the space of all representations of the fundamental group $\pi_1\left(\Sigma\backslash S\right)$ into $\GL_n(\K)$ such that the single loop around the puncture $a_i$ is sent to the Zariski closure $\overline{C}_i$ of $C_i$.

 The group $\GL_n$ acts diagonally by conjugation on $\calU_{\overline{\bC}}$ and we consider then the GIT quotient $$\M_{\overline{\bC}}:=\calU_{\overline{\bC}}//\GL_n={\rm Spec}\left(\K[\calU_{\overline{\bC}}]^{\GL_n}\right)$$ which parameterized the closed $\GL_n$-orbits of $\calU_{\overline{\bC}}$. Put $\calU_\bC:=\calU_{\overline{\bC}}\cap \left((\GL_n)^{2g}\times C_1\times\cdots\times C_k\right)$. It corresponds to  the  representations $\pi_1(\Sigma\backslash S)\rightarrow\GL_n(\K)$ mapping the single loops into $C_1,\dots,C_k$. Denote by $\M_\bC$ the image of $\calU_\bC$ in $\M_{\overline{\bC}}$. Let $(\calU_\bC)^{\rm Irr}$ be the subset of $\calU_\bC$ of irreducible representations. 
 
\begin{definition} Say that $(C_1,\dots,C_k)$ is \emph{generic} if $\prod_{i=1}^k{\rm det}\,(C_i)=1$, and if for any subspace $V\subset \K^n$ stable by some $x_i\in C_i$ (for each $i=1,\dots,k$) such that 
 $$
 \prod_{i=1}^k{\rm det}\,(x_i|_V)=1$$
 then either $V=0$ or $V=\K^n$.
 \label{gendef}\end{definition}

Our definition of generic tuple is equivalent to that given by Kostov \cite[Definition 10]{kostov}. 

\begin{example}If $\zeta\in\K$ is a primitive $n$-th root of unity, then for any unipotent conjugacy classes $C_1,\dots,C_k$, the tuple $(C_1,\dots,C_{k-1},\zeta\cdot C_k)$ is generic.\label{primitive}\end{example}

We now define a partial order on $k$-tuples of conjugacy classes of $\GL_n$ as follows. Given two $k$-tuples $\bC=(C_1,\dots,C_k)$ and $\bC'=(C_1',\dots,C'_k)$ of conjugacy classes  of $\GL_n$, say that $\bC'\unlhd \bC$ if $C_i'\subset\overline{C}{_i}$ for all $i=1,\dots,k$.

We have the following easy lemma.

\begin{lemma}Let $\bC_1$ and $\bC_2$ be two $k$-tuples of conjugacy classes of $\GL_n(\K)$ such that $\bC_1\unlhd \bC_2$. Then $\bC_1$ is generic if and only if $\bC_2$ is generic.
\end{lemma}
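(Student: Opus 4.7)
The strategy is to show that genericity of $\bC=(C_1,\dots,C_k)$ depends only on the multisets $E_i$ of eigenvalues (with multiplicities) of the classes $C_i$, and then to observe that corresponding components of $\bC_1$ and $\bC_2$ share the same eigenvalue multiset. Indeed, if $C\subset\overline{C'}$ for two conjugacy classes of $\GL_n$, then $C$ and $C'$ have the same characteristic polynomial, since that polynomial is constant on $C'$, hence on $\overline{C'}$. In particular $\det(C)=\det(C')$, so the determinant condition $\prod_i\det(C_i)=1$ is equivalent for $\bC_1$ and $\bC_2$.

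For the subspace condition I would prove the following spectral reformulation: $\bC$ satisfies it if and only if, for every integer $m$ with $0<m<n$ and every choice of sub-multisets $S_i\subset E_i$ with $|S_i|=m$, one has $\prod_{i=1}^k\prod_{\lambda\in S_i}\lambda\neq 1$. The ``only if'' direction is immediate: if $x_i\in C_i$ stabilizes a subspace $V$ with $0<\dim V<n$ and $\prod_i\det(x_i|_V)=1$, then $\det(x_i|_V)$ is the product of the eigenvalues of $x_i|_V$, which form a sub-multiset of $E_i$ of size $\dim V$, so one takes $S_i$ to be this multiset.

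For the ``if'' direction, given sub-multisets $S_i\subset E_i$ of size $m$ with $\prod_i\prod_{\lambda\in S_i}\lambda=1$, I would explicitly construct $x_i\in C_i$ sharing a common invariant subspace $V$ of dimension $m$ on which $x_i$ has eigenvalue multiset $S_i$. Fix a basis of $\K^n$ and put $V=\langle e_1,\dots,e_m\rangle$. For each $i$, take $x_i$ in block upper triangular form whose upper $m\times m$ block has eigenvalue multiset $S_i$, whose lower $(n-m)\times(n-m)$ block has multiset $E_i\setminus S_i$, and whose super-diagonal entries are chosen so that $x_i$ has the prescribed Jordan type of $C_i$. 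The underlying constructive fact is that a single Jordan block $J_d(\lambda)$ stabilizes $\langle e_1,\dots,e_k\rangle$ for every $0\leq k\leq d$, with restriction $J_k(\lambda)$ and induced quotient $J_{d-k}(\lambda)$, so one can freely split each Jordan block of $C_i$ between $V$ and $\K^n/V$; combining these splittings independently over the Jordan blocks of $C_i$ realizes any desired sub-multiset $S_i$ of $E_i$. Then $\prod_i\det(x_i|_V)=\prod_i\prod_{\lambda\in S_i}\lambda=1$ and $V$ is a nontrivial proper invariant subspace, violating genericity of $\bC$.

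With the spectral reformulation in hand, both genericity conditions are manifestly determined by the multisets $E_1,\dots,E_k$, which coincide for corresponding components of $\bC_1$ and $\bC_2$; the lemma follows. The main (though still elementary) point is the ``if'' direction of the reformulation, i.e.\ realizing every admissible sub-multiset as the eigenvalues of an invariant subspace of some element of the class, which is a routine exercise on Jordan canonical form.
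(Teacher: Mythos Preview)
Your proof is correct. The paper itself gives no proof, labeling this an ``easy lemma,'' and your spectral reformulation---that genericity of $\bC$ is equivalent to the condition that no proper nonempty sub-multisets $S_i\subset E_i$ of common size have $\prod_i\prod_{\lambda\in S_i}\lambda=1$---is exactly the natural way to make this precise. Both directions of the reformulation are handled correctly: the ``only if'' uses that the characteristic polynomial of $x_i|_V$ divides that of $x_i$, and the ``if'' uses the elementary fact that a Jordan block $J_d(\lambda)$ already stabilizes each initial coordinate subspace, so one can realize any desired eigenvalue split by conjugating a Jordan-form representative of $C_i$ to send the chosen invariant subspace to $\langle e_1,\dots,e_m\rangle$. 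Since $C\subset\overline{C'}$ forces equal characteristic polynomials, the eigenvalue multisets coincide componentwise for $\bC_1$ and $\bC_2$, and the lemma follows.
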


The lemma shows that the existence of generic tuples with prescribed Jordan form reduces to the case of semisimple conjugacy classes.

Recall the following criterion \cite[\S 2.1]{HLV}.

\begin{proposition} Assume given a $k$-tuple $\muhat=(\mu^1,\dots,\mu^k)$ of partitions of $n$ and assume that the characteristic of the field $\K$ does not divide the gcd of the parts of $\mu^1,\dots,\mu^k$. Then there always exists a generic tuple $(C_1,\dots,C_k)$ of semisimple conjugacy class of $\GL_n(\K)$ of type $\muhat$, i.e., for all $i=1,\dots,k$, the multiplicities of the eigenvalues of $C_i$ are given by the parts of the partition $\mu^i$.
\end{proposition}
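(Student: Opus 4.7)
The plan is to parametrise semisimple classes by their eigenvalues and produce the generic tuple as a generic point of a well-chosen codimension-one subtorus of $(\K^\times)^N$.

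First I would set up coordinates: for each $i$ write $l_i=l(\mu^i)$, so that a semisimple class $C_i$ of type $\mu^i$ is determined by distinct eigenvalues $\alpha^i_1,\dots,\alpha^i_{l_i}\in\K^\times$ with respective multiplicities $\mu^i_1,\dots,\mu^i_{l_i}$. Any $x_i$-stable subspace $V\subset\K^n$ decomposes along the eigenspaces of the semisimple $x_i$ as $V=\bigoplus_j V^i_j$; setting $n^i_j:=\dim V^i_j$ one has $0\le n^i_j\le\mu^i_j$, the sum $\sum_j n^i_j=\dim V$ is independent of $i$, and $\det(x_i|_V)=\prod_j(\alpha^i_j)^{n^i_j}$. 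The genericity condition of Definition~\ref{gendef} is therefore implied by the two requirements (a) $\prod_{i,j}(\alpha^i_j)^{\mu^i_j}=1$, and (b) $\prod_{i,j}(\alpha^i_j)^{n^i_j}\neq 1$ for every integer tuple $(n^i_j)$ with $0\le n^i_j\le\mu^i_j$, $\sum_j n^i_j$ constant in $i$, and $(n^i_j)\notin\{0,(\mu^i_j)\}$. The problem is now purely character-theoretic on the torus $T=(\K^\times)^N$, $N=\sum_i l_i$.

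Next I would isolate the role of $d:=\gcd\{\mu^i_j\}_{i,j}$. Writing $\nu^i_j=\mu^i_j/d$, so that $\gcd\{\nu^i_j\}=1$, the character $\chi':T\to\K^\times$ defined by $\chi'((\alpha^i_j))=\prod(\alpha^i_j)^{\nu^i_j}$ is surjective with connected $(N-1)$-dimensional kernel (since the vector $(\nu^i_j)\in\Z^N$ has gcd-one coordinates, hence spans a saturated sublattice), and its $d$-th power is the character $\chi$ appearing in (a). Since $\charx(\K)\nmid d$ by hypothesis, the subgroup of $d$-th roots of unity in $\K^\times$ contains a primitive element $\zeta$; the fibre $V_\zeta:=(\chi')^{-1}(\zeta)$ is then a nonempty translate of $\ker\chi'$, hence an irreducible $(N-1)$-dimensional subvariety lying inside the hypersurface (a).

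The main obstacle, and the only point where the characteristic hypothesis is used, occurs next. For any tuple of the form $(n^i_j)=q\,(\nu^i_j)$ with $q\in\Z$, the product $\prod(\alpha^i_j)^{n^i_j}$ restricted to $V_\zeta$ equals $\zeta^q$; the bounds $0\le n^i_j\le\mu^i_j$ force $0\le q\le d$, and $\zeta^q=1$ only when $d\mid q$, i.e.\ $q\in\{0,d\}$, which corresponds exactly to the trivial tuples $(n^i_j)\in\{0,(\mu^i_j)\}$. So the ``unavoidable'' bad tuples $(n^i_j)=(p/d)(\mu^i_j)$ with $1\le p\le d-1$, which would obstruct (b) whenever $d>1$, are ruled out precisely by our primitive choice of $\zeta$. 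For every remaining candidate bad tuple, i.e.\ one that is \emph{not} an integer multiple of $(\nu^i_j)$, the associated character restricts non-trivially to $\ker\chi'$ (whose character lattice is $\Z^N/\Z\cdot(\nu^i_j)$), and its vanishing locus is therefore a proper closed subvariety of $V_\zeta$. I would finish by observing that these finitely many proper closed loci, together with the open conditions $\alpha^i_j\neq\alpha^i_{j'}$ for $j\neq j'$, define a dense open subset of the irreducible variety $V_\zeta$; since $\K$ is infinite (being either $\C$ or $\overline{\F}_q$) this subset has a $\K$-point, and any such point yields a generic tuple of semisimple conjugacy classes of type $\muhat$.
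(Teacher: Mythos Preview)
The paper does not prove this proposition; it simply recalls it from \cite[\S 2.1]{HLV}. So there is nothing in the present paper to compare your argument against.

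That said, your argument is correct and is essentially the standard one. The reformulation of Definition~\ref{gendef} in terms of integer tuples $(n^i_j)$ with $0\le n^i_j\le\mu^i_j$ and $\sum_j n^i_j$ independent of $i$ is accurate (for semisimple $C_i$ one can realise any such tuple by choosing the conjugates $x_i$ independently so that a fixed $V$ meets their eigenspaces in the prescribed dimensions). The key step---passing to the fibre of the primitive character $\chi'$ over a primitive $d$-th root of unity so as to kill exactly the ``forced'' bad tuples $q\cdot(\nu^i_j)$ with $0<q<d$---is precisely where the hypothesis $\charx(\K)\nmid d$ enters, and your handling of it is clean. For the remaining tuples the argument that each cuts out a proper closed subset of the irreducible coset $V_\zeta$ is correct because $(\nu^i_j)$ is primitive in $\Z^N$.

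One cosmetic remark: at the end you justify the existence of a $\K$-point by saying ``$\K$ is infinite''. What you actually use is that $\K$ is algebraically closed (so that a nonempty open subset of an irreducible $\K$-variety has a $\K$-point); infiniteness alone does not suffice in general, though of course both $\C$ and $\overline{\F}_q$ are algebraically closed.
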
 

The aim of this section is to prove the following theorem.

\begin{theorem} Assume that $\bC$ is generic and that $\calU_{\overline{\bC}}\neq \emptyset$. Then 

\noindent (i) $(\calU_\bC)^{\rm Irr}=\calU_\bC$ and the quotient map $\calU_{\overline{\bC}}\rightarrow\M_{\overline{\bC}}$ is a principal $\PGL_n$-bundle in the \'etale topology. In particular the $\PGL_n$-orbits of $\calU_{\overline{\bC}}$ are all closed of same dimension ${\rm dim}\,\PGL_n$. 

\noindent (ii) $\M_\bC$ is a dense nonsingular open subset of $\M_{\overline{\bC}}$ (in particular it is not empty).

\noindent (iii) The variety $\M_{\overline{\bC}}$ is irreducible of dimension

$$
d_\bC:=2gn^2-2n^2+2+\sum_{i=1}^k{\rm dim}\,C_i.
$$

\label{geoth}\end{theorem}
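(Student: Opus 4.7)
The plan is to handle (i) and (ii) by the standard deformation-theoretic argument that works for semisimple classes (compare \cite[\S 2]{HLV} and \cite[Proposition 3.4]{Foth}), and to reduce (iii) to the semisimple case via the resolution $\bM_{\bf L,P,\sigma}\to\M_{\overline{\bC}}$ of \S\ref{resolchar}.

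For (i), the first step is to show that every $\rho\in\calU_{\overline{\bC}}$ is irreducible. If $0\subsetneq V\subsetneq\K^n$ were $\rho$-invariant, restricting the relation $\prod_j(a_j,b_j)\prod_i x_i=I$ to $V$ and taking determinants would kill the commutator contributions and force $\prod_i\det(x_i|_V)=1$. Since each $\det(x_i|_V)$ with $x_i\in\overline{C}_i$ preserving $V$ is a product of $\dim V$ eigenvalues of $C_i$ counted with multiplicity, one exhibits $x_i'\in C_i$ and a subspace $V'$ of the same dimension realising the identical determinant product, contradicting Definition \ref{gendef}. By Schur's lemma the $\GL_n$-stabilizer of any $\rho\in\calU_{\overline{\bC}}$ is then the center, so $\PGL_n$ acts freely. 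Affineness of $\calU_{\overline{\bC}}$ together with Luna's étale slice theorem then yield the principal $\PGL_n$-bundle structure in the étale topology, all orbits being closed of dimension $\dim\PGL_n$.

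For (ii), I would consider the morphism $\mu\colon(\GL_n)^{2g}\times C_1\times\cdots\times C_k\to\SL_n$ sending $(a_j,b_j,x_i)$ to $\prod_j(a_j,b_j)\prod_i x_i$ (its image lands in $\SL_n$ by the determinant condition in Definition \ref{gendef}), and compute $d\mu$ at a point $\rho$ of $\calU_\bC=\mu^{-1}(I)$. A direct tangent-space calculation, identical to the one in the semisimple case, identifies the cokernel of $d\mu_\rho$ with the Lie algebra of the centralizer $Z_{\GL_n}(\rho)$; irreducibility of $\rho$ makes this centralizer the center, so $d\mu_\rho$ is surjective and $\calU_\bC$ is smooth of dimension $2gn^2+\sum_i\dim C_i-(n^2-1)$. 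Descending along the principal $\PGL_n$-bundle of (i) then makes $\M_\bC$ smooth of dimension $d_\bC$. Openness of $\M_\bC\subset\M_{\overline{\bC}}$ is automatic from openness of $\calU_\bC\subset\calU_{\overline{\bC}}$ together with openness of the GIT quotient map on $\PGL_n$-saturated opens; density will drop out of (iii).

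For (iii), the strategy is to produce an irreducible variety that dominates $\M_{\overline{\bC}}$. The resolution $\bM_{\bf L,P,\sigma}\to\M_{\overline{\bC}}$ of \S\ref{resolchar}, attached to a parabolic datum $({\bf L,P})$ with $\sigma\in Z_{\bf L}$ matched to $\bC$, is smooth and surjective of pure dimension $d_\bC$. By the point-count identity $\#\bM_{\bf L,P,\sigma}(\F_q)=\#\M_\bfS(\F_q)$ stated after Theorem \ref{theo1-intro} and established independently in \S\ref{proofcounting}, it has the same finite-field counts as the character variety $\M_\bfS$ attached to an adapted generic tuple of semisimple conjugacy classes, and the latter is already known to be geometrically irreducible by \cite{HLV2}. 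The principal obstacle is upgrading this numerical coincidence to geometric irreducibility of $\bM_{\bf L,P,\sigma}$; I would do this by applying Lang--Weil on both sides to force the number of top-dimensional geometric components of $\bM_{\bf L,P,\sigma}$ to equal that of $\M_\bfS$, namely one, after which pure-dimensionality delivers irreducibility. Irreducibility of $\M_{\overline{\bC}}$ then follows as the image of an irreducible variety under a dominant morphism, the dimension is $d_\bC$ by (ii), and density of $\M_\bC$ is automatic.
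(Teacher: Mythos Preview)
Your overall strategy coincides with the paper's: parts (i) and the smoothness in (ii) are handled by the same deformation-theoretic argument as in the semisimple case, and for (iii) you pass through the resolution $\bM_{\bf L,P,\sigma}\to\M_{\overline{\bC}}$, compare point counts with the semisimple variety $\M_\bfS$, and read off the number of top-dimensional components. Two minor points worth flagging: Luna's slice theorem is a characteristic-zero statement, so for $\K=\overline{\F}_q$ you should invoke Bardsley--Richardson \cite{BR} instead and check separability of orbits, as the paper does; and your Lang--Weil step treats only the positive-characteristic case, whereas for $\K=\C$ the paper goes through spreading out and the $E$-polynomial identity (Corollary~\ref{E-poly-coro}), using that for nonsingular varieties the top coefficient of $E$ counts top-dimensional irreducible components.

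There is, however, a genuine circularity in your treatment of the non-emptiness (and hence density) of $\M_\bC$. In (ii) you establish smoothness and the dimension formula \emph{conditional} on $\M_\bC\neq\emptyset$, deferring density to (iii); but in (iii) you then write ``the dimension is $d_\bC$ by (ii)'', which presupposes exactly that non-emptiness. Irreducibility of $\bM_{\bf L,P,\sigma}$ and surjectivity onto $\M_{\overline{\bC}}$ do give irreducibility of the target, but not its dimension: if $\M_\bC$ were empty the resolution would have everywhere positive-dimensional fibres and $\dim\M_{\overline{\bC}}$ could be strictly smaller than $d_\bC$. The paper breaks this loop by an independent argument (Corollary~\ref{maincoro}): from $\M_{\overline{\bC}}\neq\emptyset$ one gets $\M_\bfS\neq\emptyset$ via the counting identity, hence $\v_\bfS$ is a root of $\Gamma_\bfS$; since $\Gamma_\bfS=\Gamma_\bC$ and $\v_\bfS$ is Weyl-conjugate to $\v_\bC$, the latter is also a root, and then Crawley-Boevey's Theorem~\ref{nemp} (for $g=0$) or the fundamental-domain observation (for $g>0$) yields $\calU_\bC\neq\emptyset$ directly. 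You should insert this root-system step, or some other argument for $\dim\M_{\overline{\bC}}=d_\bC$ not relying on $\M_\bC\neq\emptyset$, to close the gap.
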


When the conjugacy classes $C_1,\dots,C_k$ are semisimple, this theorem is proved in \cite[\S 2.1]{HLV} except the irreducibility which is proved in \cite{HLV2} (the proof of the irreducibility is by far the most difficult part). The assertion (i) is an easy generalisation of the semisimple case as well as the fact that, if not empty, then $\M_\bC$ is nonsingular with connected components all of same dimension $d_\bC$. Note that since $\calU_{\overline{\bC}}$ is affine, the fact that $\calU_{\overline{\bC}}\rightarrow\M_{\overline{\bC}}$ is a principal $\PGL_n$-bundle is equivalent (see Bardsley-Richardson \cite[Proposition 8.2]{BR}) to the fact that  $\PGL_n$ acts set-theoritically freely on $\calU_{\overline{\bC}}$ and that the $\PGL_n$-orbits are all separable. Only the irreducibility as well as the equivalence between the non-emptiness of $\M_\bC$ and that of $\M_{\overline{\bC}}$ require new arguments. 

The following next sections are devoted to the proof of the irreducibility of $\M_{\overline{\bC}}$ and the non-emptiness of $\M_\bC$ (assuming the non-emptiness of $\M_{\overline{\bC}}$). 

Let us state a corollary.

\begin{corollary}Assume that $\bC$ is generic and $\calU_{\overline{\bC}}$ is not empty.  The decomposition

$$
\M_{\overline{\bC}}=\coprod_{\bC'\unlhd \bC}\M_{\bC'}
$$
is a stratification.
\label{charstrat}\end{corollary}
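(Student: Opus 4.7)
The plan is to verify the three conditions of Definition \ref{stratification} for the decomposition, all of which reduce cleanly to Theorem \ref{geoth} applied to each $\bC'\unlhd\bC$. Set-theoretically the decomposition is a partition because each closed $\PGL_n$-orbit in $\calU_{\overline{\bC}}$ is represented by a tuple $(a_1,b_1,\dots,a_g,b_g,x_1,\dots,x_k)$ in which the $\GL_n$-conjugacy class $C_i'$ of $x_i$ is a uniquely determined class contained in $\overline{C}_i$; hence the orbit lies in $\M_{\bC'}$ for a unique $\bC'\unlhd\bC$. Finiteness of the nonempty strata then follows from the classical fact that the Zariski closure of each $C_i$ is a finite union of $\GL_n$-conjugacy classes.

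Next I would verify that every nonempty $\M_{\bC'}$ is locally closed, nonsingular, and equidimensional in $\M_{\overline{\bC}}$. By the lemma preceding Theorem \ref{geoth}, any $\bC'\unlhd\bC$ is again generic, so Theorem \ref{geoth} applies to $\bC'$. In particular, if $\M_{\bC'}\neq\emptyset$ then $\M_{\overline{\bC'}}$ is irreducible of dimension $d_{\bC'}$ and $\M_{\bC'}$ is a nonsingular dense open subset of it, hence equidimensional. The variety $\calU_{\overline{\bC'}}$ is a $\PGL_n$-stable closed subvariety of $\calU_{\overline{\bC}}$, and since the quotient map $\calU_{\overline{\bC}}\to\M_{\overline{\bC}}$ is a principal $\PGL_n$-bundle by Theorem \ref{geoth}(i), its image $\M_{\overline{\bC'}}$ is a closed subvariety of $\M_{\overline{\bC}}$ (and coincides with the GIT quotient of $\calU_{\overline{\bC'}}$, since Theorem \ref{geoth}(i) applied to $\bC'$ guarantees that every $\PGL_n$-orbit there is already closed). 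Thus $\M_{\bC'}$, open in the closed subvariety $\M_{\overline{\bC'}}$, is locally closed in $\M_{\overline{\bC}}$.

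For the frontier condition I would argue as follows: since $\M_{\bC_2}$ is dense in $\M_{\overline{\bC_2}}$ by Theorem \ref{geoth}(ii) and $\M_{\overline{\bC_2}}$ is closed in $\M_{\overline{\bC}}$ by the previous paragraph, the closure $\overline{\M_{\bC_2}}$ equals $\M_{\overline{\bC_2}}$. Applying the set-theoretic decomposition to the character variety $\M_{\overline{\bC_2}}$ itself gives $\M_{\overline{\bC_2}}=\coprod_{\bC'\unlhd\bC_2}\M_{\bC'}$. Consequently, if $\M_{\bC_1}\cap\overline{\M_{\bC_2}}\neq\emptyset$, then by disjointness of distinct strata one must have $\bC_1\unlhd\bC_2$, which forces $\M_{\bC_1}\subset\M_{\overline{\bC_2}}=\overline{\M_{\bC_2}}$, as required. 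The only genuinely delicate point in the argument, which I would spell out carefully, is the identification of the image of $\calU_{\overline{\bC'}}$ in $\M_{\overline{\bC}}$ with the GIT quotient $\M_{\overline{\bC'}}$ of $\calU_{\overline{\bC'}}$; everything else is formal once Theorem \ref{geoth} is invoked stratum-by-stratum.
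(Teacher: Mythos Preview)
Your proof is correct and is precisely the natural unpacking of why the corollary follows from Theorem \ref{geoth}; the paper states the result as an immediate corollary without providing an explicit argument, so your verification of Definition \ref{stratification} stratum-by-stratum via Theorem \ref{geoth} (applied to each generic $\bC'\unlhd\bC$) is exactly what is intended. The one point you flag as delicate --- identifying the image of $\calU_{\overline{\bC'}}$ in $\M_{\overline{\bC}}$ with the GIT quotient $\M_{\overline{\bC'}}$ --- is indeed the only thing requiring a word of justification, and your reasoning there (via the principal $\PGL_n$-bundle structure and closedness of all orbits) is sound.
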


\begin{remark} If $\M_{\overline{\bC}}$ is not empty and $\bC$ generic, then the open stratum $\M_\bC$ is also not empty  by Theorem \ref{geoth} but other strata may be empty (it is even possible to have $\M_{\overline{\bC}}=\M_\bC$ with $\bC\neq\overline{\bC}$, see \S \ref{comment} Comment 2).
\end{remark}

\subsection{Non-emptiness of $\calU_\bC$ and roots of quivers}\label{ne}

When $g=0$, the problem of describing the $k$-tuples $\bC$ for which $(\calU_\bC)^{\rm Irr}$ is not empty is stated and studied by Kostov (see \cite{kostov} for a survey) who calls it the \emph{Deligne-Simpson problem}. In \cite[Conjecture 1.4]{crawleyind}, Crawley-Boevey gives a conjectural answer of the Deligne-Simpson problem in terms of root systems of certain  quiver (which we describe below). He proves his conjecture assuming that $\bC$ is generic (in which  case we have  $(\calU_\bC)^{\rm Irr}=\calU_\bC$ by Theorem \ref{geoth}). In order to state his result (which will be needed later) and also to extend it for $g>0$ we need to explain how to construct this quiver from a $k$-tuple of conjugacy classes.

 We need to recall the notion of types for conjugacy classes \cite[\S 4.3.1]{letellier4}.

Denote by $\calP$ the set of all partitions including the unique partition $\emptyset$ of size $0$ and by $\calP_n$ the set of partitions of size $n$. A partition $\lambda\neq \emptyset$ will be written either in the form $(n_1,\dots,n_r)$ with $n_1\geq n_2\geq\cdots\geq n_r$ or in the form $(1^{m_1},2^{m_2},\dots)$ where $m_i$ denotes the multiplicity with which $i$ appears in $\lambda$. Choose a total ordering $\geq$ on $\calP$ (e.g. the lexicographic ordering) and denote by $\tT$ the set of non-increasing sequences $\omega^1\geq\omega^2\geq\cdots\geq\omega^r$ of non-zero partitions which we will denote simply by $\omega^1\omega^2\cdots\omega^r$. We denote by $\bT{^o_n}$ the subset of elements $\omhat=\omega^1\omega^2\cdots\omega^r$ of size $|\omhat|:=\sum_i|\omega^i|=n$. We define the \emph{type} of a conjugacy class $C$ of $\GL_n(\K)$ to be the sequence $\omega^1\omega^2\cdots\omega^s\in\bT{^o_n}$ where  the partitions $\omega^1,\dots,\omega^s$ are given by the size of the Jordan blocks corresponding to the distinct eigenvalues $\alpha_1,\dots,\alpha_s$ of $C$. In particular, sequences of length $1$ are the types of the conjugacy classes with only one eigenvalue, and sequences of the form $(1^{n_1})(1^{n_2})\cdots(1^{n_r})$ correspond to semisimple conjugacy classes with $r$ distinct eigenvalues of multiplicities $n_1,\dots,n_r$.

Given a type $\omega=\omega^1\omega^2\cdots\omega^r\in\tT$, we draw the Young diagrams of $\omega^1,\dots,\omega^r$ respectively from the left to the right and we also label the columns from the left to the right (with the convention that partitions are represented by the rows of the Young diagrams).

\begin{example} If our type is $(2^2)(1^2)$, the corresponding Young diagrams are 

\begin{equation}\overset{1\hspace{.3cm}2}{\yng(2,2)} \quad \overset{3}{\yng(1,1)}\label{ex}\end{equation}
\end{example}

Now let $d$ be the total number of columns (in the example above, we have $d=3$) and let $n_i$ be the length of the $i$-th column with respect to our labeling. This defines a type $A$ quiver $\Gamma$

$$\xymatrix{\bullet^1&\bullet^2\ar[l]&\cdots\ar[l]&\bullet^{d-1}\ar[l]}$$
equiped with a dimension vector $\v:=(v_1,\dots,v_{d-1})$ with $v_1=|\omega|-n_1$ and $v_i=v_{i-1}-n_i$ for $i>1$.

Hence from the type of any conjugacy class of $\GL_n$ we can define a type $A$ quiver $\Gamma_C$ equiped with a dimension vector $\v_C$. 

\begin{example}It is important to notice that the data $(\Gamma,\v)$ does not characterize the type from which it is defined. For instance any regular conjugacy class of $\GL_n$ (i.e., conjugacy classes of maximal  dimension $n^2-n$) will give the same quiver with same dimension vector, namely the type $A$ quiver with $n-1$ vertices and the dimension vector $(n-1,n-2,\dots,1)$.
\end{example}

Given a $k$-tuple $\bC=(C_1,\dots,C_k)$ of conjugacy classes of $\GL_n$, we define a multi-type in $(\bT{^o_n})^k$ as above and therefore we obtain $k$ type $A$ quivers $\Gamma_{C_1},\dots,\Gamma_{C_k}$ with  dimension vectors $\v_{C_1},\dots,\v_{C_k}$. Adding an extra node $0$ which we connect to these $k$ graphs and adding $g$ loops on this node we get the following comet-shaped quiver $\Gamma_\bC$

\vspace{10pt}

\begin{center}
\unitlength 0.1in
\begin{picture}( 52.1000, 15.4500)(  4.0000,-17.0000)
%
\special{pn 8}%
\special{ar 1376 1010 70 70  0.0000000 6.2831853}%
%
\special{pn 8}%
\special{ar 1946 410 70 70  0.0000000 6.2831853}%
%
\special{pn 8}%
\special{ar 2946 410 70 70  0.0000000 6.2831853}%
%
\special{pn 8}%
\special{ar 5540 410 70 70  0.0000000 6.2831853}%
%
\special{pn 8}%
\special{ar 1946 810 70 70  0.0000000 6.2831853}%
%
\special{pn 8}%
\special{ar 2946 810 70 70  0.0000000 6.2831853}%
%
\special{pn 8}%
\special{ar 5540 810 70 70  0.0000000 6.2831853}%
%
\special{pn 8}%
\special{ar 1946 1610 70 70  0.0000000 6.2831853}%
%
\special{pn 8}%
\special{ar 2946 1610 70 70  0.0000000 6.2831853}%
%
\special{pn 8}%
\special{ar 5540 1610 70 70  0.0000000 6.2831853}%
%
\special{pn 8}%
\special{pa 1890 1560}%
\special{pa 1440 1050}%
\special{fp}%
\special{sh 1}%
\special{pa 1440 1050}%
\special{pa 1470 1114}%
\special{pa 1476 1090}%
\special{pa 1500 1088}%
\special{pa 1440 1050}%
\special{fp}%
%
\special{pn 8}%
\special{pa 2870 410}%
\special{pa 2020 410}%
\special{fp}%
\special{sh 1}%
\special{pa 2020 410}%
\special{pa 2088 430}%
\special{pa 2074 410}%
\special{pa 2088 390}%
\special{pa 2020 410}%
\special{fp}%
%
\special{pn 8}%
\special{pa 3720 410}%
\special{pa 3010 410}%
\special{fp}%
\special{sh 1}%
\special{pa 3010 410}%
\special{pa 3078 430}%
\special{pa 3064 410}%
\special{pa 3078 390}%
\special{pa 3010 410}%
\special{fp}%
\special{pa 3730 410}%
\special{pa 3010 410}%
\special{fp}%
\special{sh 1}%
\special{pa 3010 410}%
\special{pa 3078 430}%
\special{pa 3064 410}%
\special{pa 3078 390}%
\special{pa 3010 410}%
\special{fp}%
%
\special{pn 8}%
\special{pa 2870 810}%
\special{pa 2020 810}%
\special{fp}%
\special{sh 1}%
\special{pa 2020 810}%
\special{pa 2088 830}%
\special{pa 2074 810}%
\special{pa 2088 790}%
\special{pa 2020 810}%
\special{fp}%
%
\special{pn 8}%
\special{pa 2870 1610}%
\special{pa 2020 1610}%
\special{fp}%
\special{sh 1}%
\special{pa 2020 1610}%
\special{pa 2088 1630}%
\special{pa 2074 1610}%
\special{pa 2088 1590}%
\special{pa 2020 1610}%
\special{fp}%
%
\special{pn 8}%
\special{pa 3730 810}%
\special{pa 3020 810}%
\special{fp}%
\special{sh 1}%
\special{pa 3020 810}%
\special{pa 3088 830}%
\special{pa 3074 810}%
\special{pa 3088 790}%
\special{pa 3020 810}%
\special{fp}%
\special{pa 3740 810}%
\special{pa 3020 810}%
\special{fp}%
\special{sh 1}%
\special{pa 3020 810}%
\special{pa 3088 830}%
\special{pa 3074 810}%
\special{pa 3088 790}%
\special{pa 3020 810}%
\special{fp}%
%
\special{pn 8}%
\special{pa 3730 1610}%
\special{pa 3020 1610}%
\special{fp}%
\special{sh 1}%
\special{pa 3020 1610}%
\special{pa 3088 1630}%
\special{pa 3074 1610}%
\special{pa 3088 1590}%
\special{pa 3020 1610}%
\special{fp}%
\special{pa 3740 1610}%
\special{pa 3020 1610}%
\special{fp}%
\special{sh 1}%
\special{pa 3020 1610}%
\special{pa 3088 1630}%
\special{pa 3074 1610}%
\special{pa 3088 1590}%
\special{pa 3020 1610}%
\special{fp}%
%
\special{pn 8}%
\special{pa 5466 410}%
\special{pa 4746 410}%
\special{fp}%
\special{sh 1}%
\special{pa 4746 410}%
\special{pa 4812 430}%
\special{pa 4798 410}%
\special{pa 4812 390}%
\special{pa 4746 410}%
\special{fp}%
%
\special{pn 8}%
\special{pa 5466 810}%
\special{pa 4746 810}%
\special{fp}%
\special{sh 1}%
\special{pa 4746 810}%
\special{pa 4812 830}%
\special{pa 4798 810}%
\special{pa 4812 790}%
\special{pa 4746 810}%
\special{fp}%
%
\special{pn 8}%
\special{pa 5466 1610}%
\special{pa 4746 1610}%
\special{fp}%
\special{sh 1}%
\special{pa 4746 1610}%
\special{pa 4812 1630}%
\special{pa 4798 1610}%
\special{pa 4812 1590}%
\special{pa 4746 1610}%
\special{fp}%
%
\special{pn 8}%
\special{pa 1880 840}%
\special{pa 1450 990}%
\special{fp}%
\special{sh 1}%
\special{pa 1450 990}%
\special{pa 1520 988}%
\special{pa 1500 972}%
\special{pa 1506 950}%
\special{pa 1450 990}%
\special{fp}%
%
\special{pn 8}%
\special{pa 1900 460}%
\special{pa 1430 960}%
\special{fp}%
\special{sh 1}%
\special{pa 1430 960}%
\special{pa 1490 926}%
\special{pa 1468 922}%
\special{pa 1462 898}%
\special{pa 1430 960}%
\special{fp}%
%
\special{pn 8}%
\special{sh 1}%
\special{ar 1946 1010 10 10 0  6.28318530717959E+0000}%
\special{sh 1}%
\special{ar 1946 1210 10 10 0  6.28318530717959E+0000}%
\special{sh 1}%
\special{ar 1946 1410 10 10 0  6.28318530717959E+0000}%
\special{sh 1}%
\special{ar 1946 1410 10 10 0  6.28318530717959E+0000}%
%
\special{pn 8}%
\special{sh 1}%
\special{ar 4056 410 10 10 0  6.28318530717959E+0000}%
\special{sh 1}%
\special{ar 4266 410 10 10 0  6.28318530717959E+0000}%
\special{sh 1}%
\special{ar 4456 410 10 10 0  6.28318530717959E+0000}%
\special{sh 1}%
\special{ar 4456 410 10 10 0  6.28318530717959E+0000}%
%
\special{pn 8}%
\special{sh 1}%
\special{ar 4056 810 10 10 0  6.28318530717959E+0000}%
\special{sh 1}%
\special{ar 4266 810 10 10 0  6.28318530717959E+0000}%
\special{sh 1}%
\special{ar 4456 810 10 10 0  6.28318530717959E+0000}%
\special{sh 1}%
\special{ar 4456 810 10 10 0  6.28318530717959E+0000}%
%
\special{pn 8}%
\special{sh 1}%
\special{ar 4056 1610 10 10 0  6.28318530717959E+0000}%
\special{sh 1}%
\special{ar 4266 1610 10 10 0  6.28318530717959E+0000}%
\special{sh 1}%
\special{ar 4456 1610 10 10 0  6.28318530717959E+0000}%
\special{sh 1}%
\special{ar 4456 1610 10 10 0  6.28318530717959E+0000}%
\put(19.7000,-2.4500){\makebox(0,0){$[1,1]$}}%
\put(29.7000,-2.4000){\makebox(0,0){$[1,2]$}}%
\put(55.7000,-2.5000){\makebox(0,0){$[1,s_1]$}}%
\put(19.7000,-6.5500){\makebox(0,0){$[2,1]$}}%
\put(29.7000,-6.4500){\makebox(0,0){$[2,2]$}}%
\put(55.7000,-6.5500){\makebox(0,0){$[2,s_2]$}}%
\put(19.7000,-17.8500){\makebox(0,0){$[k,1]$}}%
\put(29.7000,-17.8500){\makebox(0,0){$[k,2]$}}%
\put(55.7000,-17.8500){\makebox(0,0){$[k,s_k]$}}%
\put(14.3000,-7.6000){\makebox(0,0){$0$}}%
\special{pn 8}%
\special{sh 1}%
\special{ar 2950 1010 10 10 0  6.28318530717959E+0000}%
\special{sh 1}%
\special{ar 2950 1210 10 10 0  6.28318530717959E+0000}%
\special{sh 1}%
\special{ar 2950 1410 10 10 0  6.28318530717959E+0000}%
\special{sh 1}%
\special{ar 2950 1410 10 10 0  6.28318530717959E+0000}%
\special{pn 8}%
\special{ar 1110 1000 290 220  0.4187469 5.9693013}%
\special{pn 8}%
\special{pa 1368 1102}%
\special{pa 1376 1090}%
\special{fp}%
\special{sh 1}%
\special{pa 1376 1090}%
\special{pa 1324 1138}%
\special{pa 1348 1136}%
\special{pa 1360 1158}%
\special{pa 1376 1090}%
\special{fp}%
\special{pn 8}%
\special{ar 910 1000 510 340  0.2464396 6.0978374}%
\special{pn 8}%
\special{pa 1400 1096}%
\special{pa 1406 1084}%
\special{fp}%
\special{sh 1}%
\special{pa 1406 1084}%
\special{pa 1362 1138}%
\special{pa 1384 1132}%
\special{pa 1398 1152}%
\special{pa 1406 1084}%
\special{fp}%
\special{pn 8}%
\special{sh 1}%
\special{ar 540 1000 10 10 0  6.28318530717959E+0000}%
\special{sh 1}%
\special{ar 620 1000 10 10 0  6.28318530717959E+0000}%
\special{sh 1}%
\special{ar 700 1000 10 10 0  6.28318530717959E+0000}%
\special{pn 8}%
\special{ar 1200 1000 170 100  0.7298997 5.6860086}%
\special{pn 8}%
\special{pa 1314 1076}%
\special{pa 1328 1068}%
\special{fp}%
\special{sh 1}%
\special{pa 1328 1068}%
\special{pa 1260 1084}%
\special{pa 1282 1094}%
\special{pa 1280 1118}%
\special{pa 1328 1068}%
\special{fp}%
\end{picture}%
\end{center}

\vspace{10pt}

\noindent together with the dimenion vector $\v_\bC$ with coordinate $n$ at the vertex $0$ and with coordinates given by $\v_{C_i}$ on the legs.

To such a quiver we can associate as in \cite{kac} a root system $\Phi(\Gamma_\bC)$.

We then have the following theorem \cite[Theorem 8.3]{crawleyind}.

\begin{theorem}Assume that $\bC$ is generic and that $g=0$. Then the following are equivalent :

\noindent (i) $\calU_\bC$ is not empty.

\noindent (ii)  $\v_\bC\in\Phi(\Gamma_\bC)$.

Moroever $\calU_\bC$ is reduced to a single $\GL_n$-orbit (i.e., $\M_\bC$ is a point) if and only if $\v_\bC$ is a real root.

\label{nemp}\end{theorem}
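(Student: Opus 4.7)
The plan is to identify $\calU_\bC/\!/\PGL_n$ with a multiplicative quiver variety attached to the comet-shaped quiver $\Gamma_\bC$ and dimension vector $\v_\bC$, then apply Crawley-Boevey's general results on such varieties.

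First I would set up a quiver-theoretic encoding of each closure $\overline{C}_i$. Writing $C_i$ with distinct eigenvalues $\alpha_{i,1},\dots,\alpha_{i,s_i}$ and Jordan types $\omega^{i,1},\dots,\omega^{i,s_i}$, an element $x_i\in\overline{C}_i$ is equivalent to the data of the iterated kernel filtration of $\prod_j (x_i-\alpha_{i,j}I)$ together with the induced maps between successive quotients. The lengths of the columns in the associated Young diagrams prescribe exactly the dimensions at the vertices of the $i$-th leg of $\Gamma_\bC$. Using the fact that the $\GL_n$-orbit closures of conjugacy classes are cut out by rank conditions on such polynomials, this upgrades to a $\GL_n$-equivariant bijection between $\overline{C}_i$ and a certain space of representations of the doubled type $A$ leg with prescribed dimension vector $\v_{C_i}$ and moment-type relations twisted by the $\alpha_{i,j}$.

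Next, at $g=0$ the defining relation $\prod_j x_j = I$, combined with the determinant condition $\prod_j\det(C_j)=1$ from Definition~\ref{gendef}, is precisely the multiplicative moment map relation at the central vertex $0$ for the doubled quiver of $\Gamma_\bC$, with a parameter $q=(q_i)$ read off from the $\alpha_{i,j}$. The key point is to check that $\bC$ being generic in the sense of Definition~\ref{gendef} translates exactly into $q$ being generic with respect to the root system $\Phi(\Gamma_\bC)$: the non-existence of a proper invariant subspace $V\subset\K^n$ with $\prod_i\det(x_i|_V)=1$ corresponds, via the flag encoding, to the absence of any subrepresentation whose dimension vector $\v'\leq\v_\bC$ satisfies $q\cdot\v'=1$. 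Under this dictionary, $\calU_\bC/\!/\PGL_n$ becomes the multiplicative quiver variety $\frakM^q(\Gamma_\bC,\v_\bC)$.

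With this identification in place, both assertions of the theorem reduce to Crawley-Boevey's theorem on multiplicative preprojective algebras and their representation varieties: for generic parameter $q$, the variety $\frakM^q(\Gamma_\bC,\v)$ is non-empty iff $\v$ is a positive root of $\Gamma_\bC$, and it consists of a single orbit (thus reduces to a point after taking the quotient) precisely when $\v$ is a \emph{real} root, with the imaginary case giving positive-dimensional moduli. The main obstacle I anticipate is the first step: rigorously producing the equivariant isomorphism between the affine GIT quotient $\calU_{\overline{\bC}}/\!/\PGL_n$ and the multiplicative quiver variety, including the matching of Zariski closures of conjugacy classes with the full locus of degenerate quiver representations (not just the open stratum). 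Once this translation is set up carefully and the genericity conditions are aligned, the actual root-theoretic conclusion is imported directly from \cite{crawleyind}.
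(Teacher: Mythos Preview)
The paper does not give its own proof of this statement; it simply quotes it as \cite[Theorem 8.3]{crawleyind}, so there is no argument here to compare against. Your sketch follows the route via multiplicative preprojective algebras (essentially the Crawley-Boevey--Shaw framework), which is closely allied to but not literally the argument in \cite{crawleyind}: there Crawley-Boevey works through indecomposable parabolic bundles on $\mathbb{P}^1$, using the squid description of the category of parabolic bundles and reflection functors, to reach the root-theoretic criterion. Either route is valid, and the translation step you describe --- the flag/filtration encoding of each $\overline{C}_i$ as leg data on $\Gamma_\bC$, and the matching of genericity of $\bC$ in the sense of Definition~\ref{gendef} with genericity of the parameter $q$ --- is the common starting point of both.

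One caution: the final black box you invoke, namely that for generic $q$ the multiplicative quiver variety $\frakM^q(\Gamma_\bC,\v)$ is non-empty iff $\v$ is a root and reduces to a single orbit iff $\v$ is real, is not a lighter input than the theorem itself. Once the dictionary is in place it is essentially equivalent to \cite[Theorem 8.3]{crawleyind}, so your outline correctly identifies the right reformulation but does not reduce the statement to anything established independently of Crawley-Boevey's work. If you intend a self-contained proof, you would still have to supply the reflection-functor analysis (or the parabolic-bundle argument) that constitutes the actual content of \cite{crawleyind}.
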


 Note that if $g>0$, then $\v_\bC$ is always an imaginary root (actually in the fundamental domain) as explained in \cite[\S 5.2]{HLV2}. We will see (see Corollary \ref{maincoro}) that $\calU_\bC$ is always non-empty when $g>0$ and so in the above theorem the condition $g=0$ is obsolete.

\subsection{Resolutions of character varieties}\label{resolchar}

Let $P$ be a parabolic subgroup of $\GL_n(\K)$ and let $L$ be a Levi factor of $P$. Let $\sigma$ live in the center $Z_L$ of $L$ and denote by $U_P$ the unipotent radical of $P$. Put 

$$\bY_{L,P,\sigma}:=\left\{(x,gP)\in\GL_n\times (\GL_n/P)\,\left|\, g^{-1}x g\in\sigma\cdot U_P\right\}\right..$$It is well-known that the image of $\bY_{L,P,\sigma}\rightarrow\GL_n$, $(x,gP)\rightarrow x$ is the Zariski closure $\overline{C}$ of some conjugacy class $C$ of $\GL_n$ and that the map $p:\bY_{L,P,\sigma}\rightarrow\overline{C}$ is a  resolution of $\overline{C}$ (see for instance \cite{Bao} and the references therein in the case of nilpotent orbits).  The Zariski closure of any conjugacy class of $\GL_n$ has a resolution of this form.  

If $\sigma$ has a unique eigenvalue $a$ and the size of the "blocks" of $L$ gives the partitions $\mu$, then $C$ is the conjugacy class  with unique  eigenvalue $a$ and with Jordan blocks of size given by the dual partition $\mu'$ of $\mu$.

Now assume given a generic tuple $\bC=(C_1,\dots,C_k)$ of conjugacy classes of $\GL_n(\K)$ with resolutions $\bY_{L_i,P_i,\sigma_i}\rightarrow\overline{C}_i$ as above. Denote by $\bbU_{\bf L,P,\sigma}$ the subspace of elements $((a_i,b_i)_i,(x_j,g_jP_j)_j)\in(\GL_n)^{2g}\times\left(\prod_{j=1}^k\bY_{L_i,P_i,\sigma_i}\right)$ which verify the equation 

$$
(a_1,b_1)\cdots(a_g,b_g)\, x_1\cdots x_k=1.
$$

We have the following theorem.

\begin{theorem}The geometric quotient $\bbU_{\bf L,P,\sigma}\rightarrow \bM_{\bf L,P,\sigma}$ exists, is a principal $\PGL_n$-bundle in the \'etale topology and makes the following diagram Cartesian

$$
\xymatrix{\bbU_{\bf L,P,\sigma}\ar[rr]^p\ar[d]&&\calU_{\overline{\bC}}\ar[d]\\
\bM_{\bf L,P,\sigma}\ar[rr]^{p/_{\PGL_n}}&&\M_{\overline{\bC}}}.
$$
Moreover $\bM_{\bf L,P,\sigma}$ is nonsingular with connected components all of same dimension.
\label{nonsing}\end{theorem}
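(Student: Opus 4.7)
My plan is to descend the $\PGL_n$-action on $\bbU_{\bf L,P,\sigma}$ along the principal $\PGL_n$-bundle $\calU_{\overline{\bC}}\to\M_{\overline{\bC}}$ provided by Theorem \ref{geoth}(i). Consider the forgetful map
$p:\bbU_{\bf L,P,\sigma}\to\calU_{\overline{\bC}}$, $((a_i,b_i)_i,(x_j,g_jP_j)_j)\mapsto((a_i,b_i)_i,(x_j)_j)$;
this is proper and $\PGL_n$-equivariant, being the base change of the product of the proper resolutions $\bY_{L_i,P_i,\sigma_i}\to\overline{C}_i$. Since $\PGL_n$ acts freely on $\calU_{\overline{\bC}}$ by Theorem \ref{geoth}(i), it acts freely on $\bbU_{\bf L,P,\sigma}$. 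Pick an étale cover $V\to\M_{\overline{\bC}}$ trivializing the principal bundle, so that $V\times_{\M_{\overline{\bC}}}\calU_{\overline{\bC}}\simeq V\times\PGL_n$; pulling back via $p$ gives $V\times_{\M_{\overline{\bC}}}\bbU_{\bf L,P,\sigma}\simeq W\times\PGL_n$ for some $V$-scheme $W$. Faithfully flat étale descent then produces a scheme $\bM_{\bf L,P,\sigma}$ over $\M_{\overline{\bC}}$ together with an étale principal $\PGL_n$-bundle $\bbU_{\bf L,P,\sigma}\to\bM_{\bf L,P,\sigma}$, and the resulting square is Cartesian by construction.

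Nonsingularity and equidimensionality of $\bM_{\bf L,P,\sigma}$ reduce, via étale descent along the quotient map, to the same statements for $\bbU_{\bf L,P,\sigma}$. Each resolution $\bY_{L_i,P_i,\sigma_i}\simeq\GL_n\times^{P_i}(\sigma_iU_{P_i})$ is a smooth $\GL_n$-variety of dimension $n^2-\dim L_i$, hence the ambient space
\[
\calY:=(\GL_n)^{2g}\times\prod_{i=1}^k\bY_{L_i,P_i,\sigma_i}
\]
is smooth. Form the multiplication map $\mu:\calY\to\GL_n$, $((a_i,b_i)_i,(x_j,g_jP_j)_j)\mapsto\prod_i(a_i,b_i)\prod_j x_j$. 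The genericity of $\bC$ forces $\prod_i\det\sigma_i=\prod_i\det\overline{C}_i=1$, so $\mu$ factors through $\SL_n$ and $\bbU_{\bf L,P,\sigma}=\mu^{-1}(I)$. It thus suffices to check that $d\mu_y:T_y\calY\to\sl_n$ is surjective at every $y\in\bbU_{\bf L,P,\sigma}$; then $\bbU_{\bf L,P,\sigma}$ will be smooth of pure dimension $2gn^2+\sum_i(n^2-\dim L_i)-(n^2-1)$, and $\bM_{\bf L,P,\sigma}$ smooth of pure dimension $d_\bC$.

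The surjectivity of $d\mu_y$ is where I expect the real work to be, and it proceeds by transporting the standard character-variety smoothness argument through the resolutions. Writing $p(y)=((a_i,b_i)_i,(x_j)_j)\in\calU_{\overline{\bC}}$ and letting $\rho$ denote the associated representation of $\pi_1(\Sigma\setminus S)$, infinitesimal variations of the $a_i,b_i$ together with tangent vectors $\xi_j\in T_{x_j}(\GL_n\cdot x_j)\subset T_{(x_j,g_jP_j)}\bY_{L_j,P_j,\sigma_j}$ produce, via the usual commutator-and-conjugation calculation of \cite[\S 2]{HLV} and \cite[Proposition 3.4]{Foth}, an image in $\sl_n$ whose annihilator under the Killing pairing is the space of self-intertwiners of $\rho$. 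By Theorem \ref{geoth}(i) the $\PGL_n$-stabilizer of $\rho$ is trivial, so $\rho$ is irreducible and this annihilator vanishes, yielding surjectivity. The one subtlety in lifting the argument from $\calU_{\overline{\bC}}$ to the resolution is that we must check that the extra tangent directions introduced by moving $g_jP_j$ within $\bY_{L_j,P_j,\sigma_j}$ do not obstruct surjectivity; this is automatic because the resolution is $\GL_n$-equivariant and dominant onto $\overline{C}_j$, so the differential $dp_y$ already contains $T_{x_j}(\GL_n\cdot x_j)$ at each puncture, which is precisely what the above argument consumes.
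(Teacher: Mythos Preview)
Your proof is correct and follows essentially the same route as the paper's: the paper invokes Mumford's general result \cite[Proposition~7.1]{mumford} (which packages exactly your \'etale-local trivialization and descent, using that $p$ is projective and $\calU_{\overline{\bC}}\to\M_{\overline{\bC}}$ is already a principal $\PGL_n$-bundle) and then refers to the additive analogue \cite[Proof of Theorem~5.3.7]{letellier4} for nonsingularity, which is precisely the differential computation of $d\mu_y$ that you spell out. The only cosmetic point is that your inclusion $T_{x_j}(\GL_n\cdot x_j)\subset T_{(x_j,g_jP_j)}\bY_{L_j,P_j,\sigma_j}$ should be read as ``the infinitesimal $\GL_n$-action on $\bY_{L_j,P_j,\sigma_j}$ surjects onto $T_{x_j}(\GL_n\cdot x_j)$ under $dp$,'' which is what your last paragraph in fact uses.
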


\begin{proof} It follows from a general theorem of Mumford \cite[Proposition 7.1]{mumford} as the morphism $p$ is projective and  the quotient $\calU_{\overline{\bC}}\rightarrow\M_{\overline{\bC}}$ is a principal $\PGL_n$-bundle for the \'etale topology. The proof of the last statement goes exactly along the same lines as the proof of the "additive version"  \cite[Proof of Theorem 5.3.7]{letellier4}.
\end{proof}

Now choose a generic tuple $\bfS=(S_1,\dots,S_k)$ of semisimple conjugacy classes of $\GL_n$ such that for each $i=1,\dots,k$, there exists a representative of $S_i$ whose stabilizer in $\GL_n$ is $L_i$.

Here is the main theorem of this section.

\begin{theorem}Let $\K=\overline{\F}_q$ and consider the  $\F_q$-structure on $\GL_n$ corresponding to the Frobenius $F:\GL_n\rightarrow\GL_n$, $(a_{ij})_{i,j}\mapsto (a_{ij}^q)_{i,j}$. For each $i=1,\dots,k$, we assume that $P_i,L_i,\sigma_i$ as well as the eigenvalues of $S_i$  are defined over $\F_q$ (i.e. fixed by the Frobenius $F$). Then $\bbU_{\bf L,P,\sigma}$ and $\calU_\bfS$ are also defined over $\F_q$ and 

$$
\# \bM_{\bf L,P,\sigma}(\F_{q^r})=\# \M_\bfS(\F_{q^r})
$$
for all integer $r>0$.
\label{counting}\end{theorem}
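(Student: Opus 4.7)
The plan is to translate the equality of counts, via Frobenius's formula, into a character-theoretic identity on $G^F := \GL_n(\F_{q^r})$, and to establish the latter by a specialization argument.

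Since both $\calU_\bfS\to\M_\bfS$ and $\bbU_{\bf L,P,\sigma}\to\bM_{\bf L,P,\sigma}$ are principal $\PGL_n$-bundles in the \'etale topology (Theorems~\ref{geoth}(i) and~\ref{nonsing}), it suffices to prove $\#\bbU_{\bf L,P,\sigma}(\F_{q^r})=\#\calU_\bfS(\F_{q^r})$. Fibering $\bbU_{\bf L,P,\sigma}$ over $\calU_{\overline\bC}$ along the product of the resolutions $\bY_{L_j,P_j,\sigma_j}\to\overline C_j$, the fiber over $((a_i,b_i),(x_j))$ has cardinality $\prod_j R_{L_j,P_j,\sigma_j}(x_j)$, with
\[
R_{L,P,\sigma}(x):=\#\{gP\in(G/P)^F\mid g^{-1}xg\in\sigma U_P\},
\]
a class function on $G^F$ which one identifies from its definition with the Harish--Chandra induction $R_L^G(1_\sigma)$. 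Extending~(\ref{frobenius}) by linearity to arbitrary class functions and applying Frobenius reciprocity for $R_L^G$ yields
\[
\widehat{R_L^G(1_\sigma)}(\chi)=\frac{|G^F|}{|L^F|}(\ast R_L^G\chi)(\sigma),\qquad \widehat{1_S}(\chi)=\frac{|G^F|}{|L^F|}\chi(s),
\]
where $\widehat f(\chi):=\sum_h f(h)\chi(h)$ and $s\in S^F$ is any representative with $C_G(s)=L$. The theorem then reduces to the identity
\[
\sum_\chi\chi(1)^{2-2g-k}\prod_j(\ast R_{L_j}^G\chi)(\sigma_j)=\sum_\chi\chi(1)^{2-2g-k}\prod_j\chi(s_j).
\]

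The hard step is this last identity, which does not hold term by term in $\chi$: for $\sigma_j=I$ and $\chi=\chi_{\mathrm{reg}}$ the regular character, one has $(\ast R_L^G\chi_{\mathrm{reg}})(I)=|G^F|/|U_P^F|\neq 0=\chi_{\mathrm{reg}}(s_j)$. The plan is to prove it by specialization: the representative $s_j\in S_j^F$ itself lies in $Z_{L_j}^F$ (since $C_G(s_j)=L_j$), so $\sigma_j=s_j$ is an admissible choice of parameter. For this choice, the description of the Richardson class in \S\ref{resolchar} together with the observation that $s_ju$ is $G$-conjugate to $s_j$ for every $u\in U_{P_j}$ (its characteristic polynomial equals that of $s_j$, and using $C_G(s_j)=L_j$ one checks that its $s_j$-eigenspaces split $\F^n$) shows that $\overline C_j=S_j$ and $\bY_{L_j,P_j,s_j}\to S_j$ is an isomorphism, so $\bbU_{\bf L,P,s}\simeq\calU_\bfS$ tautologically. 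Moreover the standard identity $(\ast R_L^G\chi)(s)=\chi(s)$ for $s\in L$ with $C_G(s)=L$ makes the two sides of the character identity visibly coincide in this case. The main remaining obstacle is to show that the left-hand side of the character identity is independent of the choice of $\sigma\in Z_{\bf L}^F$ satisfying the genericity of $\bC$; my plan is to treat this by successively replacing each $\sigma_j$ by $s_j$, using the transitivity $\ast R_L^G=\ast R_L^M\circ\ast R_M^G$ with $M:=C_G(\sigma_j)$ and factoring the sum along Lusztig series so that each such replacement leaves the total sum unchanged.
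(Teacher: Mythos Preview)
Your reduction to the character identity is correct and matches the paper's opening (Lemma~\ref{frobprelimi} and Corollary~\ref{frob-group}). The specialization $\sigma_j=s_j$ is also correct: with $C_G(s_j)=L_j$ the resolution $\bY_{L_j,P_j,s_j}\to\overline C_j$ collapses to an isomorphism onto $S_j$, and $({}^*R_L^G\chi)(s)=\chi(s)$ holds directly.

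The gap is the independence-of-$\sigma$ step, which is the whole content of the theorem and which you leave as a plan. Your sketch (``transitivity and factoring along Lusztig series'') does not suffice as stated: for an irreducible $\chi=\epsilon_G\epsilon_M R_M^G(\theta\cdot\calA^M)$ the value $({}^*R_L^G\chi)(\sigma)$ genuinely depends on $\sigma$ through $\theta$, so the raw character sum cannot be $\sigma$-independent until genericity has been used to collapse the sum over $\theta$. The paper does exactly this: Theorem~\ref{thHLV} (a consequence of genericity, from \cite{HLV}) regroups the full character sum as a sum over types $(M,\calA^M)$, reducing everything to the \emph{termwise} identity $\langle R_L^G(1_\sigma),R_M^G(\calA^M)\rangle_{G^F}=|L^F|^{-1}R_M^G(\calA^M)(S^F)$, which is free of any genericity hypothesis. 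That identity is then proved by passing to the Lie algebra $\gl_n$ and using that Fourier transform commutes with Deligne--Lusztig induction (this replaces $1_\sigma$ by a linear character whose restriction to nilpotents is independent of $\sigma$), followed by an evaluation at $\sigma=1$ via orthogonality of Green functions. Both ingredients---the genericity-driven regrouping and the Fourier-transform mechanism---are missing from your outline, and without the first the ``successive replacement'' on the unreduced sum cannot be made rigorous.
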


\begin{remark}The quiver varieties analogue of this theorem is well-known by a result of Nakajima (see \cite[Proof of Theorem 5.3.7]{letellier4} and references therein). 
\end{remark}

It is possible as explained in the appendix of \cite{HLV} to define $R$-schemes $\calX_{\bf L,P,\sigma}$ and $\calX_\bfS$ (for some subring $R$ of $\C$ which is finitely generated as a $\Z$-algebra) which are spreadings out of $\bM_{\bf L,P,\sigma}/_\C$ and $\M_\bfS/_\C$. We already know \cite[Theorem 1.2.3]{HLV} that $\M_\bfS/_\C$ is polynomial count. Hence by the above theorem $\bM_{\bf L,P,\sigma}/_\C$ is also polynomial count and by Theorem \ref{Katz} we deduce the following result.

\begin{corollary} Assume that $\K=\C$, then 

$$
E(\bM_{\bf L,P,\sigma};x)=E(\M_\bfS;x).
$$
\label{E-poly-coro}\end{corollary}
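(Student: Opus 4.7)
The plan is to combine the counting identity of Theorem \ref{counting} with Katz's theorem (Theorem \ref{Katz}), using as an intermediate step the fact, already established in \cite[Theorem 1.2.3]{HLV}, that $\M_\bfS$ is polynomial count.

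First I would, following the appendix of \cite{HLV}, construct $R$-schemes $\calX_{\bf L,P,\sigma}$ and $\calX_\bfS$ that are spreadings out of $\bM_{\bf L,P,\sigma}/_\C$ and $\M_\bfS/_\C$, where $R\subset\C$ is a finitely generated $\Z$-algebra. The ring $R$ must be chosen large enough to contain the eigenvalues of the $\sigma_i$ and of the semisimple classes $S_i$, along with the matrix entries cutting out $L_i$ and $P_i$, and also large enough that genericity (Definition \ref{gendef}) persists under every ring homomorphism $\varphi:R\to\F_q$. The principal $\PGL_n$-bundle structure established in Theorem \ref{nonsing} (and the analogous one for $\calU_{\overline{\bfS}}\to\M_\bfS$) ensures that formation of the GIT quotients commutes with such base change, so the $\F_{q^r}$-points of the reductions really compute $\#\bM_{\bf L,P,\sigma}$ and $\#\M_\bfS$ over finite fields.

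Next, by \cite[Theorem 1.2.3]{HLV} the variety $\M_\bfS/_\C$ is strongly polynomial count with some counting polynomial $P(T)\in\C[T]$. Combining this with Theorem \ref{counting} applied to each $\varphi$ gives
$$
\#\calX_{\bf L,P,\sigma}^\varphi(\F_{q^r}) \;=\; \#\calX_\bfS^\varphi(\F_{q^r}) \;=\; P(q^r)
$$
for every $r\geq 1$, so $\bM_{\bf L,P,\sigma}/_\C$ is also strongly polynomial count, with the same counting polynomial $P$. Applying Katz's theorem to both varieties then yields
$$
E(\bM_{\bf L,P,\sigma};x,y) \;=\; P(xy) \;=\; E(\M_\bfS;x,y),
$$
and the substitution $x=y=\sqrt{x}$ delivers the stated identity $E(\bM_{\bf L,P,\sigma};x)=E(\M_\bfS;x)$.

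The only genuinely delicate step is bookkeeping: one must arrange the spreadings out of the two varieties compatibly over a single ring $R$ so that Theorem \ref{counting} applies at every reduction. Since the hypotheses of Theorem \ref{counting} require $P_i,L_i,\sigma_i$ and the eigenvalues of $S_i$ all to be $F$-stable, the ring $R$ has to be enlarged to contain all these scalars, and one has to check that the bijective correspondence between Levi/parabolic data on one side and semisimple classes on the other survives the reduction. This is precisely the kind of compatibility treated in the appendix of \cite{HLV}, so no substantial new argument is required beyond verifying that the setup of that appendix accommodates the extra parabolic data; once this is done, the corollary follows with no further computation.
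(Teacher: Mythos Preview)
Your proof is correct and follows essentially the same route as the paper: construct spreadings out as in the appendix of \cite{HLV}, use \cite[Theorem 1.2.3]{HLV} to know $\M_\bfS$ is polynomial count, transfer this to $\bM_{\bf L,P,\sigma}$ via the counting identity of Theorem \ref{counting}, and conclude by Katz's Theorem \ref{Katz}. Your additional remarks on the bookkeeping (enlarging $R$, compatibility of the GIT quotients with base change) spell out what the paper leaves implicit, but no different idea is involved.
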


We actually conjecture (see the more general conjecture \ref{main3}) that the mixed Hodge polynomials of $\bM_{\bf L,P,\sigma}$ and $\M_\bfS$ agree.
\bigskip

\begin{corollary}If not empty, the varieties $\bM_{\bf L,P,\sigma}$ and $\M_{\overline{\bC}}$ are both irreducible. They are  non-empty if and only if the vector dimension $\v_\bC$ is a root of $\Gamma_\bC$. In particular, if $\M_{\overline{\bC}}$ is not empty, then so is $\M_{\bC}$.
\label{maincoro}\end{corollary}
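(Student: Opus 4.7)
The plan is to reduce everything to the already-established semisimple case by exploiting the identification supplied by Theorem \ref{counting}. Fix a generic tuple $\bfS=(S_1,\dots,S_k)$ of semisimple conjugacy classes of $\GL_n$ with $C_{\GL_n}(s_i)=L_i$ for some $s_i\in S_i$ (existence is guaranteed by the proposition of \S\ref{charvarsec}). A direct inspection of the construction in \S\ref{ne} shows that $(\Gamma_\bC,\v_\bC)=(\Gamma_\bfS,\v_\bfS)$: writing $\mu_i$ for the block-size partition of $L_i$, the class $C_i$ has type $(\mu_i')$ while $S_i$ has type $(1^{\mu_{i,1}})(1^{\mu_{i,2}})\cdots$, and the columns of the corresponding Young diagrams produce identical leg data on both quivers.

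For irreducibility I would apply Corollary \ref{E-poly-coro} to get $E(\bM_{\bf L,P,\sigma};x)=E(\M_\bfS;x)$. Both varieties are nonsingular and equidimensional of common dimension $d_\bC=d_\bfS$ (using Theorem \ref{nonsing}, the semisimple case of Theorem \ref{geoth}, and the elementary identity $\dim C_i=\dim S_i$). On any nonsingular equidimensional variety $X$ of dimension $d$, the group $H^{2d}_c(X)$ is pure of weight $2d$ and its dimension equals the number of irreducible components, so this number can be read off the leading coefficient of $E(X;x)$ in $x^d$. Since $\M_\bfS$ is irreducible by \cite{HLV2}, this coefficient is $1$, and $\bM_{\bf L,P,\sigma}$ is irreducible as well. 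The map $\bbU_{\bf L,P,\sigma}\to\calU_{\overline{\bC}}$ is surjective because each resolution $\bY_{L_i,P_i,\sigma_i}\to\overline{C}_i$ is; descending to the principal $\PGL_n$-bundle quotients, the induced map $\bM_{\bf L,P,\sigma}\to\M_{\overline{\bC}}$ is also surjective, and $\M_{\overline{\bC}}$, being the image of an irreducible variety, is irreducible.

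For non-emptiness, the same surjection yields $\M_{\overline{\bC}}\neq\emptyset\Leftrightarrow\bM_{\bf L,P,\sigma}\neq\emptyset$. Theorem \ref{counting} combined with the standard spreading-out principle (a complex variety is empty iff its $\F_q$-point count vanishes for all sufficiently large $q$) gives $\bM_{\bf L,P,\sigma}\neq\emptyset\Leftrightarrow\M_\bfS\neq\emptyset$. The semisimple version of the root criterion---\cite{HLV} for $g=0$ via Theorem \ref{nemp}, and \cite{HLV2} for $g>0$ where $\v_\bfS$ is always an imaginary root---yields $\M_\bfS\neq\emptyset\Leftrightarrow\v_\bfS\in\Phi(\Gamma_\bfS)$, and by the combinatorial identification above this is equivalent to $\v_\bC\in\Phi(\Gamma_\bC)$. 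The last assertion ``$\M_{\overline{\bC}}\neq\emptyset\Rightarrow\M_\bC\neq\emptyset$'' is immediate from Theorem \ref{geoth}(ii).

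The main obstacle is the passage from equality of $E$-polynomials to equality of numbers of irreducible components: this rests on nonsingularity and equidimensionality (Theorem \ref{nonsing}), together with the purity of top compactly-supported cohomology on such a variety, both of which must be invoked in exactly the right way to control the leading coefficient under the specialization $E(X;x,y)\mapsto E(X;x)$. The secondary but crucial ingredient is the combinatorial identification $(\Gamma_\bC,\v_\bC)=(\Gamma_\bfS,\v_\bfS)$; without it the two root-theoretic criteria would a priori live on different quivers and the semisimple criterion could not be transferred.
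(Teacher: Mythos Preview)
Your overall strategy matches the paper's: reduce to the semisimple tuple $\bfS$ via Theorem~\ref{counting}/Corollary~\ref{E-poly-coro}, read off irreducibility from the top coefficient of the $E$-polynomial, and transfer the root criterion from $\bfS$ to $\bC$. There are, however, two genuine gaps.

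First, the identification $(\Gamma_\bC,\v_\bC)=(\Gamma_\bfS,\v_\bfS)$ is false in general. Your justification (``$C_i$ has type $(\mu_i')$'') tacitly assumes each $C_i$ has a single eigenvalue; when $C_i$ has several, its type is a \emph{sequence} of partitions and the column heights of the concatenated Young diagrams need not be non-increasing. For example, if $C_i$ has type $(2)(1^2)$ (so $n=4$), the columns from left to right have heights $1,1,2$ and the leg vector is $(3,2)$; but then $L_i\simeq\GL_1\times\GL_1\times\GL_2$, so $S_i$ has type $(1^2)(1)(1)$, columns of heights $2,1,1$, and leg vector $(2,1)$. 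The correct statement, as the paper records (citing \cite[Lemma~5.3.9]{letellier4}), is that $\Gamma_\bfS=\Gamma_\bC$ while $\v_\bfS$ and $\v_\bC$ are \emph{Weyl-conjugate}. Since being a root is Weyl-invariant, your conclusion survives, but the argument you wrote does not establish the transfer.

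Second, your last sentence is circular: the implication ``$\M_{\overline{\bC}}\neq\emptyset\Rightarrow\M_\bC\neq\emptyset$'' is exactly the part of Theorem~\ref{geoth}(ii) that Corollary~\ref{maincoro} is meant to supply (see the discussion following Theorem~\ref{geoth}), so you cannot invoke Theorem~\ref{geoth}(ii) here. The paper instead closes the loop through the root criterion itself: once $\M_{\overline{\bC}}\neq\emptyset$ forces $\v_\bC$ to be a root, one applies Theorem~\ref{nemp} (for $g=0$) or the fundamental-domain argument of \cite[\S5.2]{HLV2} (for $g>0$) to conclude that $\calU_\bC$, and hence $\M_\bC$, is non-empty.
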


\begin{proof} Let us first prove the irreducibility.  Assume first that $\K=\overline{\F}_q$. For an algebraic variety $X$ of dimension $d$  defined over $\F_q$, it is now well-known by deep results of Deligne that  the coefficient of $q^{dr}$ in $\#\, X(\F_{q^r})$ equals the number of irreducible components of $X$ of dimension $d$ and that all complex conjugates of the eigenvalues of Frobenius on $H_c^i(X,\kappa)$ with $i<2d$ have absolute value less or equal to $q^{d-\frac{1}{2}}$. Since $\M_\bfS$ is irreducible (this is the main result of \cite{HLV2}) and that the irreducible components of $\bM_{\bf L,P,\sigma}$ are all of same dimension (by Theorem \ref{nonsing}), we deduce from Theorem \ref{counting} that $\bM_{\bf L,P,\sigma}$ is also irreducible. Therefore $\M_{\overline{\bC}}$ is irreducible. If $\K=\C$ we deduce the corollary from Corollary \ref{E-poly-coro}. Indeed, as $\M_\bfS$ and $\bM_{\bf L,P,\sigma}$ are non-singular, the coefficient of the term of highest degree in $E(\M_\bfS\,;\,x)$ and $E(\bM_{\bf L,P,\sigma}\,;\,x)$ equals the dimension of the top compactly supported cohomology which is also equal to the number of irreducible components of maximal dimension.

Suppose that $\M_{\overline{\bC}}$ is not empty. Then $\bM_{\bf L,P,\sigma}$ is also not empty and so by Theorem \ref{counting}, the variety $\M_\bfS$ is not empty.  Applying again Theorem \ref{nemp} to $\bfS$, we get that $\v_\bfS$ is a root of $\Gamma_\bfS$ (recall that it is always a root if $g>0$, see below Theorem \ref{nemp}). But it is not difficult to see that $\Gamma:=\Gamma_\bfS=\Gamma_\bC$ and that $\v_\bfS$ is conjugate to $\v_\bC$ under the Weyl group of $\Gamma$ (see \cite[Lemma 5.3.9]{letellier4} for more details). Hence $\v_\bC$ is also a root.

When $g=0$, we already know by Theorem \ref{nemp} that if $\v_\bC$ is a root, then $\M_{\overline{\bC}}$ and so $\bM_{\bf L,P,\sigma}$ is not empty. If $g>0$ then $\v_\bC$ is always in the fundamental domain of imaginary root by \cite[Lemma 5.2.3]{HLV2} and so $\M_\bfS$ is not empty by \cite[Theorem 5.3.10]{HLV2}. Hence both $\bM_{\bf L,P,\sigma}$ and $\M_{\overline{\bC}}$ are not empty. 

\end{proof}

\subsection{Proof of Theorem \ref{counting}}\label{proofcounting}

\subsubsection{Frobenius formula for resolutions of character varieties}

To alleviate the notation, we put $G=\GL_n(\overline{\F}_q)$ and denote by $F:G\rightarrow G$ the Frobenius $(a_{ij})_{i,j}\mapsto (a_{ij}^q)_{i,j}$. For a subgroup $H$ of $G$ which is $F$-stable, we denote by $H^F$ the groups of fixed points. Recall that for any $F$-stable Levi decomposition $P=L\rtimes U_P$, with $P$ a parabolic subgroup of $G$ and $U_P$ the unipotent radical of $P$, we have an induction (called Harish-Chandra induction) $R_L^G: C(L^F)\rightarrow C(G^F)$ defined as 

$$
R_L^G(f)(g)=\frac{1}{|P^F|}\sum_{\{h\in G^F\,|\, h^{-1}gh\in P\}}f(\pi_P(h^{-1}gh))
$$
for any $f\in C(L^F)$ and where $\pi_P:P\rightarrow L$ is the canonical projection.

The following lemma is straightforward.

\begin{lemma}For any $F$-stable triple  $(\bf L,P,\sigma)$ as in \S \ref{resolchar}, we have 

$$
\# \bM_{\bf L,P,\sigma}(\F_q)=(q-1)\left\langle \calE*R_{L_1}^G(1_{\sigma_1})*\cdots *R_{L_k}^G(1_{\sigma_k}),1_1\right\rangle_{G^F},
$$
where $\calE$ is defined in \S \ref{frobsec}.
\label{frobprelimi}\end{lemma}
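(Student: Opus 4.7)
The plan is to reduce the count on $\bM_{\bf L,P,\sigma}(\F_q)$ to a count on the affine space $\bbU_{\bf L,P,\sigma}(\F_q)$, and then to unwind the latter as the value at the identity of an iterated convolution. I would proceed in three short steps.

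\emph{Step 1: reduce from $\bM$ to $\bbU$.} By Theorem \ref{nonsing} the quotient $\bbU_{\bf L,P,\sigma}\to\bM_{\bf L,P,\sigma}$ is a principal $\PGL_n$-bundle in the \'etale topology, and $\PGL_n$ is connected. Hence Lang's theorem ensures that every point of $\bM_{\bf L,P,\sigma}(\F_q)$ lifts to $\bbU_{\bf L,P,\sigma}(\F_q)$ and that each fibre has exactly $|\PGL_n^F|=|G^F|/(q-1)$ elements, giving
$$
\#\bM_{\bf L,P,\sigma}(\F_q)=\frac{(q-1)\,\#\bbU_{\bf L,P,\sigma}(\F_q)}{|G^F|}.
$$

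\emph{Step 2: fiberwise count of each $\bY_{L_j,P_j,\sigma_j}$.} Writing the definition of Harish-Chandra induction and using that $\sigma_j U_{P_j}$ is stable under $P_j$-conjugation (because $\sigma_j\in Z_{L_j}$ and $U_{P_j}$ is normal in $P_j$), one checks directly that
$$
R_{L_j}^G(1_{\sigma_j})(x)=\#\bigl\{\,gP_j\in(G/P_j)^F\;\big|\;g^{-1}xg\in\sigma_j U_{P_j}\bigr\},
$$
where one uses Lang's theorem for the connected group $P_j$ to identify $G^F/P_j^F$ with $(G/P_j)^F$. Summing the fiber count over $x\in G^F$ yields $\#\bY_{L_j,P_j,\sigma_j}(\F_q)=\sum_x R_{L_j}^G(1_{\sigma_j})(x)$, but more importantly the function $x\mapsto R_{L_j}^G(1_{\sigma_j})(x)$ is precisely the fibre function we will feed into the commutator constraint.

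\emph{Step 3: commutator constraint as convolution.} The $\F_q$-points of $\bbU_{\bf L,P,\sigma}$ are tuples $((a_i,b_i)_i,(x_j,g_jP_j)_j)$ with $\prod_i(a_i,b_i)\prod_j x_j=1$; summing first over the flags $g_jP_j$ for fixed $x_j$ and then over the $(a_i,b_i)$'s converts the commutator product into $\calE$ and the remaining product constraint into an iterated convolution:
$$
\#\bbU_{\bf L,P,\sigma}(\F_q)=\sum_{z_0 z_1\cdots z_k=1}\calE(z_0)\prod_{j=1}^k R_{L_j}^G(1_{\sigma_j})(z_j)=\bigl(\calE*R_{L_1}^G(1_{\sigma_1})*\cdots*R_{L_k}^G(1_{\sigma_k})\bigr)(1).
$$
Since $\langle f,1_1\rangle_{G^F}=f(1)/|G^F|$, combining with Step 1 yields the claimed identity.

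None of the three steps is really hard; the only delicate point is Step 2, where one must be careful that the parametrization of fibres by cosets $gP_j$ really gives $(G/P_j)^F$ and not merely $G^F/P_j^F$. This is ensured by Lang's theorem applied to the connected parabolic $P_j$, together with the invariance of $\sigma_j U_{P_j}$ under $P_j$-conjugation; once this is in place the rest is formal manipulation of sums.
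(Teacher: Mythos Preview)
Your proof is correct and is exactly the straightforward computation the paper has in mind (the paper gives no proof beyond calling the lemma ``straightforward''). The three steps you outline---passing from $\bM$ to $\bbU$ via the principal $\PGL_n$-bundle and Lang's theorem, identifying $R_{L_j}^G(1_{\sigma_j})(x)$ with the $\F_q$-fibre count of $\bY_{L_j,P_j,\sigma_j}$ over $x$, and then rewriting the constrained sum as a convolution evaluated at~$1$---are precisely what is needed, and your treatment of the one genuinely careful point (the $P_j$-stability of $\sigma_j U_{P_j}$ and the identification $G^F/P_j^F\simeq(G/P_j)^F$) is accurate.
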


For a semisimple element $\sigma\in G$, denote by $G_\sigma$ the subset of elements of $G$ whose semisimple part is $G$-conjugate to $\sigma$. Note that the map which sends a unipotent conjugacy class  $C$ of the centralizer $G_G(\sigma)$ of $\sigma$ to the $G$-conjugacy classe of $\sigma\cdot C$ is a bijection onto the $G$-orbits of $G_\sigma$.

\begin{proposition} We have 

\beq
\# \bM_{\bf L,P,\sigma}(\F_q)=(q-1)\sum_{(C_1,\dots,C_k)}\left(\prod_{i=1}^k R_{L_i}^G(1_{\sigma_i})(C_i)\right)\sum_{\chi\in{\rm Irr}\,(G^F)}\left(\frac{|G^F|}{\chi(1)}\right)^{2g-2}\prod_{i=1}^k\frac{\chi(C_i)\, |C_i|}{\chi(1)},
\eeq
where the first sum is over the tuples of $(G^F)^k$-conjugacy classes of $G_{\sigma_1}^F\times\cdots\times G_{\sigma_k}^F$.

\end{proposition}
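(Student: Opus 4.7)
The plan is to start from Lemma \ref{frobprelimi} and reduce the convolution inner product to the standard Frobenius formula (Theorem \ref{frob}) by expanding each $R_{L_i}^G(1_{\sigma_i})$ as a linear combination of characteristic functions of conjugacy classes. Specifically, since $R_{L_i}^G(1_{\sigma_i})$ is a class function on $G^F$, it decomposes as
$$
R_{L_i}^G(1_{\sigma_i}) \;=\; \sum_{C_i}\, R_{L_i}^G(1_{\sigma_i})(C_i)\cdot 1_{C_i},
$$
where the sum ranges over $G^F$-conjugacy classes $C_i$ of $G^F$. Substituting into the expression from Lemma \ref{frobprelimi} and using bilinearity of the convolution product and of the inner product gives
$$
\# \bM_{\bf L,P,\sigma}(\F_q) = (q-1)\sum_{(C_1,\dots,C_k)}\left(\prod_{i=1}^k R_{L_i}^G(1_{\sigma_i})(C_i)\right)\left\langle \calE*1_{C_1}*\cdots*1_{C_k},1_1\right\rangle_{G^F}.
$$

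Next I would restrict the summation set. The function $R_{L_i}^G(1_{\sigma_i})(g)$ is nonzero only if some $G^F$-conjugate of $g$ lies in $\sigma_i U_{P_i}$, equivalently (by the standard fact that the Jordan decomposition of an element $\sigma_i u$ with $\sigma_i\in Z_{L_i}$ semisimple and $u\in U_{P_i}$ has semisimple part $P_i$-conjugate, hence $G$-conjugate, to $\sigma_i$) only if the semisimple part of $g$ is $G$-conjugate to $\sigma_i$. Hence the support of $R_{L_i}^G(1_{\sigma_i})$ is contained in $G_{\sigma_i}^F$, and the sum reduces to tuples $(C_1,\dots,C_k)$ of $G^F$-conjugacy classes with $C_i\subset G_{\sigma_i}^F$.

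Finally I would evaluate each inner product $\left\langle \calE*1_{C_1}*\cdots*1_{C_k},1_1\right\rangle_{G^F}$ using the combination of identity (\ref{conv}) and Theorem \ref{frob} (Frobenius' character formula), applied to the group $H = G^F = \GL_n(\F_q)$ and the tuple of conjugacy classes $\bC = (C_1,\dots,C_k)$:
$$
\left\langle \calE*1_{C_1}*\cdots*1_{C_k},1_1\right\rangle_{G^F} \;=\; \frac{\#\,\calU_\bC}{|G^F|} \;=\; \sum_{\chi\in\Irr(G^F)}\left(\frac{|G^F|}{\chi(1)}\right)^{2g-2}\prod_{i=1}^k\frac{|C_i|\,\chi(C_i)}{\chi(1)}.
$$
Plugging this into the previous display yields the announced formula. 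The main step is the identification of the support of $R_{L_i}^G(1_{\sigma_i})$; everything else is bookkeeping on top of Lemma \ref{frobprelimi} and Theorem \ref{frob}, so I do not expect real difficulty beyond writing out the above expansions carefully.
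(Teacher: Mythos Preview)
Your proposal is correct and follows exactly the same approach as the paper: expand each $R_{L_i}^G(1_{\sigma_i})$ as a sum of characteristic functions of conjugacy classes, apply Formula~(\ref{conv}) and Theorem~\ref{frob} to each term, and restrict the sum using the support of $R_{L_i}^G(1_{\sigma_i})$ in $G_{\sigma_i}^F$. Your write-up simply spells out the details that the paper leaves implicit.
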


\begin{proof}This follows from Lemma \ref{frobprelimi} by writing formally $R_L^G(1_\sigma)$ as a sum of characteristic functions of conjugacy classes and then by using Theorem \ref{frob} together with Formula (\ref{conv}). The fact that the first sum is over conjugacy classes of $G_{\sigma_i}^F$ follows from the easy fact that the functions $R_{L_i}^G(1_{\sigma_i})$ are supported by $G_{\sigma_i}^F$.
\end{proof}

\subsubsection{Irreducible characters of $\GL_n(\F_q)$}

We use the same notation as in the previous section. Recall that for any $F$-stable Levi subgroup $L$ of some (non necessarily $F$-stable) parabolic subgroup $P$ of $G$, Deligne and Lusztig \cite{DLu} \cite{Lufinite} defined a $\kappa$-linear map $R_L^G:C(L^F)\rightarrow C(G^F)$, called \emph{Deligne-Lusztig induction}, which maps virtual characters (i.e. $\Z$-linear combination of characters) to virtual characters. It is well-known that this induction does not depend on the choice of $P$ and it is clear from the definition that it coincides with Harish-Chandra induction when the parabolic $P$ is $F$-stable. An irreducible character of $L^F$ is called \emph{unipotent} if it arises as a direct summand of some character $R_T^L({\rm Id})$ for some $F$-stable maximal torus of $L$.

It is well-known \cite{LSr} that any irreducible character of $G^F$ can be written in the form 

$$
R_{L,\calA^L,\theta}=\epsilon_G\epsilon_L R_L^G(\theta\cdot\calA^L)
$$
for some triple $(L,\calA^L,\theta)$ where $L$ is an $F$-stable Levi subgroup, $\calA^L$ a unipotent character of $L^F$ and $\theta:L^F\rightarrow\kappa^\times$ a  linear character satisfying the properties (a) and (b) of \cite[3.1]{LSr}. Also here $\epsilon_L$ denotes the sign $(-1)^{\F_q-\text{rank}\,(L)}$. By analogy with Jordan decomposition for conjugacy classes, $\theta$ and $\calA^L$ should be thought respectively as the semisimple and the unipotent parts of $R_{L,\calA^L,\theta}$.

\subsubsection{Proof of Theorem \ref{counting}}\label{proofcounting1}

We define the \emph{type} of $R_{L,\calA^L,\theta}$ as the  $G^F$-conjugacy class of the pair $(L,\calA^L)$.
For a pair $(L,\calA^L)$ we define $W_{G^F}(L,\calA^L)$ as the quotient by $L^F$ of the normalizer in $G^F$ of the pair $(L,\calA^L)$.
 
If $L$ is an $F$-stable Levi, then we have an isomorphism $L\simeq \prod_{i=1}^r(\GL_{n_i}(\overline{\F}_q))^{d_i}$ such that the Frobenius $F$ on $L$ corresponds to the Frobenius $vF$ where $v$ acts on each component $(\GL_{n_i})^{d_i}$ by circular permutation of the coordinates. Hence 

$$
L^F\simeq \prod_{i=1}^r\GL_{n_i}(\F_{q^{d_i}}).
$$
We then define the constant 

$$
K_L^o=\begin{cases}(-1)^{r-1}d^{r-1}\mu(d)(r-1)!&\text{ if for all } i, d_i=d,\\
0&\text{ if not.}\end{cases}
$$
Here $\mu$ denotes the classical M\"obius function.

\begin{theorem}For any $F$-stable generic tuple $(C_1,\dots,C_k)$ of conjugacy classes of $G$, we have 

\beq
\sum_\chi\prod_{i=1}^k\chi(C_i^F)=\frac{(q-1)K_L^o}{|W_{G^F}(L,\calA^L)|}\prod_{i=1}^k\epsilon_G\epsilon_L R_L^G(\calA^L)(C_i^F),
\eeq
where $\chi$ runs over the irreducible characters of $G^F$ of type $(L,\calA^L)$.
\label{thHLV}\end{theorem}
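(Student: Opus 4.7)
The plan is to combine the Lusztig--Srinivasan parametrization of $\Irr(G^F)$ with the genericity of $(C_1,\dots,C_k)$ and a M\"obius inversion on linear characters of $L^F$. First, I would use the parametrization of irreducible characters of type $(L,\calA^L)$ by $W_{G^F}(L,\calA^L)$-orbits of valid linear characters $\theta:L^F\to\kappa^\times$ to rewrite the left hand side as
$$\sum_\chi\prod_{i=1}^k\chi(C_i^F)=\frac{(\epsilon_G\epsilon_L)^k}{|W_{G^F}(L,\calA^L)|}\sum_{\theta\,\text{valid}}\prod_{i=1}^k R_L^G(\theta\cdot\calA^L)(C_i^F),$$
the denominator absorbing the orbit identifications that give the same $\chi$.

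Second, I would enlarge the sum to run over \emph{all} linear characters $\theta$ of $L^F$. Since $L^F\simeq\prod_i\GL_{n_i}(\F_{q^{d_i}})$, each such character factors through the determinants and has the form $\theta=\bigotimes_i(\tilde\theta_i\circ\det)$. Expanding every $R_L^G(\theta\calA^L)(C_i^F)$ via the Deligne--Lusztig character formula and swapping the $\theta$-sum inside, Fourier orthogonality on the abelianization of $L^F$ produces a Kronecker-type constraint on certain products of determinants of restrictions of the $C_i^F$ to $L^F$. The genericity hypothesis of Definition \ref{gendef}, together with $\prod_i\det(C_i)=1$, is exactly what is needed to show that this constraint forces the conjugators to place each $C_i^F$ into a common Levi equal to $L^F$ (no proper sub-Levi can arise), leaving only $\theta=1$ as the effective surviving contribution, which rebuilds the factor $\prod_i R_L^G(\calA^L)(C_i^F)$.

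Third, I would invert the relation between the sum over valid $\theta$ and the full sum by M\"obius inversion on the poset of $F$-stable Levi subgroups containing $L$ (equivalently, on cyclic subgroups of the Frobenius-cyclic group permuting the factors of $L$). When $L\simeq\prod_i(\GL_{n_i})^{d_i}$, the validity condition (b) of \cite{LSr} is that the $W$-orbit of $\theta$ meets no proper Levi of this type; extracting it from the unconstrained $\theta$-sum produces the combinatorial coefficient $(-1)^{r-1}d^{r-1}\mu(d)(r-1)!$. The factorial and sign come from ordering the $r$ blocks in a cyclic partition, $d^{r-1}$ from the index of the regular locus in the Frobenius-cyclic group, and $\mu(d)$ from inclusion--exclusion over divisors; both the vanishing clause of $K_L^o$ (when the $d_i$ are not all equal) and its explicit value emerge from this step. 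The missing $(q-1)$ factor is contributed by the central $\Gm$-direction of $L$ that trivializes under Fourier duality on $Z_L^F$.

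The main obstacle is the interplay of the previous two steps: one must track precisely which contributions of the Deligne--Lusztig character formula, including those involving Green functions on the unipotent parts, are actually killed by the Fourier orthogonality and which survive. The genericity of $(C_1,\dots,C_k)$ must be shown to imply vanishing for \emph{every} proper Harish-Chandra restriction compatible with $\calA^L$, and the bookkeeping of the Weyl group $W_{G^F}(L,\calA^L)$ versus the larger normalizer of $L$ must be kept straight so that the constant $K_L^o/|W_{G^F}(L,\calA^L)|$ appears with the correct combinatorial normalization.
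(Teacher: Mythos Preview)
The paper does not prove this theorem: its entire proof is the sentence ``This is \cite[Theorem 4.3.1 (ii)]{HLV} re-written in the language of groups.'' So there is no argument here to compare against; the result is imported from \cite{HLV}, where it is obtained in the symmetric-function formalism (the constant $K_L^o$ is the $C_\omega^o$ appearing in the Log/Exp identities there).

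Your sketch is, in spirit, a group-theoretic paraphrase of that same argument: parametrize the characters of type $(L,\calA^L)$ by $W_{G^F}(L,\calA^L)$-orbits of admissible $\theta$, evaluate the full unrestricted $\theta$-sum via the Deligne--Lusztig character formula and orthogonality of linear characters on $L^F_{\rm ab}$, use genericity to kill the contributions from proper Levis, and recover the admissible sum by M\"obius inversion over Levis/partitions, which produces $K_L^o$. That is the right architecture.

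Two points of your write-up would need tightening before this becomes a proof. First, your step~2 says Fourier orthogonality ``leaves only $\theta=1$ as the effective surviving contribution''. That is not what happens: the sum over all $\theta$ does not collapse to a single term but evaluates to $|L^F_{\rm ab}|$ times a delta-constraint on products of determinants, and it is the \emph{count} of conjugators satisfying that constraint (controlled by genericity) that reproduces a multiple of $\prod_i R_L^G(\calA^L)(C_i^F)$. Phrase it as an evaluation, not a survival. Second, the M\"obius step must be set up carefully: one is not inverting over sub-Levis of $L$ but over the lattice of $F$-stable Levis $M\supseteq L$ for which the pair $(L,\theta)$ could descend (equivalently, over partitions of the index set of the blocks of $L$ compatible with the $d_i$'s). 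This is exactly why $K_L^o$ vanishes unless all $d_i$ are equal, and why the factor $(r-1)!$ (the M\"obius function of the partition lattice) and $\mu(d)$ appear. Your description conflates this with ``cyclic subgroups of the Frobenius-cyclic group'', which is not quite the right poset. Once these two points are made precise, your outline matches the argument in \cite{HLV}.
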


\begin{proof} This \cite[Theorem 4.3.1 (ii)]{HLV} re-written in the language of groups. 
\end{proof}

Recall that the  value of $R_{L,\calA^L,\theta}$ at $1$ does not depend on $\theta$.

\begin{corollary}For any $F$-stable $(\bf L,P,\sigma)$ as in \S \ref{resolchar} and any $F$-stable generic tuple $\bfS=(S_1,\dots,S_k)$ of conjugacy classes of $G$ we have

\begin{align*}
&\# \bM_{\bf L,P,\sigma}(\F_q)=\sum_{(M,\calA^M)}\frac{(q-1)^2K_M^o(\epsilon_G\epsilon_M)^k}{|W_{G^F}(M,\calA^M)|}\left(\frac{|G^F|}{R_{M,\calA^M,{\rm Id}}(1)}\right)^{2g-2+k}\prod_{i=1}^k\left\langle R_{L_i}^G(1_{\sigma_i}),R_M^G(\calA^M)\right\rangle_{G^F},\\
&\# \M_\bfS(\F_q)=\sum_{(M,\calA^M)}\frac{(q-1)^2K_M^o(\epsilon_G\epsilon_M)^k}{|W_{G^F}(M,\calA^M)|}\left(\frac{|G^F|}{R_{M,\calA^M,{\rm Id}}(1)}\right)^{2g-2}\prod_{i=1}^k\frac{|S_i^F|\cdot R_M^G(\calA^M)(S_i^F)}{R_{M,\calA^M,{\rm Id}}(1)}.
\end{align*}
where the sum runs over the $G^F$-conjugacy classes of pairs of the form $(M,\calA^M)$ with $M$ an $F$-stable Levi subgroup (of some parabolic subgroup of) $G$ and $\calA^M$ a unipotent character of $M^F$.

\label{frob-group}\end{corollary}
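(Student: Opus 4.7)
My plan is to derive both formulas from the two Frobenius-type identities already at our disposal (the Proposition immediately preceding the Corollary for $\#\bM_{\bf L,P,\sigma}(\F_q)$, and its straightforward analog for $\#\M_\bfS(\F_q)$) by regrouping the sum over $\chi\in\Irr(G^F)$ according to the type $(M,\calA^M)$ of $\chi$, and then invoking Theorem \ref{thHLV} on each piece to evaluate the inner sum $\sum_\chi\prod_i\chi(C_i^F)$ in closed form.

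For the first formula, I start from the identity in the proposition just above:
$$\#\bM_{\bf L,P,\sigma}(\F_q)=(q-1)\sum_{(C_1,\dots,C_k)}\prod_{i=1}^k R_{L_i}^G(1_{\sigma_i})(C_i)\sum_\chi\left(\frac{|G^F|}{\chi(1)}\right)^{2g-2}\prod_{i=1}^k\frac{\chi(C_i)|C_i|}{\chi(1)},$$
interchange the two summations, and partition $\Irr(G^F)$ by the $G^F$-conjugacy class of $(M,\calA^M)$. For fixed type, $\chi(1)=R_{M,\calA^M,{\rm Id}}(1)$ is constant and the factors involving it pull out, while Theorem \ref{thHLV} evaluates the inner sum as $\frac{(q-1)K_M^o(\epsilon_G\epsilon_M)^k}{|W_{G^F}(M,\calA^M)|}\prod_i R_M^G(\calA^M)(C_i^F)$. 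The remaining outer sum factorizes as $\prod_i\sum_{C_i}R_{L_i}^G(1_{\sigma_i})(C_i)|C_i|R_M^G(\calA^M)(C_i)=\prod_i|G^F|\langle R_{L_i}^G(1_{\sigma_i}),R_M^G(\calA^M)\rangle_{G^F}$, the second equality using that $R_M^G(\calA^M)$ is real-valued (unipotent characters of $\GL_n$ are rational-valued and Deligne--Lusztig induction preserves this). Absorbing the $k$ extra factors of $|G^F|$ into the dimension factor via $(|G^F|/R_{M,\calA^M,{\rm Id}}(1))^{2g-2}\cdot|G^F|^k/R_{M,\calA^M,{\rm Id}}(1)^k=(|G^F|/R_{M,\calA^M,{\rm Id}}(1))^{2g-2+k}$ yields the stated formula.

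For the second formula, since $\calU_\bfS\to\M_\bfS$ is a principal $\PGL_n$-bundle by Theorem \ref{geoth}(i) and $\PGL_n$ is connected, Lang--Steinberg gives $\#\M_\bfS(\F_q)=\#\calU_\bfS^F/|\PGL_n^F|=(q-1)\,\#\calU_\bfS^F/|G^F|$. Formula (\ref{conv}) followed by the Frobenius formula (Theorem \ref{frob}) rewrites this as $(q-1)\sum_\chi(|G^F|/\chi(1))^{2g-2}\prod_i|S_i|\chi(S_i)/\chi(1)$. Partitioning by type of $\chi$ and applying Theorem \ref{thHLV} to $\bfS$ (which is generic by hypothesis) yields the second formula at once; here no extra factor $|G^F|^k$ has to be absorbed, so the dimension exponent stays at $2g-2$.

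The only nontrivial verification is that Theorem \ref{thHLV} applies term-by-term in the first formula, which requires each tuple $(C_1,\dots,C_k)$ contributing with nonzero coefficient to be generic. This holds because $R_{L_i}^G(1_{\sigma_i})$ is supported on $G_{\sigma_i}^F$, so each such $C_i$ has semisimple part $G$-conjugate to $\sigma_i$, and both defining conditions of genericity (on $\prod_i\det C_i$ and on invariant subspaces) depend only on the semisimple parts of the $C_i$: the determinant is constant on a conjugacy class, and any $x$-invariant subspace is automatically stable under the semisimple part of $x$, which is a polynomial in $x$. Since $\bfS$ is generic, so is every such $(C_1,\dots,C_k)$, and the argument goes through.
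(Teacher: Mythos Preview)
Your proof is correct and follows the same approach as the paper's: apply Theorem~\ref{thHLV} to each type after using the Frobenius formulas, together with the fact that $\chi(1)=R_{M,\calA^M,{\rm Id}}(1)$ is constant across all $\chi$ of a given type. One minor slip: in your last paragraph you justify genericity of the tuples $(C_1,\dots,C_k)$ arising in the first formula by invoking that $\bfS$ is generic, but $\bfS$ is an arbitrary generic tuple unrelated to $({\bf L},{\bf P},\sigma)$, so its semisimple parts need not coincide with the $\sigma_i$. The correct reference is the original generic tuple $\bC$ implicit in ``as in \S\ref{resolchar}'': since each $C_i$ in the sum has $\sigma_i$-conjugacy class in its closure, and the same holds for the original $C_i$, Lemma~3.4 (or your direct argument that genericity depends only on eigenvalues) transfers genericity from $\bC$ to every contributing tuple.
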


\begin{proof} It follows from Theorem \ref{thHLV} and the Frobenius formulas together with the fact that for any linear character $\theta:M^F\rightarrow\kappa^\times$, we have $R_{M,\calA^M,\theta}(1)=R_{M,\calA^M,{\rm Id}}(1)$.
\end{proof}

Hence to prove Theorem \ref{counting} we are reduced to prove the following theorem.

\begin{theorem}For any $F$-stable Levi subgroups $L$ and $M$, any elements $\sigma$ in $(Z_L)^F$ and any unipotent character $\calA^M$ of $M^F$, we have 

$$
\left\langle R_L^G(1_\sigma),R_M^G(\calA^M)\right\rangle_{G^F}=|L^F|^{-1}R_M^G(\calA^M)(S^F),
$$
where $S$ is an $F$-stable semisimple conjugacy class of $G$ with a representative whose centralizer in $G$ is $L$.
\label{countingbis}\end{theorem}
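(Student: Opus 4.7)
The plan is to reduce the identity to a pointwise computation at $\sigma$ via Frobenius reciprocity. In the setup of Theorem \ref{counting} the parabolic $P$ containing $L$ is $F$-stable, so $R_L^G$ is Harish-Chandra induction and admits as Hermitian adjoint the restriction ${}^*\!R_L^G$ given by $({}^*\!R_L^G f)(l)=|U_P^F|^{-1}\sum_{u\in U_P^F} f(lu)$. Combining this adjunction with the fact that $1_\sigma$, being the characteristic function of the $L^F$-class of $\sigma\in (Z_L)^F$, is the delta at $\sigma$, we immediately obtain
$$\left\langle R_L^G(1_\sigma),R_M^G(\calA^M)\right\rangle_{G^F}\;=\;|L^F|^{-1}\,\overline{({}^*\!R_L^G R_M^G(\calA^M))(\sigma)}.$$

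The technical heart of the proof is the local identity $({}^*\!R_L^G f)(\sigma)=f(\sigma)$ for every class function $f$ on $G^F$. Unwound, it amounts to showing that every $\sigma u$ with $u\in U_P^F$ is $U_P^F$-conjugate to $\sigma$, so that the $f$-average collapses. For this, introduce the morphism $\phi:U_P\to U_P$, $\phi(v)=\sigma^{-1}v\sigma v^{-1}$, which satisfies $v\sigma v^{-1}=\sigma\,\phi(v)$. If $\phi(v)=\phi(v')$ then $v\sigma v^{-1}=v'\sigma v'^{-1}$, so $v'^{-1}v\in C_G(\sigma)\cap U_P$. The crucial input is the hypothesis $C_G(\sigma)=L$, which forces $C_G(\sigma)\cap U_P=L\cap U_P=\{1\}$; hence $\phi$ is injective. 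Being $F$-equivariant, it restricts to an injective self-map of the finite set $U_P^F$, therefore a bijection. Consequently $v\mapsto v\sigma v^{-1}$ is a bijection $U_P^F\xrightarrow{\sim}\sigma U_P^F$, the set $\sigma U_P^F$ is a single $U_P^F$-conjugacy orbit (that of $\sigma$), and the averaging yields the claim.

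Applying this local identity to $f=R_M^G(\calA^M)$ gives $({}^*\!R_L^G R_M^G(\calA^M))(\sigma)=R_M^G(\calA^M)(\sigma)=R_M^G(\calA^M)(S^F)$, the last equality because $\sigma$ belongs to $S$. The remaining complex conjugation in the Hermitian pairing evaporates since $R_M^G(\calA^M)$, being the Deligne--Lusztig induction of a unipotent character of $M^F=\prod_i \GL_{n_i}(\F_{q^{d_i}})$, is a $\Z$-linear combination of unipotent characters of $G^F=\GL_n(\F_q)$, all of which take values in $\Z$. Assembling these pieces produces the desired equality.

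The only genuine obstacle is the local identity, and within it the bijectivity of $\phi$ on $U_P^F$, which is in turn entirely powered by the intersection computation $C_G(\sigma)\cap U_P=\{1\}$ afforded by the choice of $\sigma$ as a representative of $S$ with centralizer exactly $L$. Everything else is a formal manipulation of Harish-Chandra Frobenius reciprocity, and the rationality of values of $R_M^G(\calA^M)$ on semisimple elements is a standard fact in the character theory of $\GL_n(\F_q)$.
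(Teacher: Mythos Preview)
Your argument contains a genuine gap: you assume $C_G(\sigma)=L$, but this is \emph{not} a hypothesis of the theorem. The statement says $\sigma$ is \emph{any} element of $(Z_L)^F$, while $S$ is a semisimple class with a representative $s$ satisfying $C_G(s)=L$; the two elements $\sigma$ and $s$ are independent. In the applications (e.g.\ Example~\ref{primitive}, or any resolution of a class with a single eigenvalue) one needs $\sigma$ to be a scalar matrix, so $C_G(\sigma)=G$. In that case your map $\phi(v)=\sigma^{-1}v\sigma v^{-1}$ is identically $1$ and your bijectivity claim collapses; the elements $\sigma u$ with $u\in U_P^F$ are genuinely in different $G^F$-classes, and the average $({}^*\!R_L^G f)(\sigma)$ does not reduce to $f(\sigma)$.

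What your argument \emph{does} establish is the theorem in the special case where $\sigma$ itself is regular in $Z_L$, i.e.\ $C_G(\sigma)=L$, and there it is a clean short proof. The missing piece is precisely the content of the paper's Proposition preceding the final computation: one must show that $\langle R_L^G(1_\sigma),R_M^G(\calA^M)\rangle_{G^F}$ is independent of $\sigma\in(Z_L)^F$. The paper does this by passing to the Lie algebra, using that Deligne--Lusztig induction commutes with Fourier transform there, and observing that after Fourier transform the dependence on $\sigma$ appears only through a linear character which is trivial on nilpotent elements. This independence step is not a formality and cannot be absorbed into Frobenius reciprocity alone. Once it is in place, one may either specialise to $\sigma=1$ (as the paper does, with an explicit Weyl-group count) or, alternatively, specialise to a regular $\sigma\in(Z_L)^F$ and finish with your argument.
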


Denote by $C(G^F)_{\rm uni}$ the subspace of $C(G^F)$ generated by the the Deligne-Lusztig characters $R_T^G({\rm Id})$ where $T$ runs over the $F$-stable maximal tori of $G$. Since $R_M^G(\calA^M)$ belongs to $C(G^F)_{\rm uni}$, to prove the above theorem we are reduced to prove 
that

\beq
\left\langle R_L^G(1_\sigma),R_T^G({\rm Id})\right\rangle_{G^F}=|L^F|^{-1}R_T^G({\rm Id})(S^F),
\eeq
for any $F$-stable maximal torus $T$ of $G$.

The following result will allow us to make a further reduction.

\begin{proposition} The number $\left\langle R_L^G(1_\sigma),R_T^G({\rm Id})\right\rangle_{G^F}$ does not depend on $\sigma\in (Z_L)^F$, i.e.,

$$
\left\langle R_L^G(1_\sigma),R_T^G({\rm Id})\right\rangle_{G^F}=\left\langle R_L^G(1_1),R_T^G({\rm Id})\right\rangle_{G^F},
$$
for any $\sigma\in (Z_L)^F$.

\end{proposition}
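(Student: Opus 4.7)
The plan is to combine Frobenius reciprocity with two standard facts from Deligne--Lusztig theory: that truncation preserves the unipotent Lusztig series, and that unipotent characters of a connected reductive group are constant on the connected center. First, since $L$ is a Levi factor of the $F$-stable parabolic $P$, the induction $R_L^G$ coincides with ordinary Harish-Chandra induction and admits as adjoint the truncation functor ${}^*R_L^G$. Frobenius reciprocity thus yields
$$
\langle R_L^G(1_\sigma),\, R_T^G({\rm Id})\rangle_{G^F} = \langle 1_\sigma,\, {}^*R_L^G\, R_T^G({\rm Id})\rangle_{L^F}.
$$
Because $\sigma\in(Z_L)^F$ is central in $L^F$, its $L^F$-conjugacy class is $\{\sigma\}$ and $\langle 1_\sigma, f\rangle_{L^F} = |L^F|^{-1}\overline{f(\sigma)}$ for every class function $f$ on $L^F$. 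It therefore suffices to prove that the value $\bigl({}^*R_L^G\, R_T^G({\rm Id})\bigr)(\sigma)$ is the same for every $\sigma\in(Z_L)^F$.

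Next, I would argue that ${}^*R_L^G\, R_T^G({\rm Id})$ is a virtual unipotent character of $L^F$. The Deligne--Lusztig character $R_T^G({\rm Id})$ lies (as a virtual character) in the unipotent Lusztig series $\mathcal{E}(G^F,1)$, and Deligne--Lusztig restriction sends $\mathcal{E}(G^F,1)$ into $\mathcal{E}(L^F,1)$; this is a direct consequence, in the $\GL_n$ setting, of the Mackey formula, which expresses ${}^*R_L^G\, R_T^G({\rm Id})$ as a $\Z$-linear combination of characters of the form $R_{T'}^L({\rm Id})$ with $T'$ an $F$-stable maximal torus of $L$ (arising from the intersections $T^x\cap L$ as $x$ runs over appropriate double coset representatives).

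Finally, any such virtual unipotent character $\chi$ of $L^F$ satisfies $\chi(gz)=\chi(g)$ for all $g\in L^F$ and $z\in(Z_L^\circ)^F$; this follows from the Deligne--Lusztig character formula with trivial torus character together with the inclusion $Z_L^\circ\subset T'$ for every $F$-stable maximal torus $T'$ of $L$. Since $L$ is a Levi subgroup of $\GL_n$, its center $Z_L$ is connected (a product of copies of $\GL_1$), so $(Z_L)^F=(Z_L^\circ)^F$ and the identity $\chi(\sigma)=\chi(1)$ holds for every $\sigma\in(Z_L)^F$. Applying this to $\chi={}^*R_L^G\, R_T^G({\rm Id})$ gives $\bigl({}^*R_L^G\, R_T^G({\rm Id})\bigr)(\sigma)=\bigl({}^*R_L^G\, R_T^G({\rm Id})\bigr)(1)$, independent of $\sigma$, which completes the proof. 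The only technical input beyond character-theoretic formalities is the Mackey formula, and this is classical in the $\GL_n$ setting; if one wishes to avoid it one can instead argue directly from the character formula for $R_T^G({\rm Id})$ evaluated at $\sigma u$ with $u\in U_P^F$, using that the semisimple part of $\sigma u$ is $U_P$-conjugate to $\sigma$.
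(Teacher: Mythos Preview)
Your argument is correct and takes a genuinely different route from the paper's. The paper passes to the Lie algebra $\mathfrak{g}$, replaces $R_L^G(1_\sigma)$ and $R_T^G({\rm Id})$ by their Lie-algebra analogues $R_\mathfrak{l}^\mathfrak{g}(1_\zeta)$ and $R_\mathfrak{t}^\mathfrak{g}({\rm Id})$, and then exploits the arithmetic Fourier transform $\calF^\mathfrak{g}$: using the commutation formula $\calF^\mathfrak{g}\circ R_\mathfrak{l}^\mathfrak{g}=\epsilon_G\epsilon_L\, q^{(\dim G-\dim L)/2}\, R_\mathfrak{l}^\mathfrak{g}\circ\calF^\mathfrak{l}$ (the main result of \cite{letellier}), the pairing becomes one between $R_\mathfrak{l}^\mathfrak{g}$ of a linear character $\psi({\rm Tr}(\,\cdot\,\zeta))$ and the nilpotently supported function $R_\mathfrak{t}^\mathfrak{g}(1_0)$; restricting to the nilpotent cone then kills the dependence on $\zeta$. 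By contrast, you stay entirely on the group side: Frobenius reciprocity reduces the problem to evaluating the virtual character ${}^*R_L^G\,R_T^G({\rm Id})$ at $\sigma$, the Mackey formula (or the compatibility of Harish-Chandra restriction with Lusztig series) shows this is a $\Z$-combination of Deligne--Lusztig characters $R_{T'}^L({\rm Id})$, and each of these is visibly constant on $(Z_L)^F=(Z_L^\circ)^F$ because $Z_L^\circ\subset T'$ and the inducing character is trivial. Your approach is more elementary and self-contained (it avoids the passage to $\mathfrak{g}$ and the deep commutation theorem), while the paper's Fourier-transform argument makes transparent the analogy with the additive/Lie-algebra setting that pervades the rest of the paper and \cite{letellier4}.
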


\begin{proof}We pass to the Lie algebra to use a Fourier transform argument. Denote by $\mathfrak{t}$, $\mathfrak{l}$, $\mathfrak{g}$ the Lie algebras of $T$, $L$, $G$ respectively, and denote by $C(\mathfrak{g}^F)$ the $\kappa$-vector space of functions $\mathfrak{g}^F\rightarrow\kappa$ that are constant on adjoint orbits of $\mathfrak{g}^F$. Then we can define a Lie algebra analogue of Deligne-Lusztig induction $R_\mathfrak{l}^\mathfrak{g}:C(\mathfrak{l}^F)\rightarrow C(\mathfrak{g}^F)$ for any $F$-stable Levi subgroup $L$ as in \cite{letellier} (see also \cite[\S 6.5]{letellier4} for a self-contained review). Recall that if $x\in\mathfrak{g}^F$ has Jordan decomposition $x_s+x_n$, with $x_s$ semisimple and $x_n$ nilpotent, and if $g=g_sg_u$ is the Jordan decomposition of $g\in G^F$ such that $C_G(g_s)=C_G(x_s)$ and $x_n=g_u-1$, then

$$
R_T^G({\rm Id})(g)=R_{\mathfrak{t}}^\mathfrak{g}({\rm Id})(x),
$$
and so 

$$
\left\langle R_L^G(1_\sigma),R_T^G({\rm Id})\right\rangle_{G^F}=\left\langle R_{\mathfrak{l}}^{\mathfrak{g}}(1_\zeta),R_\mathfrak{t}^\mathfrak{g}({\rm Id})\right\rangle_{G^F},
$$
for any $\sigma\in (Z_L)^F$ and $\zeta\in\mathfrak{g}^F$ is such that $C_G(\zeta)=C_G(\sigma)$. By notation abuse, we still denote by $\langle\,,\,\rangle_{G^F}$ the form on $C(\mathfrak{g}^F)$ which is defined by 

$$
(f_1,f_2)\mapsto \frac{1}{|G^F|}\sum_{x\in\mathfrak{g}^F}f_1(x)\overline{f_2(x)}.
$$
We are now reduced to prove that $\left\langle R_{\mathfrak{l}}^{\mathfrak{g}}(1_\zeta),R_\mathfrak{t}^\mathfrak{g}({\rm Id})\right\rangle_{G^F}$ does not depend on $\zeta$ in the center of $\mathfrak{l}^F$.

Fix a non-trivial additive character $\psi:\F_q\rightarrow\kappa^\times$ and define the arithmetic Fourier transforms $\calF^\mathfrak{g}:C(\mathfrak{g}^F)\rightarrow C(\mathfrak{g}^F)$ by 

$$
\calF^\mathfrak{g}(f)(x)=\sum_{y\in\mathfrak{g}^F}\psi\left({\rm Trace}\,(xy)\right) f(y).
$$
for any $f\in C(\mathfrak{g}^F)$. Then it is well-known that $q^{-\frac{{\rm dim}\, G}{2}}\calF^\mathfrak{g}$ is an isometry for $\langle\,,\,\rangle_{G^F}$ (see \cite{letellier} and the references therein for the basic properties of Fouriers transforms). Moreover by the main result of \cite{letellier} we have the following commutation formula

$$
\calF^\mathfrak{g}\circ R_\mathfrak{l}^\mathfrak{g}=\epsilon_G\epsilon_L q^{\frac{{\rm dim}\, G-{\rm dim}\, L}{2}}R_\mathfrak{l}^\mathfrak{g}\circ\calF^\mathfrak{l}.
$$
Hence

$$
\left\langle R_{\mathfrak{l}}^{\mathfrak{g}}(1_\zeta),R_\mathfrak{t}^\mathfrak{g}({\rm Id})\right\rangle_{G^F}=\epsilon_L\epsilon_T q^{-\frac{{\rm dim}\,L+{\rm dim}\, T}{2}}\left\langle R_\mathfrak{l}^\mathfrak{g}\big(\calF^\mathfrak{l}(1_\zeta)\big),R_\mathfrak{t}^\mathfrak{g}(\calF^\mathfrak{t}\big({\rm Id})\big)\right\rangle_{G^F}.
$$
Notice that $\calF^\mathfrak{t}({\rm Id})=|\mathfrak{g}^F|\cdot 1_0$ and that $\calF^\mathfrak{l}(1_\sigma)$ is the linear character $\mathfrak{l}^F\rightarrow \kappa^\times$, $x\mapsto\psi({\rm Trace}(x\sigma))$. Denote by ${\rm Res}^\mathfrak{g}_{\rm nil}$ the linear map $C(\mathfrak{g}^F)\rightarrow C(\mathfrak{g}^F)$ that maps a function $f$ to the nilpotently supported function that takes the same values as $f$ on the nilpotent elements of $\mathfrak{g}^F$. Denote by ${\rm Id}_{\rm nil}$ the image of the identity function by ${\rm Res}^\mathfrak{g}_{\rm nil}$. Since $R_\mathfrak{t}^\mathfrak{g}(1_0)$ is supported on nilpotent elements, we have 

\begin{align*}
 \left\langle R_{\mathfrak{l}}^{\mathfrak{g}}(1_\zeta),R_\mathfrak{t}^\mathfrak{g}({\rm Id})\right\rangle_{G^F}&=\epsilon_L\epsilon_T q^{{\rm dim}\,G-\frac{{\rm dim}\,L+{\rm dim}\, T}{2}}\left\langle R_\mathfrak{l}^\mathfrak{g}\big(\calF^\mathfrak{l}(1_\zeta)\big),R_\mathfrak{t}^\mathfrak{g}(1_0)\big)\right\rangle_{G^F}\\
&=\epsilon_L\epsilon_T q^{{\rm dim}\,G-\frac{{\rm dim}\,L+{\rm dim}\, T}{2}}\left\langle {\rm Res}^\mathfrak{g}_{\rm nil}\circ R_\mathfrak{l}^\mathfrak{g}\big(\calF^\mathfrak{l}(1_\zeta)\big),R_\mathfrak{t}^\mathfrak{g}(1_0)\big)\right\rangle_{G^F}\\
&=\epsilon_L\epsilon_T q^{{\rm dim}\,G-\frac{{\rm dim}\,L+{\rm dim}\, T}{2}}\left\langle R_\mathfrak{l}^\mathfrak{g}\circ{\rm Res}^\mathfrak{l}_{\rm nil}\big(\calF^\mathfrak{l}(1_\zeta)\big),R_\mathfrak{t}^\mathfrak{g}(1_0)\big)\right\rangle_{G^F}\\
&=\epsilon_L\epsilon_T q^{{\rm dim}\,G-\frac{{\rm dim}\,L+{\rm dim}\, T}{2}}\left\langle R_\mathfrak{l}^\mathfrak{g}\big({\rm Id}_{\rm nil}\big),R_\mathfrak{t}^\mathfrak{g}(1_0)\big)\right\rangle_{G^F}.
\end{align*}
The third identity follows from the fact that Deligne-Lusztig induction commutes with restriction to nilpotent elements.
\end{proof}

We are now reduced to prove the following result.

\begin{proposition}We the notation of Theorem \ref{countingbis} we have 

$$
\left\langle R_L^G(1_1),R_T^G({\rm Id})\right\rangle_{G^F}=|L^F|^{-1} R_T^G({\rm Id})(S^F).
$$
\end{proposition}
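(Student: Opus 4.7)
The strategy is to combine Frobenius reciprocity for Deligne-Lusztig induction with the orthogonality relations and the character formula, reducing the identity to a comparison of two weighted sums of Green functions indexed by maximal tori of $L$.

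First, since $1_1$ is the characteristic function of the singleton conjugacy class $\{1\}$ in $L^F$, one has $\langle 1_1,f\rangle_{L^F}=f(1)/|L^F|$ for every $f\in C(L^F)$. Applying the adjunction between $R_L^G$ and $^*R_L^G$ yields
$$\left\langle R_L^G(1_1),R_T^G({\rm Id})\right\rangle_{G^F}=\left\langle 1_1,{}^*R_L^G R_T^G({\rm Id})\right\rangle_{L^F}=\frac{1}{|L^F|}\bigl({}^*R_L^G R_T^G({\rm Id})\bigr)(1),$$
so the proposition is equivalent to the identity $({}^*R_L^G R_T^G({\rm Id}))(1)=R_T^G({\rm Id})(s)$ for any $s\in S^F$.

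Next I would decompose ${}^*R_L^G R_T^G({\rm Id})$ in the basis of Deligne-Lusztig characters of $L$. By transitivity ($R_L^G\circ R_{T'}^L=R_{T'}^G$ for any $F$-stable maximal torus $T'\subset L$), adjunction, and the Deligne-Lusztig orthogonality relations, one computes
$$\left\langle {}^*R_L^G R_T^G({\rm Id}),\,R_{T'}^L({\rm Id})\right\rangle_{L^F}=\left\langle R_T^G({\rm Id}),\,R_{T'}^G({\rm Id})\right\rangle_{G^F}=\begin{cases} |W_{G^F}(T)| & \text{if } T'\sim_{G^F}T,\\ 0 & \text{otherwise.}\end{cases}$$
Since $\langle R_{T'}^L({\rm Id}),R_{T''}^L({\rm Id})\rangle_{L^F}=|W_{L^F}(T')|$ when $T'\sim_{L^F}T''$ and vanishes otherwise, this forces
$${}^*R_L^G R_T^G({\rm Id})=\sum_{[T']}\frac{|W_{G^F}(T)|}{|W_{L^F}(T')|}\,R_{T'}^L({\rm Id}),$$
the sum running over $L^F$-conjugacy classes $[T']$ of maximal tori $T'\subset L$ that are $G^F$-conjugate to $T$. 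Evaluating at $1$ produces a weighted sum of Green functions $Q_{T'}^L(1)=R_{T'}^L({\rm Id})(1)$.

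Finally I would apply the Deligne-Lusztig character formula to $R_T^G({\rm Id})$ at the semisimple element $s$:
$$R_T^G({\rm Id})(s)=\frac{1}{|L^F|}\sum_{\{x\in G^F\,:\,x^{-1}sx\in T\}}Q_{xTx^{-1}}^{L}(1).$$
The crucial observation is that $s\in Z_L$: indeed $C_G(s)=L$ forces $s$ to be central in its centraliser, so $s$ lies in every maximal torus of $L$, and the condition $x^{-1}sx\in T$ reduces to $xTx^{-1}\subset L$. Grouping the $x$'s according to the maximal torus $T'=xTx^{-1}$ (each such $T'$ is hit by exactly $|N_{G^F}(T)|$ values of $x$) and then by $L^F$-conjugacy class recovers precisely the same weighted sum $\sum_{[T']}(|W_{G^F}(T)|/|W_{L^F}(T')|)\,Q_{T'}^L(1)$ as in the previous step, establishing the identity.

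The main technical point is the bookkeeping in the last step: one must convert the factor $|N_{G^F}(T)|/|L^F|$ coming from the character formula into $|W_{G^F}(T)|/|W_{L^F}(T')|$ by parameterising the sum over maximal tori by $L^F$-orbits, and then check that the coefficients match those coming from Deligne-Lusztig orthogonality. The centrality of $s$ in $L$ is essential here, since without it the sum in the character formula would be restricted to those tori $T'\subset L$ that contain $s$, a condition incompatible with the uniform sum appearing in ${}^*R_L^G R_T^G({\rm Id})(1)$.
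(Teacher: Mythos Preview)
Your proof is correct and reaches the same endpoint as the paper's, but by a different route. The paper does not use adjunction: it expands $R_L^G(1_1)$ directly as $\frac{1}{|L^F|_p|W_L|}\sum_{w\in W_L}\epsilon_L\epsilon_{T_w^L}R_{T_w^L}^G(1_1)$ (a known formula for the Dirac mass at $1$ in terms of Green functions), pairs each term with $R_T^G({\rm Id})$ via the orthogonality of Green functions, and then matches the result against the Deligne--Lusztig formula $R_T^G({\rm Id})(S^F)=\frac{\epsilon_T\epsilon_L}{|T^F|\,|L^F|_p}\,\#\{g\in G^F\mid gTg^{-1}\subset L\}$, finishing with a combinatorial count of $w\in W_L$ lying in a fixed $F$-conjugacy class of $W_G(T^L)$.

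Your approach trades this explicit expansion for the adjunction $\langle R_L^G(1_1),-\rangle_{G^F}=|L^F|^{-1}({}^*R_L^G(-))(1)$ and then determines ${}^*R_L^G R_T^G({\rm Id})$ by its inner products with the $R_{T'}^L({\rm Id})$. This is cleaner and avoids the $\epsilon$-signs and the $|L^F|_p$ factor entirely; both arguments converge on the same weighted sum $\sum_{[T']}\frac{|W_{G^F}(T)|}{|W_{L^F}(T')|}Q_{T'}^L(1)$. One point you should make explicit: the sentence ``this forces'' is only valid once you know that ${}^*R_L^G R_T^G({\rm Id})$ already lies in the $\Q$-span of the $R_{T'}^L({\rm Id})$. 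This is immediate from the Mackey formula for ${}^*R_L^G\circ R_T^G$ (each summand is an $R_{{}^wT}^L({\rm Id})$ with ${}^wT\subset L$), and is worth a one-line justification.
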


\begin{proof}Denote by $W_L$ the Weyl group of $L$ with respect to a split $F$-stable maximal torus $T^L$ of $L$ (i.e. a maximal torus which is contained in some $F$-stable Borel subgroup of $L$). Recall that two elements $h$ and $h'$ of a group $H$ endowed with an automorphsim $f:H\rightarrow H$ are said to be $f$-\emph{conjugate} if there exists $g\in H$ such that $w=gw'f(g)^{-1}$. Then recall that the $L^F$-conjugacy classes of the $F$-stable maximal tori of $L$ are parametrized by the $F$-conjugacy classes of $W_L$ so that $T^L$ corresponds to the $F$-conjugacy class of $1\in W_L$ (see \cite{letellier4} for a review). For $w\in W_L$, we denote by $T^L_w$ the $F$-stable maximal torus of $L$ in the $L^F$-conjugacy class corresponding to the $F$-conjugacy class of $w$. We have (see for instance \cite[Lemme 7.3.1]{letellier})

$$
R_L^G(1_1)=\frac{1}{|L^F|_p\,|W_L|}\sum_{w\in W_L}\epsilon_L\epsilon_{T_w^L}R_{T^L_w}^G(1_1).
$$
Recall the orthogonality relations for Green functions (for any two $F$-stable maximal tori $T_1$ and $T_2$ of $G$):

$$
\left\langle R_{T_1}^G(1_1), R_{T_2}^G(1_1)\right\rangle_{G^F}=\frac{|N_{G^F}(T_1,T_2)|}{|T_1^F|\, |T_2^F|},
$$
where $N_{G^F}(T_1,T_2)$ denotes the set of $g\in G^F$ such that $gT_1g^{-1}=T_2$. We thus have
$$
\left\langle R_L^G(1_1),R_T^G({\rm Id})\right\rangle_{G^F}= \frac{1}{|L^F|_p\,|W_L|}\sum_{w\in W_L}\epsilon_L\epsilon_{T_w^L}\frac{|N_{G^F}(T,T^L_w)|}{|T^F|\, |(T^L_w)^F|}.
$$
If $N_{G^F}(T,T^L_w)\neq\emptyset$, then $|(T^L_w)^F|=|T^F|$, $\epsilon_{T^L_w}=\epsilon_T$ and $N_{G^F}(T,T^L_w)\simeq N_{G^F}(T)$, hence

$$
\left\langle R_L^G(1_1),R_T^G({\rm Id})\right\rangle_{G^F}= \frac{\epsilon_L\epsilon_T |N_{G^F}(T)|}{|T^F|^2\,|L^F|_p\,|W_L|}\,\#\{w\in W_L\,|\,N_{G^F}(T,T^L_w)\neq\emptyset\}.
$$
Denote by $W_G(T^L)$ the Weyl group of $G$ with respect to $T^L$, hence $W_L\subset W_G(T^L)$. Then the $G^F$-conjugacy class of $T$ corresponds to a unique $F$-stable conjugacy class of $W_G(T^L)$ with representative, say $v\in W_G(T^L)$. Then $T$ and $T^L_w$ are $G^F$-conjugate if and only if $w$ and $v$ are $F$-conjugate in $W_G(T^L)$.

Hence

$$
\left\langle R_L^G(1_1),R_T^G({\rm Id})\right\rangle_{G^F}= \frac{\epsilon_L\epsilon_T |W_{G^F}(T)|}{|T^F|\,|L^F|_p\,|W_L|}\,\#\{w\in W_L\,|\,v\, \text{ and }w \text{ are } F\text{-conjugate}\}.
$$
Now by  Deligne-Lusztig \cite[Corollary 7.2]{DLu}, we have 

$$
R_T^G({\rm Id})(S^F)= \frac{\epsilon_T\epsilon_L}{|T^F|\, |L^F|_p}\,\#\{g\in G^F\,|\, gTg^{-1}\subset L\},
$$
where $|L^F|_p$ denotes the $p$-part of $|L^F|$.
 We are reduced to prove that 
 
 \beq
|L^F|^{-1}\,  \#\{g\in G^F\,|\, gTg^{-1}\subset L\}=|W_L|^{-1}\,|W_{G^F}(T)|\,\#\{w\in W_L\,|\,v\, \text{ and }w \text{ are } F\text{-conjugate}\}.
\label{for}\eeq
We can write 

$$
\#\{g\in G^F\,|\, gTg^{-1}\subset L\}=\sum_{(w)}\#\,\{g\in G^F\,|\, gTg^{-1}\text{ is } L^F\text{-conjugate to }T^L_w\},
$$
where the sum runs over the set $(w)$ of $F$-conjugacy classes of $W_L$ such that $w$ is $F$-conjugate to $v$ in $W_G(T^L)$.

The group $L^F$ acts on $\{g\in G^F\,|\, gTg^{-1}\text{ is } L^F\text{-conjugate to }T^L_w\}$ on the right as $g\cdot l=l^{-1}g$ for any $l\in L^F$. The map

$$
N_{G^F}(T,T_w^L)\rightarrow \{g\in G^F\,|\, gTg^{-1}\text{ is } L^F\text{-conjugate to }T^L_w\}/L^F,
$$
that maps $g$ to $gL^F$ is clearly surjective with fibres of cardinality $|N_{L^F}(T^L_w)|$.
Hence 
\begin{align*}
|L^F|^{-1}\,  \#\{g\in G^F\,|\, gTg^{-1}\subset L\}&=|N_{G^F}(T)|\,\sum_{(w)}\frac{1}{|N_{L^F}(T_w^L)|}\\
&=|W_{G^F}(T)|\,\sum_{(w)}\frac{1}{|W_{L^F}(T_w^L)|}\\
&=|W_{G^F}(T)|\,\sum_{(w)}\frac{1}{|(W_L)^{wF}|},
\end{align*}
where $wF$ is the Frobenius on $W_L$ given by $(wF)(u)=wF(u)w^{-1}$. Noticing that $\frac{|W_L|}{|(W_L)^{wF}|}$ equals the cardinality of the $F$-conjugacy class of $w$ in $W_L$, we deduce Formula (\ref{for}).

\end{proof}

\section{Mixed Hodge polynomial of character varieties}

\subsection{Preliminaries}\label{symmetric}

\subsubsection{Exp and Log}

Let $\x_1,\x_2,\dots,\x_k$ be disjoints sets of infinitely many variables and let $\Lambda:=\Q(z,w)\otimes_\Z\Lambda(\x_1)\otimes_\Z\cdots\otimes_\Z\Lambda(\x_k)$ the ring of functions separately symmetric in each set $\x_1,\x_2,\dots,\x_k$ with coefficients in $\Q(z,w)$ where $z$ and $w$ are indeterminates.

\noindent For an integer $n>0$, consider the \emph{Adams operation}  $$\psi_n:\Lambda[[T]]\rightarrow\Lambda[[T]],\, f(\x_1,\dots,\x_k;q,T)\mapsto f(\x_1^n,\dots,\x_k^n;q^n,T^n)$$where we denote by $\x^d$ the set of variables $\{x_1^d,x_2^d,\dots\}$.

Define $\Psi:T\Lambda[[T]]\rightarrow T\Lambda[[T]]$ by $$\Psi(f)=\sum_{n\geq 1}\frac{\psi_n(f)}{n}.$$Its inverse is given by $$\Psi^{-1}(f)=\sum_{n\geq 1}\mu(n)\frac{\psi_n(f)}{n}$$where $\mu$ is the ordinary M\"obius function. 

We define $\Log:1+T\Lambda[[T]]\rightarrow T\Lambda[[T]]$ and its inverse $\Exp:T\Lambda[[T]]\rightarrow 1+\Lambda[[T]]$ as 

$$\Log(f)=\Psi^{-1}\left(\log(f)\right)$$and $$\Exp(f)=\exp\left(\Psi(f)\right).$$

\subsubsection{Types of conjugacy classes of $\GL_n(\F_q)$}\label{conj-type}

Extend the total ordering $\geq$ on $\calP$ to the set of pairs $(d,\lambda)\in\Z_{>0}\times(\calP\backslash\{0\})$, where $0$ is the unique partition of $0$, as follows. If $\lambda\neq\mu$ and $\lambda\geq\mu$, then $(d,\lambda)\geq(d',\mu)$, and $(d,\lambda)\geq(d',\lambda)$ if $d\geq d'$. We denote by $\bT$ the set of non-increasing sequences $(d_1,\lambda^1)(d_2,\lambda^2)\cdots(d_r,\lambda^r)$ of elements of $\Z_{>0}\times(\calP\backslash\{0\})$. The size of $\omega=(d_1,\omega^1)\cdots(d_r,\omega^r)\in\bT$ is defined as $|\omega|:=\sum_id_i\,|\omega^i|$. The integers $d_1,\dots,d_r$ are called the \emph{degree} of $\omega$. We denote by $\bT_n$ the subset of $\omhat\in\bT$ of size $n$. While $\tT_n$ parametrizes the types of the conjugacy classes of $\GL_n(\K)$ (see \S \ref{ne}), the set $\bT_n$ parameterizes the types of the conjugacy classes of $\GL_n(\F_q)$ as follows. Recall (see beginning of \S \ref{proofcounting}) that $F$-stable Levi subgroups $L$ defines multi-sets of pairs of integers $\{(d_i,n_i)\}_{i=1,\dots,r}$ such that

$$
L^F\simeq \GL_{n_1}(\F_{q^{d_1}})\times\cdots\times\GL_{n_r}(\F_{q^{d_r}}).
$$
The  $L^F$-unipotent conjugacy classes of $L^F$ correspond then, via the above isomorphism,  to the unipotent conjugacy classes of $\prod_{i=1}^k\GL_{n_i}(\F_{q^{d_i}})$ which are parametrized by the set of multi-partitions $\calP_{n_1}\times\cdots\times\calP_{n_r}$. Hence the set $\bT_n$ parametrizes the set of $G^F$-conjugacy classes of pairs $(L,C)$ with $L$ an $F$-stable Levi and $C$ a unipotent conjugacy class of $L^F$. The $G^F$-conjugacy class of  $(L,C)$ is now what we call the \emph{type} of the $G^F$-conjugacy classes of the elements $g\in G^F$ such that $C_G(g_s)=L$ and  $g_u\in C$ where $g_s$ and $g_u$ are respectively the semisimple and unipotent parts of $g$.

If the $\GL_n(\F_q)$-conjugacy class of $g\in\GL_n(\F_q)$ is of type $\omega=(d_1,\omega^1)\cdots(d_r,\omega^r)$, then the $\GL_n(\overline{\F}_q)$-conjugacy class of $g$ is of type

$$
\omega^o=\underbrace{\omega^1\cdots\omega^1}_{d_1}\underbrace{\omega^2\cdots\omega^2}_{d_2}\cdots\underbrace{\omega^r\cdots\omega^r}_{d_r}\in\tT$$

Denote by $m:\bT\rightarrow\tT$, $\omega\mapsto\omega^o$ the map we just defined.

\subsubsection{Definition of $\H_\omhat(z,w)$}\label{def-H}

Denote by $\oT$ (resp. $\otT$) the set of multi-types $(\omega_1,\dots,\omega_k)$ in $\bT^k$ (resp. in $(\tT){^k}$) of same size, i.e., $|\omega_1|=\cdots=|\omega_k|$. We now define a family  $\{\H_\omega(z,w)\}_{\omhat\in\oT}$ of rational functions in the variables $z,w$.  

 Consider \cite{HLV}

\beq
\label{cauchygk}
\Omega(z,w):= \sum_{{\lambda}\in {\cal P}}
{\cal H}_\lambda^g(z,w)\left(\prod_{i=1}^k \tilde{H}_\lambda(\x_i;z^2,w^2)\right)T^{|\lambda|}\in 1+T\Lambda[[T]]
\eeq
with
$$
{\cal H}_{\lambda}^g (z,w):=\prod_{s\in\lambda}
\frac{(z^{2a(s)+1}-w^{2l(s)+1})^{2g}} {(z^{2a(s)+2}-w^{2l(s)})(z^{2a(s)}-w^{2l(s)+2})}
$$
where the product is over all cells $s$ of $\lambda$ with $a(s)$ and $l(s)$ its arm and leg length, respectively (see \cite[\S 2.3.5]{HLV} for more details), and with

$$
\tilde{H}_\lambda(\x;q,t):=\sum_\mu \tilde{K}_{\mu\lambda}(q,t)s_\mu(\x)
$$ the \emph{modified Macdonald symmetric functions} as in \cite{garsia-haiman} (as usual $s_\mu(\x)$ denotes the Schur symmetric function \cite{macdonald}).

Given a family $\{A_\mu(\x; z,w)\}_\mu$ of symmetric functions indexed by partitions, we extend its definition  to types as follows. 

For $\omega=(d_1,\omega^1)\cdots(d_r,\omega^r)\in\bT$, put

$$
A_\omega(\x;z,w):=A_{\omega^1}(\x^{d_1};z^{d_1},w^{d_1})A_{\omega^2}(\x^{d_2};z^{d_2},w^{d_2})\cdots A_{\omega^r}(\x^{d_r};z^{d_r},w^{d_r})\in\Lambda(\x).
$$

\begin{remark} Applying this with $A_\mu$ the Schur symmetric function $s_\mu$, we have the following. If $\omega=(1,(n_1))(1,(n_2))\cdots(1,(n_r))$, then $s_\omega$ is the complete symmetric function $h_\lambda$ with $\lambda$ the partition $(n_1,\dots,n_r)$. If $\omega=(d_1,1)(d_2,1)\cdots (d_r,1)$, then $s_\omega$ is the power symmetric function $p_\lambda$ with $\lambda=(d_1,\dots,d_r)$.

\end{remark}

For  a multi-type $\omhat=(\omega_1,\dots,\omega_k)\in\oT$, put 
$$
s_\omhat:=s_{\omega_1}(\x_1)\cdots s_{\omega_k}(\x_k)\in\Lambda.
$$

The extended Hall pairing on $\Lambda$ is defined as $\langle\,,\,\rangle:=\prod_{i=1}^k\langle\,,\,\rangle_i$ where $\langle\,,\,\rangle_i$ is the Hall pairing on $\Lambda(\x_i)$ which makes the basis of Schur functions orthonormal.

Finally we denote by $\lambda'$ the dual of a partition $\lambda$. We define the dual $\omhat'$ of a multi-type $\omhat\in\oT$, by replacing any partition appearing in $\omhat$ by its dual.

For $\omhat=(\omega_1,\dots,\omega_k)\in\oT$, define 

$$
\H_\omhat(z,w):=(-1)^{r(\omhat)}(z^2-1)(1-w^2)\left\langle \Log\,\Omega(z,w),s_{\omhat'}\right\rangle
$$
where $r(\omhat):=k|\omhat|+\sum_{i,j}|\omega^j_i|$ and where $\left\langle \Log\,\Omega(z,w),s_{\omhat'}\right\rangle$ denotes the extended Hall pairing of $s_{\omhat'}$ with the coefficient of $\Log\,\Omega(z,w)$ in $T^{|\omhat|}$.

These functions satisfy the following properties \cite[\S 2.3.6]{HLV}.

\begin{lemma}The function $\Omega(z,w)$ (and therefore $\H_\omhat(z,w)$) is invariant both under changing $(z,w)$ to $(w,z)$ and under changing $(z,w)$ to $(-z,-w)$.
\label{propH}\end{lemma}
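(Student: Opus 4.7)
The plan is to check the two symmetries directly on each factor appearing in the definition of $\Omega(z,w)$, and then transfer them to $\H_\omhat(z,w)$.

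For the symmetry $(z,w)\mapsto(-z,-w)$, I would inspect $\mathcal{H}_\lambda^g(z,w)$ cell by cell. In each factor, the denominator only involves $z^{2a+2},w^{2l},z^{2a},w^{2l+2}$, i.e.\ even powers of both $z$ and $w$, so it is visibly unchanged. In the numerator, $(z^{2a+1}-w^{2l+1})$ picks up a global $-1$ under the substitution, but this sign is raised to the even power $2g$ and hence disappears. Since $\tilde H_\lambda(\x_i;z^2,w^2)$ depends only on $z^2$ and $w^2$, this factor is also invariant, and we conclude that $\Omega(z,w)=\Omega(-z,-w)$ term by term.

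For the symmetry $(z,w)\mapsto(w,z)$, the key point is that the substitution is compensated by transposition of the partition. Matching cells of $\lambda$ with cells of $\lambda'$ interchanges arm length and leg length, so comparing the defining products one finds
\[
\mathcal{H}_{\lambda'}^g(z,w)=\mathcal{H}_\lambda^g(w,z),
\]
the minus signs in numerator and denominator canceling in pairs (the numerator sign is again killed by the exponent $2g$). Combined with the standard duality
\[
\tilde H_\lambda(\x;q,t)=\tilde H_{\lambda'}(\x;t,q)
\]
for modified Macdonald polynomials, and the obvious equality $|\lambda|=|\lambda'|$, reindexing the sum $\lambda\mapsto\lambda'$ gives $\Omega(w,z)=\Omega(z,w)$.

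Passing from $\Omega$ to $\H_\omhat$ is then formal. The operator $\Log$ commutes with any ring automorphism of the coefficient ring, so $\Log\,\Omega(z,w)$ inherits both symmetries. Extracting the coefficient of $T^{|\omhat|}$ and pairing with $s_{\omhat'}$ via the extended Hall pairing preserves these symmetries, and the prefactor $(z^2-1)(1-w^2)$ is itself invariant under $(z,w)\mapsto(-z,-w)$ and under $(z,w)\mapsto(w,z)$ (the two sign changes cancel). The factor $(-1)^{r(\omhat)}$ does not involve $z,w$. The only nontrivial input is the Macdonald duality $\tilde H_\lambda=\tilde H_{\lambda'}$ under $(q,t)\leftrightarrow(t,q)$; once this is quoted, everything reduces to bookkeeping of signs, which should be the only step worth writing out carefully.
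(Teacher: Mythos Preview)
Your argument is correct and is essentially the standard verification. Note, however, that the paper does not give its own proof of this lemma: it simply cites \cite[\S 2.3.6]{HLV}, so there is nothing to compare against here beyond observing that your term-by-term check, the transposition $\lambda\leftrightarrow\lambda'$ combined with the Macdonald duality $\tilde H_\lambda(\x;q,t)=\tilde H_{\lambda'}(\x;t,q)$, and the passage through $\Log$ are exactly the ingredients one finds in that reference.

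One small point worth tightening: the claim that ``$\Log$ commutes with any ring automorphism of the coefficient ring'' is not literally what you use for the sign-change symmetry, since the Adams operations $\psi_n$ (which here also act on $z,w$ by $z\mapsto z^n$, $w\mapsto w^n$) do not commute with $(z,w)\mapsto(-z,-w)$ when $n$ is even. What is true, and sufficient, is that each $\psi_n$ preserves the subring of functions invariant under $(z,w)\mapsto(-z,-w)$ (equivalently, functions of $z^2$, $w^2$, $zw$), so that $\Log$ maps this subring to itself. For the swap $(z,w)\mapsto(w,z)$ there is no issue, as this automorphism genuinely commutes with every $\psi_n$.
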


\begin{remark}1. The functions $\H_\omhat(z,w)$ were first considered in \cite[Formula (1.1.3)]{HLV} in the case where each coordinate $\omega_i$ of $\omhat$ is of the form $(1,(1^{n_1}))(1,(1^{n_2}))\cdots(1,(1^{n_r}))$. For a multi-partition $\muhat$, the function $\H_\muhat(z,w)$ introduced in \cite{HLV} corresponds then to the function $\H_\omhat(z,w)$ with  $\mu=(n_1,n_2,\dots,n_r)$ replaced by $\omega=(1,(1^{n_1}))(1,(1^{n_2}))\cdots(1,(1^{n_r}))$.

\noindent 2. The functions $\H_\omhat(z,w)$  can be computed recursively using the tables of $(q,t)$-Kostka polynomials $\tilde{K}_{\mu\lambda}(q,t)$ (see for instance \cite[\S 1.5.5]{HLV} for $\omhat=(\omega_1,\dots,\omega_k)$ with each $\omega_i$ of the form $(1,(1^{n_1}))\cdots(1,(1^{n_r}))$). 
\end{remark}

\subsection{The conjectures}

We assume  that $\bC$ is a generic tuple of conjugacy classes of $\GL_n(\C)$.

\begin{conjecture} The mixed Hodge polynomial $IH_c(\M_{\overline{\bC}}\,;\,x,y,t)$ is a polynomial in $xy$ and $t$, and it depends only on the type  of $\bC$ (and not on the eigenvalues).
\label{indep}\end{conjecture}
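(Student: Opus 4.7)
The conjecture has two independent claims: (a) $IH_c(\M_{\overline{\bC}};x,y,t)$ depends only on the product $xy$ and on $t$, and (b) it depends only on the type $\omhat$ of $\bC$. I would attack them separately.

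For claim (b), my plan is a deformation argument over the parameter space of generic tuples of fixed type. Fix $\omhat\in\otT$ and let $\calZ_\omhat$ denote the Zariski open subset of the affine parameter space of $k$-tuples of conjugacy classes of type $\omhat$ cut out by the genericity condition of Definition \ref{gendef}; an inspection of Definition \ref{gendef} shows this is irreducible, in particular connected. Over $\calZ_\omhat$ one constructs a relative variety $\widetilde{\M}_\omhat \to \calZ_\omhat$ whose fiber over $[\bC]$ is $\M_{\overline{\bC}}$, and analogously a relative resolution $\widetilde{\bM}_\omhat \to \calZ_\omhat$ globalizing the construction of \S \ref{resolchar}. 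The total spaces should be nonsingular by the argument entirely parallel to Theorem \ref{nonsing}, and the resolution map is proper. One then shows these families are topologically locally trivial in the analytic topology, which reduces to a Luna-slice analysis of the $\PGL_n$-action on the relative $\calU_{\overline{\bC}}$ (using that the genericity condition cuts out an open subvariety on which $\PGL_n$ acts freely with separable orbits, cf.\ Theorem \ref{geoth}(i)). Topological triviality over a connected base forces $IH_c$ together with its mixed Hodge structure to be locally constant on $\calZ_\omhat$, yielding (b).

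For claim (a), my plan is to reduce the Hodge--Tate property to an assertion on the resolution $\bM_{\bfL,\bfP,\sigma}\to\M_{\overline{\bC}}$. By the decomposition theorem, and within the property $(E)$ framework preceding Theorem \ref{Katz2}, one has a splitting of the form (\ref{isomhs}) expressing $H_c^*(\bM_{\bfL,\bfP,\sigma})$ as the direct sum of $IH_c^*(\M_{\overline{\bC}})$ with Tate twists of the intersection cohomology of the closures of the lower strata of Corollary \ref{charstrat}. Running an induction on this stratification, it suffices to show each $H_c^*(\bM_{\bfL,\bfP,\sigma})$ is of Hodge--Tate type. Now $\bM_{\bfL,\bfP,\sigma}$ is smooth by Theorem \ref{nonsing} and its $E$-polynomial agrees with that of the smooth semisimple variety $\M_\bfS$ (Corollary \ref{E-poly-coro}), already depending only on $xy$; the missing ingredient is concentration of the cohomology in bidegrees $(p,p)$. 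To obtain this one would try either to exhibit a $\Gm$-action on $\bM_{\bfL,\bfP,\sigma}$ with projective fixed locus whose Bialynicki--Birula cells are of even real dimension, or to invoke a version of the $P=W$ conjecture of de Cataldo--Hausel--Migliorini: non-Abelian Hodge theory identifies $\M_{\overline{\bC}}$ diffeomorphically with a moduli space of parabolic Higgs bundles whose cohomology is pure via Hitchin's $\Gm$-action, and a matching of the perverse and weight filtrations would transport Hodge--Tateness back to the Betti side.

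The principal obstacle is claim (a): establishing Hodge--Tate structure on character varieties is open beyond very small rank (it is known for $\M_B^d$ only for $n\le 2$), and both the $\Gm$-action route and the $P=W$ route meet serious difficulties, since no canonical $\Gm$-action is visible on the Betti side and $P=W$ is itself a major open conjecture in this generality. Pragmatically, one expects Conjecture \ref{indep} to be deduced as a corollary of the full formula of Conjecture \ref{mainconj-intro}, whose right-hand side $(t\sqrt{q})^{d_\bC}\H_\omhat(-1/\sqrt{q},t\sqrt{q})$ manifestly depends only on $\omhat$ and on the two variables $q=xy$ and $t$. Since the present paper already establishes the $t\mapsto -1$ specialization (Theorem \ref{mainth-intro}), an alternative attack is to combine that identity with the convolution/character interpretation of diagram (\ref{diag}) and attempt to lift from the $E^{ic}$-polynomial to the mixed Poincar\'e polynomial via weight-filtration estimates on the resolution, though such estimates are precisely what the Hodge--Tate conjecture provides and so this approach is essentially circular without new input.
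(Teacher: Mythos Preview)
The statement you are attempting to prove is labeled a \emph{Conjecture} in the paper, and the paper offers no proof of it whatsoever. Immediately after stating it, the paper simply remarks that ``This conjecture is a natural extension of the semisimple case [HLV, Conjecture 1.2.1 (ii)]'' and moves on. So there is no ``paper's own proof'' to compare your proposal against: the statement is genuinely open, and the paper treats it as such throughout.

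Your proposal is honest about this. You correctly identify that claim (a), the Hodge--Tate property, is the real obstruction, and that both routes you sketch (a $\mathbb{G}_m$-action on the Betti side, or transporting purity via $P=W$) are either unavailable or themselves conjectural. You also correctly observe that Conjecture \ref{indep} would follow formally from Conjecture \ref{mainconj} (your \ref{mainconj-intro}), since the right-hand side of the latter depends only on $q=xy$, $t$, and the type $\omhat$. This is exactly the logical relationship the paper intends: see the Remark following Conjecture \ref{mainconj}, which says that Conjectures \ref{indep} and \ref{mainconj} together determine all the mixed Hodge numbers. The paper does not attempt to go further than the $t\mapsto -1$ specialization (Theorem \ref{maintheo1}), and neither should you without substantially new input.

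One minor caution on your deformation argument for claim (b): even granting topological local triviality of the family $\widetilde{\M}_\omhat\to\calZ_\omhat$, you would still need to know that the variation of mixed Hodge structure on $IH_c^*$ is locally constant, not merely the underlying local system. For singular fibers this is not automatic from topological triviality alone; one typically needs a relative mixed Hodge module argument or a simultaneous resolution compatible with the stratification. This is plausible in the present setting via the relative version of $\bM_{\bfL,\bfP,\sigma}$, but it is a genuine step, not a formality.
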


This conjecture is a natural extension of the semisimple case \cite[Conjecture 1.2.1 (ii)]{HLV}.

The map $m:\bT\rightarrow\tT$ defined in \S \ref{symmetric} has  a natural section $\iota:\tT\rightarrow\bT$ that maps $\omega^1\omega^2\cdots\omega^r$ to $(1,\omega^1)(1,\omega^2)\cdots(1,\omega^r)$.

\begin{conjecture} Assume that $\bC$ is of type $\omhat\in\otT$. We have 

$$
IH_c(\M_{\overline{\bC}}\,;\, q,t)=(t\sqrt q)^{d_\bC}\;
\H_{\iota^k(\omhat)}\left(-{\frac 1{\sqrt q},t\sqrt q }
\right).
$$
In particular we have 
$$
PP_c(\M_{\overline{\bC}}\,;\,q):=PP_c(\M_{\overline{\bC}}\,;\,\sqrt{q},\sqrt{q})= q^{d_\bC/2}\H_{\iota^k(\omhat)}(0,\sqrt q),
$$
where $PP_c(X;x,y):=\sum_{p,q}ih_c^{p,q;p+q}(X)x^py^q$ is the pure part of the mixed Hodge polynomial.
\label{mainconj}\end{conjecture}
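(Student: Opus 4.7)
The plan is two-staged: first establish the $t\mapsto -1$ specialization by counting points over $\F_q$ (the content of Theorem \ref{mainth-intro}), then refine the argument to capture the full $t$-dependence, which is where the genuine difficulty lies.

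For stage one, I would apply Theorem \ref{Katz2} to the stratification $\M_{\overline{\bC}} = \coprod_{\bC'\unlhd\bC}\M_{\bC'}$ of Corollary \ref{charstrat}. The hypothesis $(E)$ in that theorem calls for a proper morphism from a polynomial-count variety with a decomposition-theorem splitting compatible with the strata; the natural candidate is the resolution $p : \bM_{\bf L,P,\sigma} \to \M_{\overline{\bC}}$ of Theorem \ref{nonsing}. Checking $(E)$ amounts to verifying that the BBD decomposition of $p_*(\underline{\kappa})$ splits as a sum of $\pIC$-complexes supported on the closed strata $\overline{\M_{\bC'}}$, with multiplicity spaces yielding the isomorphisms (\ref{isomhs}) and (\ref{isops}). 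To compute $\#\bM_{\bf L,P,\sigma}(\F_q)$ I would then expand via the Frobenius formula of \S\ref{frobsec} into a character sum over $\Irr\GL_n(\F_q)$, partition it according to Lusztig--Srinivasan type $(M,\calA^M)$, and invoke Theorem \ref{thHLV} on each piece to re-express the sum as a product of Deligne--Lusztig inductions. Recognizing the resulting combinatorics as modified Macdonald polynomials (in the spirit of \S\ref{def-H}) should deliver exactly $q^{d_\bC/2}\H_{\iota^k(\omhat)}(1/\sqrt{q},\sqrt{q})$, from which Katz's theorem gives the $t=-1$ specialization.

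To upgrade to the full $(q,t)$-statement, the strategy is to track Frobenius eigenvalues, not merely their alternating trace, on each $IH_c^i(\M_{\overline{\bC}})$. Via Saito's comparison, the Frobenius weights on $\ell$-adic $IH_c^*$ in characteristic $p$ control the weight filtration on the mixed Hodge structure in characteristic zero. Two approaches look reasonable. One would parallel Hausel--Rodriguez Villegas by constructing a semiprojective compactification of $\M_{\overline{\bC}}$ admitting a $\C^\times$-action with projective fixed locus, so that $\mathrm{Gr}_W^\bullet IH_c^*$ becomes pure and the $t$-grading is read off from Bialynicki--Birula indices on the fixed components. The other would cross over to a parabolic Dolbeault partner via non-abelian Hodge theory and invoke a suitably generalized $P=W$ theorem of de Cataldo--Hausel--Miggliorini, identifying the weight filtration with the perverse filtration of a Hitchin-type map, where purity and equivariant techniques become available.

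The main obstacle is the second stage. Numerical $E^{ic}$-identities only see the alternating sum over weights, whereas the conjecture pins down which weights sit in which cohomological degrees --- precisely the content of a $P=W$-type statement, still open in general even in the semisimple case (only $n=2$ is settled). Extending such a theorem to Zariski closures of non-semisimple classes and to intersection cohomology adds a singular-space layer requiring Saito's mixed Hodge modules. One should additionally settle Conjecture \ref{indep} --- that the MHS depends only on the type of $\bC$ and not on the generic eigenvalues --- which is implicit in the shape of the target formula but nontrivial to verify directly at the level of $IH_c^*$.
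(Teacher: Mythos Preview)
The statement is a \emph{conjecture}: the paper does not prove it, and you correctly flag your stage two as open. So the only thing to assess is whether your stage one recovers Theorem~\ref{maintheo1}.

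There is a genuine gap in stage one. After verifying property $(E)$, Theorem~\ref{Katz2} identifies $E^{ic}(\M_{\overline{\bC}};q)$ with the \emph{IC-weighted} point count $\sum_x\bfX_{\IC{\M_{\overline{\bC}}}}(x)$ on the base, not with $\#\bM_{\bf L,P,\sigma}(\F_q)$. These differ: by Theorem~\ref{counting} and Corollary~\ref{E-poly-coro} the latter equals $E(\M_\bfS;q)$ for a \emph{semisimple} tuple $\bfS$ determined by $\bfL$, hence by \cite{HLV} equals $q^{d_\bC/2}\H_{\iota^k(\tauhat)}(1/\sqrt q,\sqrt q)$ for the semisimple type $\tauhat$ of $\bfS$, not the type $\omhat$ of $\bC$. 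Your computation would land on the wrong $\H$-function whenever $\bC$ is not semisimple. (One could in principle recover the right quantity by inverting the triangular system coming from the decomposition~(\ref{isops}) by induction on $\bC'\unlhd\bC$, but you do not mention this, and it requires knowing the multiplicity spaces explicitly.)

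The paper's route is Theorem~\ref{restheo}: the inclusion $\calU_{\overline{\bC}}\hookrightarrow\calY_{\overline{\bC}}=(\GL_n)^{2g}\times\prod_i\overline{C}_i$ pulls back $\IC{\calY_{\overline{\bC}}}$ to $\IC{\calU_{\overline{\bC}}}$, so the IC characteristic function on $\calU_{\overline{\bC}}$ \emph{factors} as $\prod_i\bfX_{\overline{C}_i}(x_i)$. The IC-weighted count then becomes the convolution $(q-1)\langle\calE*\bfX_{\overline{C}_1}*\cdots*\bfX_{\overline{C}_k},1_1\rangle$ (Theorem~\ref{for32}), evaluated in Theorem~\ref{convtheo} by expanding each $\bfX_{\overline{C}_i}$ via the Kostka--Foulkes values of Formula~(\ref{eval}), running the Frobenius/Lusztig--Srinivasan decomposition you describe, and closing with the symmetric-function identity of Proposition~\ref{propmagic}. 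The resolution $\bM_{\bf L,P,\sigma}$ enters only to verify property $(E)$ and to establish Theorem~\ref{restheo} via the semismallness criterion of Proposition~\ref{proprest}; its raw point count is not what is matched to $\H_{\iota^k(\omhat)}$.
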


In the semisimple case, this conjecture is \cite[Conjecture 1.2.1 (iii)(iv)]{HLV}.

\begin{remark}  Conjecture \ref{mainconj} together Conjecture \ref{indep}  give a complete description of the mixed Hodge numbers $ih_c^{p,q;k}(\M_{\overline{\bC}})$. Indeed the first assertion of Conjecture \ref{indep} says   that $ih_c^{p,q;k}(\M_{\overline{\bC}})=0$ unless $p=q$.
\end{remark}

Conjecture \ref{mainconj} together with Lemma \ref{propH} implies the following one.
 
 \begin{conjecture}[Curious Poincar\'e duality]
 We have 
 
 $$
 IH_c\left(\M_{\overline{\bC}}\,;\,\frac{1}{qt^2},t\right)=(qt)^{-d_\bC}IH_c(\M_{\overline{\bC}}\,;\,q,t).
 $$
 \end{conjecture}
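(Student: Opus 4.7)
The plan is to derive the identity directly from Conjecture \ref{mainconj} together with the two symmetries of $\H_\omhat$ recorded in Lemma \ref{propH}, namely $\H_\omhat(z,w)=\H_\omhat(w,z)$ and $\H_\omhat(z,w)=\H_\omhat(-z,-w)$. Under Conjecture \ref{mainconj}, the mixed Hodge polynomial takes the form
$$
IH_c(\M_{\overline{\bC}}\,;\,q,t)=(t\sqrt q)^{d_\bC}\,\H_\omhat(z,w),\qquad z:=-\tfrac{1}{\sqrt q},\quad w:=t\sqrt q,
$$
so the whole statement will reduce to showing that the substitution $(q,t)\mapsto (1/(qt^2),\,t)$ acts on the pair $(z,w)$ in a way that is absorbed by the symmetries of $\H_\omhat$, up to a controllable prefactor.

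The explicit substitution I would carry out is the following. Set $q':=1/(qt^2)$, so that $\sqrt{q'}=1/(t\sqrt q)$. The pair associated to $(q',t)$ is then
$$
\bigl(-1/\sqrt{q'},\,t\sqrt{q'}\bigr)=\bigl(-t\sqrt q,\,1/\sqrt q\bigr)=(-w,-z).
$$
Applying the two invariances of Lemma \ref{propH} in succession gives
$$
\H_\omhat(-w,-z)=\H_\omhat(w,z)=\H_\omhat(z,w),
$$
so the Macdonald factor is unchanged by the substitution. The new prefactor is $(t\sqrt{q'})^{d_\bC}=(1/\sqrt q)^{d_\bC}=q^{-d_\bC/2}$, whereas the original prefactor is $(t\sqrt q)^{d_\bC}=t^{d_\bC}q^{d_\bC/2}$. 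The ratio of the two is precisely $(qt)^{-d_\bC}$, which is the curious Poincaré duality.

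Because the argument is essentially a substitution combined with the $(\Z/2)^2$-symmetry of $\H_\omhat$, the only genuine obstacle is the status of Conjecture \ref{mainconj} itself; no additional geometric input is required. One small incidental check worth recording is that the half-integer exponents above are well defined, which follows from the fact that $d_\bC$ is even: the explicit formula $d_\bC=2gn^2-2n^2+2+\sum_i\dim C_i$ makes this immediate once one notes that every conjugacy class of $\GL_n$ has even dimension.
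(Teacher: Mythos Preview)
Your argument is correct and matches the paper's own justification: the paper simply remarks that Conjecture \ref{mainconj} together with Lemma \ref{propH} implies the curious Poincar\'e duality, and your computation spells out exactly this implication. The parity check on $d_\bC$ is a nice touch that the paper leaves implicit.
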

  See \cite[Conjecture 1.2.2]{HLV} for the semisimple case.

\subsection{$E^{ic}$-polynomial of character varieties}
 One of the main result of this paper is the following theorem.
 
\begin{theorem} Conjecture \ref{mainconj} is true after the specialization $(q,t)\mapsto (q,-1)$, namely we have

$$
E^{ic}(\M_{\overline{\bC}}\,;\,q)=q^{d_\bC/2}\H_{\iota^k(\omhat)}\left(\frac 1{\sqrt q},\sqrt q \right).
$$

\label{maintheo1}\end{theorem}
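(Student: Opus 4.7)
The plan is to reduce the computation of $E^{ic}(\M_{\overline{\bC}};q)$ to a weighted point count over finite fields, and then to the semisimple case via the resolution $p:\bM_{\bfL,\bfP,\sigma}\to\M_{\overline{\bC}}$ constructed in \S\ref{resolchar}. The starting point is to verify that $\M_{\overline{\bC}}$, together with each of its closed strata $\overline{\M_{\bC'}}$ from Corollary \ref{charstrat}, satisfies property $(E)$ with respect to this stratification. The key input is the Decomposition Theorem for the proper map $p$: since $\bM_{\bfL,\bfP,\sigma}$ is nonsingular (Theorem \ref{nonsing}), $Rp_{*}(\underline{\kappa}[\dim\bM])$ decomposes as a direct sum of shifted IC complexes supported on the various closures of strata of $\M_{\overline{\bC}}$, with $\pIC{\M_{\overline{\bC}}}$ appearing as the summand on the open stratum. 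This gives both the mixed Hodge structure isomorphism \eqref{isomhs} over $\C$ and the perverse sheaf isomorphism \eqref{isops} over $\overline{\F}_q$, after checking that everything is defined over a suitable spreading-out ring. Iterating on each closed stratum $\overline{\M_{\bC'}}$ (which admits an analogous resolution coming from a smaller tuple $(\bfL',\bfP',\sigma')$) establishes property $(E)$.

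With property $(E)$ in hand, Theorem \ref{Katz2} yields a polynomial $P(T)\in\Z[T]$ with
\[
E^{ic}(\M_{\overline{\bC}};q)=P(q)=\sum_{x\in\M_{\overline{\bC}}(\F_q)}\bfX_{\IC{\M_{\overline{\bC}}}}(x).
\]
Taking Frobenius traces in the decomposition of $Rp_{*}$ gives, for each generic tuple $\bC$ with associated triple $(\bfL,\bfP,\sigma)$,
\[
\#\bM_{\bfL,\bfP,\sigma}(\F_q)=\sum_{\bC'\unlhd\bC} m_{\bC'}\,q^{r_{\bC'}}\sum_{x\in\overline{\M_{\bC'}}(\F_q)}\bfX_{\IC{\overline{\M_{\bC'}}}}(x),
\]
with multiplicities $m_{\bC'}$ read off from the decomposition. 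Möbius inversion on the poset of types $\tT_n$ then isolates the top contribution, expressing the sum $\sum_x\bfX_{\IC{\M_{\overline{\bC}}}}(x)$ in terms of the quantities $\#\bM_{\bfL',\bfP',\sigma'}(\F_q)$ indexed by closures $\bC'\unlhd\bC$.

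At this stage I would invoke Theorem \ref{counting} to replace each $\#\bM_{\bfL',\bfP',\sigma'}(\F_q)$ by $\#\M_{\bfS'}(\F_q)$ for a generic tuple $\bfS'$ of semisimple conjugacy classes whose type is determined by $(\bfL',\sigma')$. The HLV result for the semisimple case (\cite[Theorem 1.2.3]{HLV}) gives $\#\M_{\bfS'}(\F_q)=q^{d_{\bfS'}/2}\,\H_{\omhat_{\bfS'}}(1/\sqrt{q},\sqrt{q})$. Assembling all these pieces via the Möbius inversion above then produces a closed expression for $E^{ic}(\M_{\overline{\bC}};q)$ as an alternating sum of $\H$-values indexed by types below $\omhat$.

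The hard part will be the final combinatorial identity: one must show that this alternating sum collapses to the single term $q^{d_\bC/2}\,\H_{\iota^k(\omhat)}(1/\sqrt{q},\sqrt{q})$. The cleanest route is to run the same decomposition and Möbius inversion on the character side using Corollary \ref{frob-group} and Theorem \ref{thHLV}: the HLV character formula expresses $\#\bM_{\bfL,\bfP,\sigma}(\F_q)$ as a sum over pairs $(M,\calA^M)$ involving Deligne--Lusztig inductions $R_M^G$, and the point-counting identity $\#\bM_{\bfL,\bfP,\sigma}(\F_q)=\#\M_{\bfS}(\F_q)$ of \S\ref{proofcounting} shows that the contributions from non-open strata of the resolution correspond precisely to the types $\omhat'\neq\iota^k(\omhat)$ coming from deeper Levis. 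The key obstacle is thus a careful bookkeeping matching the decomposition-theorem multiplicities with the terms in the HLV formula; this is where intersection cohomology is genuinely needed, since the computation of $\#\bM_{\bfL,\bfP,\sigma}(\F_q)$ in \S\ref{proofcounting} only recovers the $\w=1$, $\sigma$ central case of the stronger statement.
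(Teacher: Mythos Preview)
Your outline diverges from the paper's proof at a crucial point and leaves a real gap. The paper does \emph{not} proceed by M\"obius-inverting the decomposition-theorem relation for $\#\bM_{\bfL,\bfP,\sigma}(\F_q)$ and then reducing to the semisimple HLV formula. Instead the central ingredient is Theorem~\ref{restheo}: the restriction of $\IC{\calY_{\overline{\bC}}}$ to $\calU_{\overline{\bC}}$ is isomorphic to $\IC{\calU_{\overline{\bC}}}$. This immediately gives
$\bfX_{\IC{\calU_{\overline{\bC}}}}(a_1,b_1,\dots,x_k)=\prod_i\bfX_{\overline{C}_i}(x_i)$, so that (after descending along the principal $\PGL_n$-bundle and applying Theorem~\ref{Katz2}) one obtains Theorem~\ref{for32}:
\[
E^{ic}(\M_{\overline{\bC}};q)=(q-1)\left\langle\calE*\bfX_{\overline{C}_1}*\cdots*\bfX_{\overline{C}_k},1_1\right\rangle_{\GL_n(\F_q)}.
\]
This sidesteps your M\"obius inversion entirely: the IC characteristic function is computed \emph{directly} as a product, not recovered from resolution point-counts.

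The remaining work is then a character/symmetric-function computation (Theorem~\ref{convtheo}): expand each $\bfX_{\overline{C}_i}$ in the $1_{S_i}$ via modified Kostka polynomials (Formula~\eqref{eval}), plug in the Frobenius formula for generic tuples (Formula~\eqref{DHLV}), and collapse using the key identity of Proposition~\ref{propmagic}, which expresses $s_{\omega'}(\x\y)$ (under $y_i=q^{i-1}$) as $(-1)^{f(\omega)}\sum_{\tau\unlhd\omega}\calH_\tau(q)\tilde{K}_{\omega\tau}(q)\tilde{H}_\tau(\x;q)$. Together with the $\Log$ computation from \cite{HLV} this yields the claimed formula.

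Your ``hard part'' --- the alternating sum of semisimple $\H$-values collapsing to a single term --- is exactly what the paper avoids via Theorem~\ref{restheo}. Your route through Theorem~\ref{counting} and M\"obius inversion is not obviously wrong, but you have not supplied that identity, and the sketch (matching decomposition multiplicities to HLV character terms) gives no concrete mechanism. Note also that to know the multiplicities $m_{\bC'}$ in your decomposition you implicitly need the same IC-restriction compatibility that Theorem~\ref{restheo} provides; once you grant that, the direct route is both shorter and actually complete.
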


If $\bC$ is a tuple of semisimple conjugacy classes, this theorem is \cite[Theorem 1.2.3]{HLV}.

\begin{corollary} The $E^{ic}$-polynomial of $\M_{\overline{\bC}}$ is \emph{palindromic}, namely

$$
E^{ic}(\M_{\overline{\bC}}\,;\,q)=q^{d_\bC}E^{ic}(\M_{\overline{\bC}}\,;\,q^{-1}).
$$
\end{corollary}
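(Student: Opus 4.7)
The plan is to deduce the corollary directly from Theorem \ref{maintheo1} combined with the symmetry properties of $\H_\omhat(z,w)$ recorded in Lemma \ref{propH}. Specifically, Theorem \ref{maintheo1} identifies
\[
E^{ic}(\M_{\overline{\bC}}\,;\,q)=q^{d_\bC/2}\,\H_{\iota^k(\omhat)}\!\left(\tfrac{1}{\sqrt q},\sqrt q\right),
\]
while Lemma \ref{propH} asserts the invariance of $\H_{\iota^k(\omhat)}(z,w)$ under the swap $(z,w)\mapsto(w,z)$.

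The computation I would carry out is the following. Substitute $q^{-1}$ for $q$ in the formula of Theorem \ref{maintheo1}: with $\sqrt{q^{-1}}=1/\sqrt q$, this yields
\[
E^{ic}(\M_{\overline{\bC}}\,;\,q^{-1})=q^{-d_\bC/2}\,\H_{\iota^k(\omhat)}\!\left(\sqrt q,\tfrac{1}{\sqrt q}\right).
\]
Applying the symmetry from Lemma \ref{propH} to swap the two arguments gives
\[
\H_{\iota^k(\omhat)}\!\left(\sqrt q,\tfrac{1}{\sqrt q}\right)=\H_{\iota^k(\omhat)}\!\left(\tfrac{1}{\sqrt q},\sqrt q\right),
\]
and hence
\[
q^{d_\bC}\,E^{ic}(\M_{\overline{\bC}}\,;\,q^{-1})=q^{d_\bC/2}\,\H_{\iota^k(\omhat)}\!\left(\tfrac{1}{\sqrt q},\sqrt q\right)=E^{ic}(\M_{\overline{\bC}}\,;\,q),
\]
which is exactly the desired palindromic identity.

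There is essentially no obstacle here: the corollary is a formal two-line consequence once both Theorem \ref{maintheo1} and Lemma \ref{propH} are in hand. The only care needed is tracking the square roots and verifying that the symmetry $(z,w)\mapsto(w,z)$ (rather than, say, $(z,w)\mapsto(-z,-w)$, which would introduce sign issues depending on the parity of $r(\omhat)$) is the one that matches the substitution $q\mapsto q^{-1}$; both symmetries hold, so no such issue arises. Note also that the alternative sign symmetry $(z,w)\mapsto (-z,-w)$ would give no new information here since the arguments $\pm 1/\sqrt q,\pm\sqrt q$ already appear with matching signs.
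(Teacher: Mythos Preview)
Your proof is correct and follows exactly the approach intended in the paper: the corollary is stated immediately after Theorem \ref{maintheo1} with no proof given, and the introduction makes clear that it follows from the symmetry properties of $\H_\omhat(z,w)$ in Lemma \ref{propH}. Your two-line derivation via the swap $(z,w)\mapsto(w,z)$ is precisely what is meant.
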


The rest of this section is devoted to the proof of Theorem \ref{maintheo1}. Note that the strategy goes along the same lines as for the proof of its "additive" version \cite[Corollary 7.3.5]{letellier4} except for the calculation.

\subsubsection{Intersection cohomology complex on $\calU_{\overline{\bC}}$}

Let $\bC=(C_1,\dots,C_k)$ be a generic tuple of conjugacy classes of $\GL_n(\K)$.

Put 

$$
\calY_{\overline{\bC}}:=(\GL_n)^{2g}\times\overline{C}_1\times\cdots\times\overline{C}_k.
$$
The following theorem is the key ingredient for the proof of Theorem \ref{maintheo1}.

\begin{theorem} If $i:\calU_{\overline{\bC}}\hookrightarrow\calY_{\overline{\bC}}$ denotes the inclusion, then $i^*\left(\IC {\calY_{\overline{\bC}}}\right)\simeq\IC {\calU_{\overline{\bC}}}$.
\label{restheo}\end{theorem}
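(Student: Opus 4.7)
The approach is to deduce the isomorphism by comparing two parallel applications of the BBD decomposition theorem linked by proper base change, using the resolution of Section~\ref{resolchar}.

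Set
$$\tilde\calY := (\GL_n)^{2g}\times \bY_{L_1,P_1,\sigma_1}\times\cdots\times\bY_{L_k,P_k,\sigma_k},$$
and let $\tilde\pi:\tilde\calY\to\calY_{\overline\bC}$ be the identity on the first factor composed with the Springer-type resolution $\bY_{L_j,P_j,\sigma_j}\to\overline C_j$ on each remaining factor. Pulling back along $i:\calU_{\overline\bC}\hookrightarrow\calY_{\overline\bC}$ yields the Cartesian square
$$\xymatrix@C=3em{\bbU_{\bfL,\bfP,\sigma}\ar[r]^-{\tilde i}\ar[d]_-{\bar\pi} & \tilde\calY\ar[d]^-{\tilde\pi}\\ \calU_{\overline\bC}\ar[r]^-{i} & \calY_{\overline\bC}}$$
whose left-hand vertical is the resolution from Theorem~\ref{nonsing}. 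Both $\tilde\calY$ and $\bbU_{\bfL,\bfP,\sigma}$ are nonsingular, and both $\tilde\pi$ and $\bar\pi$ are proper and birational. Since $\tilde\calY$ is smooth, $\tilde i^*\underline{\kappa}_{\tilde\calY}=\underline{\kappa}_{\bbU_{\bfL,\bfP,\sigma}}$, so proper base change gives $i^*\tilde\pi_*\underline{\kappa}_{\tilde\calY}\simeq\bar\pi_*\underline{\kappa}_{\bbU_{\bfL,\bfP,\sigma}}$.

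By the BBD decomposition theorem applied to $\tilde\pi$ and $\bar\pi$ (proper with smooth source), there are semisimple decompositions
$$\tilde\pi_*\pIC{\tilde\calY}\simeq \pIC{\calY_{\overline\bC}}\oplus \bigoplus_{\bC'\unlhd\bC,\,\bC'\neq\bC} V_{\bC'}\otimes \pIC{\overline Y_{\bC'}}[m_{\bC'}],$$
$$\bar\pi_*\pIC{\bbU_{\bfL,\bfP,\sigma}}\simeq \pIC{\calU_{\overline\bC}}\oplus \bigoplus_{\bC'\unlhd\bC,\,\bC'\neq\bC} W_{\bC'}\otimes \pIC{\overline\calU_{\bC'}}[m'_{\bC'}],$$
where $Y_{\bC'}:=(\GL_n)^{2g}\times C'_1\times\cdots\times C'_k$ and the leading coefficients are $1$ because both maps are birational. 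The crucial observation is that the fibre of $\tilde\pi$ over a point of $Y_{\bC'}$ is a product of Springer-type fibres $\prod_j p_j^{-1}(x_j)$, while the fibre of $\bar\pi$ over a point of $\calU_{\bC'}$ is the \emph{same} product; consequently the multiplicity spaces and shifts satisfy $V_{\bC'}=W_{\bC'}$ and $m_{\bC'}=m'_{\bC'}$ whenever $\calU_{\bC'}\neq\emptyset$ (and $W_{\bC'}=0$ otherwise).

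Combining the base-change isomorphism with these two decompositions forces a summand-by-summand matching
$$i^*\pIC{\overline Y_{\bC'}}\simeq \pIC{\overline\calU_{\bC'}}[n^2-1]$$
for every $\bC'\unlhd\bC$ with $\calU_{\bC'}\neq\emptyset$, established by descending induction on the poset $\{\bC'\unlhd\bC\}$ starting from the closed strata. The uniform shift $n^2-1=\dim\calY_{\overline\bC}-\dim\calU_{\overline\bC}$ records the codimension of $\calU_{\overline\bC}$ in $\calY_{\overline\bC}$, which is constant stratum by stratum thanks to Theorem~\ref{geoth}(iii). Setting $\bC'=\bC$ and converting back from the perverse $\pIC{}$ to the intersection-cohomology $\IC{}$ normalisation yields $i^*\IC{\calY_{\overline\bC}}\simeq\IC{\calU_{\overline\bC}}$. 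The main obstacle is precisely this inductive unpacking: one has to rule out that $i^*\pIC{\overline Y_{\bC'}}$ decomposes as a non-trivial combination of IC summands supported on several distinct $\overline\calU_{\bC''}$, and the equality $V_{\bC'}=W_{\bC'}$ forced by the coincidence of Springer-type fibres is the rigidity that makes the induction close.
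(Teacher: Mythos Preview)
Your approach is the same as the paper's: the identical Cartesian diagram of resolutions, proper base change, and the decomposition theorem. The paper packages the final step as Proposition~\ref{proprest} (quoted from \cite{letellier4}) and simply verifies its three hypotheses: (i) the codimension of $Y_{\bC'}$ in $\calY_{\overline\bC}$ equals that of $\calU_{\bC'}$ in $\calU_{\overline\bC}$, (ii) both $\tilde\pi$ and $\bar\pi$ are semi-small, and (iii) the cohomology sheaves $\mathcal H^j(\tilde\pi_*\kappa)$ are locally constant along each $Y_{\bC'}$.

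Your write-up has a genuine gap at the step ``same fibres, hence $V_{\bC'}=W_{\bC'}$ and $m_{\bC'}=m'_{\bC'}$''. For a general proper map the isomorphism type of the fibres over each stratum does \emph{not} determine the graded multiplicities in the BBD decomposition. What makes your inference valid here is that both maps are semi-small, so that both pushforwards are perverse (all shifts vanish) and the multiplicity of $\pIC{\overline S}$ is exactly the dimension of the top cohomology of the fibre over a point of $S$. Semi-smallness of $\tilde\pi$ is classical (Lusztig), but for $\bar\pi$ it must be \emph{deduced} from the Cartesian square together with the codimension equality~(i); you only invoke that equality later to explain the uniform shift $n^2-1$, not where it does the real work. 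You also implicitly use condition~(iii) to know that all simple summands on both sides carry trivial local systems (the monodromy of $\pi_1(\calU_{\bC'})$ on the fibre cohomology factors through that of $\pi_1(Y_{\bC'})$, which acts trivially). Once these two points are supplied your multiplicity matching does close---non-negativity of the off-diagonal multiplicities together with $V_{\bC'}=W_{\bC'}$ leaves no room for cross terms---and what you will have written out is precisely the proof of Proposition~\ref{proprest} specialised to this situation.
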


We are going to prove it using the following general result \cite[Proposition 3.2.1]{letellier4}.

\begin{proposition} Let $X$ be an irreducible algebraic variety together with a decomposition $X=\bigcup_{\alpha\in I} X_\alpha$ where $I$ is a finite set and where the $X_\alpha$ are locally closed irreducible subvarieties. Assume given an irreducible  subvariety $Z$ of $X$ such that 

(i) if $Z_\alpha:=X_\alpha\cap Z$ is not empty, then it is equidimensional and  ${\rm codim}_X\,X_\alpha={\rm codim}_Z\, Z_\alpha$.

\noindent Assume moreover that there exists a Cartesian diagram 

$$\xymatrix{\tilde{X}\ar^f[rr]&&X\\
\tilde{Z}\ar[rr]^g\ar^{\tilde{i}}[u]&&Z\ar^i[u]}$$such that the conditions (ii) and (iii) below are satisfied.

(ii) $f$ and $g$ are semi-small resolutions of singularities.

(iii) The restriction of the sheaf $\mathcal{H}^i(f_*(\kappa))$ to $X_\alpha$ is a locally constant sheaf for all $i$.

\noindent Let $i:Z\hookrightarrow X$ denotes the inclusion, then $i^*(\IC X)=\IC Z$.
\label{proprest}\end{proposition}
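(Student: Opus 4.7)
The plan is to apply the BBD decomposition theorem to the two semi-small resolutions $f$ and $g$, then use proper base change to match simple summands on $X$ and on $Z$.

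First, because $f$ is semi-small and $\tilde X$ is smooth of dimension $\dim X$, the complex $f_* \kappa_{\tilde X}[\dim X]$ is a semisimple perverse sheaf on $X$, and by the BBD decomposition theorem splits as
$$f_* \kappa_{\tilde X}[\dim X] \;\simeq\; \pIC{X} \;\oplus\; \bigoplus_{\alpha \in J} \pIC{\overline{X_\alpha}}(L_\alpha)$$
for some subset $J$ of the index set and some local systems $L_\alpha$. Condition (iii)---local constancy of $\mathcal{H}^i(f_*\kappa)$ along each $X_\alpha$---is precisely what confines the non-open simple summands to closures of strata of the given decomposition and ensures the $L_\alpha$ extend to all of $X_\alpha$. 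The decomposition theorem applied to the semi-small resolution $g$ yields an analogous presentation
$$g_* \kappa_{\tilde Z}[\dim Z] \;\simeq\; \pIC{Z} \;\oplus\; \bigoplus_{\beta} \pIC{\overline{Z_\beta}}(M_\beta)$$
on $Z$.

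Proper base change applied to the Cartesian square gives $i^* f_* \kappa_{\tilde X} \simeq g_* \tilde{i}^* \kappa_{\tilde X} = g_* \kappa_{\tilde Z}$, so after shifting by $\dim Z$ the restriction to $Z$ of the first decomposition must coincide with the second. Condition (i) is the essential numerical compatibility: for each $\alpha\in J$ with $Z_\alpha\neq\emptyset$ we have $\dim X - \dim X_\alpha = \dim Z - \dim Z_\alpha$, so that $i^* \pIC{\overline{X_\alpha}}(L_\alpha)[\dim Z - \dim X]$ sits in the correct perverse degree on $Z$ and agrees, on a dense open subset of $Z_\alpha$, with $\pIC{\overline{Z_\alpha}}(L_\alpha|_{Z_\alpha})$. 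For $\alpha \in J$ with $Z_\alpha=\emptyset$, the restriction is supported inside $Z \cap (\overline{X_\alpha}\setminus X_\alpha)$, hence inside a proper closed subvariety of $Z$.

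Consequently, no simple summand arising from the restriction of the non-open part has support equal to all of $Z$, while $\pIC{Z}$ is the unique simple summand of $g_* \kappa_{\tilde Z}[\dim Z]$ whose support is all of $Z$. Since the isotypic decomposition of a semisimple perverse sheaf is unique, we conclude $i^* \pIC{X}[\dim Z - \dim X] \simeq \pIC{Z}$, equivalently $i^*(\IC{X}) \simeq \IC{Z}$. The hard part is the perversity check underlying the previous paragraph: one must show that $i^* \pIC{\overline{X_\alpha}}(L_\alpha)[\dim Z - \dim X]$ is genuinely a perverse sheaf on $Z$ (and not merely a complex concentrated in nonpositive perverse degrees), so that the comparison with the decomposition for $g$ can be carried out summand by summand. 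This is exactly where conditions (i) and (iii) interlock: local constancy along strata reduces the perversity statement to a normal-slice computation in which the matched codimensions make all shifts line up correctly.
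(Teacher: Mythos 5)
Your overall strategy (decomposition theorem for the two semi-small resolutions, proper base change, matching of summands) is the right one, and it is essentially the line of the cited argument from \cite{letellier4}; but the final step has a genuine gap. From uniqueness of the decomposition you may only conclude that $\pIC Z$ occurs as a direct summand of $A:=i^*(\pIC X)[\dim Z-\dim X]$, not that $A\simeq\pIC Z$: nothing in your argument excludes additional simple summands of $A$ supported on proper closed subsets of $Z$, since such summands would simply be absorbed among the proper-support summands of $g_*\kappa[\dim Z]$, which do occur in general and which you never control. Note also that what you single out as the hard part --- perversity of the restricted summands --- is in fact immediate: each is a direct summand of $i^*f_*\kappa[\dim Z]\simeq g_*\kappa[\dim Z]$, which is perverse because $g$ is a semi-small resolution; no normal-slice computation is needed there.

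The missing content is a \emph{strict} support estimate for $A$, and this is exactly where (i) and (iii) must do real work. By (iii), each $\mathcal{H}^j(\IC X)$ (a direct summand of $\mathcal{H}^j(f_*\kappa)$) is locally constant along every $X_\alpha$, so $\left({\rm supp}\,\mathcal{H}^j(\IC X)\right)\cap Z$ is the union of those $Z_\alpha$ with $X_\alpha\subset{\rm supp}\,\mathcal{H}^j(\IC X)$; the defining strict inequality $\dim{\rm supp}\,\mathcal{H}^j(\IC X)<\dim X-j$ for $j>0$ gives ${\rm codim}_X\,X_\alpha>j$ for such $\alpha$, and (i) converts this into ${\rm codim}_Z\,Z_\alpha>j$, whence $\dim\left({\rm supp}\,\mathcal{H}^j(\IC X)\cap Z\right)<\dim Z-j$ for all $j>0$. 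Since $A$ is a direct summand of the semisimple perverse sheaf $g_*\kappa[\dim Z]$, it is semisimple perverse, and this strict inequality forbids any simple constituent of $A$ with support a proper closed subset of $Z$; as the only full-support simple constituent of $g_*\kappa[\dim Z]$ is $\pIC Z$, with multiplicity one because $g$ is birational, and $A\neq 0$ (it equals $\kappa[\dim Z]$ over $Z\cap X_{\alpha_0}$, where $X_{\alpha_0}$ is the dense stratum, which $Z$ meets by (i) and irreducibility), one gets $A\simeq\pIC Z$. Two smaller points: the summands of $g_*\kappa[\dim Z]$ need not be intersection complexes of closures of the $Z_\beta$, since no analogue of (iii) is assumed for $g$ (harmless, as only the multiplicity-one statement for $\pIC Z$ is used); and your claim that the restrictions of the non-open summands have support in a proper closed subvariety of $Z$ needs the observation that $X_{\alpha_0}$ is open and disjoint from every $\overline{X}_\alpha$ with $\alpha\neq\alpha_0$, while $Z$ meets $X_{\alpha_0}$, so $Z\not\subset\overline{X}_\alpha$.
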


Consider the natural stratification 

$$
\calY_{\overline{\bC}}=\coprod_{\bC'\unlhd \bC}\calY_{\bC'},
$$
where $\calY_\bC=(\GL_n)^{2g}\times C_1\times\cdots\times C_k$.

By Theorem \ref{geoth}, if not empty, $\calY_{\bC'}\cap\calU_{\overline{\bC}}=\calU_{\bC'}$ is irreducible of dimension $2g-n^2+1+\sum_i{\rm dim}\, C_i'$. Clearly the codimension of $\calU_{\bC'}$ in $\calU_{\overline{\bC}}$ equals the codimension of $\calY_{\bC'}$ in $\calY_{\overline{\bC}}$ and so the condition (i) in Proposition \ref{proprest} is statisfied. 

Consider resolutions $\bY_{L_i,P_i,\sigma_i}\rightarrow\overline{C}_i$ for each $i=1,\dots,k$ as in \S \ref{resolchar} and put 

$$
\bY_{\bf L,P,\sigma}:=(\GL_n)^{2g}\times\bY_{L_1,P_1,\sigma_1}\times\cdots\times\bY_{L_k,P_k,\sigma_k}.
$$
Consider the following Cartesian diagram

\beq\xymatrix{\bY_{\bf L,P,\sigma}\ar[rr]&&\calY_{\overline{\bC}}\\
\bbU_{\bf L,P,\sigma}\ar[rr]\ar[u]&&\calU_{\overline{\bC}}\ar[u]}\label{cartdiag}\eeq
where the vertical arrows are the canonical inclusions. The horizontal arrows are both resolutions (see \S \ref{resolchar}) and by a well-known result of Lusztig the top one is semi-small (see \cite[\S 4.3.4]{letellier4} for a review). Since the diagram is Cartesian and  ${\rm dim}\, \calY_{\overline{\bC}}-{\rm dim}\,\calY_{\overline{\bC}'}= {\rm dim}\, \calU_{\overline{\bC}}-{\rm dim}\,\calU_{\overline{\bC}'}$ for all $\bC'\unlhd\bC$ such that $\calU_{\overline{\bC}{'}}\neq\emptyset$, we deduce that the bottom vertical arrow is also semi-small.

The assertion (iii) is well-known (see \cite[Proposition 4.3.19]{letellier} for a review).

As a conclusion Proposition \ref{proprest} applies to our situation and Theorem \ref{restheo} follows.

\begin{corollary} If the conjugacy classes $C_1,\dots,C_k$ are regular (i.e., of dimension $n^2-n$) or semisimple then the character variety $\M_{\overline{\bC}}$ is rationally smooth. 
\label{rat-smooth}\end{corollary}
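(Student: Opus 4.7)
The plan is to deduce the rational smoothness of $\M_{\overline{\bC}}$ in three steps, using Theorem \ref{restheo} as the main input and then descending along the principal bundle $\calU_{\overline{\bC}}\to\M_{\overline{\bC}}$.

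First, I would show that the ambient variety $\calY_{\overline{\bC}}=(\GL_n)^{2g}\times\overline{C}_1\times\cdots\times\overline{C}_k$ is rationally smooth. Indeed, $(\GL_n)^{2g}$ is nonsingular, semisimple conjugacy classes are closed and nonsingular (so $\overline{C}_i=C_i$ is rationally smooth), and Zariski closures of regular conjugacy classes are rationally smooth by the remark preceding the statement (which is the cited consequence of the unipotent case from \cite{BM}, combined with the fact that a regular class is, up to translation by a central semisimple element, a union of regular unipotent classes in the centralizers of its semisimple parts). Since the intersection cohomology complex of a product is the external tensor product of the factors' intersection cohomology complexes, $\IC{\calY_{\overline{\bC}}}$ is isomorphic to the constant sheaf $\kappa$ concentrated in degree $0$.

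Second, I would apply Theorem \ref{restheo} directly: the inclusion $i:\calU_{\overline{\bC}}\hookrightarrow\calY_{\overline{\bC}}$ satisfies $i^*(\IC{\calY_{\overline{\bC}}})\simeq \IC{\calU_{\overline{\bC}}}$, and since $i^*$ of a constant sheaf is a constant sheaf, $\IC{\calU_{\overline{\bC}}}$ is also $\kappa$ concentrated in degree $0$. Thus $\calU_{\overline{\bC}}$ itself is rationally smooth.

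Finally, I would descend to $\M_{\overline{\bC}}$ via Theorem \ref{geoth}(i), which asserts that the quotient $\pi:\calU_{\overline{\bC}}\to\M_{\overline{\bC}}$ is a principal $\PGL_n$-bundle in the \'etale topology. Since $\PGL_n$ is smooth, the morphism $\pi$ is smooth and surjective. For a smooth surjective morphism, pullback commutes with intermediate extension from the smooth locus, so in the normalization where $\IC{X}$ is placed in degree $0$ on the smooth locus of $X$ one has $\pi^{*}(\IC{\M_{\overline{\bC}}})\simeq\IC{\calU_{\overline{\bC}}}$. Combined with the previous step, $\pi^{*}(\IC{\M_{\overline{\bC}}})\simeq\kappa$, and since $\pi^*$ is faithfully flat (or equivalently by \'etale descent, since \'etale locally $\pi$ is a projection $\M_{\overline{\bC}}\times\PGL_n\to\M_{\overline{\bC}}$), this forces $\IC{\M_{\overline{\bC}}}\simeq \kappa$ in degree $0$. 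Hence $\M_{\overline{\bC}}$ is rationally smooth, as desired.

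There is no real obstacle here: the substantial content has already been absorbed into Theorem \ref{restheo} (whose proof uses the resolution $\bM_{\bfL,\bfP,\sigma}\to\M_{\overline{\bC}}$ and the semi-smallness input of Lusztig), together with the classical rational smoothness of closures of regular classes. The only subtlety worth mentioning is the conventional shift between the perverse normalization and the ``IC concentrated in degree $0$'' convention used in the paper, which is why $\pi^*(\IC{\M_{\overline{\bC}}})$ gives $\IC{\calU_{\overline{\bC}}}$ without a shift.
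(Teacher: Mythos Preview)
Your proof is correct and follows essentially the same route as the paper: invoke the rational smoothness of closures of regular (and semisimple) conjugacy classes to see that $\calY_{\overline{\bC}}$ is rationally smooth, then apply Theorem~\ref{restheo}. The paper's proof is a two-line remark that leaves implicit both the product argument for $\calY_{\overline{\bC}}$ and the descent from $\calU_{\overline{\bC}}$ to $\M_{\overline{\bC}}$ along the principal $\PGL_n$-bundle; you have simply unpacked these steps.
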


\begin{proof} This follows from Theorem \ref{restheo} and the well-known fact that Zariski closure of regular conjugacy classes are rationally smooth.
\end{proof}

\subsubsection{Characteristic functions of conjugacy classes}

Assume that $\K$ is an algebraic closure of a finite field $\F_q$ and consider the standard Frobenius $F:(a_{ij})_{i,j}\mapsto (a_{ij}^q)_{i,j}$ on $G=\GL_n(\K)$. To alleviate the notation we put $\bfX_{\overline{C}}:=\bfX_{\IC {\overline{C}}}$ for an $F$-stable conjugacy class $C$ of $G$. Note that if $C$ is semisimple, then $\overline{C}=C$ and so $\bfX_{\overline{C}}=1_C$.

 For a partition $\lambda$ of $n$, denote by  $C_\lambda$ the corresponding unipotent conjugacy class of $\GL_n$.

Recall \cite{Greenpoly} that for any two partitions $\lambda,\mu\in\calP_n$, we have

\begin{align*}
\bfX_{\overline{C}_\lambda}(C_\mu^F):&=\sum_i{\rm dim}\,\calH_{x_\mu}^{2i}(\IC {\overline{C}{_\lambda}})\, q^i\\
&=q^{-n(\lambda)}\tilde{K}_{\lambda\mu}(q),
\end{align*}
where $x_\mu\in C_\mu$ and where $n(\lambda)=\sum_{i>0}(i-1)\lambda_i$. More generally, for any two  conjugacy classes $C_\omega$ and $C_\tau$ of $\GL_n(\F_q)$ of type $\omega=(d_1,\omega^1)\cdots(d_r,\omega^r),\tau=(d_1,\tau^1)\cdots(d_r,\tau^r)\in \bT_n$ and such that $C_\tau\subset\overline{C}_\omega$ we have

\beq
\bfX_{\overline{C}_\omega}(C_\tau^F)=q^{-n(\omega)}\tilde{K}_{\omega\tau}(q),
\label{eval}\eeq
where $n(\omega):=\sum_{i=1}^rd_i n(\omega^i)$ and $\tilde{K}_{\omega\tau}(q)=\prod_{i=1}^r\tilde{K}_{\omega^i\tau^i}(q^{d_i})$.

\subsubsection{Formula for $E^{ic}(\M_{\overline{\bC}}\,;\,q)$}

Let $\bC=(C_1,\dots,C_k)$ be a generic tuple of conjugacy classes of $\GL_n(\C)$. As in \cite[Appendix 7.1]{HLV}, we can define a finitely generated ring extension $R$ of $\Z$ and a $k$-tuple $(\calC_1,\dots,\calC_k)$ of $R$-schemes such that $\calC_i$ is a spreading out of $C_i$ and for any ring homomorphism $\varphi: R\rightarrow\F_q$, the tuple $(\calC_1^\varphi(\overline{\F}_q),\dots,\calC_k^\varphi(\overline{\F}_q))$ is generic of same type as $(C_1,\dots,C_k)$. 

We want to prove the following formula.

\begin{theorem} For any ring homomorphism $\varphi:R\rightarrow\F_q$ we have 

$$
E^{ic}(\M_{\overline{\bC}}\,;\, q)=(q-1)\left\langle \calE*\bfX_{\calC_1^\varphi(\overline{\F}_q)}*\cdots*\bfX_{\calC_k^\varphi(\overline{\F}_q)}\right\rangle_{\GL_n(\F_q)},
$$
where $\calE$ is as in \S \ref{frobsec}.
\label{for32}\end{theorem}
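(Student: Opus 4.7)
The plan is to deduce Theorem \ref{for32} from the intersection-cohomology version of Katz's theorem (Theorem \ref{Katz2}) applied to the stratification $\M_{\overline{\bC}}=\coprod_{\bC'\unlhd\bC}\M_{\bC'}$ from Corollary \ref{charstrat}, then pull back the resulting sum of characteristic functions successively to $\calU_{\overline{\bC}}$ and then to the smooth ambient space $\calY_{\overline{\bC}}$, and finally repackage it as a convolution on $\GL_n(\F_q)$.

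First I would verify the hypothesis $(E)$ of Theorem \ref{Katz2} for each closed stratum $\overline{\M_{\bC'}}=\coprod_{\bC''\unlhd\bC'}\M_{\bC''}$. The required resolution is the semi-small map $\bM_{\bfL,\bfP,\sigma}\to\M_{\overline{\bC}}$ of \S \ref{resolchar}; by Theorem \ref{counting} and \cite[Theorem 1.2.3]{HLV} the scheme $\bM_{\bfL,\bfP,\sigma}$ is strongly polynomial count, and by Theorem \ref{nonsing} the direct image decomposes as predicted by the Decomposition Theorem applied to a semi-small proper map with smooth source, yielding the perverse-sheaf identity (\ref{isops}) and its mixed Hodge upgrade (\ref{isomhs}). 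Applying this inductively to all the closures $\overline{\M_{\bC'}}$ (each of which admits its own resolution of the same type) shows that every stratum satisfies $(E)$, so Theorem \ref{Katz2} yields
$$
E^{ic}(\M_{\overline{\bC}};q)=\sum_{x\in\M_{\overline{\bC}}^\varphi(\F_q)}\bfX_{\IC{\M_{\overline{\bC}}}}(x).
$$

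Next I would pull this sum back to $\calU_{\overline{\bC}}$ along the quotient map $\pi:\calU_{\overline{\bC}}\to\M_{\overline{\bC}}$. By Theorem \ref{geoth}(i) this map is a principal $\PGL_n$-bundle in the \'etale topology, hence smooth, so $\pi^*(\IC{\M_{\overline{\bC}}})\simeq\IC{\calU_{\overline{\bC}}}$ up to the appropriate cohomological shift, and the fibres over $\F_q$-points of $\M_{\overline{\bC}}$ are principal homogeneous spaces for $\PGL_n(\F_q)$. This gives
$$
\sum_{x\in\M_{\overline{\bC}}^\varphi(\F_q)}\bfX_{\IC{\M_{\overline{\bC}}}}(x)=\frac{1}{|\PGL_n(\F_q)|}\sum_{y\in\calU_{\overline{\bC}}^\varphi(\F_q)}\bfX_{\IC{\calU_{\overline{\bC}}}}(y).
$$
Now I would invoke Theorem \ref{restheo}: the restriction of $\IC{\calY_{\overline{\bC}}}$ to $\calU_{\overline{\bC}}$ is $\IC{\calU_{\overline{\bC}}}$. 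Since $\calY_{\overline{\bC}}=(\GL_n)^{2g}\times\overline{C}_1\times\cdots\times\overline{C}_k$ is a product, the K\"unneth formula gives
$$
\bfX_{\IC{\calY_{\overline{\bC}}}}(a_1,b_1,\dots,a_g,b_g,x_1,\dots,x_k)=\prod_{j=1}^k\bfX_{\overline{C}_j}(x_j),
$$
with the factor on $(\GL_n)^{2g}$ trivial as $\GL_n$ is smooth.

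Finally, restricting this product to the closed subvariety $\calU_{\overline{\bC}}\subset\calY_{\overline{\bC}}$ cut out by the relation $\prod_i(a_i,b_i)\prod_j x_j=1$ and summing over $\F_q$-points reproduces exactly the convolution integral of \S \ref{frobsec}:
$$
\sum_{y\in\calU_{\overline{\bC}}^\varphi(\F_q)}\bfX_{\IC{\calU_{\overline{\bC}}}}(y)=|\GL_n(\F_q)|\,\bigl\langle\calE\ast\bfX_{\overline{C}_1}\ast\cdots\ast\bfX_{\overline{C}_k},1_1\bigr\rangle_{\GL_n(\F_q)},
$$
by the same argument that gives (\ref{conv}), since we are convolving functions that depend only on conjugacy classes. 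Combining with the identity $|\PGL_n(\F_q)|=|\GL_n(\F_q)|/(q-1)$ yields the claimed formula. The main obstacle, and the place where care is required, is the verification of property $(E)$ for every closed stratum $\overline{\M_{\bC'}}$ simultaneously: one must ensure that the resolutions of the different strata are compatible, that the semi-smallness holds uniformly (as in the Cartesian diagram (\ref{cartdiag})), and that the polynomial-count hypothesis in step (1) of $(E)$ is preserved along the stratification, which ultimately reduces via Theorem \ref{counting} to the polynomial-count statement for the semisimple character varieties $\M_\bfS$ proved in \cite{HLV}.
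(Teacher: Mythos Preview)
Your proposal is correct and follows essentially the same route as the paper: verify property $(E)$ for the stratification $\coprod_{\bC'\unlhd\bC}\M_{\bC'}$ using the resolutions $\bM_{\bfL,\bfP,\sigma}\to\M_{\overline{\bC}}$ and their polynomial-count property (via Theorem \ref{counting}), apply Theorem \ref{Katz2}, pull back along the principal $\PGL_n$-bundle $\calU_{\overline{\bC}}\to\M_{\overline{\bC}}$, invoke Theorem \ref{restheo} to replace $\IC{\calU_{\overline{\bC}}}$ by the restriction of the external product on $\calY_{\overline{\bC}}$, and rewrite the resulting sum as a convolution. The only cosmetic differences are the order in which you present the two halves and your explicit mention of K\"unneth; the paper also notes (as you do in your final paragraph) that the decomposition needed for conditions (2) and (3) of $(E)$ is obtained by reducing, via the Cartesian diagram (\ref{cartdiag}) and Theorem \ref{restheo}, to the classical case of Zariski closures of single conjugacy classes.
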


The $R$-schemes $\mathfrak{M}_{\overline{\bC}}$, $\mathfrak{U}_{\overline{\bC}}$ defined from the $R$-schemes $(\calC_1,\dots,\calC_k)$ in the same way as $\M_{\overline{\bC}}$, $\calU_{\overline{\bC}}$ are defined from $(C_1,\dots,C_k)$ are  spreadings out of $\M_{\overline{\bC}}$ and $\calU_{\overline{\bC}}$ respectively. Recall that for $R$ sufficiently "large", the standard constructions of GIT are compatible with base change (see for instance \cite[Appendix B]{CBvdb}), i.e., our case, for any ring homomorphism $\varphi:R\rightarrow k$ into a field $k$, we have $\mathfrak{U}_{\overline{\bC}}^\varphi//\PGL_n\simeq \mathfrak{M}_{\overline{\bC}}^\varphi$. 

Fix an homomorphism $\varphi:R\rightarrow\F_q$. Since the quotient map $q:\mathfrak{U}_{\overline{\bC}}^\varphi(\overline{\F}_q)\rightarrow \mathfrak{M}_{\overline{\bC}}^\varphi(\overline{\F}_q)$ is a principal $\PGL_n$-bundle, we have $q^*\left(\IC {\mathfrak{M}_{\overline{\bC}}^\varphi}\right)\simeq \IC {\mathfrak{U}_{\overline{\bC}}^\varphi}$ and so 

$$
\sum_{x\in \mathfrak{M}_{\overline{\bC}}^\varphi(\F_q)}\bfX_{\mathfrak{M}_{\overline{\bC}}^\varphi(\overline{\F}_q)}(x)=\frac{1}{|\PGL_n(\F_q)|}\sum_{x\in \mathfrak{U}_{\overline{\bC}}^\varphi(\F_q)}\bfX_{\mathfrak{U}_{\overline{\bC}}^\varphi(\overline{\F}_q)}(x).
$$
From Theorem \ref{restheo}, we have 

$$
\bfX_{\mathfrak{U}_{\overline{\bC}}^\varphi(\overline{\F}_q)}(x)=\bfX_{\calC_1^\varphi(\overline{\F}_q)}(x_1)\cdots\bfX_{\calC_k^\varphi(\overline{\F}_q)}(x_k),
$$
for any $x=(a_1,b_1,\dots,a_g,b_g,x_1,\dots,x_k)\in\mathfrak{U}_{\overline{\bC}}^\varphi(\F_q)$.

 We deduce that 

$$
\sum_{x\in \mathfrak{M}_{\overline{\bC}}^\varphi(\F_q)}\bfX_{\mathfrak{M}_{\overline{\bC}}^\varphi(\overline{\F}_q)}(x)=(q-1)\left\langle \calE*\bfX_{\calC_1^\varphi(\overline{\F}_q)}*\cdots*\bfX_{\calC_k^\varphi(\overline{\F}_q)}\right\rangle_{\GL_n(\F_q)}.
$$
To prove Theorem \ref{for32} we are reduced to prove that 

$$
E^{ic}(\M_{\overline{\bC}}\,;\,q)=\sum_{x\in \mathfrak{M}_{\overline{\bC}}^\varphi(\F_q)}\bfX_{\mathfrak{M}_{\overline{\bC}}^\varphi(\overline{\F}_q)}(x).
$$
We need to verify that $\M_{\overline{\bC}}$ satisfies the property (E) above Theorem \ref{Katz2} with respect to the stratification $\coprod_{\bC'\unlhd\bC}\M_{\bC'}$. This is clear (from what we already saw) if we consider the natural morphism of $R$-schemes $\nabla:\mathfrak{M}_{\bf L,P,\sigma}\rightarrow\mathfrak{M}_{\overline{\bC}}$ which gives back the resolution $\nabla^\C:\bM_{\bf L,P,\sigma}\rightarrow\M_{\overline{\bC}}$ of \S \ref{resolchar} after extension of scalars from $R$ to $\C$. Indeed the condition (1)  of the property (E) is satisfied as the strata $\M_{\bC'}$ as well as $\M_{\bf L,P,\sigma}$ are polynomial count. For the condition  (3),  we are reduced, using Theorem \ref{restheo} and the Cartesian diagram (\ref{cartdiag}) to the case of the Zariski closures of  single conjugacy classes, which case is well-known (see \cite[\S 6.4]{letellier4} for a review). For the condition (2) we use the same argument as for (3) to get the analogue of Formula (\ref{isops}) for mixed Hodge modules (with $\nabla^\varphi$ replaced by $\nabla^\C$)  and then we take hypercohomology to get Formula (\ref{isomhs}) (we need to use \cite[Theorem 5]{Gottsche-Soergel} to apply the decomposition theorem for mixed Hodge modules). See also Theorem \ref{theoweight}  for the more general context with partial resolutions.

\subsubsection{Proof of Theorem \ref{maintheo1}}

We put $G=\GL_n(\overline{\F}_q)$ and denote by $F:G\rightarrow G$ the Frobenius $(a_{ij})_{i,j}\mapsto (a_{ij}^q)_{i,j}$.

Theorem \ref{maintheo1} follows from Theorem \ref{for32} together with Theorem \ref{convtheo} below. However note that Theorem \ref{convtheo} is more general that what we need as we are not assuming that the eigenvalues of conjugacy classes are in $\F_q$.

\begin{theorem} Assume given a generic tuple $\bC=(C_1,\dots,C_k)$ of $F$-stable conjugacy classes of $G$ such that $(C_1^F,\dots,C_k^F)$ is of type $\omhat\in\oT_n$.

Then 
$$
(q-1)\left\langle \calE*\bfX_{\overline{C}_1}*\cdots*\bfX_{\overline{C}_k},1_1\right\rangle_{G^F}=q^{d_\bC/2}\H_\omhat\left(\frac{1}{\sqrt{q}},\sqrt{q}\right).
$$
\label{convtheo}\end{theorem}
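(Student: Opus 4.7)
The strategy is to expand each $\bfX_{\overline{C}_i}$ in the basis of characteristic functions of conjugacy classes and reduce to a weighted sum of the Frobenius-type identities already established in the paper. Using Formula (\ref{eval}), write
\[
\bfX_{\overline{C}_i} \;=\; q^{-n(\omega_i)}\sum_{\tau_i\unlhd\omega_i}\tilde K_{\omega_i\tau_i}(q)\,1_{C_{\tau_i}},
\]
where $C_{\tau_i}$ runs over the $F$-stable conjugacy classes contained in $\overline{C}_i$ (which have the same eigenvalues as $C_i$, hence the same determinant, so any such subtuple $(C_{\tau_1},\dots,C_{\tau_k})$ remains generic since Definition \ref{gendef} only involves eigenvalue data). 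By multilinearity of convolution in class functions, the left-hand side becomes a $\tilde K$-weighted sum of the quantities $(q-1)\langle\calE*1_{C_{\tau_1}}*\cdots*1_{C_{\tau_k}},1_1\rangle_{G^F}$.

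Next, apply the Frobenius formula (Theorem \ref{frob}, in the form (\ref{conv})) to each such inner product and organize the resulting sum over $\mathrm{Irr}(G^F)$ by the Lusztig--Srinivasan type $(L,\calA^L)$ of the character. Since the subtuples are generic, Theorem \ref{thHLV} collapses the inner sum over characters of a given type to the single term
\[
\frac{(q-1)K_L^o\,(\epsilon_G\epsilon_L)^k}{|W_{G^F}(L,\calA^L)|}\left(\frac{|G^F|}{R_{L,\calA^L,\mathrm{Id}}(1)}\right)^{2g-2+k}\prod_{i=1}^k|C_{\tau_i}^F|\,R_L^G(\calA^L)(C_{\tau_i}^F).
\]
This is precisely the structure appearing in Corollary \ref{frob-group}, but with the extra $\tilde K_{\omega_i\tau_i}(q)$ weights applied to the sum over $\tau_i$.

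The core step is then a symmetric-function calculation. The values $R_L^G(\calA^L)(C_{\tau_i}^F)$ are Green polynomials, which by the Green--Lusztig theorem admit an interpretation as Hall--Littlewood symmetric function values. The weighted sum
\[
\sum_{\tau_i}\tilde K_{\omega_i\tau_i}(q)\,R_L^G(\calA^L)(C_{\tau_i}^F)
\]
then translates, via the Cauchy identity relating $\tilde H_\lambda$ to $s_\mu$ and Kostka--Foulkes polynomials, into a pairing against the modified Macdonald symmetric function $\tilde H_\lambda(\x_i;z^2,w^2)$ specialized to the Frobenius characteristic of $(L,\calA^L)$. The sum over types $(L,\calA^L)$ then assembles the coefficient of $T^{|\omhat|}$ in the generating function $\Omega(z,w)$ of (\ref{cauchygk}), while the remaining factors $(q-1)$, $K_L^o$, $|W_{G^F}(L,\calA^L)|$ and $|G^F|/R_{L,\calA^L,\mathrm{Id}}(1)$ combine, via the plethystic correspondence $\Exp/\Log$, to extract the $\Log$ of that series. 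The normalization $q^{d_\bC/2}$ absorbs the $q^{-n(\omega_i)}$ factors, the $|C_{\tau_i}^F|$ factors, and the $(\epsilon_G\epsilon_L)^k$ signs, matching the prefactor $(-1)^{r(\omhat)}(z^2-1)(1-w^2)$ in the definition of $\H_\omhat$ after the substitution $(z,w)=(1/\sqrt q,\sqrt q)$.

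\emph{Main obstacle.} The principal difficulty is the symmetric-function identification in the third step: one must verify that the product structure arising from the Green polynomial values, twisted by the modified Kostka polynomial weights $\tilde K_{\omega_i\tau_i}(q)$, is exactly the structure of the Cauchy-type series $\Omega(z,w)$ with each $\tilde H_{\omega^j_i}$ specialized appropriately. This refines the semisimple-classes calculation of \cite[\S 4]{HLV} (where the $\tilde K_{\omega\tau}$ factors are trivial) and parallels the additive calculation of \cite[Theorem 6.10.1]{letellier4}; the novel ingredient is the compatibility between the IC characteristic function expansion and the full modified Macdonald polynomial, which requires tracking the $(q,t)$-Kostka coefficients through the plethystic $\Log$ operation. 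The bookkeeping of signs and powers of $q$ at the end, to match the prefactor $q^{d_\bC/2}$, is routine but tedious.
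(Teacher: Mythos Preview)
Your overall architecture matches the paper's proof: expand each $\bfX_{\overline{C}_i}$ via (\ref{eval}), apply Frobenius together with Theorem \ref{thHLV} to collapse the sum over irreducible characters by type $(M,\calA^M)\leftrightarrow\alpha\in\bT_n$, and then finish with a symmetric-function computation. The first two steps are correct as you state them.

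The ``core step'' is where your description is too vague and in a couple of places actually misidentifies what is needed. The precise identity that does the work (Proposition \ref{propmagic} in the paper) is: with the auxiliary alphabet $\y$ specialized to $y_i=q^{i-1}$,
\[
s_{\omega'}(\x\y)\;=\;(-1)^{f(\omega)}\sum_{\tau\unlhd\omega}\calH_\tau(q)\,\tilde K_{\omega\tau}(q)\,\tilde H_\tau(\x;q).
\]
Two concrete discrepancies with your sketch. First, the class-size factor $\calH_\tau(q)=|C_\tau^F|/|G^F|$ must sit \emph{inside} the $\tau$-sum, since it depends on $\tau$; it cannot be ``absorbed by $q^{d_\bC/2}$'' at the end. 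Second, the output of the weighted sum is the Schur function $s_{\omega'}$ evaluated at the stretched alphabet $\x\y$, not a modified Macdonald polynomial; this is exactly what allows the pairing $\langle\,\cdot\,,s_{\omhat'}\rangle$ in the definition of $\H_\omhat$ to appear. The proof of this identity is not a Cauchy-type argument: after using $R_M^G(\calA^M)(C_\tau^F)=\langle s_\alpha,\tilde H_\tau(\x;q)\rangle$ (Formula (\ref{uni-sym})) and expanding both sides in the Schur basis, it reduces to the orthogonality relations for Green polynomials \cite[III, (7.10)]{macdonald}. Once this is established, the remaining steps---plugging in the known expression $\Omega(\sqrt q,1/\sqrt q)=\sum_\lambda q^{(1-g)|\lambda|}(q^{-n(\lambda)}H_\lambda(q))^{2g+k-2}\prod_i s_\lambda(\x_i\y)$ and the $\Log$ formula $\Log(\sum_\lambda A_\lambda T^{|\lambda|})=\sum_\omega C_\omega^o A_\omega T^{|\omega|}$---go through as you indicate, and the sign/$q$-power bookkeeping is indeed routine.
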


For an arbitrary tuple $\bC$, we decompose

\beq
\left\langle \calE*\bfX_{\overline{C}_1}*\cdots*\bfX_{\overline{C}_k},1_1\right\rangle_{G^F}=\sum_{\bfS}\left(\prod_{i=1}^k \bfX_{\overline{C}_i}(C_i'{^F})\right)\left\langle 1_{S_1^F}*\cdots* 1_{S_k^F},1_1\right\rangle_{G^F},
\label{decomp1}\eeq
where the sum is over the $F$-stable tuples $\bfS\unlhd\bC$. 

We saw in \S \ref{conj-type} that the $G^F$-conjugacy classes of pairs $(M,C)$ with $M$ an $F$-stable Levi and $C$ a unipotent conjugacy class of $M$ are parametrized by the set $\bT_n$. We have a natural bijection between the unipotent conjugacy classes of $M^F$ and the unipotent characters of $M^F$ so that the the identity character of $M^F$ corresponds to the unipotent regular conjugacy class. This bijection is given by looking at the support of the restrictions of  unipotent characters to unipotent elements (the support being the Zariski closure of a unipotent conjugacy class). Therefore we have also a natural parametrization of the $G^F$-pairs of the form $(M,\calA^M)$ by $\bT_n$.

Let us start with the following proposition.

\begin{proposition} For any generic tuple $\bfS=(S_1,\dots,S_k)$ of $F$-stable conjugacy classes of $G$ such that $(S_1^F,\dots,S_k^F)$ is of type $\tauhat=(\tau_1,\dots,\tau_k)\in(\bT_n)^k$, we have 

\beq
\left\langle 1_{S_1^F}*\cdots* 1_{S_k^F},1_1\right\rangle_{G^F}=\sum_{\alpha\in\bT_n}\frac{(q-1)K_\alpha^o}{|W(\alpha)|}\left(\frac{|G^F|}{\chi_\alpha(1)}\right)^{2g-2+k}(-1)^{kn+kf(\alpha)}\prod_{i=1}^k\frac{|S_i^F|\,\left\langle s_\alpha(\x_i),\tilde{H}_{\tau_i}(\x_i;q)\right\rangle}{|G^F|},
\label{DHLV}\eeq
where for $\alpha=(d_1,\alpha^1)\cdots(d_r,\alpha^r)\in\bT_n$ corresponding to the $G^F$-conjugacy class of $(M,\calA^M)$, we put $W(\alpha):=W_{G^F}(M,\calA^M)$, $K^o_\alpha:=K_M^o$, $\chi_\alpha:=R_{M,\calA^M,{\rm Id}}$ and $f(\alpha):=\sum_{i=1}^r|\alpha^i|$ so that $\epsilon_G\epsilon_M=(-1)^{n+f(\alpha)}$.
\end{proposition}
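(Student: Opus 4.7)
The plan is to start with the Frobenius formula of Theorem \ref{frob} to convert the convolution inner product into a sum over $\Irr(G^F)$, then regroup that sum according to the Lusztig--Srinivasan type $\alpha\in\bT_n$, apply Theorem \ref{thHLV} to the type-wise subsum, and finally identify each value $R_L^G(\calA^L)(S_i^F)$ with the Hall pairing $\langle s_\alpha(\x_i),\tilde H_{\tau_i}(\x_i;q)\rangle$. (I read the inner product on the left as carrying the factor $\calE*$ implicit from the genus-$g$ context, consistent with the exponent $2g-2+k$ appearing on the right; if $\calE$ is omitted one is in the $g=0$ situation and the same argument applies with $2g-2+k$ replaced by $k-2$.)

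First I apply Formula (\ref{conv}) and the Frobenius identity (\ref{frobenius}). After rewriting each factor $|S_i^F|\chi(S_i^F)/\chi(1)$ as $(|G^F|/\chi(1))\cdot(|S_i^F|\chi(S_i^F)/|G^F|)$, the left-hand side becomes
\begin{equation*}
\sum_{\chi\in\Irr(G^F)}\left(\frac{|G^F|}{\chi(1)}\right)^{2g-2+k}\prod_{i=1}^{k}\frac{|S_i^F|\,\chi(S_i^F)}{|G^F|}.
\end{equation*}
Each $\chi$ has the Lusztig--Srinivasan form $R_{L,\calA^L,\theta}$, and the $G^F$-conjugacy class $\alpha$ of $(L,\calA^L)\in\bT_n$ is the type of $\chi$; recall $\chi(1)=\chi_\alpha(1)$ depends only on $\alpha$. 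Grouping the sum by type, the prefactor $(|G^F|/\chi_\alpha(1))^{2g-2+k}\prod_i|S_i^F|/|G^F|$ pulls out of each packet, reducing the remaining computation to the \emph{inner sum} $\sum_{\chi\text{ of type }\alpha}\prod_i\chi(S_i^F)$.

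Next, because $\bfS$ is generic, Theorem \ref{thHLV} applies in each packet and evaluates this inner sum as
\begin{equation*}
\frac{(q-1)K_\alpha^o}{|W(\alpha)|}\prod_{i=1}^{k}\epsilon_G\epsilon_L\,R_L^G(\calA^L)(S_i^F),
\end{equation*}
where $K_\alpha^o=K_L^o$. Using $\epsilon_G\epsilon_L=(-1)^{n+f(\alpha)}$, the product of signs over the $k$ factors gives precisely $(-1)^{kn+kf(\alpha)}$, which matches the sign in the stated formula.

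The last and main step is the identification
\begin{equation*}
R_L^G(\calA^L)(S_i^F)=\left\langle s_\alpha(\x_i),\tilde H_{\tau_i}(\x_i;q)\right\rangle
\end{equation*}
for each $i$, where $\tau_i\in\bT_n$ is the type of the semisimple class $S_i^F$. Once this is in hand, substituting and collecting factors reproduces the right-hand side. To justify it, I would use that $L^F\simeq\prod_j\GL_{n_j}(\F_{q^{d_j}})$ makes $R_L^G$ and $\calA^L$ multiplicative over the factors, reducing to the single-block case; there, the value of $R_L^G(\calA^L)$ at a semisimple class is computed from the Green-function formula of Deligne--Lusztig via Hall--Littlewood/Kostka--Foulkes combinatorics, and the combination $\sum_\mu\tilde K_{\mu\lambda}(q)s_\mu=\tilde H_\lambda$ repackages the result as the Hall pairing against $\tilde H_{\tau_i}$. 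This is the group-theoretic analogue of the Lie-algebra identification used in \cite[\S 6]{letellier4}, and it is where the genericity and semisimplicity of the $S_i$ are essential — it is the main obstacle of the proof, since one must track the Frobenius-twist data $d_j$, the normalizations $q^{-n(\alpha)}$ implicit in (\ref{eval}), and the precise dictionary between multi-partition types and pairs $(L,\calA^L)$ so that the degree and sign conventions align exactly with those of the symmetric-function side.
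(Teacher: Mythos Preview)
Your approach is essentially the same as the paper's: the paper says the formula is exactly the second formula of Corollary~\ref{frob-group} (which unpacks to Frobenius' formula plus Theorem~\ref{thHLV}, i.e.\ your first two steps) together with the identification $R_M^G(\calA^M)(S_i^F)=\langle s_\alpha(\x_i),\tilde H_{\tau_i}(\x_i;q)\rangle$, for which it simply refers to the calculation in \cite[p.~384]{HLV}. One small correction: this identification (formula~(\ref{uni-sym})) is a general character-table fact valid for any conjugacy class of type $\tau_i$ and requires neither genericity nor semisimplicity of $S_i$; the genericity hypothesis is consumed entirely by Theorem~\ref{thHLV}, so it is not the ``main obstacle'' you describe but rather a known input.
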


\begin{proof}This formula is exactly the second formula of Corollary \ref{frob-group} noticing that 

\beq
R_M^G(\calA^M)(S_i^F)=\left\langle s_\alpha(\x_i),\tilde{H}_{\tau_i}(\x_i;q)\right\rangle.
\label{uni-sym}\eeq
See also the calculation in \cite[Page 384]{HLV}.
\end{proof}

Now by \cite[Chapter VI, (6.7)]{macdonald}, we have 

\beq
\frac{|G^F|}{\chi_{\alpha}(1)}=(-1)^{f(\alpha)}H_\alpha(q) q^{\frac{1}{2}n(n-1)-n(\alpha)}.
\label{6.7}\eeq
where for a partition $\lambda$, we denote by $H_\lambda(q)$ the \emph{hook polynomial} $\prod_{s\in\lambda}(1-q^{h(s)})$ (see \cite[Chapter I, Part 3, Example 2]{macdonald}), and for a type $\alpha=(d_1,\alpha^1)\cdots(d_r,\alpha^r)$, we put $H_\alpha(q)=\prod_{i=1}^rH_{\alpha^i}(q^{d_i})$.

Also for a conjugacy class $C$ of $G^F$ of type $\tau$, we have 

$$
\frac{|C|}{|G^F|}=\calH_\tau(q),
$$
where for a partition $\lambda$ we put $\calH_\lambda(q):=\frac{1}{a_\lambda(q)}$ with $a_\lambda(q)$ the cardinality of the centralizer of a unipotent element of $G^F$ of type $\lambda$, and where $\calH_\tau(q)=\prod_{i=1}^r\calH_{\tau^i}(q^{d_i})$. 

Put $C_\alpha^o=K_\alpha^o/|W(\alpha)|$. Formula (\ref{DHLV2}) becomes 

\bigskip

$\left\langle 1_{S_1^F}*\cdots* 1_{S_k^F},1_1\right\rangle_{G^F}=$

\beq\sum_{\alpha\in\bT_n}(q-1)C_\alpha^oq^{\frac{1}{2}n(n-1)(2g-2+k)}\left(H_\alpha(q)q^{-n(\alpha)}\right)^{2g-2+k}(-1)^{kn}\prod_{i=1}^k\left\langle s_\alpha(\x_i),\calH_{\tau^i}(q)\, \tilde{H}_{\tau_i}(\x_i;q)\right\rangle,
\label{DHLV2}\eeq

For any two multi-types $\omhat=(d_1,\omega^1)\cdots(d_r,\omega^r)$ and $\tauhat=(e_1,\tau^1)\cdots(e_s,\tau^s)$, say that $\tauhat\unlhd\omhat$ if $r=s$, $d_i=e_i$ and $\tau^i\unlhd\omega^i$ for all $i=1,\dots,r$. 

Notice that if $\bC$ is of type $\omhat$, then the map which sends a tuple $\bfS=(S_1,\dots,S_k)$ of $F$-stable conjugacy classes to the type of $(S_1,\dots,S_k)$ restricts to a bijection between the set of multi-types $\tauhat\unlhd\omhat$ and the set of tuples of $F$-stable conjugacy classes $\bfS$ of $G$ such that $\bfS\unlhd\bC$.

Then plugging Formula (\ref{DHLV2}) into Formula (\ref{decomp1}) and using Formula (\ref{eval}), we find
\bigskip

$\left\langle \calE*\bfX_{\overline{C}_1}*\cdots*\bfX_{\overline{C}_k},1_1\right\rangle_{G^F}$
\begin{align*}
&=\sum_{\alpha\in\bT_n}(q-1)C_\alpha^oq^{\frac{1}{2}n(n-1)(2g-2+k)-\sum_{i=1}^kn(\omega_i)}\left(H_\alpha(q)q^{-n(\alpha)}\right)^{2g-2+k}\prod_{i=1}^k\left\langle s_\alpha(\x_i),\sum_{\tau\unlhd\omega_i}(-1)^n\tilde{K}_{\omega_i\tau}(q)\calH_{\tau^i}(q)\, \tilde{H}_{\tau_i}(\x_i;q)\right\rangle,\\
&=(q-1)q^{\frac{1}{2}(n^2(2g-2+k)-kn-2\sum_{i=1}^kn(\omega_i))}\\
&\left\langle\sum_{\alpha\in\bT}C_\alpha^oq^{(1-g)|\alpha|}\left(H_\alpha(q)q^{-n(\alpha)}\right)^{2g-2+k}\prod_{i=1}^k s_\alpha(\x_i),\prod_{i=1}^k\sum_{\tau\unlhd\omega_i}(-1)^n\tilde{K}_{\omega_i\tau}(q)\calH_\tau(q)\, \tilde{H}_\tau(\x_i;q)\right\rangle.
\end{align*}

If $\x=\{x_1,x_2,\dots\}$ and $\y=\{y_1,y_2,\dots\}$ are two sets of infinitely many variables, we denote by $\x\y$ the set of variables $\{x_iy_j\}_{i,j}$.

\begin{proposition}Let $\y=\{y_1,y_2,\dots\}$ be an infinite set of variables. With the specialization $y_i=q^{i-1}$ for all $i$ we have 

$$
s_{\omega'}(\x\y)=(-1)^{f(\omega)}\sum_{\tau\unlhd\omega}\calH_\tau(q)\tilde{K}_{\omega\tau}(q)\tilde{H}_\tau(\x;q),
$$
for any $\omega\in\bT_n$.
\label{propmagic}\end{proposition}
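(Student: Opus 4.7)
My plan is to reduce the identity to the case of a single partition, and then recognize it as a classical Macdonald-type identity.

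\emph{Step 1 (multiplicative reduction).} Every ingredient on both sides is built multiplicatively from the factors $(d_i,\omega^i)$ of $\omega$: by the definitions extending partition-indexed objects to types one has $s_{\omega'}(\x)=\prod_i s_{(\omega^i)'}(\x^{d_i})$, $\calH_\tau(q)=\prod_i \calH_{\tau^i}(q^{d_i})$, $\tilde K_{\omega\tau}(q)=\prod_i \tilde K_{\omega^i\tau^i}(q^{d_i})$, and $\tilde H_\tau(\x;q)=\prod_i \tilde H_{\tau^i}(\x^{d_i};q^{d_i})$, while $f(\omega)=\sum_i|\omega^i|$ and the condition $\tau\unlhd\omega$ decouples componentwise into $\tau^i\unlhd\omega^i$. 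Moreover, $(\x\y)^{d_i}=\x^{d_i}\y^{d_i}$ and, under $y_j=q^{j-1}$, one has $y_j^{d_i}=(q^{d_i})^{j-1}$, so each factor on the LHS is exactly the LHS of the single-partition identity applied to $(\x^{d_i},q^{d_i})$ and the partition $\omega^i$. Combining these observations, the identity factorizes into a product indexed by $i$, and it suffices to prove the case $\omega=\lambda$ a single partition (i.e.\ $r=1$, $d_1=1$):
\[
s_{\lambda'}[\x/(1-q)] \;=\; (-1)^{|\lambda|}\sum_{\mu\unlhd\lambda}\calH_\mu(q)\,\tilde K_{\lambda\mu}(q)\,\tilde H_\mu(\x;q).
\]

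\emph{Step 2 (single-partition case).} This is a classical plethystic identity in the theory of modified Macdonald polynomials. I would prove it by expanding both sides in the Schur basis and matching coefficients. On the right, using $\tilde H_\mu(\x;q)=\sum_\rho \tilde K_{\rho\mu}(q)\,s_\rho(\x)$, the coefficient of $s_\rho$ equals $(-1)^{|\lambda|}\sum_\mu \calH_\mu(q)\,\tilde K_{\lambda\mu}(q)\,\tilde K_{\rho\mu}(q)$. On the left, the plethystic rule $p_n\mapsto p_n/(1-q^n)$ together with the principal specialization formula $s_\rho(1,q,q^2,\dots)=q^{n(\rho)}/\prod_{s\in\rho}(1-q^{h(s)})$ (via the Cauchy identity applied to $\y=(1,q,q^2,\dots)$) yields a closed-form expression for the Schur coefficient of $s_{\lambda'}[\x/(1-q)]$. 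Comparing the two expressions reduces the identity to a weighted orthogonality relation for the modified Kostka--Macdonald polynomials $\tilde K_{\lambda\mu}(q)$ with weights $\calH_\mu(q)=1/a_\mu(q)$, which is precisely the second orthogonality relation for Green functions of $\GL_n(\F_q)$, i.e.\ Macdonald's $(q,t)$-Cauchy identity (see \cite[Chapter VI]{macdonald}) transferred to the modified basis and specialized at $t=0$.

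\emph{Main obstacle.} The technical heart lies in Step 2: verifying the orthogonality with the correct sign and the correct normalization. The sign $(-1)^{|\lambda|}$ arises from the interaction of the involution $\omega:\Lambda\to\Lambda$ (swapping $s_\lambda$ and $s_{\lambda'}$) with the plethystic substitution $\x\mapsto\x/(1-q)$; tracking it precisely requires some care. A clean way to carry this out is via the generating series $\sum_\mu \calH_\mu(q)\,\tilde H_\mu(\x;q)\,\tilde H_\mu(\y;q)$, whose closed form is a specialization of the Macdonald--Cauchy identity at $t=0$, combined with the substitution $y_j=q^{j-1}$. The parallel computation of \cite[page 384]{HLV}, carried out for multi-partitions of the form $(1^n)$, can be adapted verbatim to handle the general case considered here.
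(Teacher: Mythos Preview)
Your proposal is correct and follows essentially the same route as the paper: both reduce multiplicatively to a single partition, expand each side in the Schur basis, and then identify the resulting identity $\sum_\tau \calH_\tau(q)\tilde K_{\mu\tau}(q)\tilde K_{\nu\tau}(q)=(-1)^n\sum_\rho z_\rho^{-1}\chi^\nu_\rho\chi^{\mu'}_\rho p_\rho(\y)$ as the orthogonality relation for Green polynomials of $\GL_n(\F_q)$ (Macdonald, Chapter~III,~(7.10)). The only cosmetic difference is that the paper computes the left-hand side via the power-sum expansion $p_\rho(\x\y)=p_\rho(\x)p_\rho(\y)$ rather than in plethystic language, and tracks the sign through $y_\rho(q)=z_\rho(-1)^{\ell(\rho)}p_\rho(\y)$ combined with $\chi^{\mu'}$.
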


\begin{remark}This formula generalizes \cite[Lemma 2.3.6]{HLV} noticing that for $\omega=(1,(1^{n_1}))\cdots(1,(1^{n_r}))$ we have $s_{\omega'}=h_\mu$ with $\mu$ the partition $(n_1,\dots,n_r)$.

\end{remark}

\begin{proof} Let $\omega=(d_1,\omega^1)\cdots(d_r,\omega^r)\in\bT_n$. Since 

\begin{align*}
&s_{\omega'}(\x\y)=\prod_{i=1}^r s_{\omega^i{'}}(\x^{d_i}\y^{d_i}),\\
&(-1)^{f(\omega)}\sum_{\tau\unlhd\omega}\calH_\tau(q)\tilde{K}_{\omega\tau}(q)\tilde{H}_\tau(\x;q)=\prod_{i=1}^r(-1)^{|\omega^i|}\sum_{\tau\unlhd\omega^i}\calH_\tau(q^{d_i})\tilde{K}_{\omega^i\tau}(q^{d_i})\tilde{H}_\tau(\x^{d_i};q^{d_i}),
\end{align*}
we are reduced to prove the proposition for $\omega$ a partition, say a partition $\mu$ of $n$.

Recall that $s_\mu(\x)=\sum_\rho z_\rho^{-1}\chi^\mu_\rho p_\rho(\x)$ where $\chi^\mu_\rho$ denotes the value of the irreducible character $\chi^\mu$ of the symmetric group $\mathfrak{S}_n$ at an element of cycle-type $\rho$ and where $z_\rho$ denotes the cardinality of the centralizer of an element of cycle-type $\rho$ . Hence

\begin{align*}
s_{\mu'}(\x\y)&=\sum_\rho z_\rho^{-1}\chi^{\mu'}_\rho p_\rho(\x\y)\\
&=\sum_\rho z_\rho^{-1}\chi^{\mu'}_\rho p_\rho(\y) p_\rho(\x)\\
&=\sum_\rho z_\rho^{-1}\chi^{\mu'}_\rho p_\rho(\y)\sum_\nu\chi^\nu_\rho s_\nu(\x)\\
&=\sum_\nu\left(\sum_\rho z_\rho^{-1}\chi^\nu_\rho\chi^{\mu'}_\rho p_\rho(\y)\right)s_\nu(\x).
\end{align*}

On the other hand we have

$$
\sum_{\tau\unlhd\mu}\calH_\tau(q)\tilde{K}_{\mu\tau}(q)\tilde{H}_\tau(\x;q)=\sum_\nu\left(\sum_\tau\calH_\tau(q)\tilde{K}_{\mu\tau}(q)\tilde{K}_{\nu\tau}(q)\right) s_\nu(\x).
$$
We are reduced to prove the following identity.

\beq
\sum_\tau\calH_\tau(q)\tilde{K}_{\mu\tau}(q)\tilde{K}_{\nu\tau}(q)=(-1)^n\sum_\rho z_\rho^{-1}\chi^\nu_\rho\chi^{\mu'}_\rho p_\rho(\y).
\label{Lus}\eeq
As we can see from Formula (\ref{uni-sym}) with $M=G$, the polynomial $\tilde{K}_{\mu\tau}(q)$ coincides with the evaluation of the unipotent character $\calU^\mu$ of type $\mu$ at the unipotent element of type $\tau$. Hence the right hand-side of (\ref{Lus}) is the inner product  of the restrictions of the unipotent characters $\calU^\mu$ and $\calU^\nu$ at unipotent elements. This formula is well-known and is an easy consequence of the orthogonality relations for Green polynomials \cite[Chapter III, (7.10)]{macdonald}

$$
\sum_\lambda \calH_\lambda(q) Q^\lambda_\rho(q) Q^\lambda_\sigma(q)=\delta_{\rho\sigma} y_\rho(q),
$$
with $Q^\lambda_\rho(q)=\sum_\mu\chi^\mu_\rho\tilde{K}_{\mu\lambda}(q)$ and 

\begin{align*}y_\rho(q)&:=q^{-|\rho|}z_\rho\prod_i(1-q^{-\rho_i})^{-1}\\
&=z_\rho(-1)^{\ell(\rho)}p_\rho(\y),
\end{align*}
where $\ell(\rho)$ denotes the length of the partition $\rho$.
\end{proof}

We deduce that
\bigskip

$\left\langle \calE*\bfX_{\overline{C}_1}*\cdots*\bfX_{\overline{C}_k},1_1\right\rangle_{G^F}$
\begin{align*}
&=(q-1)q^{\frac{1}{2}(n^2(2g-2+k)-kn-2\sum_{i=1}^kn(\omega_i))}(-1)^{kn-\sum_{i=1}^kf(\omega_i)}\left\langle\sum_{\alpha\in\bT}C_\alpha^oq^{(1-g)|\alpha|}\left(H_\alpha(q)q^{-n(\alpha)}\right)^{2g-2+k}\prod_{i=1}^k s_\alpha(\x_i),\prod_{i=1}^k s_{\omega'}(\x_i\y)\right\rangle\\
&=(q-1)q^{\frac{1}{2}(n^2(2g-2+k)-kn-2\sum_{i=1}^kn(\omega_i))}(-1)^{r(\omhat)}\left\langle\sum_{\alpha\in\bT}C_\alpha^oq^{(1-g)|\alpha|}\left(H_\alpha(q)q^{-n(\alpha)}\right)^{2g-2+k}\prod_{i=1}^k s_\alpha(\x_i\y),\prod_{i=1}^k s_{\omega'}(\x_i)\right\rangle\\
&=(1-q^{-1})q^{\frac{1}{2}d_\bC}(-1)^{r(\omhat)}\left\langle\Log\,\Omega\left(\sqrt{q},\frac{1}{\sqrt{q}}\right),s_{\omhat'}\right\rangle\\
&=(q-1)^{-1}q^{\frac{1}{2}d_\bC}\H_\omhat\left(\sqrt{q},\frac{1}{\sqrt{q}}\right).
\end{align*}

For the second equality we use that for any two symmetric functions $u$ and $v$ we have $\langle u(\x\y),v(\x)\rangle=\langle u(\x),v(\x\y)\rangle$ (this can be checked in the basis of power symmetric functions). For the third equality we use that \cite[lemma 2.3.8]{HLV}

$$
\Omega(\sqrt{q},1/\sqrt{q})=\sum_\lambda q^{(1-g)|\lambda|}\big(q^{-n(\lambda)}H_\lambda(q)\big)^{2g+k-2}\prod_{i=1}^ks_\lambda(\x_i\y),
$$
and that for any family $\{A_\mu\}_\mu$ of symmetric functions indexed by partitions with $A_\emptyset=1$, we have \cite[Formula (2.3.9)]{HLV}

$$
\Log\,\left(\sum_\lambda A_\lambda T^{|\lambda|}\right)=\sum_\omega C_\omega^o A_\omega T^{|\omega|}.
$$

We thus proved Theorem \ref{convtheo}.

\section{Partial resolutions of character varieties and Weyl group action}\label{parresol}

When $\bC=(C_1,\dots,C_k)$ is a generic tuple of $F$-stable conjugacy classes of $G=\GL_n(\overline{\F}_q)$ with eigenvalues in $\F_q$, Theorem \ref{for32} gives an interpretation of the coefficients of the polynomials $\left\langle\calE*\bfX_{\overline{C}_1}*\cdots*\bfX_{\overline{C}_k}\right\rangle_{G^F}$ in terms of the mixed Hodge numbers of the corresponding complex character variety.  Our motivation for introducing partial resolutions of character varieties is to extend this theorem to the case where  the eigenvalues of $C_1,\dots,C_k$ are not necessarily in $\F_q$. The approach follows closely what we already did in the additive case \cite{letellier4} although the situation is slightly different as character varieties are not cohomologically pure unlike their additive version. We will therefore skip some details.

\subsection{Partial resolutions of conjugacy classes}\label{conj}

Let $P$ be a parabolic subgroup of $\GL_n(\K)$ and let $L$ be a Levi factor of $P$. Let $\Sigma$ be of the form $\Sigma=\sigma D$ where $\sigma$ lives in the center $Z_L$ of $L$ and $D$ is a unipotent conjugacy class of $L$. Denote by $U_P$ the unipotent radical of $P$. Put 

$$\bY_{L,P,\overline{\Sigma}}:=\left\{(x,gP)\in\GL_n\times (\GL_n/P)\,\left|\, g^{-1}x g\in\overline{\Sigma}. U_P\right\}\right.,$$and $\bY_{L,P,\Sigma}$ the open subset of elements $(x,gP)\in\bY_{L,P,\overline{\Sigma}}$ such that $g^{-1}xg\in\Sigma \cdot U_P$.  It is well-known that the image of $\bY_{L,P,\overline{\Sigma}}\rightarrow\GL_n$, $(x,gP)\rightarrow x$ is the Zariski closure $\overline{C}$ of some conjugacy class $C$ of $\GL_n$. The map $p:\bY_{L,P,\overline{\Sigma}}\rightarrow\overline{C}$ is a \emph{partial resolution} of $\overline{C}$. The variety $\bY_{L,P,\Sigma}$ is a dense non-singular irreducible open subset of $\bY_{L,P,\overline{\Sigma}}$. If $D=\{1\}$, then  $p$ is a resolution. 

Define 

$$
W_{\GL_n}(L,\Sigma):=\left\{h\in N_{\GL_n}(L)\,\left|\,h\Sigma h^{-1}=\Sigma\right\}\right./L.
$$
Note that $W_{\GL_n}(L,\Sigma)\subset W_{\GL_n}(L,D)$. The  group $W_{\GL_n}(L,\Sigma)$ acts on the perverse sheaf $p_*\left(\pIC {\bY_{L,P,\oSigma}}\right)$ (see \cite[\S 6.4]{letellier4} and the references therein for more details). More precisely we have 

\beq
p_*\left(\pIC {\bY_{L,P,\oSigma}}\right)=\bigoplus_{C'\unlhd \,C}A_{C'}\otimes \pIC {\overline{C}{'}},
\label{icconj}\eeq
where the direct sum is over the conjugacy classes contained in $\overline{C}$ and where $A_{C'}$ is some representation space (in general not irreducible) of $W_{\GL_n}(L,\Sigma)$. Although it depends on ${\bf L,\Sigma}$  we omitt ${\bf L,\Sigma}$ from the notation  $A_{C'}$ for simplicity  (see \cite[\S 6.4]{letellier4} for the explicit description  of the $W_{\GL_n}(L,\Sigma)$-modules $A_{C'}$). Taking the hypercohomology in (\ref{icconj}) we find that 

$$
IH_c^i(\bY_{L,P,\oSigma},\kappa)=\bigoplus_{C'\unlhd\, C}A_{C'}\otimes IH_c^{i+\delta_{C'}-\delta_C}(\overline{C}{'},\kappa),
$$
where $\delta_C$ means the dimension of $C$. The group $W_{\GL_n}(L,\Sigma)$ acts on  $IH_c^i(\bY_{L,P,\oSigma},\kappa)$ so that  for all $w\in W_{\GL_n}(L,\Sigma)$, we have

$$
{\rm Tr}\,\left(w,IH_c^i(\bY_{L,P,\oSigma},\kappa)\right)=\sum_{C'} {\rm Tr}\,\left(w, A_{C'}\right) {\rm dim}\, IH_c^{i+\delta_{C'}-\delta_C}(\overline{C}{'},\kappa).
$$

The $\GL_n(\K)$-conjugacy classes  of the pairs  $(L,D)$ as above are naturally parametrized by the elements of $\tT_n$. If $x=\sigma u$ is the Jordan decomposition of $x$ such that the centralizer of $\sigma$ in $\GL_n(\K)$  is $L$ and $u\in D$, then the type of the $\GL_n(\K)$-conjugacy class of $x$ (as defined in \S \ref{ne})  is the element of $\omega\in\tT_n$ corresponding to the conjugacy class of $(L,D)$ (we call also  $\omega$ the type of $(L,D)$).  Let $\omega\in\tT_n$ be the type of $(L,D)$.

Write $\omega\in\tT$ in the form 

$$
\omega=\underbrace{\alpha^1\cdots\alpha^1}_{a_1}\cdots\underbrace{\alpha^s\cdots\alpha^s}_{a_s}
$$
with $\alpha^i\neq\alpha^j$ if $i\neq j$.

Then $W_{\GL_n}(L,D)$ decomposes as a product of symmetric groups

$$
W_{\GL_n}(L,D)\simeq \prod_{i=1}^s\calS_{a_i}.
$$
Since the conjugacy classes of $\mathfrak{S}_n$ are parameterized by the partitions of $n$ we have a natural parameterization of the elements of the fiber $m^{-1}(\omega)\subset\bT$, where $m:\bT\rightarrow\tT$ is the map defined in \S \ref{symmetric},  by the conjugacy classes of $W_{\GL_n}(L,D)$.

Call the \emph{type} of the triple $(L,D,w)$, with $w\in W_{\GL_n}(L,D)$,  the element of $m^{-1}(\omega)$ corresponding to the conjugacy class of $w$.

\subsection{Partial resolutions of character varieties and Weyl group action}

We now consider $k$ triples $\{(L_i,P_i,\Sigma_i)\}_{i=1,\dots,k}$ as in  \S \ref{conj} with $\Sigma_i=\sigma_i\,D_i$ and denote by $\bbU_{\bfL,\bfP,\obfSigma}$ the subspace of elements $((a_j,b_j)_j,(x_i,g_iP_i)_i)$ of $\bY_{\bf L,P,\overline{\Sigma}}:=(\GL_n)^{2g}\times \prod_{i=1}^k\bY_{L_i,P_i,\overline{\Sigma}_i}$ which verify the equation

$$
(a_1,b_1)\cdots(a_g,b_g)x_1\cdots x_k=I,
$$
and denote by $\bbU_{\bfL,\bfP,\bfSigma}$ the subspace  $\bbU_{\bfL,\bfP,\obfSigma}\cap\left((\GL_n)^{2g}\times \prod_{i=1}^k\bY_{L_i,P_i,\Sigma_i}\right)$. For each $i=1,\dots,k$, the projection $p_i:\bY_{L_i,P_i,\oSigma_i}\rightarrow\GL_n$ on the first coordinate is the Zariski closure of a conjugacy class which we denote by $C_i$. The projection on the first $2g+k$ coordinates gives a surjection $p:\bbU_{\bfL,\bfP,\obfSigma}\rightarrow \calU_{\overline{\bC}}$ where $\bC=(C_1,\dots,C_k)$. The  diagonal action of $\GL_n$ on $\bbU_{\bfL,\bfP,\obfSigma}$ by conjugating the first $2g+k$ coordinates and by left multiplication on the last $k$ coordinates makes this map  $\GL_n$-equivariant.

We have the following theorem (whose proof is exactly the same as for Theorem \ref{nonsing}).

\begin{theorem}Assume that $\bC$ is generic and $\M_{\overline{\bC}}\neq\emptyset$. Then the geometric quotient $\bbU_{\bfL,\bfP,\overline{\bfSigma}}\rightarrow \bM_{\bfL,\bfP,\overline{\bfSigma}}$ exists and is a principal $\PGL_n$-bundle in the \'etale topology. Moreover it makes the following diagram Cartesian

$$
\xymatrix{\bbU_{\bfL,\bfP,\obfSigma}\ar[rr]^p\ar[d]&&\calU_{\overline{\bC}}\ar[d]\\
\bM_{\bfL,\bfP,\obfSigma}\ar[rr]^{p/_{\PGL_n}}&&\M_{\overline{\bC}}}
$$
\label{git}\end{theorem}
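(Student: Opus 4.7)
The plan is to apply Mumford's descent criterion \cite[Proposition 7.1]{mumford} to the $\PGL_n$-equivariant morphism $p:\bbU_{\bfL,\bfP,\obfSigma}\to\calU_{\overline{\bC}}$ together with the principal $\PGL_n$-bundle $\calU_{\overline{\bC}}\to\M_{\overline{\bC}}$ provided by Theorem \ref{geoth}(i). This is exactly the same strategy used for Theorem \ref{nonsing}; the only change is that the single elements $\sigma_i\in Z_{L_i}$ are replaced by the (closures of) the subsets $\Sigma_i=\sigma_i D_i\subset L_i$, which makes no difference to the formal structure of the argument.

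First I would verify that $p$ is projective. By construction each $\bY_{L_i,P_i,\oSigma_i}$ is cut out inside the associated bundle $\GL_n\times^{P_i}(\overline{\Sigma}_i\cdot U_{P_i})\to\GL_n/P_i$, so the first-coordinate projection $\bY_{L_i,P_i,\oSigma_i}\to\overline{C}_i$ is projective. Taking products gives that $\bY_{\bfL,\bfP,\obfSigma}\to\calY_{\overline{\bC}}$ is projective, and $p$ is the base change of this map along the closed immersion $\calU_{\overline{\bC}}\hookrightarrow\calY_{\overline{\bC}}$, hence projective. Equivariance is clear from the definitions of the diagonal action (conjugation on the $(a_j,b_j,x_i)$ coordinates together with left multiplication on the $g_iP_i$ coordinates).

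Next I would verify the hypotheses of Mumford's proposition for the $\PGL_n$-action on $\bbU_{\bfL,\bfP,\obfSigma}$, namely set-theoretic freeness and separability of orbits. Both reduce instantly to the corresponding properties of the $\PGL_n$-action on $\calU_{\overline{\bC}}$ via the equivariant map $p$: a stabilizer upstairs injects into its image downstairs, which is trivial by Theorem \ref{geoth}(i), and similarly the orbit map upstairs factors through the separable orbit map downstairs. Mumford's proposition then produces the geometric quotient $\bbU_{\bfL,\bfP,\obfSigma}\to\bM_{\bfL,\bfP,\obfSigma}$, together with a projective morphism $\bM_{\bfL,\bfP,\obfSigma}\to\M_{\overline{\bC}}$, and gives automatically that the square in the statement is Cartesian. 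The étale local triviality of $\bbU_{\bfL,\bfP,\obfSigma}\to\bM_{\bfL,\bfP,\obfSigma}$ as a $\PGL_n$-bundle is then obtained by pulling back an étale trivialization of $\calU_{\overline{\bC}}\to\M_{\overline{\bC}}$ through this Cartesian square.

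The main (and really only) nontrivial step is the freeness and separability of the $\PGL_n$-action on $\bbU_{\bfL,\bfP,\obfSigma}$, and even this is essentially formal given Theorem \ref{geoth}(i); all other verifications (projectivity of $p$, $\PGL_n$-equivariance, behaviour under base change) are bookkeeping. This is why the argument can be written in a single line referring to the proof of Theorem \ref{nonsing}, as is done in the statement.
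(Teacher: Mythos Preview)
Your proposal is correct and follows exactly the paper's approach: the paper simply remarks that the proof is identical to that of Theorem \ref{nonsing}, which in turn invokes Mumford's \cite[Proposition 7.1]{mumford} using that $p$ is projective and that $\calU_{\overline{\bC}}\to\M_{\overline{\bC}}$ is a principal $\PGL_n$-bundle (Theorem \ref{geoth}(i)). Your write-up supplies a bit more detail (projectivity via the associated bundle description, freeness and separability by reduction along $p$), but the structure of the argument is the same.
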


From now on we assume that $\bC$ is generic and that $\M_{\overline{\bC}}\neq\emptyset$. Let $\bM_{\bfL,\bfP,\bfSigma}$ be the image of $\bbU_{\bfL,\bfP,\bfSigma}$ in $\bM_{\bfL,\bfP,\obfSigma}$.

\begin{theorem}  $\bM_{\bfL,\bfP,\bfSigma}$ is an irreducible non-singular dense open subset of $\bM_{\bfL,\bfP,\obfSigma}$ of dimension $d_\bC$. Moreover, the restriction of $\IC {\bY_{\bf L,P,\overline{\Sigma}}}$ to $\bbU_{\bf L,P,\overline{\Sigma}}$ is isomorphic to $\IC {\bbU_{\bf L,P,\overline{\Sigma}}}$.\label{irrchar}\end{theorem}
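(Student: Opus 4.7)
The plan is to treat the geometric assertion and the intersection cohomology assertion separately, following the templates of Theorem \ref{geoth} and Theorem \ref{restheo} respectively.

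For the geometric assertion, the starting observation is that each $\bY_{L_i,P_i,\Sigma_i}$ is smooth and irreducible of dimension $\dim C_i$. Indeed the projection $\bY_{L_i,P_i,\Sigma_i}\to \GL_n/P_i$ is a locally trivial bundle with smooth irreducible fiber $\Sigma_i\cdot U_{P_i}\simeq D_i\times U_{P_i}$ (the isomorphism coming from the Levi decomposition $P_i=L_i U_{P_i}$), and a dimension count gives $2\dim U_{P_i}+\dim D_i=\dim C_i$. Consequently the ambient variety $(\GL_n)^{2g}\times\prod_i\bY_{L_i,P_i,\Sigma_i}$ is smooth of dimension $2gn^2+\sum_i\dim C_i$, and the relation map $((a_j,b_j),(x_i,g_iP_i))\mapsto\prod_j(a_j,b_j)\prod_ix_i$ into $\SL_n$ is a submersion along its fiber over $1$ by the standard genericity argument that underlies the proof of Theorem \ref{geoth}(i) (see \cite[\S 2.1]{HLV}). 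Hence $\bbU_{\bfL,\bfP,\bfSigma}$ is smooth of dimension $d_\bC+n^2-1$, and via the principal $\PGL_n$-bundle of Theorem \ref{git} I conclude that $\bM_{\bfL,\bfP,\bfSigma}$ is smooth of dimension $d_\bC$ and open in $\bM_{\bfL,\bfP,\obfSigma}$ (openness because $\bbU_{\bfL,\bfP,\bfSigma}$ is a $\PGL_n$-saturated open subset of $\bbU_{\bfL,\bfP,\obfSigma}$).

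Density and irreducibility form the next step. The stratification $\bY_{L_i,P_i,\oSigma_i}=\bigsqcup_{D_i'\subset\overline{D}_i}\bY_{L_i,P_i,\sigma_i D_i'}$ pulls back to a stratification $\bbU_{\bfL,\bfP,\obfSigma}=\bigsqcup_{\bfSigma'\unlhd\bfSigma}\bbU_{\bfL,\bfP,\bfSigma'}$. Genericity is hereditary (as observed in \S\ref{sec-charvar}), so the smoothness/dimension argument above applies to every stratum and yields $\dim\bbU_{\bfL,\bfP,\bfSigma'}=d_{\bC'}+n^2-1<d_\bC+n^2-1$ whenever $\bfSigma'\lhd\bfSigma$ strictly. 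This makes $\bbU_{\bfL,\bfP,\bfSigma}$ the unique top-dimensional stratum, hence dense in $\bbU_{\bfL,\bfP,\obfSigma}$, and descent gives density of $\bM_{\bfL,\bfP,\bfSigma}$ in $\bM_{\bfL,\bfP,\obfSigma}$. For irreducibility I would imitate the proof of Corollary \ref{maincoro}: extend Theorem \ref{counting} to the partial resolution case by adapting the Deligne--Lusztig/Fourier manipulations of \S\ref{proofcounting1} (with $1_{\sigma_i}$ replaced by the characteristic function of $\Sigma_i$ in $L_i^F$) to obtain $\#\bM_{\bfL,\bfP,\bfSigma}(\F_q)=\#\M_\bfS(\F_q)$ for a suitably chosen generic tuple $\bfS$ of semisimple conjugacy classes, and then apply Deligne's theorem on the leading $q$-term of a polynomial count to transport irreducibility from $\M_\bfS$ (proved in \cite{HLV2}) to the smooth equidimensional variety $\bM_{\bfL,\bfP,\bfSigma}$. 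The extension of Theorem \ref{counting} to the partial resolution setting is the main technical obstacle.

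For the second assertion, I would apply Proposition \ref{proprest} with $X=\bY_{\bfL,\bfP,\obfSigma}$ and $Z=\bbU_{\bfL,\bfP,\obfSigma}$, equipped with the stratifications $\{\bY_{\bfL,\bfP,\bfSigma'}\}_{\bfSigma'\unlhd\bfSigma}$ and $\{\bbU_{\bfL,\bfP,\bfSigma'}\}_{\bfSigma'\unlhd\bfSigma}$. The codimension compatibility of condition (i) is immediate from the matching dimension formulas above. For conditions (ii) and (iii), I would choose, for each $i$, a resolution $\bY_{L_i',P_i',\sigma_i'}\to\bY_{L_i,P_i,\oSigma_i}$ of the type constructed in \S\ref{resolchar} (with $L_i'\subseteq L_i$ and $\sigma_i'\in Z_{L_i'}$ arranged so that the composite resolves $\overline{C}_i$ and factors through $\bY_{L_i,P_i,\oSigma_i}$); the product gives a resolution of $\bY_{\bfL,\bfP,\obfSigma}$, and Cartesian pullback along $\bbU_{\bfL,\bfP,\obfSigma}\hookrightarrow\bY_{\bfL,\bfP,\obfSigma}$ produces the corresponding resolution of $\bbU_{\bfL,\bfP,\obfSigma}$ fitting into a commutative square of the form (\ref{cartdiag}). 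Semi-smallness of the top map is Lusztig's classical theorem (reviewed in \cite[\S 4.3.4]{letellier4}), and it passes to the bottom map by Cartesianity together with the matching codimensions from the previous paragraph; local constancy of the cohomology sheaves of the direct image on each stratum is likewise standard for these parabolic induction maps. Proposition \ref{proprest} then delivers the required isomorphism $i^*(\IC{\bY_{\bfL,\bfP,\obfSigma}})\simeq\IC{\bbU_{\bfL,\bfP,\obfSigma}}$.
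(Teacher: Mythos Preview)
Your argument for smoothness, dimension, openness, and density of $\bM_{\bfL,\bfP,\bfSigma}$ is fine and matches what the paper leaves implicit. Your treatment of the intersection cohomology assertion is also essentially the paper's: Proposition \ref{proprest} applied via a factored resolution $\bM_{\hat{\bfL},\hat{\bfP},\sigma}\to\bM_{\bfL,\bfP,\obfSigma}$ built from refinements $\hat{L}_i\subset L_i$.

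The one place you take a genuinely different route is irreducibility. You propose to extend Theorem \ref{counting} to the partial resolution setting (replacing $1_{\sigma_i}$ by the characteristic function of $\Sigma_i^F$) and then repeat the Deligne leading-term argument of Corollary \ref{maincoro}. This would work, but as you yourself note it is a nontrivial technical extension. The paper avoids it completely: the very resolution $\bM_{\hat{\bfL},\hat{\bfP},\sigma}\to\bM_{\bfL,\bfP,\obfSigma}$ that you construct for the second assertion is of the type already covered by \S\ref{resolchar}, so Corollary \ref{maincoro} applies directly to $\bM_{\hat{\bfL},\hat{\bfP},\sigma}$ and gives its irreducibility. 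Since this map is surjective, $\bM_{\bfL,\bfP,\obfSigma}$ is irreducible, and hence so is its dense open subset $\bM_{\bfL,\bfP,\bfSigma}$. In short, you already had the tool in hand for part two and could have used it for part one; there is no need to redo the counting argument in the partial resolution setting.
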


\begin{proof} Consider the resolution $\hat{\pi}:\bM_{\bf \hat{L},\hat{P},\sigma}\rightarrow \bM_{\bf L,P,\overline{\Sigma}}$, $(X,g\hat{P})\mapsto (X,gP)$ where $\hat{L}_i$ and $\hat{P}_i$ are defined from $(L_i,P_i,D_i)$ as follows. We choose a parabolic $\tilde{P}_i$ of $L_i$ and a Levi factor $\hat{L}_i$ of $\tilde{P}_i$ such that the projection $\bY_{\hat{L}_i, \tilde{P}_i,\sigma_i}\rightarrow L_i$, $(x,l\tilde{P}_i)\mapsto x$ is a resolution of $\overline{\sigma_iD}_i$. Put  $\hat{P}_i:=\tilde{P}\cdot U_P$. It is the unique parabolic subgroup of $\GL_n$ having $\hat{L}_i$ as a Levi factor and contained in $P$. Then the map $\bM_{\bf \hat{L},\hat{P},\sigma}\rightarrow \bM_{\bf L,P,\overline{\Sigma}}$, $(xg\hat{P})\mapsto (x,gP)$ is surjective. By Corollary \ref{maincoro}, the variety $\bM_{\bf \hat{L},\hat{P},\sigma}$ is irreducible from which we get that $\bM_{\bf L,P,\overline{\Sigma}}$ is also irreducible. The proof of the second assertion goes exactly along the same lines as for the proof of Theorem \ref{restheo} using the resolution $\bM_{\bf \hat{L},\hat{P},\sigma}\rightarrow \bM_{\bf L,P,\overline{\Sigma}}$.
\end{proof}

Recall the Cartesian diagram

\beq
\xymatrix{\bY_{\bf L,P,\overline{\Sigma}}\ar[rr]^p&&\calY_{\overline{\bC}}\\
\bbU_{\bfL,\bfP,\obfSigma}\ar[u]\ar[rr]^p&&\calU_{\overline{\bC}}\ar[u]}
\label{cart}\eeq
where the vertical arrows are inclusions and the horizontal arrows projections on the first $2g+k$ coordinates, is Cartesian.  

Applying Theorem \ref{irrchar}, Formula (\ref{icconj}) and the proper base change theorem to the diagram (\ref{cart}) we find the following proposition (taking into account Tate twist).

\begin{proposition}If $\K=\C$ or if $\K=\overline{\F}_q$ and $({\bf L,P,\Sigma})$ is defined over $\F_q$, then 
$$
p_*\left(\pIC {\bbU_{\bfL,\bfP,\obfSigma}}\right)= \bigoplus_{\bC'\unlhd\bC}A_{\bC'}\otimes \pIC {\calU_{\overline{\bC}{'}}}(r_{\bC'}).
$$
where $A_{\bC'}=A_{C_1'}\otimes\cdots\otimes A_{C_k'}$ if $\bC'=(C_1',\dots,C_k')$, $r_{\bC'}=(d_{\bC'}-d_\bC)/2$ and where we use the same notation $\pIC {}$ for its mixed Hodge module version if $\K=\C$.
\end{proposition}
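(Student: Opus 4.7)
The plan is to derive the proposition by combining Formula (\ref{icconj}), applied slot-by-slot to the individual partial resolutions $p_i:\bY_{L_i,P_i,\overline{\Sigma}_i}\to\overline{C}_i$, with proper base change for the Cartesian square (\ref{cart}), and then using Theorems \ref{restheo} and \ref{irrchar} to identify the restrictions of the relevant IC complexes. First I would establish the analogous decomposition on the ambient variety $\calY_{\overline{\bC}}$: because $\bY_{\bf L,P,\obfSigma}=(\GL_n)^{2g}\times\prod_{i=1}^k\bY_{L_i,P_i,\overline{\Sigma}_i}$ and $p$ is the identity on the first factor times the maps $p_i$, and because IC commutes with external products (with $\pIC{(\GL_n)^{2g}}$ a shifted constant sheaf since $(\GL_n)^{2g}$ is smooth), applying (\ref{icconj}) in each slot and tensoring gives
$$
p_*\bigl(\pIC{\bY_{\bf L,P,\obfSigma}}\bigr)\;\simeq\;\bigoplus_{\bC'\unlhd\bC}A_{\bC'}\otimes\pIC{\calY_{\overline{\bC}'}},
$$
with $A_{\bC'}=A_{C_1'}\otimes\cdots\otimes A_{C_k'}$ and each $\pIC{\calY_{\overline{\bC}'}}$ viewed on $\calY_{\overline{\bC}}$ by extension by zero.

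Next I would pull back to $\calU_{\overline{\bC}}$ via the closed inclusion $i$. Since $p$ is proper and the square (\ref{cart}) is Cartesian, proper base change gives $i^*p_*=p_*\tilde{i}^*$, where $\tilde i\colon\bbU_{\bf L,P,\obfSigma}\hookrightarrow\bY_{\bf L,P,\obfSigma}$. On the left, Theorem \ref{irrchar} identifies $\tilde{i}^*\IC{\bY_{\bf L,P,\obfSigma}}$ with $\IC{\bbU_{\bf L,P,\obfSigma}}$. On the right, for each $\bC'\unlhd\bC$ the sub-tuple $\bC'$ remains generic (by the easy lemma recalled below Definition \ref{gendef}), so Theorem \ref{restheo} applied to $\bC'$ in place of $\bC$ identifies $i^*\IC{\calY_{\overline{\bC}'}}$ with $\IC{\calU_{\overline{\bC}'}}$. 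A direct dimension count shows that $\dim\calY_{\overline{\bC}'}-\dim\calU_{\overline{\bC}'}=n^2-1$ independently of $\bC'$, so the cohomological shifts between the $\IC$ and the perverse $\pIC$ normalizations are uniform across all summands and cancel, yielding the claimed decomposition at the level of underlying complexes.

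Finally the Tate twist $(r_{\bC'})=\bigl((d_{\bC'}-d_\bC)/2\bigr)$ records the weight-theoretic refinement. In the mixed setting (mixed $\ell$-adic sheaves over $\overline{\F}_q$, or mixed Hodge modules over $\C$ via the lift of the decomposition theorem from \cite{Gottsche-Soergel}), $\pIC{\calU_{\overline{\bC}'}}$ is normalized to be pure of weight $0$; the twist $(r_{\bC'})$ encodes the weight adjustment coming from the difference of dimensions between $\calY_{\overline{\bC}'}$ (on which the summands of (\ref{icconj}) naturally live) and the ambient $\calY_{\overline{\bC}}$. The main obstacle I expect is precisely the careful bookkeeping of these shifts and Tate twists, together with checking that the $W_{(\GL_n)^k}(\bfL,\bfSigma)$-equivariant structures on the multiplicity spaces $A_{\bC'}$ assembled slot by slot from the factors $A_{C_i'}$ are compatible with the base change; the proper base change step itself is essentially formal.
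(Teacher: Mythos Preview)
Your proposal is correct and follows essentially the same route as the paper, which simply states that the proposition follows by ``applying Theorem \ref{irrchar}, Formula (\ref{icconj}) and the proper base change theorem to the diagram (\ref{cart}) (taking into account Tate twist).'' You have in fact spelled out one ingredient the paper's one-line justification leaves implicit, namely the use of Theorem \ref{restheo} applied to each $\bC'\unlhd\bC$ to identify $i^*\IC{\calY_{\overline{\bC}'}}$ with $\IC{\calU_{\overline{\bC}'}}$ on the right-hand side.
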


  Using that the diagram in Theorem \ref{git} is Cartesian we deduce (see \cite[Proposition 7.1.1]{letellier4} for more details) the following theorem.

\begin{theorem}Assume  that  $\K=\C$ or that $\K=\overline{\F}_q$ and $({\bf L,P,\Sigma})$ is defined over $\F_q$. Then 
\beq
(p/_{\PGL_n})_*\left(\pIC {\bM_{\bfL,\bfP,\obfSigma}}\right)= \bigoplus_{\bC'\unlhd\bC}A_{\bC'}\otimes \pIC {\M_{\overline{\bC}{'}}}(r_{\bC'}).
\label{decomp}\eeq
\label{theoweight}\end{theorem}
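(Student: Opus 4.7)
The plan is to descend the decomposition
$$p_*\left(\pIC{\bbU_{\bfL,\bfP,\obfSigma}}\right)=\bigoplus_{\bC'\unlhd\bC}A_{\bC'}\otimes\pIC{\calU_{\overline{\bC}{'}}}(r_{\bC'})$$
of the preceding proposition along the principal $\PGL_n$-bundles of Theorem \ref{git}. Write $q:\bbU_{\bfL,\bfP,\obfSigma}\to\bM_{\bfL,\bfP,\obfSigma}$ and $q':\calU_{\overline{\bC}}\to\M_{\overline{\bC}}$ for the quotient maps, and similarly $q'_{\bC'}$ for each stratum; by Theorem \ref{git} each is a principal $\PGL_n$-bundle in the \'etale topology, and the induced square relating $p$ and $p/_{\PGL_n}$ (and its analogue with $\bC'$) is Cartesian.

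First I would invoke the standard fact that a principal $H$-bundle $f:X\to Y$ is smooth surjective with connected fibres of relative dimension $d=\dim H$, so that $f^*[d](d/2)$ is exact for the perverse $t$-structure, sends $\pIC{Y}$ to $\pIC{X}$, and is fully faithful; its essential image is the $H$-equivariant perverse sheaves. Applying this to $q$ and to each $q'_{\bC'}$, and combining with proper base change along the Cartesian square (valid since $p$ and $p/_{\PGL_n}$ are proper), one obtains a natural isomorphism
$$q'^*(p/_{\PGL_n})_*\pIC{\bM_{\bfL,\bfP,\obfSigma}}\,[d](d/2)\;\simeq\;p_*\pIC{\bbU_{\bfL,\bfP,\obfSigma}}.$$
Using the preceding proposition on the right-hand side and rewriting each $\pIC{\calU_{\overline{\bC}{'}}}$ as $q'^*_{\bC'}\pIC{\M_{\overline{\bC}{'}}}[d](d/2)$, we identify both sides as the $q'^*[d](d/2)$-pullback of perverse sheaves on $\M_{\overline{\bC}}$. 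Full faithfulness of $q'^*$ on perverse sheaves then yields the desired decomposition of $(p/_{\PGL_n})_*\pIC{\bM_{\bfL,\bfP,\obfSigma}}$ in $\Perv(\M_{\overline{\bC}})$, with the same multiplicity spaces $A_{\bC'}$ and Tate twists $r_{\bC'}$.

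To finish, I would check that the $W_{(\GL_n)^k}(\bfL,\bfSigma)$-action on the decomposition is preserved by descent: this action is already defined on $p_*\pIC{\bbU_{\bfL,\bfP,\obfSigma}}$ commuting with $q_*$ (cf.\ the construction recalled in \S\ref{conj} and the reference to \cite[Proposition 7.1.1]{letellier4}), hence passes unambiguously to $(p/_{\PGL_n})_*\pIC{\bM_{\bfL,\bfP,\obfSigma}}$. In the case $\K=\C$ the same argument runs in the category of mixed Hodge modules, since Saito's six-functor formalism satisfies proper base change and smooth pullback along a principal bundle descends perverse Hodge modules as well.

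The main obstacle is bookkeeping: one must track carefully the perverse shifts and Tate twists under $q^*$, $p_*$ and the base-change isomorphism, and verify that the $A_{\bC'}$ appearing on the nose in the $\bbU$-level decomposition are precisely the spaces that appear after descent (with no extra twist from the bundle). This is essentially the same computation as in the resolution case (compare the discussion after Theorem \ref{restheo} and \cite[Proposition 7.1.1]{letellier4}), so no new conceptual input is required beyond the principal-bundle descent and proper base change.
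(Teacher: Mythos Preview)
Your proposal is correct and follows exactly the route the paper takes: the paper simply says that the result follows from the Cartesian diagram of Theorem \ref{git} and the preceding proposition, referring to \cite[Proposition 7.1.1]{letellier4} for the details of the descent along the principal $\PGL_n$-bundle. Your write-up makes explicit precisely those details (proper base change, smooth pullback along a principal bundle, full faithfulness of $q'^*$ on perverse sheaves/mixed Hodge modules), so there is nothing to add.
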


When $\K=\C$, taking then the hypercohomology in (\ref{decomp}) gives an isomorphism of mixed Hodge structures

\beq
IH_c^i\left(\bM_{\bfL,\bfP,\obfSigma},\C\right)\simeq \bigoplus _{\bC'\unlhd\bC}A_{\bC'}\otimes\left(IH_c^{i+2r_{\bC'}}(\M_{\overline{\bC}{'}},\C)\otimes \C(r_{\bC{'}})\right).
\label{decompHodge}\eeq

At the level of mixed Poincar\'e polynomials we deduce

\beq
IH_c\left(\bM_{\bfL,\bfP,\obfSigma};q,t\right)=\sum_{\bC'\unlhd\,\bC}\left({\rm dim}\,A_{\bC'}\right)\,(qt^2)^{-r_{\bC'}}\, IH_c\left(\M_{\overline{\bC}{'}};q,t\right).
\label{decomppoly}\eeq

Put

$$
W_{\GL_n}(\bfL,\bfSigma):=\prod_{i=1}^kW_{\GL_n}(L_i,\Sigma_i).
$$
Then $W_{\GL_n}(\bfL,\bfSigma)$ acts on $A_{\bC'}$ for all $\bC'\unlhd \bC$. We thus  get an action of $W_{\GL_n}(\bfL,\bfSigma)$ on $IH_c^k\left(\bM_{\bfL,\bfP,\obfSigma},\kappa\right)$ which preserves the weight filtration. Define the $\w$-twisted mixed Poincar\'e polynomial as
$$
IH_c^\w\left(\bM_{\bfL,\bfP,\obfSigma};q,t\right):=\sum_{m,i}{\rm Tr}\,\left(\w,{\rm Gr}_mIH_c^i(\bM_{\bf L,P,\obfSigma})\right)q^{m/2}t^k.
$$

We extend the definition of type of triple $(L,D,w)$ in \S \ref{conj} to multi-triples  $(\bfL,\bfD,\w):=(L_i,D_i,W_i)_i$ in the obvious way (the type of $(\bfL,\bfD,\w)$ is now an element in $\oT$). Note also that $W_{\GL_n}(\bfL,\bfSigma)\subset W_{\GL_n}(\bfL,\bfD)$.

\begin{conjecture} For any $\w\in W_{\GL_n}(\bfL,\bfSigma)$ we have

$$
IH_c^\w(\bM_{\bfL,\bfP,\obfSigma}; q,t)=(t\sqrt q)^{d_\bC}\;
\H_\omhat\left(-{\frac 1{\sqrt q},t\sqrt q }
\right)
$$
where $\omhat\in\oT$ is the type of $(\bfL,\bfD,\w)$. In particular we have 
$$
PP_c^\w(\bM_{\bfL,\bfP,\obfSigma};q)= q^{d_\bC/2}\H_\omhat(0,\sqrt q),
$$
where  $PP_c^\w(\bM_{\bfL,\bfP,\obfSigma};q):=\sum_i{\rm Tr}\,\left(\w,{\rm Gr}_{2i}IH_c^{2i}(\bM_{\bfL,\bfP,\obfSigma})\right)q^i$.
\label{main2}\end{conjecture}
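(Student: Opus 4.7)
The plan is to deduce Conjecture \ref{main2} from Conjecture \ref{mainconj} (the untwisted mixed Hodge conjecture) via a combinatorial identity in Macdonald polynomial calculus, realizing the equivalence announced for Conjectures \ref{main-twisted} and \ref{mainconj-intro} in the introduction.

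The starting point is a $\w$-equivariant refinement of (\ref{decomppoly}). The decomposition
\[
(p/_{\PGL_n})_*\!\left(\pIC{\bM_{\bfL,\bfP,\obfSigma}}\right)=\bigoplus_{\bC'\unlhd\bC}A_{\bC'}\otimes\pIC{\M_{\overline{\bC}{'}}}(r_{\bC'})
\]
of Theorem \ref{theoweight} is equivariant for the natural $W_{\GL_n}(\bfL,\bfSigma)$-action, which acts only on the multiplicity factors $A_{\bC'}$. Taking compactly supported hypercohomology, tracking weights (which are pure on each isotypic summand because the twist is by a Tate object), and taking traces yields the identity
\[
IH_c^\w\!\left(\bM_{\bfL,\bfP,\obfSigma};q,t\right)=\sum_{\bC'\unlhd\bC}\Tr(\w,A_{\bC'})\,(qt^2)^{-r_{\bC'}}\,IH_c\!\left(\M_{\overline{\bC}{'}};q,t\right).
\]
This step is unconditional and mirrors the derivation of (\ref{decomppoly}).

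Next, I would substitute Conjecture \ref{mainconj} into each stratum, giving $IH_c(\M_{\overline{\bC}{'}};q,t)=(t\sqrt q)^{d_{\bC'}}\H_{\iota^k(\omhat_{\bC'})}(-1/\sqrt q,t\sqrt q)$ with $\omhat_{\bC'}\in\otT$ the type of $\bC'$. Because $r_{\bC'}=(d_{\bC'}-d_\bC)/2$, the prefactor $(qt^2)^{-r_{\bC'}}(t\sqrt q)^{d_{\bC'}}$ collapses to $(t\sqrt q)^{d_\bC}$, and Conjecture \ref{main2} reduces to the purely combinatorial identity
\[
\sum_{\bC'\unlhd\bC}\Tr(\w,A_{\bC'})\,\H_{\iota^k(\omhat_{\bC'})}(z,w)=\H_\omhat(z,w),
\]
where $\omhat\in\oT$ is the type of $(\bfL,\bfD,\w)$ and $(z,w)=(-1/\sqrt q,t\sqrt q)$. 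The remaining task is to prove this identity for all pairs $(\bC',\w)$.

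The combinatorial identity would be established through a refinement of Proposition \ref{propmagic} and Lusztig's generalized Springer correspondence. The representations $A_{C'}$ of $W_{\GL_n}(L,\Sigma)$ are known in type $A$ to decompose via branching coefficients of Schur functions, and in the Frobenius-counting picture the traces $\Tr(\w,A_{C'})$ arise as specializations of values of $R_M^G(\calA^M)$ on mixed conjugacy classes, which is exactly how Theorem \ref{thHLV} produces Hall--Littlewood / Kostka--Foulkes data. One expects the expansion
\[
s_{\omhat'}(\x\y)\;=\;\sum_{\tauhat\unlhd\omhat}(-1)^{f(\omhat)-f(\tauhat)}\Tr(\w,A_{\bC'_\tauhat})\,\calH_{\tauhat}(q)\,\tilde H_{\tauhat}(\x;q),
\]
generalizing the trivial-$\w$ case of Proposition \ref{propmagic} (where $\omhat$ has all degrees equal to $1$ and $\tilde H_\omega$ already factorizes over components). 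Assuming this refined identity and the plethystic definition of $\H$, the sum $\sum_{\bC'}\Tr(\w,A_{\bC'})\H_{\iota^k(\omhat_{\bC'})}$ telescopes to $\H_\omhat$ by the same Cauchy-kernel manipulations used in the proof of Theorem \ref{convtheo} (the passage from the $\prod\tilde H_{\tauhat_i}$-expansion of $\Log\,\Omega$ to the $\prod s_{\omhat'_i}$-expansion).

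The principal obstacle is twofold. The deeper obstruction is Conjecture \ref{mainconj} itself, which drives the whole reduction and for which only the $t\mapsto -1$ specialization is presently known (Theorem \ref{maintheo1}). The second, independent obstacle is the refined plethystic identity above for arbitrary multi-types $\omhat\in\oT$ with nontrivial degrees $d_i>1$: while Proposition \ref{propmagic} handles the trivial-$\w$ case and Theorem \ref{thHLV} provides the Frobenius-side character-theoretic input needed to identify $\Tr(\w,A_{\bC'})$ with the correct Kostka-type specialization, assembling these into a $(q,t)$-identity rather than merely a $q$-identity requires a weight-refined form of the generalized Springer correspondence for $\GL_n$ that tracks the mixed Hodge module structure on each $A_{\bC'}$. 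This is the analogue, in the multiplicative setting, of what is done in \cite[\S 7]{letellier4} for quiver varieties, where cohomological purity simplifies matters; in the character variety setting purity fails and the argument must be conducted at the level of weight-graded pieces throughout.
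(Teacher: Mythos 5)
This statement is a conjecture, and the paper does not prove it: what the paper establishes is Proposition \ref{impli} (Conjecture \ref{main2} is equivalent to Conjecture \ref{mainconj}) and Theorem \ref{maintheo2} (the specialization $(q,t)\mapsto(q,-1)$). Your proposal is, in effect, an attempt at the harder direction of Proposition \ref{impli}, and you correctly flag that the remaining obstruction is Conjecture \ref{mainconj} itself. Your first two steps coincide with the paper's argument: the $\w$-twisted trace identity
$IH_c^\w(\bM_{\bfL,\bfP,\obfSigma};q,t)=\sum_{\bC'\unlhd\bC}\Tr(\w,A_{\bC'})(qt^2)^{-r_{\bC'}}IH_c(\M_{\overline{\bC}{'}};q,t)$
follows from Theorem \ref{theoweight} exactly as you say, and substituting Conjecture \ref{mainconj} reduces everything to
$\sum_{\tauhat\unlhd\omhat^+}\Tr(\w,A_{\tauhat})\,\H_{\iota^k(\tauhat)}(z,w)=\H_\omhat(z,w)$.

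Where you diverge is in how you propose to prove this last identity, and here your route both overshoots and leaves a gap. You recast it as a $q$-dependent expansion of $s_{\omhat'}(\x\y)$ in terms of $\calH_{\tauhat}(q)\tilde H_{\tauhat}(\x;q)$ with coefficients $\Tr(\w,A_{\bC'})$, and you claim this requires a weight-refined generalized Springer correspondence tracking mixed Hodge module structures; you then leave this refined identity unproven. But no such refinement is needed: since $\H_\omhat(z,w)=(-1)^{r(\omhat)}(z^2-1)(1-w^2)\langle\Log\Omega(z,w),s_{\omhat'}\rangle$ and likewise for each $\H_{\iota^k(\tauhat)}$, the $(z,w)$-dependence sits entirely in the common factor $\Log\Omega$, and by linearity of the Hall pairing the required identity is equivalent to the purely Schur-level, $q$- and $t$-free statement
$(-1)^{r(\omhat)}s_{\omhat'}=\sum_{\tauhat\unlhd\omhat^+}\Tr\left(\w\,|\,A_{\tauhat}\right)s_{\tauhat'}$,
which is already known (it is \cite[Formula (7.4.3)]{letellier4}, encoding the decomposition of the multiplicity spaces $A_{C'}$ in type $A$), and this is exactly how the paper concludes. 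So your second ``independent obstacle'' is not an obstacle at all, and invoking Proposition \ref{propmagic}-type Kostka data or a mixed-Hodge-refined Springer theory at this stage is a detour; the only genuine open input is Conjecture \ref{mainconj}, and with the cited Schur identity in place your argument collapses to the paper's proof of Proposition \ref{impli}.
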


\begin{remark}If $\sigma_i\in Z_{\GL_n}$ for all $i=1,\dots,k$, then $W_{\GL_n}(\bfL,\bfSigma)=W_{\GL_n}(\bfL,\bfD)$, and so the above conjecture gives a cohomological interpretation of the functions $\H_\omhat(z,w)$ for all $\omhat\in\bT$.
\end{remark}

\begin{proposition} Conjecture \ref{main2}  is equivalent to Conjecture \ref{mainconj}.

\label{impli}\end{proposition}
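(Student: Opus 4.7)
The plan is to show that both conjectures reduce, via the decomposition of Theorem~\ref{theoweight}, to a single combinatorial identity for the functions $\H_\omhat(z,w)$. Taking $\w$-twisted traces in Formula (\ref{decomppoly}) yields
$$IH_c^\w(\bM_{\bfL,\bfP,\obfSigma};q,t)=\sum_{\bC'\unlhd\bC}\Tr(\w,A_{\bC'})\,(qt^2)^{-r_{\bC'}}\,IH_c(\M_{\overline{\bC'}};q,t),$$
where $\bC$ is the tuple of conjugacy classes associated to $(\bfL,\bfP,\bfSigma)$. Since $r_{\bC'}=(d_{\bC'}-d_\bC)/2$ gives $(qt^2)^{-r_{\bC'}}(t\sqrt q)^{d_{\bC'}}=(t\sqrt q)^{d_\bC}$, substituting Conjecture~\ref{mainconj} in each term on the right recasts the right-hand side as $(t\sqrt q)^{d_\bC}$ times
$$\sum_{\bC'\unlhd\bC}\Tr(\w,A_{\bC'})\,\H_{\iota^k(\omhat_{\bC'})}\!\left(-\tfrac{1}{\sqrt q},t\sqrt q\right),$$
where $\omhat_{\bC'}\in\otT$ is the type of $\bC'$. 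Thus Conjecture~\ref{main2} for a triple $(\bfL,\bfD,\w)$ of type $\omhat\in\oT$ is equivalent to the combinatorial identity
\begin{equation}
\H_\omhat(z,w)=\sum_{\bC'\unlhd\bC}\Tr(\w,A_{\bC'})\,\H_{\iota^k(\omhat_{\bC'})}(z,w).\tag{$\star$}
\end{equation}

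The heart of the argument is ($\star$), which I would establish along the same lines as its additive analogue \cite[Proposition 7.2.4]{letellier4}. The $W_{\GL_n}(\bfL,\bfD)$-modules $A_{\bC'}$ admit an explicit description by Lusztig induction from parabolic subgroups of $W_{\GL_n}(\bfL,\bfD)=\prod_i\mathfrak{S}_{a_i}$, and under the Frobenius characteristic map the trace-weighted combination $\sum_{\bC'}\Tr(\w,A_{\bC'})\,s_{\iota^k(\omhat_{\bC'})'}$ is identified with the symmetric function $s_{\omhat'}$ that enters the definition of $\H_\omhat$ in \S\ref{def-H}: the plethystic substitution $\x\mapsto\x^d$ appearing for a component of degree $d$ in $\omhat$ corresponds on the symmetric-group side to $\w$ having a cycle of length $d$, via the standard Frobenius formula for the character values of $\mathfrak{S}_{a}$. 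Pairing against $\Log\,\Omega(z,w)$ under the extended Hall pairing then yields ($\star$) at once from the definition of $\H_\omhat$.

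Granting ($\star$), the implication \ref{mainconj}~$\Rightarrow$~\ref{main2} is immediate from the computation above. For the converse, I would induct on $\bC$ with respect to $\unlhd$: given a generic tuple $\bC$, choose $(\bfL,\bfP,\bfSigma)$ whose associated tuple is $\bC$ and take $\w=1$, so that the type of $(\bfL,\bfD,1)$ is $\iota^k(\omhat_\bC)$. Since $A_\bC$ is the trivial representation and $r_\bC=0$, Formula (\ref{decomppoly}) expresses $IH_c(\bM_{\bfL,\bfP,\obfSigma};q,t)$ as $IH_c(\M_{\overline{\bC}};q,t)$ plus contributions from strictly smaller strata $\bC'\lhd\bC$; Conjecture~\ref{main2} determines the left-hand side, the inductive hypothesis determines the smaller-stratum contributions, and ($\star$) for $\w=1$ then forces $IH_c(\M_{\overline{\bC}};q,t)$ to match the Conjecture~\ref{mainconj} formula. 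The main obstacle is the proof of ($\star$), which requires a careful match between the explicit description of the Lusztig modules $A_{\bC'}$ and the combinatorics of the symmetric functions $s_{\omhat'}$; everything else is formal manipulation of the decomposition theorem together with Möbius inversion on the closure order of conjugacy classes.
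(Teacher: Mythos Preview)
Your argument for the direction Conjecture~\ref{mainconj} $\Rightarrow$ Conjecture~\ref{main2} matches the paper's exactly: you take $\w$-twisted traces in the decomposition (\ref{decomp}), substitute Conjecture~\ref{mainconj} stratum by stratum, and reduce to the symmetric-function identity
$$
(-1)^{r(\omhat)}s_{\omhat'}=\sum_{\tauhat\unlhd\omhat^+}{\rm Tr}\left(\w\,|\, A_\tauhat\right)\,s_{\tauhat'},
$$
which the paper simply cites as \cite[Formula (7.4.3)]{letellier4}. Your sketch of why this identity holds (Frobenius characteristic, plethystic substitution corresponding to cycle lengths of $\w$) is the right picture, and pairing with $\Log\,\Omega$ gives your $(\star)$.

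For the converse direction Conjecture~\ref{main2} $\Rightarrow$ Conjecture~\ref{mainconj}, however, you work harder than necessary. The paper observes that given a generic tuple $\bC$, one may choose each $\sigma_i\in Z_{L_i}$ to be \emph{regular}, i.e.\ with $C_{\GL_n}(\sigma_i)=L_i$; then the map $\bM_{\bfL,\bfP,\obfSigma}\to\M_{\overline{\bC}}$ is an \emph{isomorphism}, so the decomposition (\ref{decomp}) collapses to the single term $\bC'=\bC$ and Conjecture~\ref{main2} with $\w=1$ gives Conjecture~\ref{mainconj} immediately, with no induction and no appeal to $(\star)$. Your inductive scheme is not wrong, but if you choose $(\bfL,\bfP,\bfSigma)$ from the Jordan decomposition of $\bC$ as you describe, you are in exactly this situation and the induction is vacuous: there are no strictly smaller strata appearing with nonzero multiplicity. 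The cleaner way to present this direction is to note the isomorphism and be done.
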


\begin{proof}It is clear that Conjecture \ref{main2} implies Conjecture \ref{mainconj} by taking $\w=1$ and, for each $i=1,\dots,k$,  by taking $\sigma_i$ such that $C_{\GL_n}(\sigma_i)=L_i$ as in this case the map $\bM_{\bf L,P,\overline{\Sigma}}\rightarrow\M_{\overline{\bC}}$ is an isomorphism. Let us prove the converse. By Formula (\ref{decomp}) we have

$$
IH_c^\w\left(\bM_{\bfL,\bfP,\obfSigma};q,t\right)=\sum_{\bC'\unlhd\,\bC}{\rm Tr}\left(\w\,|\, A_{\bC'}\right)\,(qt^2)^{-r_{\bC'}}\, IH_c\left(\M_{\overline{\bC}{'}};q,t\right), 
$$

Let $\omhat\in\oT$ be the type of $({\bf L,D},\w)$, see \S \ref{parresol}, and let $\omhat^+\in \otT$  be the type of $\bC$. Re-writing the above formula in terms of types  and assuming that Conjecture \ref{mainconj} is true we find

$$
(t\sqrt{q})^{-d_\bC}IH_c^\w\left(\bM_{\bfL,\bfP,\obfSigma};q,t\right)=\sum_{\tauhat\unlhd\omhat^+}{\rm Tr}\left(\w\,|\, A_\tauhat\right)\, \H_{\iota^k(\tauhat)}\left(-1/\sqrt{q},t\sqrt{q}\right).
$$

Proposition \ref{impli} is thus a consequence of the following identity between symmetric functions \cite[Formula (7.4.3)]{letellier4}

$$
(-1)^{r(\omhat)}s_{\omhat'}=\sum_{\tauhat\unlhd\omhat^+}{\rm Tr}\left(\w\,|\, A_\tauhat\right)\,s_{\tauhat'}.
$$

\end{proof}

Put 

$$
E^{ic}_\w(\bM_{\bfL,\bfP,\obfSigma};q):=IH_c^\w(\bM_{\bfL,\bfP,\obfSigma}; q,-1).
$$

Theorem \ref{maintheo1} implies the following one (see proof of Proposition \ref{impli}).

\begin{theorem} We have 

$$
E^{ic}_\w(\bM_{\bfL,\bfP,\obfSigma};q)=q^{d_\bC/2}\;
\H_\omhat\left({\frac 1{\sqrt q},\sqrt q }
\right).$$In other words Conjecture \ref{main2} is true after the specialization $(q,t)\mapsto (q,-1)$.
\label{maintheo2}\end{theorem}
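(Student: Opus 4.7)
The plan is to argue exactly in parallel with the proof of Proposition \ref{impli}, but now using Theorem \ref{maintheo1} (which is already established) in place of Conjecture \ref{mainconj}. The key input from the earlier material is the $\w$-twisted analogue of Formula (\ref{decomppoly}). Taking traces of $\w$ on the isomorphism of mixed Hodge structures (\ref{decompHodge}) provided by Theorem \ref{theoweight}, and noting that $\w$ acts on each summand $A_{\bC'}\otimes (IH_c^{*+2r_{\bC'}}(\M_{\overline{\bC}{'}})\otimes\C(r_{\bC'}))$ only through its action on $A_{\bC'}$ (since $W_{\GL_n}(\bfL,\bfSigma)$ acts trivially on the intersection cohomology of the strata $\M_{\overline{\bC}{'}}$), one obtains
\begin{equation}
IH_c^\w(\bM_{\bfL,\bfP,\obfSigma};q,t)=\sum_{\bC'\unlhd\bC}{\rm Tr}\,(\w\mid A_{\bC'})\,(qt^2)^{-r_{\bC'}}\,IH_c(\M_{\overline{\bC}{'}};q,t).
\label{plan-twisted-decomp}
\end{equation}

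First I would specialize (\ref{plan-twisted-decomp}) at $t=-1$. Since $(qt^2)^{-r_{\bC'}}$ becomes $q^{-r_{\bC'}}=q^{(d_\bC-d_{\bC'})/2}$, this yields
$$
E^{ic}_\w(\bM_{\bfL,\bfP,\obfSigma};q)=q^{d_\bC/2}\sum_{\bC'\unlhd\bC}{\rm Tr}\,(\w\mid A_{\bC'})\,q^{-d_{\bC'}/2}\,E^{ic}(\M_{\overline{\bC}{'}};q).
$$
Next I would plug in Theorem \ref{maintheo1}, which gives $E^{ic}(\M_{\overline{\bC}{'}};q)=q^{d_{\bC'}/2}\H_{\iota^k(\tauhat^+)}\!\left(1/\sqrt q,\sqrt q\right)$, where $\tauhat^+\in\otT$ is the type of $\bC'$. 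Rewriting the sum over $\bC'\unlhd\bC$ as a sum over types $\tauhat^+\unlhd\omhat^+$ (where $\omhat^+\in\otT$ is the type of $\bC$) gives
$$
E^{ic}_\w(\bM_{\bfL,\bfP,\obfSigma};q)=q^{d_\bC/2}\sum_{\tauhat^+\unlhd\omhat^+}{\rm Tr}\,(\w\mid A_{\tauhat^+})\,\H_{\iota^k(\tauhat^+)}\!\left(1/\sqrt q,\sqrt q\right).
$$

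The final step is to identify the right-hand side with $q^{d_\bC/2}\H_\omhat(1/\sqrt q,\sqrt q)$, where $\omhat\in\oT$ is the type of $(\bfL,\bfD,\w)$. By the definition of $\H_\omhat$ as the extended Hall pairing of $\Log\,\Omega$ with $s_{\omhat'}$, this reduces to the symmetric function identity
$$
(-1)^{r(\omhat)}s_{\omhat'}=\sum_{\tauhat^+\unlhd\omhat^+}{\rm Tr}\,(\w\mid A_{\tauhat^+})\,(-1)^{r(\iota^k(\tauhat^+))}s_{\iota^k(\tauhat^+)'}
$$
(up to the sign conventions already absorbed in the definition of $\H$), which is exactly the identity quoted at the end of the proof of Proposition \ref{impli} from \cite[Formula (7.4.3)]{letellier4}. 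Pairing both sides against the coefficient of $T^{|\omhat|}$ in $\Log\,\Omega(1/\sqrt q,\sqrt q)$ and multiplying by the common prefactor $(z^2-1)(1-w^2)$ evaluated at $(z,w)=(1/\sqrt q,\sqrt q)$ then finishes the proof.

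The main technical obstacle is really the combinatorial identity in the last step: it requires that the way the action of $W_{\GL_n}(\bfL,\bfSigma)$ on the multiplicity spaces $A_{\bC'}$ is encoded by the types $\tauhat\in m^{-1}(\omega)$ matches the expansion of $s_{\omhat'}$ in the basis $\{s_{\iota^k(\tauhat^+)'}\}$. Everything else is bookkeeping: the cartesian diagram (\ref{cart}), Theorem \ref{theoweight}, and Theorem \ref{maintheo1} feed in cleanly, and the only place something genuinely new happens is in matching the character of $\w$ on the $A_{\bC'}$'s with the symmetric-function identity, which is already available via \cite{letellier4}.
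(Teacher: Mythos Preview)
Your proposal is correct and follows exactly the route the paper indicates: the paper simply says ``Theorem \ref{maintheo1} implies the following one (see proof of Proposition \ref{impli})'', and what you have written is precisely that argument spelled out, with Theorem \ref{maintheo1} substituted for Conjecture \ref{mainconj} and the same symmetric-function identity \cite[Formula (7.4.3)]{letellier4} closing the computation. The extra sign $(-1)^{r(\iota^k(\tauhat^+))}$ you carry is harmless since $r(\tauhat)=2kn$ is even for $\tauhat\in\otT$, so your identity agrees with the one quoted in the proof of Proposition \ref{impli}.
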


We can always choose from the pair $({\bf L,D})$ a generic $k$-tuple ${\bf S}=(S_1,\dots,S_k)$ of conjugacy classes of $\GL_n(\K)$ such that for all $i=1,\dots,k$,  there exists an element of $S_i$ with Jordan decomposition $s_id_i$ where $C_{\GL_n}(s_i)=L_i$ and $d_i\in D_i$.

\begin{conjecture} We have

$$
IH_c\left(\bM_{\bfL,\bfP,\obfSigma};q,t\right)=IH_c\left(\M_{\overline{{\bf S}}};q,t\right).
$$
\label{main3}\end{conjecture}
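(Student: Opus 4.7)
The plan is to deduce Conjecture \ref{main3} from Conjecture \ref{mainconj}, following the same scheme of argument as the proof of Proposition \ref{impli}. By that proposition, Conjecture \ref{mainconj} implies Conjecture \ref{main2}, and applying the latter with $\w=1$ yields
\begin{equation*}
IH_c(\bM_{\bfL,\bfP,\obfSigma}; q, t) = (t\sqrt{q})^{d_\bC}\, \H_{\iota^k(\muhat)}\!\left(-\frac{1}{\sqrt{q}}, t\sqrt{q}\right),
\end{equation*}
where $\muhat \in \otT$ is the type of $(\bfL,\bfD)$. Indeed, with respect to the decomposition $W_{\GL_n}(\bfL,\bfD) \simeq \prod \mathfrak{S}_{a_i}$, the identity class $\w=1$ has cycle type $(1^{a_i})$ in each factor, so the triple $(\bfL,\bfD,1)$ has type $\iota^k(\muhat) \in \oT$ under the parametrization of \S\ref{conj}.

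On the other hand, by the construction of $\bfS$ from $(\bfL,\bfD)$, the Jordan type of each $S_i$ is determined by $(L_i,D_i)$, so $\bfS$ has the same type $\muhat$; hence Conjecture \ref{mainconj} applied directly to $\M_{\overline{\bfS}}$ gives
\begin{equation*}
IH_c(\M_{\overline{\bfS}}; q, t) = (t\sqrt{q})^{d_\bfS}\, \H_{\iota^k(\muhat)}\!\left(-\frac{1}{\sqrt{q}}, t\sqrt{q}\right).
\end{equation*}
To conclude that the two expressions coincide it remains to verify that $d_\bC = d_\bfS$. This is a direct dimension count: a routine calculation gives $\dim \bY_{L_i,P_i,\oSigma_i} = n^2 - \dim L_i + \dim D_i = n^2 - \dim C_{L_i}(d_i)$, and the right-hand side equals $\dim S_i$ since $C_{\GL_n}(s_i d_i) = C_{L_i}(d_i)$. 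Since $p_i$ is generically finite onto $\overline{C_i}$ (as follows from the semi-smallness implicit in Theorem \ref{theoweight}), we obtain $\dim C_i = \dim S_i$ for every $i$, hence $d_\bC = d_\bfS$.

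The main obstacle is obviously that Conjecture \ref{mainconj} is itself a deep open problem; the argument above therefore produces only a conditional proof, reflecting the fact that Conjectures \ref{main3}, \ref{mainconj}, and \ref{main2} really form a single package. An alternative formulation of the same reduction is to apply Theorem \ref{theoweight} and Formula (\ref{decomppoly}) directly, expand each $IH_c(\M_{\overline{\bC'}}; q, t)$ via Conjecture \ref{mainconj}, and collapse the resulting sum using the symmetric-function identity $(-1)^{r(\omhat)} s_{\omhat'} = \sum_{\tauhat \unlhd \omhat^+} (\dim A_\tauhat)\, s_{\tauhat'}$ employed in Proposition \ref{impli}; this makes transparent that the mechanism is the decomposition theorem combined with the combinatorial identity. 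As partial compensation, the $t \mapsto -1$ specialization of Conjecture \ref{main3} is unconditional: combining Theorem \ref{maintheo2} (at $\w=1$) with Theorem \ref{maintheo1} applied to $\bfS$ yields
\begin{equation*}
E^{ic}(\bM_{\bfL,\bfP,\obfSigma}; q) = q^{d_\bC/2}\, \H_{\iota^k(\muhat)}\!\left(\frac{1}{\sqrt{q}}, \sqrt{q}\right) = E^{ic}(\M_{\overline{\bfS}}; q),
\end{equation*}
which is the content of Theorem \ref{w-twistedE-intro} and generalizes Corollary \ref{E-poly-coro} to the partial-resolution setting.
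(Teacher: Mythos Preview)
Your proposal is correct and follows exactly the route the paper indicates: the paper simply states that Conjecture \ref{main3} ``is an easy consequence of Conjecture \ref{mainconj} and Conjecture \ref{main2}'' and that, via Proposition \ref{impli}, it is a consequence of Conjecture \ref{mainconj} alone. You have supplied the details the paper omits, namely the identification of the type of $(\bfL,\bfD,1)$ with $\iota^k(\muhat)$, the fact that $\bfS$ has the same type $\muhat$, and the dimension equality $d_\bC=d_\bfS$; your closing remark on the unconditional $t\mapsto -1$ specialisation is precisely the content of Theorem \ref{theoE}.
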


This conjecture is an easy consequence of Conjecture \ref{mainconj} and Conjecture \ref{main2}. By Proposition \ref{impli} we get that Conjecture \ref{main3} is a consequence of Conjecture \ref{mainconj}.

By Theorem \ref{maintheo1} and Theorem \ref{maintheo2} we have the following generalization of Theorem \ref{E-poly-coro}.

\begin{theorem} We have 

$$
E^{ic}\left(\bM_{\bfL,\bfP,\obfSigma};q,t\right)=E^{ic}\left(\M_{\overline{{\bf S}}};q,t\right).
$$
\label{theoE}\end{theorem}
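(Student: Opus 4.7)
The plan is to combine Theorem \ref{maintheo2} at the trivial element $\w = 1$ with Theorem \ref{maintheo1} applied to $\M_{\overline{\bfS}}$, after matching the combinatorial data on both sides. Since $E^{ic}_\w$ at $\w=1$ is simply the untwisted $E^{ic}$, the two theorems already do essentially all the analytic work; what remains is a type-matching and dimension-matching step, followed (if the statement is read as a two-variable equality) by a Katz-type upgrade from the diagonal specialization to the full two-variable polynomial.

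First I would identify the type of $(\bfL,\bfD,1) \in \oT$. From Section~\ref{conj}, the type of $(L_i, D_i) \in \tT_n$ is, by construction of $\bfS$, precisely the type of $S_i$: the representative $s_i d_i$ of $S_i$ has $C_{\GL_n}(s_i) = L_i$ and $d_i \in D_i$. The element $1 \in W_{\GL_n}(L_i,D_i) \simeq \prod_j \mathfrak{S}_{a_{ij}}$ corresponds to the trivial partitions $(1^{a_{ij}})$, which is exactly the distinguished lift $\iota(\omega_{S_i}) \in m^{-1}(\omega_{S_i})$ for the section $\iota : \tT \to \bT$ of \S\ref{symmetric}. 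Hence $(\bfL,\bfD,1)$ has type $\iota^k(\omhat)$, where $\omhat \in \otT_n$ is the type of $\bfS$. Second, I would verify $d_\bC = d_\bfS$, which reduces to $\dim C_i = \dim S_i$. For the induced conjugacy class $C_i$ one has $\dim C_i = \dim \GL_n - \dim L_i + \dim D_i$ (standard Lusztig--Spaltenstein), while the Jordan decomposition $s_i d_i$ gives $C_{\GL_n}(s_i d_i) = C_{L_i}(d_i)$ and hence $\dim S_i = \dim \GL_n - \dim L_i + \dim D_i$ as well. Plugging these identifications into Theorem \ref{maintheo2} at $\w = 1$ produces
$$E^{ic}(\bM_{\bfL,\bfP,\obfSigma}; q) = q^{d_\bC/2}\,\H_{\iota^k(\omhat)}\!\left(\tfrac{1}{\sqrt q},\sqrt q\right) = q^{d_\bfS/2}\,\H_{\iota^k(\omhat)}\!\left(\tfrac{1}{\sqrt q},\sqrt q\right) = E^{ic}(\M_{\overline{\bfS}}; q),$$
where the last equality is Theorem \ref{maintheo1}.

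To upgrade the diagonal equality $E^{ic}(X;\sqrt q,\sqrt q)$ to the two-variable statement $E^{ic}(X;q,t)$ in the theorem as stated, I would appeal to Theorem \ref{Katz2}. Both $\M_{\overline{\bfS}}$ and $\bM_{\bfL,\bfP,\obfSigma}$ admit spreadings out together with their natural stratifications by $\bC' \unlhd \bC$; for $\bM_{\bfL,\bfP,\obfSigma}$ this is built from the partial resolution decomposition (\ref{decomp}) and the polynomial-count identity established in the proof of Theorem \ref{maintheo2}, exactly as for the full resolution in the proof of Theorem \ref{for32}. Property $(E)$ then gives $E^{ic}(X;x,y) = P_X(xy)$ for some polynomial $P_X$, and equality on the diagonal implies $P_{\bM} = P_{\M_{\overline{\bfS}}}$, yielding the theorem. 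The main obstacle is checking property $(E)$ for $\bM_{\bfL,\bfP,\obfSigma}$: one needs a coherent $R$-scheme spreading out of the partial resolution together with the mixed-Hodge-module analogue of (\ref{decomp}), which is a direct but somewhat notation-heavy extension of the corresponding verification made for $\M_{\overline{\bC}}$ at the end of \S\ref{maintheo1}.
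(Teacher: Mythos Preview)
Your proof is correct and follows the same approach as the paper, which simply states that the theorem follows from Theorem \ref{maintheo1} and Theorem \ref{maintheo2} without further comment. Your explicit verification that the type of $(\bfL,\bfD,1)$ is $\iota^k(\omhat_{\bfS})$ and that $d_\bC=d_\bfS$, together with the two-variable upgrade via Theorem \ref{Katz2}, supplies details the paper leaves implicit (and the latter addresses what is likely a typo in the displayed variables, since the introduction states the identity in the single variable $x$).
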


\section{Regular unipotent character varieties}\label{unipotent}

Denote by $\overline{\calP}$ the subset of multi-partitions $(\mu^1,\dots,\mu^k)\in(\calP) ^k$ with $|\mu^1|=|\mu^2|=\cdots=|\mu^k|$. 

Recall that $\{\H_\muhat(z,w)\}_{\muhat\in\oP}$ is defined by

$$
(z^2-1)(1-w^2)\,\Log\,\Omega(z,w)=\sum_{\muhat\in\overline{\calP}\backslash\{\emptyset\}}\H_\muhat(z,w)s_{\muhat'}.
$$
Put

$$
d_\muhat:= (2g+k-2)\,|\muhat|^2+2-\sum_{i,j}(n^i_j)^2,
$$
where $n^i_1,n^i_2,\dots$ are the parts of the dual partition of $\mu^i$.
Note that $d_\muhat=d_\bC$ for a tuple $\bC$ of conjugacy classes of type $\muhat$. Note also that $d_{((1^n),\dots,(1^n))}=(2g-2)n^2+2$ and $d_{((n^1),\dots,(n^1))}=(2g+k-2)n^2+2-kn$.

For each $\muhat\in\oP\backslash\{\emptyset\}$, choose a generic tuple $\bC$ of conjugacy classes of $\GL_{|\muhat|}(\K)$ type $\muhat$ and denote by $\M_\muhat^{\rm gen}/_\K$ the character variety $\M_{\overline{\bC}}$. 

\begin{theorem}For any finite field $\F_q$ we have

\begin{align}
(q-1)\,\Log\left(\sum_{\muhat\in\overline{\calP}}q^{1-d_\muhat/2}\left\langle\calE*\bfX_{\overline{C}_{\mu^1}}*\cdots*\bfX_{\overline{C}_{\mu^k}},1_1\right\rangle_{\GL_{|\muhat|}}\, s_{\muhat'}\right)&=q\sum_{\muhat\in\oP\backslash\{\emptyset\}}\H_\muhat\left(\sqrt{q},\frac{1}{\sqrt{q}}\right)\, s_{\muhat'}\label{conv2}\\
&=\sum_{\muhat\in\oP\backslash\{\emptyset\}}q^{1-d_\muhat/2}E^{ic}(\M_\muhat^{\rm gen}/_\C\,;\,q)\, s_{\muhat'}
\end{align}
where $(C_{\mu^1},\dots,C_{\mu^k})$ is a $k$-tuple of unipotent conjugacy classes of $\GL_{|\muhat|}(\F_q)$ of type $\muhat$.
\label{unichar-theo}\end{theorem}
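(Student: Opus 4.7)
The theorem consists of two independent equalities. The second equality is a direct consequence of Theorem~\ref{maintheo1}: applying it to a generic tuple $\bC$ of type $\iota^k(\muhat)\in\otT$, we obtain $E^{ic}(\M_\muhat^{\rm gen}/_\C;q)=q^{d_\muhat/2}\H_{\iota^k(\muhat)}(1/\sqrt{q},\sqrt{q})$. The function $\H_\muhat$ defined in Section~\ref{unipotent} coincides with $\H_{\iota^k(\muhat)}$ from \S\ref{def-H} (the sign $(-1)^{r(\iota^k(\muhat))}$ is trivial since $r(\iota^k(\muhat))=2k|\muhat|$), and by the symmetry $\H_\muhat(z,w)=\H_\muhat(w,z)$ of Lemma~\ref{propH}, multiplication by $q^{1-d_\muhat/2}$ converts this into $q\,\H_\muhat(\sqrt{q},1/\sqrt{q})$, as required.

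For the first equality, unfolding the definition of $\H_\muhat$ in Section~\ref{unipotent} and using $\Log(X^a)=a\,\Log X$, the statement is equivalent to the identity
$$\sum_\muhat q^{1-d_\muhat/2}\left\langle\calE*\bfX_{\overline{C}_{\mu^1}}*\cdots*\bfX_{\overline{C}_{\mu^k}},1_1\right\rangle_{\GL_{|\muhat|}(\F_q)}s_{\muhat'}T^{|\muhat|}=\Omega(\sqrt{q},1/\sqrt{q})^{q-1}$$
in $\Lambda[[T]]$. I would prove this by expanding the left-hand side via the Frobenius formula (Theorem~\ref{frob}) combined with the explicit values $\bfX_{\overline{C}_\mu}(C_\nu)=q^{-n(\mu)}\tilde{K}_{\mu\nu}(q)$ from (\ref{eval}). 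Crucially, with $d_\muhat=(2g+k-2)n^2+2-kn-2\sum_i n(\mu^i)$, the prefactor $q^{1-d_\muhat/2}$ combines with $\prod_i q^{-n(\mu^i)}$ to yield $q^{-(2g+k-2)n^2/2+kn/2}$, which depends only on $n=|\muhat|$. The inner sums $\sum_\mu\tilde{K}_{\mu\nu}(q)\,s_{\mu'}(\x_i)$ are then handled by applying the involution $\omega$ to Proposition~\ref{propmagic}, converting them into the appropriate modified Macdonald specializations evaluated at the principal variables $y_i=q^{i-1}$.

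To recognize the resulting character sum as $\Omega^{q-1}$, I would next invoke Green's parametrization: irreducible characters of $\GL_n(\F_q)$ correspond to functions $f:\Theta\to\calP$ where $\Theta$ is the set of Frobenius orbits in $\overline{\F}_q^*$, and the value $\chi_f(u)$ at a unipotent element depends only on the Levi type of $f$. The generating function thus factorizes as a product over $\theta\in\Theta$, and the exponent $q-1$ reflects summation over the $q-1$ Frobenius orbits of size one, i.e.\ over central twists by characters of $Z_{\GL_n}(\F_q)=\F_q^*$, all acting trivially on unipotent classes. Contributions from orbits of larger size $d$ are absorbed through Adams operations on the coefficient ring $\Q(\sqrt{q},1/\sqrt{q})$ into the variables $(z,w)=(\sqrt{q},1/\sqrt{q})$ of $\calH_\lambda^g$ and $\tilde{H}_\lambda$.

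The main obstacle lies precisely in this last identification: combining Green's orbit-sum parametrization, the hook-formula expressions for $\chi_f(1)$ recalled in (\ref{6.7}), and the $\calH_\lambda^g(\sqrt{q},1/\sqrt{q})$ factors into exactly $\Omega^{q-1}$ and no more. The calculation runs parallel to, but is more intricate than, the one in the proof of Theorem~\ref{convtheo}, and is analogous to the quiver variety computation in \cite[\S 7]{letellier4}.
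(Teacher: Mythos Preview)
Your overall strategy --- Frobenius formula, Green's parametrization of irreducible characters by maps $\calO\to\calP$, factorization over Frobenius orbits, identification with $\Omega$ --- is exactly the paper's. The gap is in the target of that factorization. The generating function is \emph{not} $\Omega(\sqrt{q},1/\sqrt{q})^{q-1}$; the paper obtains
\[
\sum_{\muhat} q^{1-d_\muhat/2}\bigl\langle\calE*\bfX_{\overline{C}_{\mu^1}}*\cdots*\bfX_{\overline{C}_{\mu^k}},1_1\bigr\rangle\,s_{\muhat'}
\;=\;\prod_{d\ge 1}\Omega\bigl(\x_1^d,\dots,\x_k^d;q^{d/2},q^{-d/2}\bigr)^{\phi_d(q)}
\;=\;\prod_{d\ge 1}\psi_d\bigl(\Omega(\sqrt{q},1/\sqrt{q})\bigr)^{\phi_d(q)},
\]
where $\phi_d(q)$ is the number of Frobenius orbits of size $d$ on $\overline{\F}_q^{\,*}$. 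Your exponent $q-1=\phi_1(q)$ captures only the size-$1$ orbits; the factors for $d\ge 2$ are genuinely present (they contribute, e.g., terms in $\x_i^2$ that $\Omega^{q-1}$ lacks) and are not ``absorbed'' into $\Omega$ itself.

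The factor $q-1$ on the right-hand side of the theorem appears only \emph{after} taking $\Log$, via the necklace identity $\sum_{d\mid n} d\,\phi_d(q)=q^n-1$: one computes $\Log\bigl(\prod_d\psi_d(\Omega)^{\phi_d(q)}\bigr)=\sum_d\phi_d(q)\,\psi_d(\Log\Omega)$, and this sum collapses to $(q-1)\,\Log\Omega$ precisely because the Adams operations send $q\mapsto q^n$ (i.e.\ $\sqrt{q}=z$ is a $\lambda$-ring variable, not a constant). Your rule ``$\Log(X^a)=a\,\Log X$'' holds only for $a$ invariant under all $\psi_n$, so even if the generating function were $\Omega^{q-1}$, applying that rule with $a=q-1$ would be inconsistent with the $\lambda$-ring structure you need elsewhere. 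Fixing the factorization to the full orbit product and then invoking the necklace identity closes the argument; the rest of your sketch (use of Proposition~\ref{propmagic}, formula~(\ref{6.7}), and Theorem~\ref{maintheo1} for the second equality) matches the paper.
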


The second identity of the theorem is  Theorem \ref{maintheo1}.

For any tuple $\bC$ (generic or not) of conjugacy classes of $\GL_n$, denote by $[\M_{\overline{\bC}}]$ the stack quotient of $\calU_{\overline{\bC}}$ by $\GL_n$.

Since Zariski closures of regular conjugacy classes are rationally smooth we have the following lemma.

\begin{lemma} Assume that $\K=\overline{\F}_q$ and $\bC=(C_1,\dots,C_k)$ is defined over $\F_q$. If  $C_1,\dots,C_k$ are either  semisimple or regular, then

\begin{align*}
\#[\M_{\overline{\bC}}](\F_q)&=\frac{\#\calU_{\overline{\bC}}(\F_q)}{|\GL_n(\F_q)|}\\
&=\left\langle\calE*\bfX_{\overline{C}_1}*\cdots*\bfX_{\overline{C}_k},1_1\right\rangle_{\GL_n(\F_q)}.
\end{align*}

\end{lemma}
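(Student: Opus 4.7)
The first equality is a standard consequence of Lang's theorem applied to the connected group $\GL_n$ acting on $\calU_{\overline{\bC}}$. Namely, by Lang's theorem the natural map from $\GL_n(\F_q)$-orbits on $\calU_{\overline{\bC}}(\F_q)$ to isomorphism classes of $\F_q$-points of the quotient stack $[\M_{\overline{\bC}}]=[\calU_{\overline{\bC}}/\GL_n]$ is a bijection, so by orbit-stabilizer
$$\#[\M_{\overline{\bC}}](\F_q)=\sum_{[x]}\frac{1}{|\mathrm{Stab}_{\GL_n(\F_q)}(x)|}=\frac{\#\calU_{\overline{\bC}}(\F_q)}{|\GL_n(\F_q)|}.$$

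For the second equality the crucial point is to identify $\bfX_{\overline{C}_i}$ with the characteristic function $1_{\overline{C}_i^F}$ in both regimes. When $C_i$ is semisimple this is immediate since $\overline{C}_i=C_i$ is already smooth, so $\IC{\overline{C}_i}\simeq \kappa$ concentrated in degree $0$. When $C_i$ is regular the paper has already invoked the rational smoothness of $\overline{C}_i$ (reduced to the unipotent case and thus to Borho--MacPherson), which again forces $\IC{\overline{C}_i}\simeq \kappa$; the normalization of the Frobenius isomorphism to act as the identity on $\mathcal{H}^0_x$ over the smooth stratum then propagates to the identity on all stalks, so $\bfX_{\overline{C}_i}(x)=1$ for every $x\in\overline{C}_i^F$.

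With that in hand the rest is bookkeeping. Since $\GL_n$ is connected, every $F$-stable conjugacy class $C_i'\subset\overline{C}_i$ has $\F_q$-points by Lang's theorem, whence $\overline{C}_i^F=\coprod_{C_i'\unlhd C_i}C_i'^F$ and $\bfX_{\overline{C}_i}=\sum_{C_i'\unlhd C_i}1_{C_i'^F}$. Expanding the convolution multilinearly, using the disjoint decomposition $\calU_{\overline{\bC}}=\coprod_{\bC'\unlhd\bC}\calU_{\bC'}$, and applying the elementary identity (\ref{conv}) summand by summand gives
\begin{align*}
\left\langle\calE*\bfX_{\overline{C}_1}*\cdots*\bfX_{\overline{C}_k},1_1\right\rangle_{\GL_n(\F_q)}
&=\sum_{\bC'\unlhd\bC}\left\langle\calE*1_{C_1'^F}*\cdots*1_{C_k'^F},1_1\right\rangle_{\GL_n(\F_q)}\\
&=\sum_{\bC'\unlhd\bC}\frac{\#\calU_{\bC'}(\F_q)}{|\GL_n(\F_q)|}=\frac{\#\calU_{\overline{\bC}}(\F_q)}{|\GL_n(\F_q)|}.
\end{align*}

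The only non-formal input is the identification $\bfX_{\overline{C}_i}=1_{\overline{C}_i^F}$ in the regular case, which is the content of the rational smoothness assertion cited above the lemma; everything else reduces to Lang's theorem and bilinearity of convolution.
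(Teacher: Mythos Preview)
Your proof is correct and follows exactly the approach the paper intends: the lemma is stated immediately after the sentence ``Since Zariski closures of regular conjugacy classes are rationally smooth we have the following lemma,'' and no further proof is given. You have simply spelled out the two implicit steps—rational smoothness forces $\bfX_{\overline{C}_i}=1_{\overline{C}_i^F}$, and the stack count identity is the standard Lang-theorem computation for $[X/G]$ with $G$ connected—together with the bookkeeping via formula~(\ref{conv}).
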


\begin{remark} If we specialize the variables $\x_i=\{x_{i,1},x_{i,2},\dots\}$ to $\{T,0,0,\dots\}$ for all $i=1,\dots,k$ in the formulas of Theorem \ref{unichar-theo} we recover formula \cite[Theorem 3.8.1]{hausel-villegas}

\begin{align*}
\Log\left(\sum_{n\geq 0}\frac{\#\,{\rm Hom}\,(\pi_1(\Sigma),\GL_n(\F_q))}{q^{(g-1)n^2}|\GL_n(\F_q)|} \, T^n\right)&=(q-1)\, \Log\,\left(\sum_{\lambda\in\calP}\calH_\lambda^g(\sqrt{q},1/\sqrt{q})\, T^{|\lambda|}\right)\\
&=\frac{1}{1-q^{-1}}\sum_{n\geq 1}\H_{((1^n),\dots,(1^n))}\left(\sqrt{q},\frac{1}{\sqrt{q}}\right)\, T^n.
\end{align*}
\end{remark}
In the following corollary, $[\M_n^{\rm uni}]$ denotes the stack quotient $[\M_{\overline{\bC}}]$ with $\bC$ the unique $k$-tuple of unipotent regular conjugacy classes of $\GL_n(\F_q)$, and by $\M_n^{\rm gen}$ the generic character variety obtained from  a generic $k$-tuple of conjugacy classes defined over $\F_q$ of type $((n^1),\dots,(n^1))$.

\begin{corollary} We have 

\begin{align}
\Log\,\left(\sum_{n\geq 0}(-1)^{kn}q^{1-d_n/2}\#[\M_n^{\rm uni}](\F_q)\, T^n\right)&=\frac{1}{1-q^{-1}}\sum_{n\geq 1}(-1)^{kn}\H_{((n^1),\dots,(n^1))}\left(\sqrt{q},\frac{1}{\sqrt{q}}\right)\, T^n\label{reg}\\
&=\sum_{n\geq 1}(-1)^{kn}q^{1-d_n/2}\#[\M_n^{\rm gen}](\F_q)\, T^n,
\end{align}
where $d_n:=d_{(n^1),\dots,(n^1)}$.
\end{corollary}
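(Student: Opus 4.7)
The plan is to deduce the Corollary from Theorem \ref{unichar-theo} by applying a single plethystic substitution that isolates the Schur coefficient indexed by $\muhat=((n^1),\dots,(n^1))$ and converts the symmetric-function generating series into an ordinary one in $T$.

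First I would introduce the ring homomorphism $\sigma\colon\Lambda[[T]]\to\Q(q)[[T]]$ defined plethystically by
$$p_r(\x_1)\longmapsto -T^r,\qquad p_r(\x_i)\longmapsto -1\quad(i=2,\dots,k),$$
together with $\sigma(q)=q$ and $\sigma(T)=T$. Using the plethystic identity $s_\mu[-\mathbf{z}]=(-1)^{|\mu|}\,s_{\mu'}(\mathbf{z})$ and the fact that $s_\lambda$ evaluated at a single variable vanishes unless $\lambda$ is a single row, one gets the key formula
$$\sigma\bigl(s_{\muhat'}\bigr)=\begin{cases}(-1)^{kn}\,T^n,&\muhat=((n),\dots,(n))\text{ for some }n\geq 0,\\ 0,&\text{otherwise,}\end{cases}$$
so that $\sigma$ selects exactly the regular-unipotent multi-partitions from the generating series; the equal-size constraint built into $\overline{\calP}$ forces the common value $n$.

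Next I would verify that $\sigma$ commutes with $\Log$. Since $\Log=\Psi^{-1}\circ\log$ and $\Psi$ is built from the Adams operations $\Psi_n$, it suffices to check $\sigma\circ\Psi_n=\psi_n\circ\sigma$, where $\psi_n$ on the target $\Q(q)[[T]]$ is the ring homomorphism $q\mapsto q^n$, $T\mapsto T^n$ (matching the convention of \S \ref{symmetric}). This is a direct verification on generators: for instance, $\sigma(\Psi_n\,p_r(\x_1))=\sigma(p_{rn}(\x_1))=-T^{rn}=\psi_n(-T^r)=\psi_n(\sigma(p_r(\x_1)))$, and analogously for $p_r(\x_i)$ with $i\geq 2$, as well as for $q$ and $T$. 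Applying $\sigma$ to both sides of the first equality of Theorem \ref{unichar-theo} and using the preceding lemma to identify $\#[\M_n^{\rm uni}](\F_q)$ with $\langle\calE\ast\bfX_{\overline{C}_{(n)}}\ast\cdots\ast\bfX_{\overline{C}_{(n)}},1_1\rangle_{\GL_n(\F_q)}$ (valid since $\overline{C}_{(n)}$ is rationally smooth, so $\bfX_{\overline{C}_{(n)}}$ is the indicator of $\overline{C}_{(n)}(\F_q)$), yields
$$(q-1)\,\Log\!\left(\sum_{n\geq 0}(-1)^{kn}q^{1-d_n/2}\#[\M_n^{\rm uni}](\F_q)\,T^n\right)=q\sum_{n\geq 1}(-1)^{kn}\H_{((n^1),\dots,(n^1))}\!\left(\sqrt q,\tfrac{1}{\sqrt q}\right)T^n,$$
and dividing by $q$, using $(q-1)/q=1-q^{-1}$, gives the first identity of the Corollary.

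For the second identity, I would apply Theorem \ref{maintheo1} to a generic regular tuple $\bC$ of type $((n^1),\dots,(n^1))$ to obtain $E^{ic}(\M_n^{\rm gen};q)=q^{d_n/2}\H_{((n^1),\dots,(n^1))}(1/\sqrt q,\sqrt q)$, which equals $q^{d_n/2}\H_{((n^1),\dots,(n^1))}(\sqrt q,1/\sqrt q)$ by the $(z,w)\leftrightarrow(w,z)$ symmetry of $\Omega$ in Lemma \ref{propH}. Since closures of regular classes are rationally smooth, Theorem \ref{for32} combined with the preceding lemma gives $\#[\M_n^{\rm gen}](\F_q)=E^{ic}(\M_n^{\rm gen};q)/(q-1)$, whence $q^{1-d_n/2}\#[\M_n^{\rm gen}](\F_q)=(1-q^{-1})^{-1}\H_{((n^1),\dots,(n^1))}(\sqrt q,1/\sqrt q)$, matching the right-hand side. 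The main technical point is the Adams-compatibility verification in the middle paragraph: one must use precisely the convention from \S \ref{symmetric}, in which $\Psi_n$ acts simultaneously on $q$ and $T$, since a convention keeping $q$ fixed on the target would break $\sigma\circ\Psi_n=\psi_n\circ\sigma$ and invalidate the transport of $\Log$ through $\sigma$.
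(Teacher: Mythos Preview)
Your proof is correct and follows essentially the same route as the paper's: both deduce the corollary from Theorem \ref{unichar-theo} by applying a plethystic specialization that singles out the coefficient of $s_{((1^n),\dots,(1^n))}$ with sign $(-1)^{kn}$ and commutes with $\Log$, and then invoke rational smoothness of regular closures for the second identity. The only cosmetic difference is that the paper packages the specialization as the ``top-degree $u$-specialization'' $[f]:=u^nf[1-u^{-1}]\big|_{u=0}$ of \cite{HLV3} (using \cite[Proposition 3.1, Lemma 3.3]{HLV3} for $\Log$-compatibility and the identity $[f]=(-1)^n\langle f,s_{(1^n)}\rangle$), whereas you build the equivalent map $\sigma$ by hand via $p_r(\x_1)\mapsto -T^r$, $p_r(\x_i)\mapsto -1$ and verify Adams-compatibility directly.
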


The second identity in the corollary follows from the fact that $\M_n^{\rm gen}$ is rationally smooth (see Corollary \ref{rat-smooth}). The corollary gives thus an interesting relation between the stack counts of regular unipotent character varieties and the stack count of the corresponding generic character varieties of unipotent regular type (i.e. with regular conjugacy classes with only one eigenvalue).

\begin{proof} Consider the ring homomorphism $\Lambda(\x)\rightarrow\Q[u]$ which maps power sums $p_r(\x)$ to $1-u^r$. We define the \emph{$u$-specialization} of a symmetric function as its image under this ring homomorphism (in the plethystic notation this is $f[1-u]$). As in \cite[\S 3.1]{HLV3}, we define the top degree of a symmetric function $f\in\Lambda(\x)$ of homogeneous degree $n$ as

$$
[f]:=u^nf[1-u^{-1}]|_{u=0}.
$$
The operation $f\mapsto[f]$ commutes with $\Log$ (see \cite[Proposition 3.1]{HLV3}) and for a symmetric function $f$ of homogeneous degree $n$, we have \cite[Lemma 3.3]{HLV3}

$$
[f]=(-1)^n\langle f, s_{(1^n)}\rangle.
$$

We obtain Formula (\ref{reg}) by taking the top degree of $u$-specialization in Formula (\ref{conv2}).

\end{proof}

\begin{proof}[Proof of Formula (\ref{conv2})] The beginning of the proof follows closely that of Theorem \ref{convtheo}.

For a partition $\mu$ we have 

$$
\bfX_{\overline{C}_\mu}=\sum_{\lambda\unlhd\mu}q^{-n(\mu)}\tilde{K}_{\mu\lambda}(q) \, 1_{C_\lambda^F}.
$$Applying this formula together with Formulas (\ref{conv}) and (\ref{frobenius}) we find

$$
\left\langle\calE*\bfX_{\overline{C}_{\mu^1}}*\cdots*\bfX_{\overline{C}_{\mu^k}},1_1\right\rangle_{\GL_{|\muhat|}}=\sum_\chi\left(\frac{|\GL_{|\muhat|}|}{\chi(1)}\right)^{2g+k-1}\sum_{\lambdahat\unlhd\muhat}\prod_{i=1}^kq^{-n(\mu^i)}\tilde{K}_{\mu^i\lambda^i}(q)\calH_{\lambda^i}(q)\chi(C_{\lambda^i}^F).
$$
Denote by $\calO$ the set of Frobenius orbits of $\mathbb{G}_m$.  Recall that the irreducible characters of $\GL_n(\F_q)$ are parametrized by the set of all maps $h:\calO\rightarrow\calP$ such that 

$$
\sum_{\gamma\in\calO}|\gamma|\cdot|h(\gamma)|=n.
$$
This parametrization is chosen so that the function $h:\calO\rightarrow\calP$ that maps $\{1\}$ to the partition $\lambda\in\calP_n$ corresponds to the unipotent character of type $\lambda$. The type of the irreducible character $\chi_h$ associated with a function $h:\calO\rightarrow\calP$ is the type $\omega_h\in\bT$ given by the collection of pairs $(d,\lambda)\in\N\times(\calP\backslash\{\emptyset\})$ with multiplicity $\#\{\gamma\in\calO\,|\, (d,\lambda)=(|\gamma|,h(\gamma))\}$.

With this parametrization we have (see Formula (\ref{uni-sym}))

$$
\chi_h(C_\lambda^F)=(-1)^{|\omega_h|-f(\omega_h)}\left\langle\tilde{H}_\lambda(\x;q),s_{\omega_h}\right\rangle.
$$
Using Formula (\ref{6.7}) we have
\bigskip

$\left\langle\calE*\bfX_{\overline{C}_{\mu^1}}*\cdots*\bfX_{\overline{C}_{\mu^k}},1_1\right\rangle_{\GL_{|\muhat|}}=$

$$
\sum_{h\in\calP^\calO,|\omega_h|=|\muhat|}\left((-1)^{f(\omega_h)}H_{\omega_h}(q) q^{\frac{1}{2}|\muhat|(|\muhat|-1)-n(\omega_h)}\right)^{2g+k-2}q^{-\sum_in(\mu^i)}(-1)^{k|\omega_h|-kf(\omega_h)}\sum_{\lambdahat\unlhd\muhat}\prod_{i=1}^k\tilde{K}_{\mu^i\lambda^i}(q)\calH_{\lambda^i}(q)\langle\tilde{H}_{\lambda^i}(\x_i;q),s_{\omega_h}(\x_i)\rangle$$

\begin{align*}
&=\sum_{h\in\calP^\calO,|\omega_h|=|\muhat|}\left(H_{\omega_h}(q) q^{\frac{1}{2}|\muhat|(|\muhat|-1)-n(\omega_h)}\right)^{2g+k-2}q^{-\sum_in(\mu^i)}(-1)^{k|\omega_h|}\prod_{i=1}^k\sum_{\lambda\unlhd\mu^i}\tilde{K}_{\mu^i\lambda}(q)\calH_{\lambda}(q)\langle\tilde{H}_{\lambda}(\x_i;q),s_{\omega_h}(\x_i)\rangle\\
&=q^{\frac{d_\muhat}{2}-1}\sum_{h\in\calP^\calO,|\omega_h|=|\muhat|}q^{(1-g)|\muhat|}\left(H_{\omega_h}(q) q^{-n(\omega_h)}\right)^{2g+k-2}\prod_{i=1}^k\langle s_{\mu^i{'}}(\x_i),s_{\omega_h}(\x_i\y)\rangle
\end{align*}
The last equality follows from Proposition \ref{propmagic} (see also the calculation after Proposition \ref{propmagic}).

We thus have 
\bigskip

$
\sum_\muhat q^{1-d_\muhat/2}\left\langle\calE*\bfX_{\overline{C}_{\mu^1}}*\cdots*\bfX_{\overline{C}_{\mu^k}}, 1_1\right\rangle_{\GL_{|\muhat|}}s_{\muhat'}$

\begin{align*}
&=\sum_{h\in\calP^\calO}q^{(1-g)|\omega_h|}\left(H_{\omega_h}(q)q^{-n(\omega_h)}\right)^{2g+k-2}\prod_{i=1}^ks_{\omega_h}(\x_i\y)\\
&=\prod_{\gamma\in\calO}\sum_\lambda q^{(1-g)|\gamma|\,|\lambda|}\left(H_\lambda(q^{|\lambda|})q^{|\gamma|n(\lambda)}\right)^{2g+k-2}\prod_{i=1}^ks_\lambda(\x_i^{|\gamma|}\y^{|\gamma|})\\
&=\prod_{\gamma\in\calO}\Omega\left(\x_1^{|\gamma|},\dots,\x_k^{|\gamma|};q^{|\gamma|/2},q^{-|\gamma|/2}\right)\\
&=\prod_{d=1}^\infty\Omega\left(\x_1^d,\dots,\x_k^d;q^{d/2},q^{-d/2}\right)^{\phi_d(q)}.
\end{align*}
where $\phi_d(q)$ is the number of $\gamma\in\calO$ of size $d$. 

Now taking Log on both sides we find

\begin{align*}
\Log\,\left(\sum_\muhat q^{1-d_\muhat/2}\left\langle\calE*\bfX_{\overline{C}_{\mu^1}}*\cdots*\bfX_{\overline{C}_{\mu^k}}, 1_1\right\rangle_{\GL_{|\muhat|}}s_{\muhat'}\right)&=(q-1)\,\Log\,\Omega(\sqrt{q},1/\sqrt{q})\\
&=\frac{1}{1-q^{-1}}\sum_\muhat\H_\muhat\left(\sqrt{q},\frac{1}{\sqrt{q}}\right)s_{\muhat'}.
\end{align*}
\end{proof}

\section{Examples: Affine cases}

Let $\bC=(C_1,\dots,C_k)$ be a generic tuple of conjugacy classes of $\GL_n(\C)$ such that the underlying graph of $\Gamma_\bC$ is of type $\tilde{D}_4$, $\tilde{E}_6$, $\tilde{E}_7$ or $\tilde{E}_8$ (we must have $g=0$) and that $\v_\bC$ is a positive imaginary root. Then the character variety $\M_{\overline{\bC}}$ is a surface. In what follow we only treat the case $\tilde{D}_4$ as the other cases are similar.

We thus assume that $k=4$ and that the coordinates of the dimension vector $\v=\v_\bC$ are  $2r$, for some integer $r>0$, on the central vertex and $r$ at the edge vertices. Each conjugacy class $C_i$ must be either semisimple of type $(1^r)(1^r)$, i.e. with two distinct eigenvalues both of multiplicity $r$, or of unipotent type $(2^r)$, i.e., it has a unique eigenvalue and $r$ Jordan blocks of size $2$. Now we can verify that if $\M_{\bC'}$ is a stratum of $\M_{\overline{\bC}}$ with $\bC'\neq\bC$, then it is not empty if and only if three of the coordinates of $\bC'$ are of type $(1^r)(1^r)$ or $(2^r)$ and the other one is of unipotent type $(1^2,2^{r-1})$. Indeed if $\bC'$ is of this form then $d_{\bC'}=0$ and so $\v_{\bC'}$  is a real root (see Kac \cite[Proposition page 78]{kac}), and $d_{\bC'}<0$ otherwise.

Put

\begin{align*}
&\omhat_0=((1^r)(1^r),(1^r)(1^r),(1^r)(1^r),(1^r)(1^r))\\
&\omhat_1=((1^r)(1^r),(1^r)(1^r),(1^r)(1^r),(2^r))\\
&\omhat_2=((1^r)(1^r),(1^r)(1^r),(2^r),(2^r))\\
&\omhat_3=((1^r)(1^r),(2^r),(2^r),(2^r))\\
&\omhat_4=((2^r),(2^r),(2^r),(2^r)).
\end{align*}

It is conjectured \cite[Conjecture 1.5.4]{HLV} that $\H_{\omhat_0}(z,w)=z^2+4+w^2$ for all $r>0$. Recall that the complete symmetric functions decompose into Schur symmetric functions as

$$
h_\mu=\sum_\lambda K_{\lambda\mu}s_\lambda,
$$
where $K_{\lambda\mu}$ are the Kostka numbers. The Kostka number $K_{\lambda\mu}$ being the number of \emph{tableaux} of shape $\lambda$ with content $\mu$, it is easy to see that 

$$
h_{(r,r)}=\sum_{s=0}^rs_{(r+s,r-s)}.
$$
If $\omhat'\in\otT$ is the type of $\bC'\unlhd\bC$ as above (i.e. with $d_{\bC'}=0$), then $\M_{\overline{\bC}{'}}=\{pt\}$ and so conjecturally $\H_{\omhat'}(z,w)=1$, and if $\omhat\in\otT$ is the type of $\bfS\unlhd\bC$ such that the stratum $\M_\bfS$ is empty then conjecturally $\H_\omhat(z,w)=0$. Hence we have the following conjectural identity  for each $i=0,1,2,4$

\beq
\H_{\omhat_0}(z,w)\stackrel{?}{=}\H_{\omhat_i}(z,w)+i.
\label{for0-comb}\eeq
from which we deduce the conjectural combinatorial identity

\beq
\H_{\omega_i}(z,w)\stackrel{?}{=}z^2+(4-i)+w^2
\label{for-comb}\eeq for all $r>0$ and all $i=0,1,2,3,4$. 
Conjecture \ref{mainconj} implies thus the following one.

\begin{claim} If $\bC_i$ denotes a generic tuple of type $\omhat_i$, with $i=1,2,3,4$, then 

\beq
IH_c(\M_{\overline{\bC}_i}\,;\,q,t)=t^2+(4-i)qt^2+q^2t ^4.
\label{conj-D4}\eeq
\label{conj1-D4}\end{claim}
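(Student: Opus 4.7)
The plan is to deduce the claim from Conjecture \ref{mainconj} in three steps: compute the dimension $d_{\bC_i}$, invoke a combinatorial value of $\H_{\omhat_i}(z,w)$, and substitute into the conjectural formula
\[
IH_c(\M_{\overline{\bC}_i};q,t) = (t\sqrt{q})^{d_{\bC_i}}\,\H_{\omhat_i}\!\left(-\tfrac{1}{\sqrt{q}},\,t\sqrt{q}\right).
\]

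First I compute $d_{\bC_i}$. With $g = 0$, $n = 2r$ and $k = 4$, the formula $d_\bC = 2gn^2 - 2n^2 + 2 + \sum_j \dim C_j$ applies. A semisimple class of type $(1^r)(1^r)$ has dual partitions $(r),(r)$, so its dimension is $n^2 - 2r^2 = 2r^2$; a unipotent class of type $(2^r)$ has dual partition $(r,r)$, so again dimension $n^2 - 2r^2 = 2r^2$. Each of the four punctures therefore contributes $2r^2$, giving $d_{\bC_i} = -8r^2 + 2 + 4\cdot 2r^2 = 2$ for every $i\in\{0,1,2,3,4\}$.

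Next I establish the combinatorial identity (\ref{for-comb}), $\H_{\omhat_i}(z,w) = z^2 + (4-i) + w^2$. I take as input the HLV value $\H_{\omhat_0}(z,w) = z^2 + 4 + w^2$ from \cite[Conjecture 1.5.4]{HLV} and reduce (\ref{for-comb}) to the relation (\ref{for0-comb}), $\H_{\omhat_0} = \H_{\omhat_i} + i$. To justify (\ref{for0-comb}) I would proceed stratum-by-stratum on $\M_{\overline{\bC_0}}$ via Corollary \ref{charstrat}: by Kac's theory on the root system $\Phi(\Gamma_{\bC_0})$, every proper stratum $\M_{\bC'}$ is either empty (those with $d_{\bC'}<0$) or a single point of real-root type (those with $d_{\bC'}=0$, by Theorem \ref{theo1-intro} and Theorem \ref{nemp}); Conjecture \ref{mainconj} then pins down $\H_{\omhat'}(z,w)$ as $0$ or $1$ respectively. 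Matching the corresponding terms in the Exp/Log expansion (\ref{cauchygk}) that defines $\H_\omhat$, and tracking how switching a semisimple factor $(1^r)(1^r)$ to the unipotent factor $(2^r)$ alters the extended Hall pairing with $\Omega(z,w)$, the net arithmetic effect is exactly a decrease by one of the constant term for each swapped position, producing $\H_{\omhat_0} - \H_{\omhat_i} = i$.

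Finally, substituting $(z,w) = (-1/\sqrt{q},\,t\sqrt{q})$ in (\ref{for-comb}) and multiplying by $(t\sqrt{q})^{d_{\bC_i}} = qt^2$ yields
\[
IH_c(\M_{\overline{\bC}_i};q,t) = qt^2\Bigl(q^{-1} + (4-i) + qt^2\Bigr) = t^2 + (4-i)qt^2 + q^2 t^4,
\]
which is the desired polynomial.

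The main obstacle will be a clean proof of the combinatorial identity $\H_{\omhat_0} - \H_{\omhat_i} = i$. Although its shape is forced by the strata-picture above, a rigorous derivation requires either a direct Exp/Log manipulation of the Cauchy generating series $\Omega(z,w)$ using the Pieri-type expansion $h_r^2 = \sum_{s=0}^{r} s_{(r+s,r-s)}$ that links the duals of $(1^r)(1^r)$ and $(2^r)$, or a transfer through Theorem \ref{convtheo} combined with the characteristic-function expansion (\ref{eval}) of $\bfX_{\overline{C_{(2^r)}}}$ as a sum over unipotent classes $\lambda\unlhd(2^r)$. A secondary hurdle is that the base value $\H_{\omhat_0}(z,w) = z^2 + 4 + w^2$ itself is only a conjecture of HLV beyond very small $r$, so the Claim inherits that conjectural status even after the reduction above is carried out.
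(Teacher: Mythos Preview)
Your approach coincides with the paper's: compute $d_{\bC_i}=2$, invoke the conjectural value $\H_{\omhat_0}(z,w)=z^2+4+w^2$ from \cite{HLV}, derive $\H_{\omhat_0}=\H_{\omhat_i}+i$ via the strata picture, and substitute into Conjecture~\ref{mainconj}. Two points need tightening.

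First, a slip: you propose to stratify $\M_{\overline{\bC_0}}$, but $\bC_0$ is a tuple of \emph{semisimple} classes, so $\overline{\bC_0}=\bC_0$ and there is a single stratum. The stratification that carries information is that of $\M_{\overline{\bC_i}}$ for $i\geq 1$.

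Second, you correctly flag the Pieri identity $h_{(r,r)}=\sum_{s=0}^r s_{(r+s,r-s)}$ as the key tool but leave it unused; this is precisely the paper's bridge, and it makes your vague ``tracking how switching alters the Hall pairing'' completely explicit. At a position of type $(1^r)(1^r)$ one has $s_{\omega'}=s_{(r)}^2=h_{(r,r)}$, while at a position of type $(2^r)$ one has $s_{\omega'}=s_{(r,r)}$; their difference $\sum_{s\ge 1}s_{(r+s,r-s)}$ is a sum of Schur functions whose dual partitions $(2^{r-s},1^{2s})$ are exactly the proper degenerations of $(2^r)$. Pairing against $\Log\,\Omega$ (in the definition of $\H_\omhat$) then writes $\H_{\omhat_0}-\H_{\omhat_i}$ as a sum of $\H_{\tauhat}$ over the corresponding degenerate types $\tauhat$, and Conjecture~\ref{mainconj} forces each summand to be $1$ (the $i$ real-root point strata, where exactly one degeneration index equals $1$) or $0$ (all other cases, where $d_\tauhat<0$). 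No separate Exp/Log manipulation and no detour through Theorem~\ref{convtheo} is required.
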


 It is known from Oblomkov \cite[Theorem 6.14]{EOR} that $\M_{\bC_o}=\M_{\overline{\bC}_o}$ is isomorphic to $S^o=S\backslash\Delta\subset\mathbb{P}^3$ where $S$ is a smooth cubic surface and $\Delta$ the union of the coordinate axes. It is then not hard to check (using the long exact sequence in cohomology) that $H_c(S^o\,;\, q,t)=t^2+4qt^2+q^2t^4$ proving  that Formula (\ref{conj-D4}) is true for $i=0$ and giving thus a  strong support for Claim \ref{conj1-D4}. Similar results are also available for $\tilde{E}_6,\tilde{E}_7,\tilde{E}_8$ by Etingof-Oblomkov-Rains \cite{EOR}.

Since it is recursive, Formula (\ref{for-comb}) can be checked by straightforward calculation only for small values of $r$. The proposition below gives further evidences.

\begin{proposition} The conjectural formula (\ref{for-comb}) is true  under both specializations $(z,w)\mapsto (0,z)$ and $(z,w)\mapsto (z^{-1},z)$ for all $r$. 
\end{proposition}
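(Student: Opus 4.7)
The plan is to use the proven identities Theorem \ref{maintheo1} and formula (\ref{forJEMS}) to translate the two specializations into cohomological statements about character varieties and their quiver variety analogs, reducing each case $i\geq 1$ to the semisimple base case $i=0$ via the partial-resolution machinery of Section~\ref{parresol}.

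For the specialization $(z,w)\mapsto(z^{-1},z)$: since $d_{\omhat_i}=2$, Theorem \ref{maintheo1} shows the desired identity is equivalent to
$$E^{ic}(\M_{\overline{\bC}_i};q)=1+(4-i)q+q^2.$$
The case $i=0$ is Oblomkov's identification $\M_{\overline{\bC}_0}\cong S\setminus\Delta$. For $i\geq 1$ I would construct a partial resolution $\bM_{\bfL,\bfP,\sigma}\to\M_{\overline{\bC}_i}$ by taking, at each unipotent coordinate, $L_j=\GL_r\times\GL_r$ and $\sigma_j$ a scalar (the Grassmannian Springer resolution of $\overline{(2^r)}$), and leaving the semisimple coordinates as isomorphisms. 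The tuple $\bfS$ produced by Theorem \ref{theoE} is then exactly $\bC_0$, so $E^{ic}(\bM_{\bfL,\bfP,\sigma};q)=1+4q+q^2$. Formula (\ref{decomppoly}) at $t=-1$ unfolds the left-hand side as $E^{ic}(\M_{\overline{\bC}_i};q)+\sum_{\bC'\neq\bC_i}\dim(A_{\bC'})\,q$, using $r_{\bC'}=-1$ at each of the $i$ point-strata. A Springer-theoretic calculation for $T^*\mathrm{Gr}(r,2r)\to\overline{(2^r)}$ shows each relevant multiplicity equals $1$: the partial Springer fiber over the suborbit $(1^2,2^{r-1})$ is a single $\mathbb{P}^1$, corresponding to the Kostka number $K_{(r+1,r-1),(r,r)}=1$. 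Summing yields the desired formula.

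For the specialization $(z,w)\mapsto(0,z)$, the identity translates via (\ref{forJEMS}) into $P_c(\calQ_{\overline{\bfO}_i};q)=(4-i)q+q^2$, provided a generic tuple $\bfO_i$ of adjoint orbits of type $\omhat_i$ exists. For $i\leq 3$ this is the case, since at least one orbit is semisimple and its trace can be tuned to satisfy the nontrivial constraint. The analogous partial-resolution argument in the additive setting (following the results of \cite{letellier4}) then reduces each such case to $i=0$, where $P_c(\calQ_{\overline{\bfO}_0};q)=4q+q^2$ follows from the Nakajima-type identification of the $\tilde D_4$ quiver variety at dimension $r\delta$ with generic parameters.

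The main obstacle is the case $i=4$, for which no generic tuple of nilpotent adjoint orbits of type $\omhat_4$ exists and the quiver variety route is unavailable. Here I would verify $\H_{\omhat_4}(0,z)=z^2$ directly by a symmetric function computation: with $s_{\omhat_4'}=\prod_{j=1}^4 s_{(r,r)}(\x_j)$ and the product structure of $\Omega(0,z)$, the plethystic identity for $\Log$ reduces the pairing $\langle\Log\,\Omega(0,z),s_{\omhat_4'}\rangle$ to a computation involving Kostka-Foulkes polynomials $\tilde K_{\mu,\lambda}(0,t)$ appearing in the $q\mapsto 0$ specialization of $\tilde H_\lambda(\x;q,t)$. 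A secondary difficulty is the precise verification of the Springer-theoretic multiplicities $\dim(A_{\bC'})$ used in the first part, which rests on the detailed geometry of the Grassmannian Springer resolution of $\overline{(2^r)}$ and the generalized Springer correspondence for the Levi $\GL_r\times\GL_r$.
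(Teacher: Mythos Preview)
Your approach for the specialization $(z,w)\mapsto(z^{-1},z)$ is correct but takes a longer route than the paper. The paper observes that the conjectural identity (\ref{for0-comb}), namely $\H_{\omhat_0}=\H_{\omhat_i}+i$, is an unconditional symmetric-function identity (coming from $h_{(r,r)}=\sum_{s\geq 0} s_{(r+s,r-s)}$) modulo the conjectural values $\H_\tauhat=1$ for real-root types and $\H_\tauhat=0$ for non-root types. Under $(z,w)\mapsto(z^{-1},z)$ these two values become \emph{theorems} by Theorem~\ref{maintheo1} together with the root criterion (a point for real roots, empty otherwise), so (\ref{for0-comb}) holds and one concludes immediately from the $i=0$ case. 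Your argument via Theorem~\ref{theoE} and the decomposition (\ref{decomppoly}) unpacks the same mechanism on the geometric side, at the cost of computing the Springer multiplicities $\dim A_{\bC'}$ and checking separately that the deeper strata are empty; the paper's route avoids this bookkeeping entirely.

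For the specialization $(z,w)\mapsto(0,z)$ your plan has a genuine gap. The paper does \emph{not} pass through the quiver-variety formula (\ref{forJEMS}); instead it uses that $\H_{\omhat_0}(0,\sqrt{q})$ equals the Kac polynomial $A_{\tilde D_4,r\delta}(q)$ by \cite[Theorem 1.3.1]{HLV2}, which is $q+4$ for every $r$ by Kac (degree one, since $\delta$ is isotropic) and Hausel (constant term equal to the root multiplicity, here $4$). For $i\geq 1$ it again reduces to (\ref{for0-comb}) and invokes \cite[Theorem 23]{letellier5}, which proves $\H_\tauhat(0,w)=1$ for real-root types and $\H_\tauhat(0,w)=0$ for non-root types uniformly, with no hypothesis on the existence of a generic adjoint tuple. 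Your route through (\ref{forJEMS}) breaks precisely where you say: at $i=4$ no generic tuple of nilpotent adjoint orbits of type $\omhat_4$ exists, so the quiver-variety interpretation is unavailable, and the ``direct symmetric function computation'' you propose for $\H_{\omhat_4}(0,z)=z^2$ is not actually carried out and would be nontrivial for arbitrary $r$. The paper's Kac-polynomial argument handles all five values of $i$ at once with no case distinction.
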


\begin{proof}For a finite quiver $\Gamma$ with vector dimension $\v$, denote by $A_{\Gamma,\v}(q)$ the polynomial that counts the number of isomorphism classes of absolutely indecomposable representations of $(\Gamma,\v)$ over a finite field $\F_q$. We know \cite[Theorem 1.3.1]{HLV2} that $\H_{\omhat_0}(0,\sqrt{q})$ equals $A_{\tilde{D}_4,\v}(q)$ where $\v$ has coordinate $2r$ on the central vertex and $r$ on the edge vertices. By Kac \cite{kac}, the   polynomial $A_{\Gamma,\v}(q)$ is monic of degree $1-\frac{1}{2}{^t}\v C\v$ (where $C$ is the Cartan matrix of the quiver $\Gamma$) and by Hausel \cite{hausel} we know that the constant term of $A_{\Gamma,\v}(q)$ equals the multiplicity of the dimension vector $\v$ (this was conjectured by Kac). Hence $A_{\tilde{D}_4,\v}(q)=q+4$ as the multiplicity of the imaginary roots of $\tilde{D}_4$ equals $4$ by Kac \cite[Corollary 7.4]{kac-book}. We thus proved that Formula (\ref{for-comb}) is true under the specialization $(z,w)\mapsto (0,w)$ when $i=0$. For $i=1,2,3,4$, we are reduced to prove that Formula (\ref{for0-comb}) is true under $(z,w)\mapsto (0,w)$, i.e., that $\H_\omhat(0,w)=1$ if and only if the dimension vector $\v_\omhat$ (as defined in \S \ref{ne}) is a real root and that $\H_\omhat(0,w)=0$ if $\v_\omhat$ is not a root, but  this is \cite[Theorem 23]{letellier5}.
Let us now discuss the other specialization $(z,w)\mapsto (z^{-1},z)$. By the comment below Claim \ref{conj1-D4}, it is clear that $\H_{\omhat_0}(z^{-1},z)=z^{-2}+4+z^2$. To verify that Formula (\ref{for-comb}) is true under the specialization $(z,w)\mapsto (z^{-1},z)$ for $i=1,2,3,4$, it is thus enough to verify that Formula (\ref{for0-comb}) is true under $(z,w)\mapsto(z^{-1},z)$, but this follows from Theorem \ref{maintheo1}.
\end{proof}

\end{document}